\newcommand{\E}{\mathrm{e}}
\newcommand{\I}{\mathrm{i}}
\newcommand{\N}{\mathbb{N}}
\newcommand{\Z}{\mathbb{Z}}
\newcommand{\R}{\mathbb{R}}
\newcommand{\C}{\mathbb{C}}
\newcommand{\Hq}{\mathbb{H}}
\newcommand{\Tor}{\mathbb{T}}
\newcommand{\esper}{\mathbb{E}}
\newcommand{\proba}{\mathbb{P}}
\newcommand{\Var}{\mathrm{Var}}
\newcommand{\card}{\mathrm{card}\,} 
\newcommand{\GL}{\mathrm{GL}}
\newcommand{\SL}{\mathrm{SL}}
\newcommand{\SU}{\mathrm{SU}}
\newcommand{\SO}{\mathrm{SO}}
\newcommand{\unit}{\mathrm{U}}
\newcommand{\SP}{\mathrm{Sp}}
\newcommand{\Gra}{\mathrm{Gr}}
\newcommand{\glie}{\mathfrak{g}}
\newcommand{\klie}{\mathfrak{k}}
\newcommand{\tlie}{\mathfrak{t}}
\newcommand{\sulie}{\mathfrak{su}}
\newcommand{\solie}{\mathfrak{so}}
\newcommand{\splie}{\mathfrak{sp}}
\newcommand{\sym}{\mathfrak{S}}
\newcommand{\ym}{\mathfrak{Y}}
\newcommand{\matr}{\mathrm{M}}
\newcommand{\vand}{\mathrm{V}}
\newcommand{\eps}{\varepsilon}
\newcommand{\lle}{\left[\!\left[} 
\newcommand{\rre}{\right]\!\right]} 
\newcommand{\id}{\mathrm{id}}
\newcommand{\tr}{\mathrm{tr}}
\newcommand{\leb}{\mathscr{L}}
\newcommand{\diag}{\mathrm{diag}}
\newcommand{\hendo}{\mathrm{End}}
\newcommand{\hatG}{\widehat{G}}
\newcommand{\hatchi}{\widehat{\chi}}
\newcommand{\dtv}{d_{\mathrm{TV}}}
\newcommand{\scal}[2]{\left\langle #1\vphantom{#2}\,\right |\left.#2 \vphantom{#1}\right\rangle}
\newcommand{\figcap}[2]{\begin{figure}[ht] \begin{center} {\footnotesize{#1}} \caption{#2} \end{center} \end{figure}}
\newcommand{\comment}[1]{}
\numberwithin{equation}{section}
\newtheorem{theorem}{Theorem}
\newtheorem{proposition}[theorem]{Proposition}
\newtheorem{lemma}[theorem]{Lemma}
\newtheorem{definition}[theorem]{Definition}
\newtheorem{corollary}[theorem]{Corollary}
\theoremstyle{remark}
\newtheorem*{remark}{Remark}
\newtheorem*{example}{Example}
\author{Pierre-Lo\"ic M\'eliot}
\date{\today}
\title[The cut-off phenomenon for Brownian motions on symmetric spaces of compact type]{The cut-off phenomenon for Brownian\\ motions on symmetric spaces of compact type}
\begin{document}
\setcounter{tocdepth}{2}
\begin{abstract} In this paper, we prove the cut-off phenomenon in total variation distance for the Brownian motions traced on the classical symmetric spaces of compact type, that is to say:\vspace{2mm}
\begin{enumerate}
\item the classical simple compact Lie groups: special orthogonal groups, special unitary groups and compact symplectic groups;\vspace{2mm}
\item the real, complex and quaternionic Grassmannian varieties (including the real spheres, and the complex or quaternionic projective spaces);\vspace{2mm}
\item the spaces of real, complex and quaternionic structures.\vspace{2mm}
\end{enumerate}
Denoting $\mu_{t}$ the law of the Brownian motion at time $t$, we give explicit lower bounds for $\dtv(\mu_{t},\mathrm{Haar})$ if $t < t_{\text{cut-off}}=\alpha \log n$, and explicit upper bounds if $t > t_{\text{cut-off}}$. This provides in particular an answer to some questions raised in recent papers by Chen and Saloff-Coste. Our proofs are inspired by those given by Rosenthal and Porod for products of random rotations in $\SO(n)$, and by Diaconis and Shahshahani for products of random transpositions in $\mathfrak{S}_{n}$.
\end{abstract}

\maketitle

\hrule

\tableofcontents
\vspace{-0.7cm}

\clearpage

\section{Introduction}

\subsection{The cut-off phenomenon for random permutations}
This paper is concerned with the analogue for Brownian motions on compact Lie groups and symmetric spaces of the famous \emph{cut-off phenomenon} observed in random shuffles of cards (\emph{cf.} \cite{AD86,BD92}). Let us recall this result in the case of ``natural'' shuffles of cards, also known as \emph{riffle shuffles}. Consider a deck of $n$ ordered cards $1,2,\ldots,n$, originally in this order. At each time $k \geq 1$, one performs the following procedure:\vspace{2mm}
\begin{enumerate}
\item One cuts the deck in two parts of sizes $m$ and $n-m$, the integer $m$ being chosen randomly according to a binomial law of parameter $\frac{1}{2}$:
$$
\proba[m=M]=\frac{1}{2^{n}}\binom{n}{M}.
$$
So for instance, if $n=10$ and the deck was initially $123456789\mathrm{X}$, then one obtains the two blocks $A=123456$ and $B=789\mathrm{X}$ with probability $\frac{1}{2^{10}}\binom{10}{6}=\frac{105}{512}\simeq 0.21$.\vspace{2mm}
\item The first card of the new deck comes from $A$ with probability $(\card A)/n$, and from $B$ with probability $(\card B)/n$. Then, if $A'$ and $B'$ are the remaining blocks after removal of the first card, the second card of the new deck will come from $A'$ with probability $(\card A')/(n-1)$, and from $B'$ with probability $(\card B')/(n-1)$; and similarly for the other cards. So for instance, by shuffling $A=123456$ and $B=789\mathrm{X}$, one can obtain with probability $1/\binom{10}{6}\simeq 0.0048$ the deck $17283459\mathrm{X}6$. \vspace{2mm}
\end{enumerate}
Denote $\sym_{n}$ the symmetric group of order $n$, and $\sigma^{(k)}$ the random permutation in $\sym_{n}$ obtained after $k$ independent shuffles. One can guess that as $k$ goes to infinity, the law $\proba^{(k)}$ of $\sigma^{(k)}$ converges to the uniform law $\mathbb{U}$ on $\sym_{n}$.\bigskip

There is a natural distance on the set $\mathscr{P}(\sym_{n})$ of probability measures on $\sym_{n}$ that allows to measure this convergence: the so-called \emph{total variation distance} $\dtv$. Consider more generally a measurable space $X$ with $\sigma$-field $\mathcal{B}(X)$. The total variation distance is the metric on the set of probability measures $\mathscr{P}(X)$ defined by
$$
\dtv(\mu,\nu)=\sup\left\{|\mu(A)-\nu(A)|,\,\,\,A \in \mathcal{B}(X)\right\}\,\, \in [0,1].
$$
The convergence in total variation distance is in general a stronger notion than the weak convergence of probability measures. On the other hand, if $\mu$ and $\nu$ are absolutely continuous with respect to a third measure $dx$ on $X$, then their total variation distance can be written as a $\leb^{1}$-norm:
$$
\dtv(\mu,\nu)=\frac{1}{2}\int_{X} \left|\frac{d\mu}{dx}(x)-\frac{d\nu}{dx}(x)\right|\,dx.
$$\vspace{2mm}

It turns out that with respect to total variation distance, the convergence of random shuffles occurs at a specific time $k_{\text{cut-off}}$, that is to say that $\dtv(\proba^{(k)},\mathbb{U})$ stays close to $1$ for $k < k_{\text{cut-off}}$, and that $\dtv(\proba^{(k)},\mathbb{U})$ is then extremely close to $0$ for $k > k_{\text{cut-off}}$. More precisely, in \cite{BD92} (see also \cite[Chapter 10]{CSST}), it is shown that:
\begin{theorem}[Bayer-Diaconis]\label{diaconishuffle}
Assume $k=\frac{3}{2\log 2}\,\log n +\theta$. Then, 
$$
\dtv(\proba^{(k)},\mathbb{U}) = 1-2\,\phi\left(\frac{-2^{-\theta}}{4\sqrt{3}}\right) + O\left(n^{-1/4}\right), \quad\text{with }\phi(x)=\frac{1}{\sqrt{2\pi}}\int_{-\infty}^{x} \E^{-\frac{s^{2}}{2}}\,ds.
$$
So for $\theta$ negative, the total variation distance is extremely close to $1$, whereas it is extremely close to $0$ for $\theta$ positive.
\end{theorem}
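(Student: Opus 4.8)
The plan is to use the Gilbert--Shannon--Reeds description of riffle shuffles in terms of \emph{generalised $a$-shuffles} and then reduce the computation of $\dtv(\proba^{(k)},\mathbb{U})$ to a one-dimensional sum over a descent statistic. Recall that an $a$-shuffle cuts the deck into $a$ packets with multinomial$(\tfrac1a,\dots,\tfrac1a)$ sizes and interleaves them uniformly among all compatible interleavings. Two structural facts are needed: (i) an $a$-shuffle followed by a $b$-shuffle has exactly the law of a single $ab$-shuffle, so that $k$ independent ordinary riffle shuffles realise precisely a $2^{k}$-shuffle; and (ii) applied to the identity, an $a$-shuffle produces a given $\pi\in\sym_{n}$ with probability $q_{a}(\pi)=a^{-n}\binom{a+n-1-d(\pi)}{n}$, where $d(\pi)$ is the number of descents of $\pi^{-1}$ (equivalently $d(\pi)+1$ is the number of rising sequences of $\pi$). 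Thus $\proba^{(k)}(\pi)=q_{2^{k}}(\pi)$ depends on $\pi$ only through $d(\pi)$.

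Grouping permutations by $d(\pi)=j$ and writing $A_{n,j}=\card\{\sigma\in\sym_{n}:\sigma\text{ has }j\text{ descents}\}$ for the Eulerian numbers (so $\card\{\pi:d(\pi)=j\}=A_{n,j}$, inversion being a bijection of $\sym_{n}$), with $a=2^{k}$ and $p_{j}=a^{-n}\binom{a+n-1-j}{n}$ one gets
$$
\dtv(\proba^{(k)},\mathbb{U})=\frac{1}{2}\sum_{j=0}^{n-1}A_{n,j}\,\Bigl|p_{j}-\frac{1}{n!}\Bigr|.
$$
Since $j\mapsto p_{j}$ is strictly decreasing (for $a$ large, $\binom{a+n-1-j}{n}$ decreases in $j$), there is a threshold $j^{\ast}$ with $p_{j}\geq 1/n!$ iff $j\leq j^{\ast}$, and the sum telescopes:
$$
\dtv(\proba^{(k)},\mathbb{U})=\proba_{\mathrm{sh}}(J\leq j^{\ast})-\proba_{\mathrm{u}}(J\leq j^{\ast}),
$$
where the descent statistic $J$ has law $A_{n,j}/n!$ under $\mathbb{U}$ and $A_{n,j}\,p_{j}$ under $\proba^{(k)}$. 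One is thus reduced to locating $j^{\ast}$ and to the central limit theorem for the Eulerian distribution: the number of descents of a uniform permutation of $\{1,\dots,n\}$ lies within distance $1$ of a sum of $n$ i.i.d.\ uniforms on $[0,1]$, hence is asymptotically $\mathcal{N}\bigl(\tfrac{n-1}{2},\tfrac{n+1}{12}\bigr)$ with a Berry--Esseen error $O(n^{-1/2})$.

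For the asymptotics, put $a=2^{k}=2^{\theta}n^{3/2}$. Writing $j=\tfrac{n-1}{2}+t$ and $\ell=m-\tfrac{n-1}{2}$, and using $\sum_{\ell}(\ell-t)=-nt$ and $\sum_{\ell}\ell^{2}=\tfrac{n(n^{2}-1)}{12}$, a Taylor expansion gives $\log(n!\,p_{j})=\sum_{\ell}\log\!\bigl(1+\tfrac{\ell-t}{a}\bigr)=-\tfrac{nt}{a}-\tfrac{n^{3}}{24a^{2}}+O(n^{-1/2})$ uniformly for $|t|\lesssim\sqrt{n}\log n$. Hence $p_{j}=1/n!$ at $t=t^{\ast}:=-\tfrac{n^{2}}{24a}\,(1+o(1))$, i.e. with $\sigma=\sqrt{(n+1)/12}$ the point $j^{\ast}$ lies $\beta:=\tfrac{2^{-\theta}}{4\sqrt{3}}$ standard deviations below $\tfrac{n-1}{2}$; by the Eulerian CLT, $\proba_{\mathrm{u}}(J\leq j^{\ast})=\phi(-\beta)+O(n^{-1/4})$. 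On the other hand $A_{n,j}\,p_{j}=\tfrac{A_{n,j}}{n!}\exp\!\bigl(-\tfrac{nt}{a}-\tfrac{n^{3}}{24a^{2}}+O(n^{-1/2})\bigr)$, and multiplying the approximating Gaussian density of $J$ by $e^{-nt/a}$ and completing the square translates it by $\sigma^{2}n/a$ and produces a constant factor $e^{\sigma^{2}n^{2}/(2a^{2})}=e^{n^{3}/(24a^{2})}$ that exactly cancels the additive term $-\tfrac{n^{3}}{24a^{2}}$; so under $\proba_{\mathrm{sh}}$ the variable $J$ is asymptotically $\mathcal{N}\bigl(\tfrac{n-1}{2}-\tfrac{n^{2}}{12a},\sigma^{2}\bigr)$, i.e.\ a pure translation of the uniform one by $2t^{\ast}$. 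As $j^{\ast}$ sits exactly halfway between the two Gaussian means, $\proba_{\mathrm{sh}}(J\leq j^{\ast})=\phi(\beta)+O(n^{-1/4})=1-\phi(-\beta)+O(n^{-1/4})$, and subtracting gives $\dtv(\proba^{(k)},\mathbb{U})=1-2\phi(-\beta)+O(n^{-1/4})$ with $\beta=2^{-\theta}/(4\sqrt{3})$, which is the claim.

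The main obstacle is making the three approximations of the previous paragraph uniform and quantitative, and upgrading the $o(1)$'s into the announced $O(n^{-1/4})$. This requires: (a) a local limit theorem for the Eulerian numbers strong enough to control the sums $\sum_{j\leq j^{\ast}}A_{n,j}(p_{j}-1/n!)$ near the crossover, where the summand has size $\sim n^{-1}|j-j^{\ast}|/n!$ over a window of width $\sim\sqrt{n}$; (b) bounding the contribution of the tails $|j-\tfrac{n-1}{2}|\gg\sqrt{n}$, where the Taylor remainder of $\log(n!\,p_{j})$ is no longer negligible, via crude monotonicity and ratio estimates for $\binom{a+n-1-j}{n}$ together with Gaussian tail bounds; and (c) tracking the discreteness of $J$ near the non-integer value $j^{\ast}$ and the cubic and higher Taylor corrections (each $O(1/n)$ uniformly), and checking that all these errors aggregate to $O(n^{-1/4})$. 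Once these estimates are in place, what remains is routine Gaussian bookkeeping.
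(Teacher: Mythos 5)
The paper does not actually prove this theorem: it is recalled from Bayer--Diaconis \cite{BD92} (see also \cite[Chapter 10]{CSST}), and your proposal reconstructs essentially that very argument --- the $a$-shuffle semigroup property, the rising-sequence formula $q_{a}(\pi)=a^{-n}\binom{a+n-1-d(\pi)}{n}$, the reduction of $\dtv(\proba^{(k)},\mathbb{U})$ to the Eulerian descent statistic with a single crossover $j^{\ast}$, and the Gaussian computation placing $j^{\ast}$ at $2^{-\theta}/(4\sqrt{3})$ standard deviations below the mean while the shuffled law is the uniform one translated by twice that amount; the constants all come out correctly. The only thing separating your text from a complete proof is the deferred uniform error analysis in your points (a)--(c), which is exactly where the work of \cite{BD92} lies; note in particular that the local limit theorem you invoke in (a) can be sidestepped through the exact identity $A_{n,j}/n!=\proba\left[j\leq U_{1}+\cdots+U_{n}<j+1\right]$, which converts every Eulerian sum into an increment of the distribution function of a sum of i.i.d.\ uniforms, so that Berry--Esseen bounds together with the explicit, monotone form of $p_{j}$ suffice to obtain the stated $O(n^{-1/4})$.
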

\noindent The cut-off phenomenon has been proved for other shuffling algorithms (\emph{e.g.} random transpositions of cards), and more generally for large classes of finite Markov chains, see for instance \cite{DSC96,Dia96}. It has also been investigated by 
Chen and Saloff-Coste for Markov processes on continuous spaces, \emph{e.g.} spheres and Lie groups; see in particular \cite{SC94,SC04,CSC08} and the discussion of \S\ref{sofar}. However, in this case, cut-offs are easier to prove for the $\leb^{p>1}$-norm of $p_{t}(x)-1$, where $p_{t}(x)$ is the density of the process at time $t$ and point $x$ with respect to the equilibrium measure. The case of the $\leb^{1}$-norm, which is (up to a factor $2$) the total variation distance, is somewhat different. In particular, a proof of the cut-off phenomenon for the total variation distance between the Haar measure and the marginal law $\mu_{t}$ of the Brownian motion on a classical compact Lie group was apparently not known --- see the remark just after \cite[Theorem 1.2]{CSC08}. The purpose of this paper is precisely to give a proof of this $\leb^{1}$-cut-off for all classical compact Lie groups, and more generally for all classical symmetric spaces of compact type. In the two next paragraphs, we describe the spaces in which we will be interested (\S\ref{symmetric}), and we precise what is meant by ``Brownian motion'' on a space of this type (\emph{cf.} \S\ref{brown}). This will then enable us to explain the results of Chen and Saloff-Coste in \S\ref{sofar}, and finally to state in \S\ref{statement} which improvements we were able to prove.\bigskip

\subsection{Classical compact Lie groups and symmetric spaces}\label{symmetric} To begin with, let us fix some notations regarding the three classical families of simple compact Lie groups, and their quotients corresponding to irreducible simply connected compact symmetric spaces. We use here most of the conventions of \cite{Hel78,Hel84}. For every $n \geq 1$, we denote $\unit(n)=\unit(n,\C)$ the \emph{unitary group} of order $n$; $\mathrm{O}(n)=\mathrm{O}(n,\R)$ the \emph{orthogonal group} of order $n$; and $\unit\SP(n)=\unit\SP(n,\Hq)$ the  \emph{compact symplectic group} of order $n$. They are defined by the same equations:
$$
UU^{\dagger}=U^{\dagger}U=I_{n} \quad;\quad OO^{t}=O^{t}O=I_{n} \quad;\quad SS^{\star}=S^{\star}S=I_{n}
$$
with complex, real or quaternionic coefficients, the conjugate of a quaternion $w+\I x + \mathrm{j} y + \mathrm{k} z$ being $w-\I x - \mathrm{j} y - \mathrm{k} z$.
The orthogonal groups are not connected, so we shall rather work with the \emph{special orthogonal groups}
$$
\SO(n)=\SO(n,\R)=\left\{O \in \mathrm{O}(n,\R)\,\,|\,\, \det O=1\right\}.
$$
On the other hand, the unitary groups are not simple Lie groups (their center is one-dimensional), so it is convenient to introduce the \emph{special unitary groups} 
$$\SU(n)=\SU(n,\C)=\left\{U \in \unit(n,\C)\,\,|\,\, \det U=1\right\}.$$
Then, for every $n \geq 1$, $\SU(n,\C)$, $\SO(n,\R)$ and $\unit\SP(n,\Hq)$ are connected simple compact real Lie groups, of respective dimensions 
$$
\dim_{\R} \SU(n,\C)=n^{2}-1\quad;\quad\dim_{\R} \SO(n,\R)=\frac{n(n-1)}{2}\quad;\quad \dim_{\R} \unit\SP(n,\Hq)= 2n^{2}+n.
$$
The special unitary groups and compact symplectic groups are simply connected; on the other hand, for $n \geq 3$, the fundamental group of $\SO(n,\R)$ is $\Z/2\Z$, and its universal cover is the \emph{spin group} $\mathrm{Spin}(n)$.
\bigskip

Many computations on these simple compact Lie groups can be performed by using their \emph{representation theory}, which is covered by the highest weight theorem; see \S\ref{weyltheory}. We shall recall all this briefly in Section \ref{fourier}, and give in each case the list of all irreducible representations, and the corresponding dimensions and characters. It is well known that every simply connected compact simple Lie group is: \vspace{2mm}
\begin{itemize}
\item either one group in the infinite families $\SU(n)$, $\mathrm{Spin}(n)$, $\unit\SP(n)$;\vspace{2mm}
\item or, an exceptional simple compact Lie group of type $\mathrm{E}_{6}$, $\mathrm{E}_{7}$, $\mathrm{E}_{8}$, $\mathrm{F}_{4}$ or $\mathrm{G}_{2}$.\vspace{2mm}
\end{itemize}
We shall refer to the first case as the \emph{classical simple compact Lie groups}, and as mentioned before, our goal is to study Brownian motions on these groups.
\bigskip

We shall more generally be interested in compact symmetric spaces; see \emph{e.g} \cite[Chapter 4]{Hel78}. These spaces can be defined by a local condition on geodesics, and by Cartan-Ambrose-Hicks theorem, a symmetric space $X$ is isomorphic as a Riemannian manifold to $G/K$, where $G$ is the connected component of the identity in the isometry group of $X$; $K$ is the stabilizer of a point $x\in X$ and a compact subgroup of $G$; and $(G,K)$ is a symmetric pair, which means that $K$ is included in the group of fixed points $G^{\theta}$ of an involutive automorphism of $G$, and contains the connected component $(G^{\theta})^{0}$ of the identity in this group. Moreover, $X$ is compact if and only if $G$ is compact. This result reduces the classification of symmetric spaces to the classification of real Lie groups and their involutive automorphisms. So, consider an irreducible simply connected symmetric space, of compact type. Two cases arise:\vspace{2mm}
\begin{enumerate}
\item The isometry group $G=K\times K$ is the product of a compact simple Lie group with itself, and $K$ is embedded into $G$ via the diagonal map $k \mapsto (k,k)$. The symmetric space $X$ is then the group $K$ itself, the quotient map from $G$ to $X\simeq K$ being
\begin{align*}
G &\to K\\
g=(k_{1},k_{2}) &\mapsto k_{1}k_{2}^{-1}.
\end{align*}
In particular, the isometries of $K$ are the multiplication on the left and the right by elements of $K\times K$, and this action restricted to $K \subset G$ is the action by conjugacy.\vspace{2mm}
\item The isometry group $G$ is a compact simple Lie group, and $K$ is a closed subgroup of it. In this case, there exists in fact a non-compact simple Lie group $L$ with maximal compact subgroup $K$, such that $G$ is a compact subgroup of the complexified Lie group $L^{\C}$, and maximal among those containing $K$. The involutive automorphism $\theta$ extends to $L^{\C}$, with $K=G^{\theta}=L^{\theta}$ and the two orthogonal symmetric Lie algebras $(\glie,d_{e}\theta)$ and $(\mathfrak{l},d_{e}\theta)$ dual of each other.\vspace{2mm}
\end{enumerate}
The classification of irreducible simply connected compact symmetric spaces is therefore the following: in addition to the compact simple Lie groups themselves, there are the seven infinite families
\begin{align*}
&\Gra(p+q,q,\R)=\SO(p+q)/(\SO(p)\times \SO(q))\,\,\,\text{with }p,q \geq 1\,\,\,\text{(real Grassmannians)};\\
&\Gra(p+q,q,\C)=\SU(p+q)/\mathrm{S}(\unit(p)\times \unit(q))\,\,\,\text{with }p,q \geq 1\,\,\,\text{(complex Grassmannians)};\\
&\Gra(p+q,q,\Hq)=\unit\SP(p+q)/(\unit\SP(p)\times \unit\SP(q))\,\,\,\text{with }p,q \geq 1\,\,\,\text{(quaternionic Grassmannians)};\\
&\SU(n)/\SO(n)\,\,\,\text{with }n\geq 2\,\,\,\text{(real structures on $\C^{n}$)};\\
&\unit\SP(n)/\unit(n)\,\,\,\text{with }n\geq 1\,\,\,\text{(complex structures on $\Hq^{n}$)};\\
&\SO(2n)/\unit(n)\,\,\,\text{with }n\geq 2\,\,\,\text{(complex structures on $\R^{2n}$)};\\
&\SU(2n)/\unit\SP(n)\,\,\,\text{with }n\geq 2\,\,\,\text{(quaternionic structures on $\C^{2n}$)};
\end{align*}
and quotients involving exceptional Lie groups, \emph{e.g.} $\mathbb{P}^{2}(\mathbb{O})=\mathrm{F}_{4}/\mathrm{Spin}(9)$; see \cite[Chapter 10]{Hel78}. For the two last families, one sees $\unit(n)$ as a subgroup of $\SO(2n)$ by replacing each complex number $x+\I y$ by the $2\times2$ real matrix 
\begin{equation} \begin{pmatrix} x & y \\
-y & x \end{pmatrix};\label{doublecomplex}
\end{equation}
and one sees $\unit\SP(n)$ as a subgroup of $\SU(2n)$ by replacing each quaternion number $w+\I x + \mathrm{j} y + \mathrm{k} z$ by the $2 \times 2$ complex matrix
\begin{equation}\begin{pmatrix}
w+\I x & y + \I z\\
-y + \I z & w-\I x
\end{pmatrix}; \label{doublequaternion}\end{equation}
$\unit\SP(n,\Hq)$ is then the intersection of $\SU(2n,\C)$ and of the complex symplectic group $\SP(2n,\C)$. We shall refer to the seven aforementioned families as \emph{classical simple compact symmetric spaces} (of type non-group); again, we aim to study in detail the Brownian motions on these spaces.\bigskip

\subsection{Laplace-Beltrami operators and Brownian motions on symmetric spaces}\label{brown} We denote $d\eta_{K}(k)$ or $dk$ the \emph{Haar measure} of a (simple) compact Lie group $K$, and $d\eta_{X}(x)$ or $dx$ the Haar measure of a compact symmetric space $X=G/K$, which is the image measure of $d\eta_{G}$ by the projection map $\pi : G \to G/K$. We refer to \cite[Chapter 1]{Hel84} for precisions on the integration theory over (compact) Lie groups and their homogeneous spaces. There are several complementary ways to define a Brownian motion on a compact Lie group $K$ or a on compact symmetric space $G/K$, see in particular \cite{Liao04book}. Hence, one can view them: \vspace{2mm}
\begin{enumerate}
\item as Markov processes with infinitesimal generator the Laplace-Beltrami differential operator of the underlying Riemannian manifold; \vspace{2mm}
\item as conjugacy-invariant continuous L\'evy processes on $K$, or as projections of such a process on $G/K$;\vspace{2mm}
\item at least in the group case, as solutions of stochastic differential equations driven by standard (multidimensional) Brownian motions on the Lie algebra.\vspace{2mm}
\end{enumerate}
The first and the third point of view will be specially useful for our computations. For the sake of completeness, let us recall briefly each point of view --- the reader already acquainted with these notions can thus go directly to \S\ref{sofar}.\bigskip

\subsubsection{Choice of normalization and Laplace-Beltrami operators} To begin with, let us precise the Riemannian structures chosen in each case. In the case of a simple compact Lie group $K$, the opposite of the \emph{Killing form} $B(X,Y)=\tr(\mathrm{ad}\,X\,\circ\,\mathrm{ad}\,Y)$ is negative-definite and gives by transport on each tangent space the unique bi-$K$-invariant Riemannian structure on $K$, up to a positive scalar. We choose this normalization constant as follows. When $K=\SU(n)$ or $\SO(n)$ or $\unit\SP(n)$, the Killing form on $\klie$ is a scalar multiple of the bilinear form $X\otimes Y \mapsto \Re(\tr(XY))$ --- the real part is only needed for the quaternionic case. Then, we shall always consider the following invariant scalar products on $\klie$:
\begin{equation}
\scal{X}{Y}=-\frac{\beta n}{2} \,\Re(\tr(XY)),\label{normalization}
\end{equation}
with $\beta=1$ for special orthogonal groups, $\beta=2$ for special unitary groups and unitary groups, and $\beta=4$ for compact symplectic groups (these are the conventions of \emph{e.g.} \cite{Lev11}). Similarly, on a simple compact symmetric space $X=G/K$ of type non-group, we take  the previously chosen $\mathrm{Ad}(G)$-invariant scalar product (the one given by Equation \eqref{normalization}), and we restrict it to the orthogonal complement $\mathfrak{x}$ of $\klie$ in $\glie$. This $\mathfrak{x}$ can be identified with the tangent space of $X=G/K$ at $eK$, and by transport one gets the unique (up to a scalar) $G$-invariant Riemannian structure on $X$, called the Riemannian structure \emph{induced} by the Riemannian structure of $G$. From now on, each classical simple compact symmetric space $X=G/K$ will be endowed with this induced  Riemannian structure. 
\medskip

\begin{remark}
This is not necessarily the ``usual'' normalization for these quotients: in particular, when $G=\SO(n+1)$ and $K=\SO(n)\times\SO(1)=\SO(n)$, the Riemannian structure defined by the previous conventions on the $n$-dimensional sphere $X=\mathbb{S}^{n}(\R)$ differs from the restriction of the standard euclidian metric of $\R^{n+1}$ by a factor $\sqrt{n+1}$. However this normalization does not change the nature of the cut-off phenomenon that we are going to prove.
\end{remark} 

\begin{remark}
The bilinear form in \eqref{normalization} is only proportional to minus the Killing form, and not equal to it; for instance, the Killing form of $\SO(n,\R)$ is $$(n-2)\,\tr(XY)=-\frac{2n-4}{n}\scal{X}{Y},$$ and not $-\scal{X}{Y}$. However, the normalization of Formula \eqref{normalization}  enables one to relate the Brownian motions on the compact Lie groups to the ``standard'' Brownian motions on their Lie algebras, and to the classical ensembles of random matrix theory (see the SDEs at the end of this paragraph).
\end{remark}\bigskip

The \emph{Laplace-Beltrami operator} on a Riemannian manifold $M$ is the differential operator of degree $2$ defined by
$$\Delta f(m)=\sum_{1\leq i,j\leq d} g^{ij}( \nabla_{X_{i}}\!\nabla_{X_{j}}f(m)-\nabla_{\nabla_{X_{i}}X_{j}}f(m)),$$
where $(X_{1},\ldots,X_{d})$ is a basis of $T_{m}M$, $(g^{ij})_{i,j}$ is the inverse of the metric tensor $(g_{ij}=\scal{X_{i}}{X_{j}}_{T_{m}M})_{i,j}$, and $\nabla_{X}Y$ denotes the covariant derivative of a vector $Y$ along a vector $X$ and with respect to the Levi-Civita connection. In the case of a compact Lie group $K$, this expression can be greatly simplified as follows (see for instance \cite[\S2.3]{Liao04book}). Fix once and for all an orthonormal basis $(X_{1},X_{2},\ldots,X_{d})$ of $\klie$. On another tangent space $T_{k}K$, one transports each $X_{i}$ by setting
$$
X_{i}^{l}(k)=\{d_{e}R_{k}\}(X_{i}) \in T_{k}K,
$$
where $R_{k}$ is the multiplication on the right by $k$. One thus obtains a vector field $X_{i}^{l}=\frac{\partial}{\partial x_{i}}$ which is left-invariant by construction and right-invariant because of the $\mathrm{Ad}(K)$-invariance of the scalar product on $\klie$. Then,
\begin{equation}
\Delta=\sum_{i=1}^{d}\frac{\partial^{2}}{\partial x_{i}^{2}}.\label{laplacebeltrami}
\end{equation}
\begin{definition}\label{defbrown}
A (standard) Brownian motion on a compact Riemannian manifold $M$ is a continuous Feller process $(m_{t})_{t \in \R_{+}}$ whose infinitesimal generator restricted to $\mathscr{C}^{2}(M)$ is $\frac{1}{2}\,\Delta$.
\end{definition}

\noindent In the following, on a compact Lie group $K$ or a compact symmetric space $G/K$, we shall also assume that $m_{0}=e$ or $m_{0}=eK$ almost surely. We shall then denote $\mu_{t}$ the marginal law of the process at time $t$, and $p_{t}^{K}(k)=\frac{d\mu_{t}}{d\eta_{K}}(k)$ or $p_{t}^{X}(x)=\frac{d\mu_{t}}{d\eta_{X}}(x)$ the density of $\mu_{t}$ with respect to the Haar measure. General results about hypoelliptic diffusions on manifolds ensure that these densities exist for $t >0$ and are continuous in time and space; we shall later give explicit formulas for them (\emph{cf.} Section \ref{fourier}). 
\bigskip

\subsubsection{Brownian motions as continuous L\'evy processes} By using the geometry of the spaces considered and the language of L\'evy processes, one can give another equivalent definition of Brownian motions. The \emph{right increments} of a random process $(g_{t})_{t \in \R_{+}}$ with values in a (compact) Lie group $G$ are the random variables $r_{t}^{s}=g_{s}^{-1}g_{t}$, so $g_{t}=g_{s}\,r_{t}^{s}$ for any times $s \leq t$. Then, a \emph{left L\'evy process} on $G$ is a c\`adl\`ag random process such that:\vspace{2mm}
\begin{enumerate}
\item For any times $0=t_{0}\leq t_{1}\leq \cdots \leq t_{n}$, the right increments $r_{t_{1}}^{t_{0}}, r_{t_{2}}^{t_{1}},\ldots, r_{t_{n}}^{t_{n-1}}$ are independent.\vspace{2mm}
\item For any times $s \leq t$, the law of $r_{t}^{s}$ only depends on the difference $t-s$: $r_{t-s}^{0}\stackrel{\text{\tiny law}}{=} r_{t}^{s}.$\vspace{2mm}
\end{enumerate}
Denote $P_{t}$ the operator on the space $\mathscr{C}(G)$ of continuous functions on $G$ defined by $(P_{t}f)(g)=\esper[f(gg_{t})];$
and $\mu_{t}$ the law of $g_{t}$ assuming that $g_{0}=e_{G}$ almost surely. For $h \in G$, we also denote by $L_{h}$ the operator on $\mathscr{C}(G)$ defined by $L_{h}f(g)=f(hg)$. If $(g_{t})_{t \in \R_{+}}$ is a left L\'evy process on $G$ starting at $g_{0}=e_{G}$, then:\vspace{2mm}
\begin{enumerate}
\item The family of operators $(P_{t})_{t \in \R_{+}}$ is a Feller semigroup that is left $G$-invariant, meaning that $P_{t}\circ L_{h}=L_{h}\circ P_{t}$ for all $h \in G$ and for all time $t$. Conversely, any such Feller semigroup is the group of transitions of a left L\'evy process which is unique in law.
\vspace{2mm}
\item The family of laws $(\mu_{t})_{t \in \R_{+}}$ is a semigroup of probability measures for the convolution product of measures
$$
(\mu*\nu)(f)=\int_{G^{2}}f(gh)\,d\mu(g)\,d\nu(h).
$$
Hence, $\mu_{s}*\mu_{t}=\mu_{s+t}$ for any $s$ and $t$. Moreover, this semigroup is continuous, \emph{i.e.}, the limit in law $\lim_{t \to 0}\mu_{t}$ exists and is the Dirac measure $\delta_{e}$. Conversely, given such a semigroup of measures, there is always a corresponding left L\'evy process, and it is unique in law.
\end{enumerate}\bigskip

Thus, left L\'evy processes are the same as left $G$-invariant Feller semigroups of operators, and they are also the same as continuous semigroups of probability measures on $G$. In particular, on a compact Lie group, they are characterized by their infinitesimal generator
$$Lf(g)=\lim_{t \to \infty}\frac{P_{t}f(g)-f(g)}{t}$$
defined on a suitable subspace of $\mathscr{C}(G)$. Hunt's theorem (\emph{cf.} \cite{Hun56}) then characterizes the possible infinitesimal generators of (left) L\'evy processes on a Lie group; in particular, continuous left-L\'evy processes correspond to left-invariant differential operator of degree $2$. \bigskip

Assume then that $(g_{t})_{t\in \R_{+}}$ is a continuous L\'evy process on a simple compact Lie group $G$, starting from $e$ and with the additional property that $(hg_{t}h^{-1})_{t \in \R_{+}}$ and $(g_{t})_{t \in \R_{+}}$ have the same law in $\mathscr{C}(\R_{+},G)$ for every $h$.
These hypotheses imply that the infinitesimal generator $L$, which is a differential operator of degree $2$, is a scalar multiple of the Laplace-Beltrami operator $\Delta$. Thus, on a simple compact Lie group $K$, up to a linear change of time $ t\mapsto at$, a \emph{conjugacy-invariant continuous left L\'evy process} is a Brownian motion in the sense of Definition \ref{defbrown}. Similarly, on a simple compact symmetric space $G/K$, up to a linear change of time, the image $(g_{t}K)_{t \in \R_{+}}$ of a conjugacy-invariant continuous left L\'evy process on $G$ is a Brownian motion in the sense of Definition \ref{defbrown}. This second definition of Brownian motions on compact symmetric spaces has the following important consequence:
\begin{lemma}\label{nonincreasingdistance}
Let $\mu_{t}$ be the law of a Brownian motion on a compact Lie group $K$ or on a compact symmetric space $G/K$. The total variation distance $\dtv(\mu_{t},\mathrm{Haar})$ is a non-increasing function of $t$.
\end{lemma}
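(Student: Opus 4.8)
The plan is to deduce the monotonicity from the semigroup structure of the family of laws $(\mu_{t})_{t\geq 0}$ recalled above, together with two soft facts: the Haar measure absorbs convolutions, and the total variation distance is contracted by convolution with a fixed probability measure. On a compact Lie group $K$ this is immediate. One uses the identity $\mu_{s}*\mu_{t}=\mu_{s+t}$, the observation that $\eta_{K}*\nu=\eta_{K}$ for every probability measure $\nu$ on $K$ (a direct consequence of the right-invariance of $\eta_{K}$), and the fact that $\dtv(\rho_{1}*\nu,\rho_{2}*\nu)\leq\dtv(\rho_{1},\rho_{2})$, which follows from the submultiplicativity $\|\rho*\nu\|_{\mathrm{TV}}\leq\|\rho\|_{\mathrm{TV}}\,\|\nu\|_{\mathrm{TV}}$ of the total variation norm of signed measures under convolution, applied to $\rho=\rho_{1}-\rho_{2}$. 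Combining these, for $s,t\geq 0$,
\[
\dtv(\mu_{s+t},\eta_{K})=\dtv(\mu_{s}*\mu_{t},\,\eta_{K}*\mu_{t})\leq\dtv(\mu_{s},\eta_{K}),
\]
which is the claim.

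For a symmetric space $X=G/K$, $\mu_{t}$ is the image by $\pi\colon G\to X$ of the law $\widetilde{\mu}_{t}$ of the conjugacy-invariant L\'evy process on $G$, and $\eta_{X}=\pi_{*}\eta_{G}$. The one point needing care is that right translations on $G$ do not descend to $X$, so $(\mu_{t})$ is not literally a convolution semigroup on $X$. I would get around this using the $K$-conjugation-invariance of $\widetilde{\mu}_{t}$: for a probability measure $\rho$ on $X$ with an arbitrary lift $\widehat{\rho}$ (i.e. $\pi_{*}\widehat{\rho}=\rho$), set $\rho\star\widetilde{\mu}_{t}:=\pi_{*}(\widehat{\rho}*\widetilde{\mu}_{t})$; this is independent of the lift, because for $f\in\mathscr{C}(X)$ the function $g\mapsto\int_{G}f(\pi(gh))\,d\widetilde{\mu}_{t}(h)$ is right-$K$-invariant — replacing $h$ by $khk^{-1}$ leaves $\widetilde{\mu}_{t}$ unchanged and turns $\pi(gh)$ into $\pi(gkhk^{-1})=\pi(gkh)$ — so the integral against $\widehat{\rho}$ only sees $\pi_{*}\widehat{\rho}$. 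With this operation one checks, exactly as in the group case, that $\mu_{s}\star\widetilde{\mu}_{t}=\mu_{s+t}$ (lift $\mu_{s}$ by $\widetilde{\mu}_{s}$), that $\eta_{X}\star\widetilde{\mu}_{t}=\eta_{X}$ (lift $\eta_{X}$ by $\eta_{G}$ and use $\eta_{G}*\widetilde{\mu}_{t}=\eta_{G}$), and that $\rho\mapsto\rho\star\widetilde{\mu}_{t}$ is $1$-Lipschitz for $\dtv$ (take the lifts uniform along the fibres of $\pi$, so $\|\widehat{\rho}_{1}-\widehat{\rho}_{2}\|_{\mathrm{TV}}\leq\|\rho_{1}-\rho_{2}\|_{\mathrm{TV}}$, then combine submultiplicativity of convolution with the fact that $\pi_{*}$ contracts the total variation norm). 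The same three-line computation then gives $\dtv(\mu_{s+t},\eta_{X})\leq\dtv(\mu_{s},\eta_{X})$.

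Alternatively, since by Definition \ref{defbrown} the process on $X$ is itself a time-homogeneous Feller process, one can argue directly at the level of its transition semigroup $(P_{t})$: write $\mu_{s+t}=\mu_{s}P_{t}$, note $\eta_{X}P_{t}=\eta_{X}$, and use that $P_{t}$ contracts the total variation norm on signed measures, so that $\dtv(\mu_{s+t},\eta_{X})=\tfrac12\|(\mu_{s}-\eta_{X})P_{t}\|_{\mathrm{TV}}\leq\dtv(\mu_{s},\eta_{X})$; equivalently, in the $L^{1}$ picture valid for $t>0$, apply the triangle inequality and Fubini to $p_{s+t}^{X}(x)=\int_{X}p_{s}^{X}(y)\,k_{t}(y,x)\,d\eta_{X}(y)$ using the two normalizations $\int_{X}k_{t}(y,x)\,d\eta_{X}(x)=1$ (conservativity) and $\int_{X}k_{t}(y,x)\,d\eta_{X}(y)=1$ (stationarity of $\eta_{X}$). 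The only genuinely non-formal ingredient in any of these routes is the stationarity of the Haar measure for the Brownian semigroup — the right-invariance of $\eta_{K}$ on the group, and the $G$-invariance of $\eta_{X}$ together with $G$-invariance of the heat semigroup and uniqueness of the invariant probability measure on $X$ — and I expect the one place requiring attention in the write-up to be precisely the legitimacy of the convolution action $\rho\star\widetilde{\mu}_{t}$ in the symmetric-space case; everything else is routine.
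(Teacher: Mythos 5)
Your proposal is correct and runs on the same engine as the paper's proof: the semigroup property of $(\mu_{t})$, the absorption of convolution by the Haar measure, the contraction of total variation under convolution, and, for $X=G/K$, the realization of the Brownian motion on $X$ as the projection of the conjugacy-invariant Brownian motion on $G$. The only packaging difference is that you convolve on the right by $\widetilde{\mu}_{t}$ and use its invariance under conjugation to make the lifted operation $\rho\star\widetilde{\mu}_{t}$ well defined, whereas the paper works directly with the $\leb^{1}$ densities and convolves on the left by $p_{t}^{G}$, which descends to right $K$-cosets automatically; your alternative route via stationarity of the Markov semigroup is precisely the general argument the paper's remark attributes to \cite{CSC08} and deliberately replaces by a more elementary computation.
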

\begin{proof}
First, let us treat the case of compact Lie groups. If $f_{1},f_{2}$ are in $\leb^{1}(K,d\eta_{K})$, then their convolution product $f_{1}*f_{2}$ is again in $\leb^{1}(K)$, with $$\|f_{1}*f_{2}\|_{\leb^{1}(K)} \leq \|f_{1}\|_{\leb^{1}(K)}\,\|f_{2}\|_{\leb^{1}(K)}.$$ Now, since $\mu_{s+t}=\mu_{s}*\mu_{t}$, the densities of the Brownian motion also satisfy $p_{s+t}^{K}=p_{s}^{K}*p_{t}^{K}$.
Consequently,
\begin{align*}
2\,\dtv(\mu_{s+t},\eta_{K})&=\|p_{s+t}^{K}-1\|_{\leb^{1}(K)}=\|(p_{s}^{K}-1)*p_{t}^{K}\|_{\leb^{1}(K)}\\
& \leq \|p_{s}^{K}-1\|_{\leb^{1}(K)} \,\|p_{t}^{K}\|_{\leb^{1}(K)}=\|p_{s}^{K}-1\|_{\leb^{1}(G)}=2\,\dtv(\mu_{s},\eta_{K}).
\end{align*}
The proof is thus done in the group case. For a compact symmetric space $X=G/K$, denote $p_{t}^{G}$ the density of the Brownian motion on $G$, and $p_{t}^{X}$ the density of the Brownian motion on $X$. Since the Brownian motion on $X$ is the image of the Brownian motion on $G$ by $\pi :G \to G/K$, one has:
$$\forall x=gK,\,\,\,p_{t}^{X}(x)=\int_{K}p_{t}^{G}(gk)\,dk.$$
As a consequence, 
\begin{align*}
\|p_{s+t}^{X}-1\|&_{\leb^{1}(X)}=\int_{G} \left|p_{s+t}^{X}(gK)-1\right|dg =\int_{G}\left|\int_{K}(p_{s+t}^{G}(gk)-1)\,dk\right|dg\\
&=\int_{G}\left|\int_{K\times G}(p_{s}^{G}(h^{-1}gk)-1)\,p_{t}^{G}(h)\,dk\,dh\right|dg = \int_{G} \left|\int_{G} (p_{s}^{X}(h^{-1}gK)-1)\,p_{t}^{G}(h)\,dh\right|dg\\
&\leq \int_{G\times G} \left|p_{s}^{X}(h^{-1}gK)-1)\right|\,\left|p_{t}^{G}(h)\right|\,dh\,dg=\|p_{s}^{X}-1\|_{\leb^{1}(X)}\,\|p_{t}^{G}\|_{\leb^{1}(G)}=\|p_{s}^{X}-1\|_{\leb^{1}(X)},
\end{align*}
so $\dtv(\mu_{s+t},\eta_{X}) \leq \dtv(\mu_{s},\eta_{X})$ also in the case of symmetric spaces.
\end{proof}
\begin{remark}
 Later, this property will allow us to compute estimates of $\dtv(\mu_{t},\eta_{X})$ only for $t$ around the cut-off time. Indeed, if one has for instance an (exponentially small) estimate of $1-\dtv(\mu_{t_{0}},\eta_{X})$ at time $t_{0}=(1-\eps)\,t_{\text{cut-off}}$, then the same estimate will also hold for $1-\dtv(\mu_{t},\eta_{X})$ with $t < t_{0}$.
 \end{remark}
\begin{remark}
Actually, the same result holds for the $\leb^{p}$-norm of $p_{t}(x)-1$, and in the broader setting of Markov processes with a stationary measure; see \emph{e.g.} \cite[Proposition 3.1]{CSC08}. Our proof is a little more elementary.
\end{remark}
\bigskip

\subsubsection{Brownian motions as solutions of SDE} A third equivalent definition of Brownian motions on compact Lie groups is by mean of stochastic differential equations. More precisely, given a Brownian motion $(k_{t})_{t \in \R_{+}}$ traced on a compact Lie group $K$, there exists a (trajectorially unique) standard $d$-dimensional Brownian motion $(W_{t})_{t \in \R_{+}}$ on the Lie algebra $\klie$ that drives stochastic differential equations for every test function $f \in \mathscr{C}^{2}(K)$ of $(k_{t})_{t \in \R_{+}}$ (\emph{cf.} \cite{Liao04book}). So for instance, on a unitary group $\unit(n,\C)$, the Brownian motion is the solution of the SDE
$$
U_{0}=I_{n}\qquad;\qquad dU_{t}=\I\,U_{t}\cdot dH_{t}-\frac{1}{2}\,U_{t}\,dt,
$$
where $(H_{t})_{t \in \R_{+}}$ is a Brownian hermitian matrix normalized so that at time $t=1$ the diagonal coefficients are independent real gaussian variables of variance $1/n$, and the upper-diagonal coefficients are independent complex gaussian variables with real and imaginary parts independent and of variance $1/2n$. In the general case, let us introduce the \emph{Casimir operator}
\begin{equation}
C=\sum_{i=1}^{d} X_{i} \otimes X_{i}.\label{casimir}
\end{equation}
This tensor should be considered as an element of the universal enveloping algebra $U(\klie)$. Then, for every representation $\pi : K \to \GL(V)$, the image of $C$ by the infinitesimal representation $d\pi : U(\klie) \to \mathrm{End}(V)$ commutes with $d\pi(\glie)$.  In particular, for an irreducible representation $V$, $d\pi(C)$ is a scalar multiple $\kappa_{V}\mathrm{id}_{V}$ of $\mathrm{id}_{V}$. Assume that $K$ is a classical simple Lie group. Then its ``geometric'' representation is irreducible, so $\sum_{i=1}^{d} (X_{i})^{2}=\alpha_{\glie}\,I_{n}$ if one sees the $X_{i}$'s as matrices in $\matr(n,\R)$ or $\matr(n,\C)$ or $\matr(n,\Hq)$. The stochastic differential equation satisfied by a Brownian motion on $K$ is then
$$
k_{0}=e_{K}\qquad;\qquad dk_{t}=k_{t}\cdot dB_{t} + \frac{\alpha_{\klie}}{2} k_{t}\,dt,
$$
where $B_{t}=\sum_{i=1}^{d}W_{t}^{i}\,X_{i}$ is a standard Brownian motion on the Lie algebra $\klie$. The constant $\alpha_{\klie}$ is given in the classical cases by\label{casimircoefficient}
$$
\alpha_{\sulie(n)}=-\frac{n^{2}-1}{n^{2}}\quad;\quad\alpha_{\solie(n)}=-\frac{n-1}{n}\quad;\quad \alpha_{\splie(n)}=-\frac{2n+1}{2n}
$$
see \cite[Lemma 1.2]{Lev11}. These Casimir operators will play a prominent role in the computation of the densities of these Brownian motions (\emph{cf.} \S\ref{weyltheory}), and also at the end of this paper (\S\ref{zonal}), see Lemma \ref{expectationcoefficients}.\bigskip

\subsection{Chen-Saloff-Coste results on $\mathscr{L}^{p}$-cut-offs of Markov processes}\label{sofar}
Fix $p \in [1,\infty)$, and consider a Markov process $\mathfrak{X}=(x_{t})_{t \in \R_{+}}$ with values in a measurable space $(X,\mathcal{B}(X))$, and admitting an invariant probability $\eta$. One denotes $\mu_{t,x}$ the marginal law of $x_{t}$ assuming $x_{0}=x$ almost surely, and
$$d_{t}^{p}(\mathfrak{X})=\max_{x\in X} \left(\int_{X} \left|\frac{d\mu_{t,x}}{d\eta}(y)-1\right|^{p}\eta(dy)\right)^{\frac{1}{p}},$$
with by convention $$d_{t}^{p}(\mathfrak{X})=\begin{cases} 2&\text{if }p=1,\\+\infty &\text{if }p>1,\end{cases}$$ when $\mu_{t,x}$ is not absolutely continuous with respect to $\eta$. This is obviously a generalization of the total variation distance to the stationary measure. In virtue of the remark stated just after Lemma \ref{nonincreasingdistance}, $t \mapsto d_{t}^{p}(\mathfrak{X})$ is always non-increasing. A sequence of Markov processes $(\mathfrak{X}^{(n)})_{n \in \N}$ with values in measurable spaces $(X^{(n)},\mathcal{B}(X^{(n)}))_{n \in \N}$ is said to have a \emph{max-$\mathscr{L}^{p}$-cut-off} with cut-off times $(t^{(n)})_{n \in \N}$ if
$$\lim_{n \to \infty} \left(\sup_{t>(1+\eps)t^{(n)}}d_{t}^{p}(\mathfrak{X}^{(n)})\right)=0\qquad;\qquad \lim_{n \to \infty} \left(\inf_{t<(1-\eps)t^{(n)}}d_{t}^{p}(\mathfrak{X}^{(n)})\right)=\limsup_{n\to \infty}\,d_{0}^{p}(\mathfrak{X}^{(n)})=M>0$$
for every $\eps \in (0,1)$ --- usually $M$ will be equal to $2$ or $+\infty$. A generalization of Theorem \ref{diaconishuffle} ensures that these $\leb^{p}$-cut-offs occur for instance in the case of riffle shuffles of cards, with $t^{(n)}=\frac{3\log n}{2\log 2}$ for every $p \in [1,+\infty)$.\bigskip

In \cite{CSC08}, Chen and Saloff-Coste shown that a general criterion due to Peres ensures a $\leb^{p>1}$-cut-off for a sequence of Markov processes; but then one does not know necessarily the value of the cut-off time $t^{(n)}$. Call \emph{spectral gap} $\lambda(\mathfrak{X})$ of a Markov process $\mathfrak{X}$ the largest $c \geq 0$ such that for all $f \in \leb^{2}(X,\eta)$ and all time $t$,
$\|(P_{t}-\eta)f\|_{\leb^{2}(X)}\leq \E^{-tc}\,\|f\|_{\leb^{2}(X)},$ where $(P_{t})_{t \in \R_{+}}$ stands for the semigroup associated to the Markov process. \begin{theorem}[Chen-Saloff-Coste]\label{csc}
Fix $p \in (1,\infty)$. One considers a family of Markov processes $(\mathfrak{X}^{(n)})_{n \in \N}$ with normal operators $P_{t}$ and spectral gaps $\lambda^{(n)}$, and one assumes that $\lim_{t \to \infty}d_{t}^{p}(\mathfrak{X}^{(n)})=0$ for every $n$. For $\eps_{0} >0$ fixed, set $$t^{(n)}=\inf\{t : d_{t}^{p}(\mathfrak{X}^{(n)})\leq \eps_{0}\}.$$
The family of Markov processes has a max-$\leb^{p}$-cut-off if and only if Peres' criterion is satisfied:
$$\lim_{n \to \infty} \lambda^{(n)}\,t^{(n)}=+\infty.$$
\end{theorem}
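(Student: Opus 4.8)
The plan is to isolate two operator-theoretic estimates that hold because each $P_t$ is a normal operator on $\leb^2(X^{(n)},\eta)$, and that together make the profile $t\mapsto d_t^p(\mathfrak X^{(n)})$ abrupt precisely when $\lambda^{(n)}t^{(n)}\to+\infty$. Write $d_t^{(n)}=d_t^p(\mathfrak X^{(n)})$, $\lambda_n=\lambda^{(n)}$, and set $t_n=\inf\{t:d_t^{(n)}\le\eps_0\}$; since the densities $p_t^{(n)}(x,\cdot)$ are continuous in $t$, each $X^{(n)}$ is compact, and $t\mapsto d_t^{(n)}$ is non-increasing (the remark after Lemma~\ref{nonincreasingdistance}), we have $d_{t_n}^{(n)}=\eps_0$. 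Let $P_t^{*}$ denote the adjoint of $P_t$ on $\leb^2(X,\eta)$, which is again a Markov operator --- the semigroup of the time-reversed chain --- and note that for the rank-one operator $f\mapsto\eta(f)\mathbf 1$, again written $\eta$, one has $\eta^{*}=\eta$, so $P_t^{*}-\eta$ has the same $\leb^2(\eta)$-norm as $P_t-\eta$. The first estimate (\emph{exponential decay}) starts from the Chapman--Kolmogorov identity $p_{s+t}(x,\cdot)-1=(P_s^{*}-\eta)(p_t(x,\cdot)-1)$: since $P_s^{*}$ is Markov and $\eta$ has operator norm $\le 1$ on each $\leb^r(\eta)$, we get $\|P_s^{*}-\eta\|_{\leb^1\to\leb^1}\le 2$, $\|P_s^{*}-\eta\|_{\leb^\infty\to\leb^\infty}\le 2$, while $\|P_s^{*}-\eta\|_{\leb^2\to\leb^2}=\|P_s-\eta\|_{\leb^2\to\leb^2}\le\E^{-\lambda_n s}$ by the definition of the spectral gap; Riesz--Thorin interpolation then yields $\|P_s^{*}-\eta\|_{\leb^p\to\leb^p}\le 2\,\E^{-\theta_p\lambda_n s}$ with $\theta_p=\min(2/p,2-2/p)>0$, and taking the maximum over the starting point,
\[
d_{s+t}^{(n)}\ \le\ 2\,\E^{-\theta_p\lambda_n s}\,d_t^{(n)}\qquad(s,t>0).
\]

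The second estimate (\emph{spectral lower bound}) goes the other way. Let $p'$ be the conjugate exponent and pick a unit eigenfunction $\phi$ of $P_1$ orthogonal to constants whose eigenvalue $\beta$ satisfies $|\beta|=\E^{-\lambda_n}$ (such $\phi$ exists in the compact setting, where $P_1$ is trace-class with smooth eigenfunctions; in general one replaces it by an approximate eigenfunction drawn from a suitable spectral subspace). Since $\phi\perp\mathbf 1$ gives $\langle p_t(x,\cdot)-1,\phi\rangle_{\leb^2(\eta)}=(P_t\phi)(x)$ and $|(P_t\phi)(x)|=\E^{-\lambda_n t}\,|\phi(x)|$, testing the $\leb^p$--$\leb^{p'}$ duality against $g=\phi/\|\phi\|_{\leb^{p'}}$ gives $\|p_t(x,\cdot)-1\|_{\leb^p}\ge\E^{-\lambda_n t}\,|\phi(x)|/\|\phi\|_{\leb^{p'}}$, and maximising over $x$ produces, since $\|\phi\|_{\leb^{p'}}\le\|\phi\|_{\leb^\infty}$ for the probability measure $\eta$,
\[
d_t^{(n)}\ \ge\ \E^{-\lambda_n t}\qquad(t>0).
\]

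With these in hand, the implication ``Peres' criterion $\Rightarrow$ cut-off'' is short: assume $\lambda_n t_n\to+\infty$. For $t\ge(1+\eps)t_n$, monotonicity and the decay estimate with $s=\eps t_n$ give $d_t^{(n)}\le d_{(1+\eps)t_n}^{(n)}\le 2\eps_0\,\E^{-\theta_p\lambda_n\eps t_n}\to0$; while for $t<(1-\eps)t_n$, monotonicity gives $d_t^{(n)}\ge d_{(1-\eps)t_n}^{(n)}$, and the decay estimate applied with $s=\eps t_n$ in the form $\eps_0=d_{t_n}^{(n)}\le 2\,\E^{-\theta_p\lambda_n\eps t_n}\,d_{(1-\eps)t_n}^{(n)}$ gives $d_{(1-\eps)t_n}^{(n)}\ge\tfrac{1}{2}\,\eps_0\,\E^{\theta_p\lambda_n\eps t_n}\to+\infty$. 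So $(\mathfrak X^{(n)})$ has a max-$\leb^p$-cut-off with cut-off times $(t_n)$, the constant $M$ being $+\infty$ because each $\mu_{0,x}=\delta_x$ is singular. For the converse, assume $(\mathfrak X^{(n)})$ has a max-$\leb^p$-cut-off. Using only that $d^{(n)}$ is continuous, non-increasing, and equals $\eps_0$ at $t_n$, one checks that any sequence of cut-off times is asymptotically equivalent to $(t_n)$, so $(t_n)$ is itself a sequence of cut-off times; in particular $d_{(1+\eps)t_n}^{(n)}\to0$ for every $\eps>0$. If Peres' criterion failed, there would be a subsequence along which $\lambda_n t_n\le C$, and then the spectral lower bound would give $d_{(1+\eps)t_n}^{(n)}\ge\E^{-(1+\eps)C}>0$ along it --- contradicting $d_{(1+\eps)t_n}^{(n)}\to0$. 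Hence $\lambda_n t_n\to+\infty$.

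The heart of the matter, and the one place where normality is essential, is obtaining the two estimates with the \emph{sharp} rates $\theta_p\lambda_n$ and $\lambda_n$ in the exponents: normality is exactly what forces $\|P_t-\eta\|_{\leb^2\to\leb^2}=\E^{-\lambda_n t}$ and provides a spectral value of modulus $\E^{-\lambda_n}$ off the constants, so that the cut-off is governed by the spectral gap itself rather than by some a priori inaccessible quantity. A secondary technical point is that the lower bound as written uses a bounded eigenfunction and the identification $\max_x|\phi(x)|=\|\phi\|_{\leb^\infty(\eta)}$; in a general measurable space one should instead take an approximate eigenfunction in the range of the spectral projection onto $\{z:|z|>\E^{-2\lambda_n}\}$ (the degraded rate $2\lambda_n$ still keeping $2(1+\eps)\lambda_n t_n$ bounded along the bad subsequence) and work with suprema over test functions --- but none of this intervenes in the present paper, where each $X^{(n)}$ is a compact Riemannian manifold, $P_1$ is trace-class with smooth eigenfunctions, and $d_t^p$ does not even depend on the starting point by the conjugation-invariance of the Brownian motion, so both estimates hold verbatim.
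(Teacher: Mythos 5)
The paper itself offers no proof of this statement: Theorem~\ref{csc} is quoted from Chen and Saloff-Coste \cite{CSC08}, so there is no internal argument to compare yours against, and your reconstruction has to stand on its own. It essentially does, at least in the setting where the theorem is used here. The decay estimate is correct: $p_{s+t}(x,\cdot)-1=(P_s^{*}-\eta)(p_t(x,\cdot)-1)$, the bounds $\|P_s^{*}-\eta\|_{\leb^1\to\leb^1}\le 2$, $\|P_s^{*}-\eta\|_{\leb^\infty\to\leb^\infty}\le 2$ hold because $P_s^{*}$ is again a Markov operator fixing $\eta$, the $\leb^2$ bound $\E^{-\lambda^{(n)}s}$ is immediate from the paper's definition of the spectral gap, and Riesz--Thorin gives exactly your $\theta_p=\min(2/p,2-2/p)$; the deduction of both halves of the cut-off from $d^p_{s+t}\le 2\,\E^{-\theta_p\lambda^{(n)}s}\,d^p_t$ when $\lambda^{(n)}t^{(n)}\to\infty$ is fine, as is the converse via the lower bound $d^p_t\ge \E^{-\lambda^{(n)}t}$ obtained from a genuine eigenfunction, which exists in the paper's compact setting.

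Two caveats are worth naming. First, in the converse you assert that $(t^{(n)})$ must itself be a cut-off sequence; this uses $\eps_0<M$ (so that $\inf_{t<(1-\eps)s^{(n)}}d^p_t\to M$ forces $t^{(n)}\ge(1-\eps)s^{(n)}$). Here $M=+\infty$ because $p>1$ and $\mu_{0,x}=\delta_x$ is singular, so this is harmless, but in the abstract formulation it is a hypothesis you are silently using. Second, your fallback for general normal semigroups (approximate eigenfunctions in a spectral window, with rate degraded to $2\lambda^{(n)}$) is not complete as sketched when $1<p<2$: the bound one gets is $d^p_t\ge \E^{-2\lambda^{(n)}t}\,\|\psi\|_{\leb^2}/\|\psi\|_{\leb^{p'}}$ with $p'>2$, and the ratio $\|\psi\|_{\leb^2}/\|\psi\|_{\leb^{p'}}$ need not be bounded below uniformly in $n$, so the contradiction along a subsequence with $\lambda^{(n)}t^{(n)}\le C$ does not follow without further work; Chen and Saloff-Coste's own proof of the general statement proceeds differently at this point. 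Neither issue affects the application in this paper, where the semigroups are self-adjoint with discrete spectrum, smooth eigenfunctions, and $d_0^p=+\infty$.
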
\medskip

In this case, the sequence $(t^{(n)})_{n \in \N}$ gives the values of the cut-off times. A lower bound on $t^{(n)}$ also ensures the cut-off phenomenon; but then, the cut-off time remains \emph{unknown}. Nevertheless, an important application of this general criterion is (see \cite[Theorem 1.2]{CSC08}, and also \cite[Theorem 1.1 and 1.2]{SC04}):
\begin{corollary}[Saloff-Coste]\label{window}
Consider the Brownian motions traced on the special orthogonal groups $\SO(n,\R)$, with the normalization of the metric detailed in the previous paragraph. They exhibit for every $p \in (1,\infty)$ a cut-off with $t^{(n)}$ asymptotically between $2\log n$ and $4\log n$ --- notice that $t^{(n)}$ depends on $p$.
\end{corollary}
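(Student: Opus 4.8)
The plan is to deduce the statement from the Chen--Saloff-Coste criterion (Theorem \ref{csc}), applied for each fixed $p \in (1,\infty)$ to the sequence $\mathfrak{X}^{(n)}$ of Brownian motions on $\SO(n,\R)$. Three hypotheses are soft. Since the Brownian motion is conjugacy-invariant, the marginal law $\mu_{t}$ is symmetric, so the transition semigroup $P_{t}=\E^{t\Delta/2}$ is self-adjoint on $\leb^{2}(\SO(n),\mathrm{Haar})$, hence normal. Next, on a compact manifold the heat kernel $p_{t}^{\SO(n)}$ converges uniformly to $1$ as $t\to\infty$, so $\lim_{t\to\infty}d_{t}^{p}(\mathfrak{X}^{(n)})=0$ for every $n$. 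Finally, the spectral gap is easy to pin down: $-\Delta$ acts on the isotypic component of an irreducible representation $V$ of $\SO(n)$ by the scalar $-\kappa_{V}>0$, and among nontrivial $V$ the smallest such eigenvalue is that of the defining representation $\R^{n}$, for which the normalization \eqref{normalization} gives $\kappa_{\R^{n}}=\alpha_{\solie(n)}=-\frac{n-1}{n}$; hence $P_{t}$ restricted to the orthogonal complement of the constants has norm $\E^{-t(n-1)/(2n)}$, so $\lambda^{(n)}=\frac{n-1}{2n}\geq\frac14$.

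It then remains to verify Peres' criterion $\lambda^{(n)}t^{(n)}\to\infty$, where $t^{(n)}=\inf\{t:d_{t}^{p}(\mathfrak{X}^{(n)})\leq\eps_{0}\}$, and to sandwich $t^{(n)}$ between $2\log n$ and $4\log n$. For the \emph{upper bound}, use $d_{t}^{p}\leq\|p_{t}^{\SO(n)}-1\|_{\leb^{\infty}}$ and combine the identity $p_{t}^{\SO(n)}=p_{t/2}^{\SO(n)}*p_{t/2}^{\SO(n)}$ with Cauchy--Schwarz to get $\|p_{t}^{\SO(n)}-1\|_{\leb^{\infty}}\leq\|p_{t/2}^{\SO(n)}-1\|_{\leb^{2}}^{2}$. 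By the Plancherel formula on $\SO(n)$,
$$
\big\|p_{s}^{\SO(n)}-1\big\|_{\leb^{2}}^{2}=\sum_{V\neq\mathrm{triv}}(\dim V)^{2}\,\E^{s\kappa_{V}},
$$
and the contribution of the defining representation, $n^{2}\,\E^{-s(n-1)/n}$, is $o(1)$ as soon as $s\geq(2+\eps)\log n$. Granting that the sum of the remaining terms is also $o(1)$ for such $s$ (the technical core; see below), one obtains $d_{(4+2\eps)\log n}^{p}\to0$, and since $t\mapsto d_{t}^{p}$ is non-increasing (\S\ref{sofar}) we get $t^{(n)}\leq(4+2\eps)\log n$ for $n$ large; as $\eps$ is arbitrary, $\limsup_{n}t^{(n)}/\log n\leq4$.

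For the \emph{lower bound}, note $\|f\|_{\leb^{p}}\geq\|f\|_{\leb^{1}}$ on a probability space, so $d_{t}^{p}\geq2\,\dtv(\mu_{t},\mathrm{Haar})$, and it suffices to show $\dtv(\mu_{t},\mathrm{Haar})\to1$ at $t=(2-\eps)\log n$. I would run the second-moment method with $f(g)=\tr(g)$, the trace in the defining representation: under the Haar measure $\esper f=0$ and $\Var f=1$ (for $n\geq3$), whereas under $\mu_{t}$ the matrix coefficients of $\R^{n}$ satisfy $\frac{d}{dt}\esper_{\mu_{t}}[g_{ij}]=\frac12\kappa_{\R^{n}}\esper_{\mu_{t}}[g_{ij}]$, so that $\esper_{\mu_{t}}[\tr g]=n\,\E^{-t(n-1)/(2n)}=n^{1-(2-\eps)(n-1)/(2n)}\to+\infty$, while a Diaconis--Shahshahani-style moment computation gives $\Var_{\mu_{t}}[\tr g]=O(1)$. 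Chebyshev's inequality then shows that the set $\{g:\tr g\leq\frac12\esper_{\mu_{t}}[\tr g]\}$ has $\mu_{t}$-measure tending to $0$ and Haar-measure tending to $1$, so $\dtv(\mu_{t},\mathrm{Haar})\to1$; hence $t^{(n)}\geq(2-\eps)\log n$ and $\liminf_{n}t^{(n)}/\log n\geq2$. In particular $t^{(n)}\to\infty$, so $\lambda^{(n)}t^{(n)}\geq\frac14 t^{(n)}\to\infty$: Peres' criterion holds, and Theorem \ref{csc} yields the max-$\leb^{p}$-cut-off with $t^{(n)}$ asymptotically between $2\log n$ and $4\log n$.

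The one genuinely non-routine ingredient is the tail estimate $\sum_{V\neq\mathrm{triv},\,V\neq\R^{n}}(\dim V)^{2}\,\E^{s\kappa_{V}}=o(1)$ for $s\geq(2+\eps)\log n$, together with $\Var_{\mu_{t}}[\tr g]=O(1)$; both require the explicit classification of the irreducible representations of $\SO(n)$ with their dimensions and Casimir eigenvalues, i.e. the Fourier-analytic material of Section \ref{fourier}. With that input in hand, the corollary follows by the mechanical verification above; the sharper $\leb^{1}$-cut-off with a precise window --- the real content of this paper --- requires considerably more work.
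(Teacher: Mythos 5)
Your argument is correct in outline, but it is worth noting that it is not the route the paper takes: the paper treats this corollary as essentially a quotation, invoking Theorem \ref{csc} together with Saloff-Coste's earlier results (the spectral gap staying bounded below and $t^{(n)}=O(\log n)$, with the window $[2\log n,4\log n]$ taken from \cite{SC04,CSC08}), whereas you reconstruct the two inputs yourself. Your reconstruction is sound: the semigroup is self-adjoint, the gap is $\frac{n-1}{2n}\geq\frac14$ by minimality of the Casimir eigenvalue at the defining representation (this is exactly the group case of Lemma \ref{decay}), the upper bound follows from $d_t^p\leq\|p_t-1\|_{\leb^{\infty}}\leq\|p_{t/2}-1\|_{\leb^{2}}^2$ and the Plancherel series of Proposition \ref{scaryseries}, and the lower bound is the second-moment argument on $\tr g$ that the paper itself runs in Section \ref{lower} (mean $n\,\E^{-t(n-1)/(2n)}$, variance $\leq 8$ by Lemma \ref{controlvariance}). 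The one caveat is your closing claim that, given the classification of irreducible representations, the rest is ``mechanical'': the tail estimate $\sum_{\lambda\neq 0,\lambda\neq\lambda_{\min}}(D^{\lambda})^2\E^{-sB_n(\lambda)}=o(1)$ at $s=(2+\eps)\log n$ is precisely the tedious uniform control of $D^{\lambda}\E^{-B_n(\lambda)\log n}$ carried out in Section \ref{upper} (or, alternatively, must be imported from Saloff-Coste's papers, which is what the present corollary does); likewise the $O(1)$ variance bound is a genuine computation, not a formality. You do flag this as the technical core, so there is no gap in logic, but be aware that in supplying it you are re-proving the very estimates that make the paper's sharper $\leb^1$ result possible, while the paper's proof of this particular corollary is deliberately a two-line appeal to Theorem \ref{csc} plus citation. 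One gain of your version is that it makes the constants explicit ($\lambda^{(n)}\geq\frac14$, window endpoints $2\log n$ and $4\log n$ obtained directly) without leaving the paper's framework.
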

\noindent Indeed, the spectral gap stays bounded and has a non-negative limit (which we shall compute later), whereas $t^{(n)}$ was shown by Saloff-Coste to be a $O(\log n)$. Similar results are presented in \cite{SC04} in the broader setting of simple compact Lie groups or compact symmetric spaces, but without a proof of the cut-off phenomenon (Saloff-Coste gave a window for $t^{(n)}$ for every $p \in [1,+\infty]$). The main result of our paper is that a cut-off indeed occurs for every $p \in [1,+\infty($, for every classical simple compact Lie group or classical simple compact symmetric space, and with a cut-off time equal to  $\log n$ or $2\log n$ depending on the type of the space considered. In particular, the main improvements in comparison to the aforementioned theorems are:\vspace{2mm}
\begin{enumerate}
\item the case $p=1$ is now included; \vspace{2mm}
\item one knows the precise value of the cut-off time.\vspace{2mm}
\end{enumerate}
\bigskip

\subsection{Statement of the main results and discriminating events}\label{statement}
\begin{theorem}\label{main}
Let $\mu_{t}$ be the marginal law of the Brownian motion traced on a classical simple compact Lie group, or on classical simple compact symmetric space. There exists positive constants $\alpha$, $\gamma_{b}$, $\gamma_{a}$, $c$, $C$ and an integer $n_{0}$ such that in each family, for all $n \geq n_{0}$,
\begin{align}
&\forall \eps \in (0,1/4),\,\,\, \dtv(\mu_{t},\mathrm{Haar}) \geq 1-\frac{c}{n^{\gamma_{b}\eps}}\,\,\,\text{ if }t=\alpha\,(1-\eps)\,\log n;\label{mainlower}\\
&\forall \eps \in (0,\infty),\,\,\, \dtv(\mu_{t},\mathrm{Haar}) \leq \frac{C}{n^{\gamma_{a}\eps/4}}\,\,\,\,\,\,\,\,\,\,\,\,\text{ if }t=\alpha\,(1+\eps)\,\log n.\label{mainupper}
\end{align}
The constants $\alpha$, $\gamma_{b}$ and $\gamma_{a}$ are determined by the type of the space considered, and then one can make the following choices for $n_{0}$, $c$ and $C$:\vspace{2mm}
$$
\begin{tabular}{|c|c|c|c|c|c|c|c|}
\hline $K$ or $G/K$ & $\beta$ & $\alpha$ & $\gamma_{b}$ & $\gamma_{a}$ & $n_{0}$& $c$&$C$\\
\hline \hline $\SO(n,\R)$ & $1$& $2$& $2$& $2$& $10$& $36$ & $6$\\
\hline $\SU(n,\C)$ & $2$ & $2$ &$2$&$4$& $2$ & $8$ & $10$\\
\hline $\unit\SP(n,\Hq)$& $4$ & $2$ &$2$&$2$& $3$ & $5$& $3$\\
\hline \hline $\Gra(n,q,\R)$& $1$ & $1$ &$1$&$1$& $10$ & $32$& $2$\\
\hline $\Gra(n,q,\C)$& $2$ & $1$ &$1$&$2$& $2$ & $32$& $2$\\
\hline $\Gra(n,q,\Hq)$& $4$ & $1$ &$1$&$1$& $3$ & $16$& $2$\\
\hline \hline $\SO(2n,\R)/\unit(n,\C)$& $1$ & $1$ &$2$&$1$& $10$ & $8$& $2$\\
\hline $\SU(n,\C)/\SO(n,\R)$& $2$ & $1$ &$2$&$2$& $2$ & $24$& $8$\\
\hline $\SU(2n,\C)/\unit\SP(n,\Hq)$& $2$ & $1$ &$2$&$2$& $2$ & $22$& $8$\\
\hline $\unit\SP(n,\Hq)/\unit(n,\C)$& $4$ & $1$ &$2$&$1$& $3$ & $17$& $2$\\
\hline
\end{tabular}
$$
\end{theorem}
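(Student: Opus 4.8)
The plan is to follow the classical Diaconis–Shahshahani Fourier-analytic scheme, adapted to the continuous setting via the heat-kernel expansion on compact homogeneous spaces. First I would use the fact, recalled in the excerpt, that the density $p_t$ of the Brownian motion is a conjugacy-invariant (resp. $K$-bi-invariant) function, hence by Peter–Weyl it admits an explicit $\leb^2$-expansion
\begin{equation}
p_t^K(k) = \sum_{\lambda \in \widehat{K}} (\dim V_\lambda)\, \E^{-\frac{t}{2}\kappa_\lambda}\, \chi_\lambda(k), \qquad \kappa_\lambda = -d\pi_\lambda(C),
\end{equation}
and similarly on $G/K$ with characters replaced by (suitably normalized) spherical/zonal functions and the sum restricted to spherical representations. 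The eigenvalues $\kappa_\lambda$ are computed from the highest-weight data through the Casimir constants $\alpha_{\klie}$ listed in \S\ref{casimircoefficient}; the key quantitative input is that the smallest nontrivial eigenvalue $\kappa_{\min}$ is bounded below uniformly in $n$ (it has a positive limit), which is what forces the cut-off window to be $\Theta(\log n)$ and pins down $\alpha \in \{1,2\}$.

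For the \textbf{upper bound \eqref{mainupper}}, I would apply Cauchy–Schwarz to pass from the $\leb^1$ to the $\leb^2$ norm: $4\,\dtv(\mu_t,\mathrm{Haar})^2 \le \|p_t-1\|_{\leb^2}^2 = \sum_{\lambda \neq 0} (\dim V_\lambda)^2 \E^{-t\kappa_\lambda}$ (with $(\dim V_\lambda)^2$ replaced by a single $\dim V_\lambda$ in the symmetric-space case, since the zonal functions have $\leb^2$-norm $(\dim V_\lambda)^{-1/2}$). Then I would show this series is dominated, for $t = \alpha(1+\eps)\log n$, by its first few terms — this is where the bulk of the work lies: one needs the dimension polynomials $\dim V_\lambda$ (polynomial in $n$ of controlled degree) together with the linear growth of $\kappa_\lambda$ in the "size" of $\lambda$, so that $(\dim V_\lambda)^2 \E^{-t\kappa_\lambda} \le n^{-\gamma_a \eps/2}\,\E^{-t\kappa_\lambda/2}$ and the remaining geometric-type sum converges. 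Carrying this through case by case, using the explicit root systems and Weyl dimension formulas for $\mathrm{A}$, $\mathrm{B}$, $\mathrm{C}$, $\mathrm{D}$ types and their spherical subfamilies, yields the constants $\gamma_a$, $C$, $n_0$ in the table.

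For the \textbf{lower bound \eqref{mainlower}}, I would construct a discriminating random variable, namely a low-degree character (or a trace-type observable, e.g. $\tr(k)$ or $\tr(k^2) - \text{const}$ on the group, and the analogous zonal observable on $G/K$) whose expectation under $\mu_t$ is $\E^{-\frac{t}{2}\kappa_\lambda}$ times an explicit constant, while under Haar it is $0$ (or a known small value), and whose variance under both measures is controlled. A Chebyshev/second-moment argument — exactly as Diaconis–Shahshahani do for random transpositions and Rosenthal–Porod for random rotations — then separates $\mu_t$ from Haar whenever $t = \alpha(1-\eps)\log n$: the mean is of order $n^{\gamma_b \eps/2}$ standard deviations away, giving $\dtv \ge 1 - c\,n^{-\gamma_b\eps}$. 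Computing the first and second moments of these observables requires knowing how tensor products of the geometric representation decompose (to express $|\chi_\lambda|^2$ or products of zonal functions as short linear combinations of irreducibles), which I would read off from Littlewood–Richardson-type rules in each classical family; the variance bound under $\mu_t$ then follows from the eigenvalue estimates already established. Finally, Lemma \ref{nonincreasingdistance} lets me extend both one-sided estimates from the single times $\alpha(1\mp\eps)\log n$ to the full ranges $t \lessgtr t_{\text{cut-off}}$, and a uniform lower bound on the spectral gap $\lambda^{(n)} = \tfrac12\kappa_{\min}$ combined with these estimates upgrades the result to a genuine cut-off in the sense of \S\ref{sofar} for every $p \in [1,\infty)$.

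The main obstacle I anticipate is \textbf{uniformity in the rank parameter} $q$ for the Grassmannians and in the dimension $n$ across all seven symmetric families simultaneously: the zonal spherical functions are now Jacobi-type polynomials in several variables, their $\leb^2$-norms and the relevant product formulas depend on $q$, and one must ensure the dimension-over-exponential bounds hold with constants that do not degrade as $q \to \infty$ or $q$ stays fixed. Handling the $\leb^1$ lower bound (the case $p=1$, which Chen–Saloff-Coste left open) is delicate precisely because Cauchy–Schwarz goes the wrong way there, so the discriminating-event construction must be done by hand and its second moment estimated sharply rather than through a soft $\leb^2$ bound.
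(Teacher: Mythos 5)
Your overall architecture coincides with the paper's: Cauchy--Schwarz plus the Parseval expansion of $p_t-1$ for the upper bound, and a Chebyshev second-moment argument on a minimal character or zonal spherical function for the lower bound, with Lemma \ref{nonincreasingdistance} extending the estimates away from the cut-off window. For the upper bound your sketch is essentially the paper's route (the missing detail being the precise mechanism --- a uniform bound on $D^{\lambda}\,\E^{-t_{\text{cut-off}}B_{n}(\lambda)/2}$ proved by growing partitions layer by layer, combined with a generating-series bound over partitions; note also that for $\SU(n)$ the naive inequality $B_{n}(\lambda)\geq \alpha|\lambda|$ fails and a slightly different summation is needed), and these are implementation issues rather than gaps.

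There is, however, a genuine gap in your lower bound for the seven families of type non-group. You propose to obtain the second moment of the zonal observable by reading off the decomposition of $V^{\lambda_{\min}}\otimes V^{\lambda_{\min}}$ from Littlewood--Richardson-type rules. That decomposition only tells you \emph{which} spherical representations $\nu$ can contribute to $(\phi^{\lambda_{\min}})^{2}=\sum_{\nu}c_{\nu}\phi^{\nu}$; it does not give the coefficients $c_{\nu}$, because zonal spherical functions are not characters: there is no multiplicity formula or orthogonality relation that produces $c_{\nu}$ without already knowing the functions $\phi^{\nu}$ themselves, and for the Grassmannians and spaces of structures these are not available in closed form (only the coefficient of the constant function, $1/D^{\lambda_{\min}}$, comes for free). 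Since the $c_{\nu}$ are of size comparable to $n^{0}$ or $n^{2}/pq$ and multiply exponentials with \emph{different} Casimir exponents, you cannot bound $\esper_{t}[\Omega^{2}]$ at $t=(1-\eps)\log n$ without them, so the variance estimate --- and hence the whole Chebyshev step --- does not go through as described. The paper circumvents this by an entirely different device: it identifies $V^{\lambda_{\min}}$ geometrically, writes $\phi^{\lambda_{\min}}(gK)$ explicitly as a quadratic polynomial in the matrix entries of $g$, and computes the joint moments of these entries under $\mu_{t}$ from the SDE via $\esper[g_{t}^{\otimes k}]=\exp\bigl(t\,\tfrac{k\alpha_{\glie}}{2}I^{\otimes k}+t\sum_{i<j}\eta_{i,j}(C_{\glie})\bigr)$, diagonalizing the Casimir-type matrices case by case; the coefficients $c_{\nu}$ are then recovered a posteriori by matching exponentials. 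Some such substitute (or an independent derivation of the $c_{\nu}$, e.g. rank-one product formulas, which only cover $q=1$) is needed to make your plan complete; additionally, for the Grassmannians the variance is \emph{not} uniformly bounded (it grows like $n^{\eps}$), which costs a factor in the exponent $\gamma_{b}$ and must be tracked rather than asserted.
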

\figcap{
\includegraphics{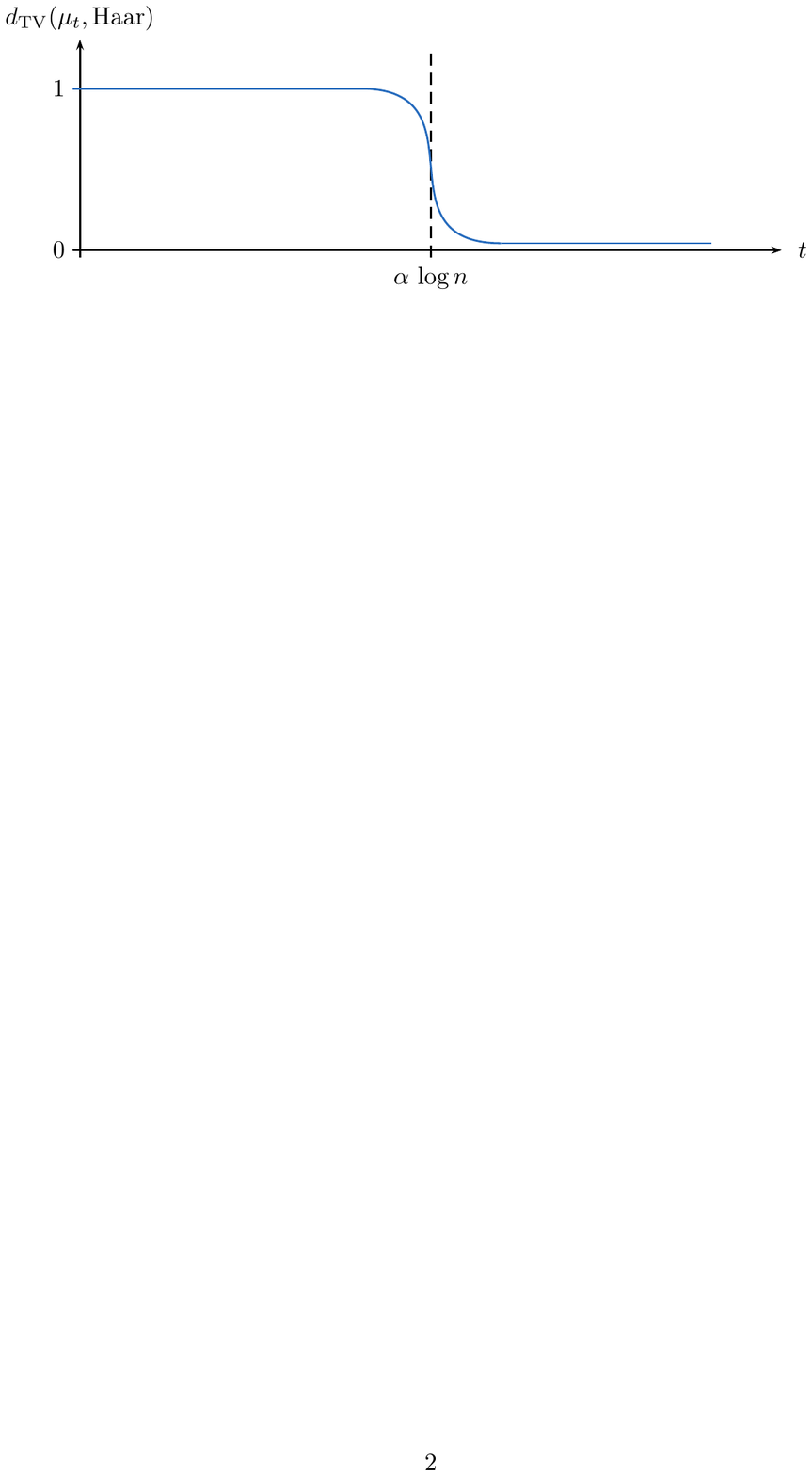}\vspace{-2mm}
}{Aspect of the function $t \mapsto \dtv(\mu_{t},\mathrm{Haar})$ for the Brownian motion on a classical simple compact Lie group or on a classical simple compact symmetric space.\label{mainfig}}
As the function $t \mapsto \dtv(\mu_{t},\mathrm{Haar})$ is non-increasing in $t$, the aspect of this function in the scale $t \propto \log n$ is then always as on Figure \ref{mainfig}. The constants $c$ and $C$ in Theorem \ref{main} can be slightly improved by raising the integer $n_{0}$; the restriction $n\geq n_{0}$ will only be used to ease certain computations and to get reasonable constants $c$ and $C$. A result similar to Theorem \ref{main} has been proved by Rosenthal and Porod in \cite{Ros94,Por96a,Por96b} for random products of (real, or complex, or quaternionic) reflections. Our proofs are really inspired by their proofs, though quite different in the details of the computations. \bigskip

For the upper bound \eqref{mainupper}, it has long been known that if $\lambda(X_{n})$ denotes the spectral gap of the heat semigroup associated to the infinitesimal generator $L=\frac{1}{2}\Delta$, then for $n$ fixed, the total variation distance $\dtv(\mu_{t},\eta_{X_{n}})$ decreases exponentially fast (see \emph{e.g.} \cite{Liao04paper}):
$$
\dtv(\mu_{t},\eta_{X_{n}}) \leq C(X_{n})\,\E^{-\lambda(X_{n})\,t}.
$$
Consider now the family of spaces $(X_{n})_{ n \in \N}$, and assume that $C(X_{n})=C\,n^{\delta}$, and that $\lambda(X_{n})$ stays almost constant to $\lambda$ --- this last condition is ensured by the normalization \eqref{normalization}. Then, one obtains for $t=(1+\eps)\,\frac{\delta}{\lambda}\,\log n$ the bound
$$
\dtv(\mu_{t},\eta_{X_{n}}) \leq \frac{C}{n^{\delta \eps}}.
$$
Thus in theory, the upper bound \eqref{mainupper} follows from the calculations of the constants $C(X_{n})$ and $\lambda(X_{n})$ in each classical family. It is very hard to find directly a constant $C(X_{n})$ that works for every time $t$. But on the other side, by using the representation theory of the classical simple compact Lie groups (\emph{cf.} Section \ref{fourier}), one can determine series of negative exponentials that dominates the total variation distance; see Proposition \ref{scaryseries}. In these series, the ``least negative'' exponentials give the correct order of decay $\lambda(X_{n})$. It remains then to prove that the other terms can be uniformly bounded. This is tedious, but doable, and these precise estimates are shown in Section \ref{upper}: we shall adapt and improve the arguments of \cite{Ros94,Por96a,Por96b,CSST}.\bigskip

As for the lower bound \eqref{mainlower}, it is obtained by looking at \emph{discriminating events}, that have a probability close to $1$ with respect to a marginal law $\mu_{t}$ with $t <t_{\text{cut-off}}$, and close to $0$ with respect to the Haar measure. For instance, in the case of riffle shuffles, the sizes of the \emph{rising sequences} of a permutation enable one to discriminate a random shuffle of order $k < k_{\text{cut-off}}$ from a uniform permutation; see \cite[\S2]{BD92}. In the case of a Brownian motion on a classical compact Lie group, this is the \emph{trace} of the matrices that allows to discriminate Haar distributed elements and random Brownian elements before cut-off time. 
\figcap{\vspace{-50mm}\includegraphics[scale=0.5]{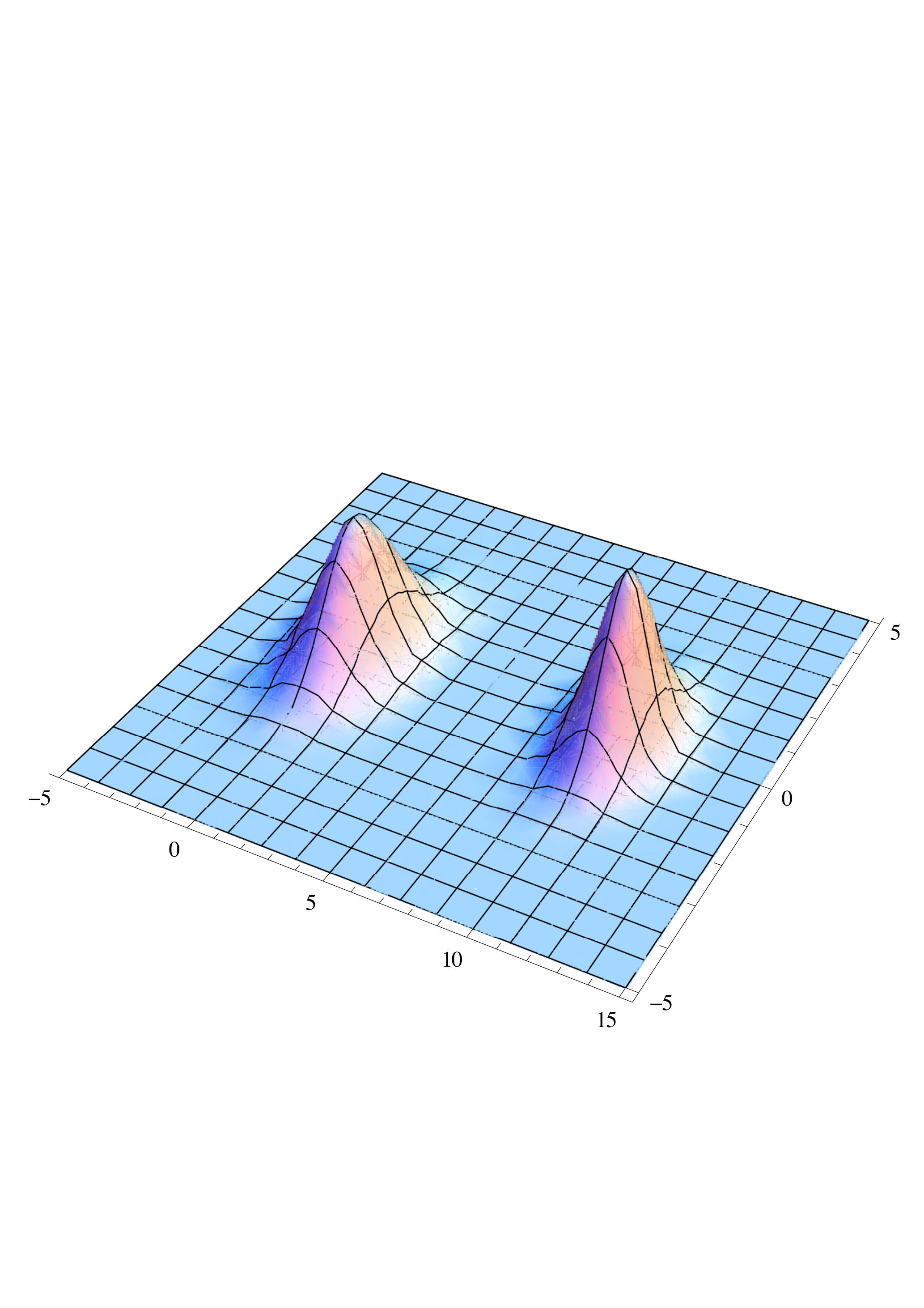}\vspace{-30mm}}{Aspect of the density of the trace $\tr\,U_{n}$ of a random unitary matrix, with $U_{n} \sim \mathrm{Haar}$ for the left peak, and $U_{n} \sim \mu_{t<t_{\text{cut-off}}}$ for the right peak (using \texttt{Mathematica}).}
Indeed, consider for instance a random unitary matrix $U_{n}$ of size $n$, taken under the Haar measure or under the marginal law $\mu_{t}$ of the Brownian motion at a given time $t$. Then, $\tr\, U_{n}$ is a complex valued random variable, and we shall see that
$$
\esper\left[|\tr\, U_{n}-m|^{2}\right]\leq 1,
$$
where $m$ is the mean of $\tr\, U_{n}$; and this, for any $n\geq 1$ and any time $t \geq 0$ if $U_{n} \sim \mu_{t}$. However, $m=0$ under the Haar measure, whereas $|m|\gg 1$ for $t < t_{\text{cut-off}}$. So, the trace of a Brownian unitary matrix before cut-off time will never ``look the same'' as the trace of an Haar distributed unitary matrix.\bigskip

Up to a minor modification, the same argument will work for special orthogonal groups and compact special orthogonal groups --- in this later case, the trace of a quaternionic matrix of size $n$ is defined as the trace of the corresponding complex matrix of size $2n$, \emph{cf.} the remark at the end of \S\ref{symmetric}.  Over the classical simple compact symmetric spaces, the trace of matrices will be replaced by a zonal spherical function ``of minimal non-zero weight''; these minimal zonal spherical functions are also those that give the order of decay of the series of negative exponentials that dominate $\dtv(\mu_{t},\mathrm{Haar})$ after the cut-off time. This argument for the lower bound was already known, since it has been used successfully in \cite{SC94} to prove the cut-off phenomenon over spheres: we have simply extended it to the case of general simple compact symmetric spaces (\emph{cf.} Section \ref{lower}).
\bigskip

An important consequence of Theorem \ref{main} and its proof is that one also has a max-$\leb^{p}$-cut-off for every $p \in [1,\infty]$. Moreover, the value of the cut-off time is known when $p \in [1,2]$. 
\begin{corollary}\label{lpcutoff}
For every $p \in [1,+\infty]$, the family of Brownian motions $(\mathfrak{X}_{n})_{n \in \N}$ traced on simple compact Lie groups $(K_{n})_{n \in \N}$ in one of the three classical families (respectively, on simple compact symmetric spaces of type non-group $(X_{n})_{n \in \N}$ in one of the seven classical families) has a max-$\leb^{p}$-cut-off. If $p \in [1,2]$, it is with respect to the sequence $t^{(n)}=2\log n$ (respectively, $t^{(n)}=\log n$).
\end{corollary}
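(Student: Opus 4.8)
The plan is to derive the max-$\leb^p$-cut-off from Theorem \ref{main} by an interpolation argument, using the monotonicity of $\leb^p$-norms in $p$ together with the $\leb^1$- and $\leb^2$-estimates already at our disposal. First I would observe that the quantities $d_t^p(\mathfrak{X}_n)$ are non-increasing in $t$ (by the remark after Lemma \ref{nonincreasingdistance}, which applies verbatim to every $\leb^p$-norm of $p_t-1$), so it suffices to control the two endpoints $t=\alpha(1-\eps)\log n$ and $t=\alpha(1+\eps)\log n$, with $\alpha=2$ in the group case and $\alpha=1$ in the non-group case. Because the Brownian motion is conjugacy-invariant and starts at the base point, $d_t^p(\mathfrak{X}_n)=\|p_t-1\|_{\leb^p}$ with no supremum over the starting point needed (homogeneity), which is why the "max" in the cut-off is automatic here.

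For the \emph{upper side} I would split at $p=2$. For $p\in[1,2]$, Jensen's inequality on the probability space $(X_n,\eta)$ gives $\|p_t-1\|_{\leb^p}\le\|p_t-1\|_{\leb^2}$, and the $\leb^2$-norm is controlled explicitly by the Plancherel/Parseval series in terms of characters and Casimir eigenvalues (this is exactly the series of negative exponentials bounding $\dtv$ from above in Proposition \ref{scaryseries}, now without the square root); the same computation that proves \eqref{mainupper} shows $\|p_t-1\|_{\leb^2}^2$ is $O(n^{-\gamma_a\eps})$ for $t=\alpha(1+\eps)\log n$, so $d_t^p\to0$ for all $p\le2$, with the stated cut-off time. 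For $p\in(2,\infty]$ I would use the semigroup property $p_{2s}=p_s*p_s$ together with $\leb^2\hookrightarrow\leb^\infty$ smoothing: $\|p_t-1\|_{\leb^\infty}\le\|p_{t/2}-1\|_{\leb^2}^2$ on these compact homogeneous spaces (Young's inequality for the convolution $*$, using $\|p_{t/2}-1\|_{\leb^2}=\|p_{t/2}\|_{\leb^2}^2-1$ bookkeeping via Parseval), so an $\leb^2$-bound at time $t/2$ forces an $\leb^\infty$-bound at time $t$; since $t/2=\alpha(1+\eps)\log n$ still sits past the cut-off for the $\leb^2$-estimate up to doubling the slack, $d_t^\infty\to0$ for $t>2\alpha(1+\eps)\log n$, and by monotonicity for all larger times. (One loses a factor $2$ in the window here, which is why the precise cut-off time is only asserted for $p\in[1,2]$.)

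For the \emph{lower side}, the inequality $\|p_t-1\|_{\leb^p}\ge\|p_t-1\|_{\leb^1}=2\,\dtv(\mu_t,\mathrm{Haar})$ for every $p\ge1$ reduces everything to the $\leb^1$ lower bound \eqref{mainlower} of Theorem \ref{main}: at $t=\alpha(1-\eps)\log n$ one has $d_t^p\ge2(1-c\,n^{-\gamma_b\eps})\to2$, so $\inf_{t<\alpha(1-\eps)\log n}d_t^p$ has $\liminf\ge2$, while trivially $d_t^p\le d_0^p$ and $\limsup_n d_0^p(\mathfrak{X}_n)=2$ (for $p=1$) or $+\infty$ (for $p>1$, since $\mu_0=\delta_e$ is not absolutely continuous); in all cases the $M>0$ requirement of the definition of max-$\leb^p$-cut-off is met. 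Assembling the two sides gives the cut-off for each $p\in[1,\infty]$, with cut-off sequence $t^{(n)}=2\log n$ (groups) or $t^{(n)}=\log n$ (symmetric spaces) when $p\in[1,2]$.

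The main obstacle I anticipate is the $\leb^\infty$ (and general $p>2$) upper bound: one must be careful that the convolution smoothing step is legitimate on symmetric spaces $G/K$, where $p_t^X$ is only bi-$K$-invariant on $G$ rather than a genuine group density, so the identity $p_{2s}^X(x)=\int_G p_s^X(h^{-1}x)p_s^G(h)\,dh$ from the proof of Lemma \ref{nonincreasingdistance} must be used in place of a naive group convolution, and the resulting bound $\|p_t^X-1\|_{\leb^\infty}\le\|p_{t/2}^X-1\|_{\leb^2}\,\|p_{t/2}^G-1\|_{\leb^2}+\text{(cross terms)}$ needs the $\leb^2$-estimate on the \emph{group} $G$ as well — but that is already available from the group case of Theorem \ref{main}, so no genuinely new estimate is required, only careful bookkeeping.
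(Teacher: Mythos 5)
Your proposal breaks down in two places, both tied to the definition of max-$\leb^{p}$-cut-off used in \S\ref{sofar}. That definition requires, for the cut-off sequence $t^{(n)}$, that $\lim_{n}\bigl(\inf_{t<(1-\eps)t^{(n)}}d_{t}^{p}\bigr)=\limsup_{n}d_{0}^{p}=M$, and since the processes start at a point mass, $M=+\infty$ for every $p>1$ (only for $p=1$ is $M=2$). Your lower-side argument, $\|p_{t}-1\|_{\leb^{p}}\geq\|p_{t}-1\|_{\leb^{1}}=2\,\dtv\to 2$, therefore only settles the case $p=1$; for $p\in(1,2]$ it does not produce the required divergence before cut-off time. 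For $p=2$ this is easily repaired (and this is exactly what the paper does): $d_{t}^{2}$ is the square root of the explicit series of Proposition \ref{scaryseries}, whose term indexed by the minimal weight of Lemma \ref{decay} already blows up for $t=\alpha(1-\eps)\log n$. But for $1<p<2$ neither your inequality nor monotonicity in $p$ gives divergence, and some additional input is needed (the paper gets both the existence and the value of the cut-off time for $p\in[1,2]$ from the comparison theorem of mixing times, \cite[Proposition 5.1]{CSC08}; an alternative would be a H\"older duality argument against the discriminating function $\Omega$, but that requires moment bounds you have not established, in particular for the spherical functions).

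The second and more serious gap is $p\in(2,\infty]$. Your convolution smoothing $\|p_{t}-1\|_{\leb^{\infty}}\leq\|p_{t/2}-1\|_{\leb^{2}}^{2}$ is fine (and can be made to work on $G/K$ as you indicate), but it yields decay only for $t>2\alpha(1+\eps)\log n$, while your largeness statement holds only for $t<\alpha(1-\eps)\log n$. These two bounds live at different time scales, so there is no single sequence $t^{(n)}$ for which both halves of the cut-off definition are verified: what you obtain is a window of width comparable to $t^{(n)}$, i.e.\ precisely the pre-existing state of knowledge recorded in Corollary \ref{window}, not a cut-off. A window does not imply a sharp transition. The paper closes this by quoting \cite[Theorem 5.3]{CSC08}: once the max-$\leb^{2}$-cut-off is established, Peres' criterion (Theorem \ref{csc}; bounded spectral gap together with mixing times tending to infinity) gives the \emph{existence} of a max-$\leb^{p}$-cut-off for every $p\in(1,+\infty]$ without identifying the cut-off time. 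Your proposal never invokes the spectral-gap criterion or any substitute for it, so the conclusion for $p>2$ (and, as explained above, the lower side for $p\in(1,2)$) does not follow from the argument as written.
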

\begin{proof}
The upper bound in Theorem \ref{main} will be shown by using Cauchy-Schwarz inequality and estimating the $\leb^{2}$-norm of $|p_{t}-1|$, which can be written as a series $S_{n}(t)$ of negative exponentials. Section \ref{upper} is devoted to the proof of the fact that $S_{n}(t)$ is small after cut-off time, and on the other hand, the same series trivially goes to infinity before cut-off time, because some of its terms go to infinity (consider for instance the term indexed by the ``minimal'' label identified in Lemma \ref{decay}). Thus, our proof of Theorem \ref{main} implies readily a $\leb^{2}$-cut-off; and since the Brownian motion is invariant by action of the isometry group, it is even a max-$\leb^{2}$-cut-off. We can then use \cite[Theorem 5.3]{CSC08} to obtain the existence of a max-$\leb^{p}$-cut-off for every $p \in (1,+\infty]$, and the comparison theorem of mixing times \cite[Proposition 5.1]{CSC08} to get the value of the cut-off time when $p$ is between $1$ and $2$.
\end{proof}
\begin{remark}
 When $p=+\infty$, \cite[Theorem 5.3]{CSC08} also gives the value of the cut-off time: it is $4 \log n$ in the group case, and $2 \log n$ in the non-group case. However, when $p \in (2,+\infty)$, one still does not know the value of the mixing time: one has only the window $\alpha\log n \leq t^{(n)} \leq 2\alpha \log n$.
\end{remark}\bigskip

\subsection{Organization of the paper} 
In Section \ref{fourier}, we recall the basics of representation theory and harmonic analysis on compact symmetric spaces, with a particular emphasis on explicit formulas since we will need them in each case. All of it is really classical and of course well-known by the experts, but it is necessary in order to fix the notations related to the harmonic analysis of the classical compact Lie groups and compact symmetric spaces. In Section \ref{upper}, we use the explicit expansion of the densities to establish precise upper bounds on $\|p_{t}-1\|_{\leb^{2}(X,\eta)}$; by Cauchy-Schwarz we obtain similar upper bounds on $\dtv(\mu_{t},\eta)$. The main idea is to control the growth of the dimension of an irreducible spherical representation involved in the expansion of $p_{t}$ when the corresponding highest weight grows in the lattice of weights (\S\ref{versus}). The crucial fact, which was apparently unknown, is that precisely at cut-off time, the quantity
$$\begin{cases}  (D^{\lambda})^{2}\,\E^{-t_{\text{cut-off}} B_{n}(\lambda)} & \text{in the group case},\\
D^{\lambda}\,\E^{-t_{\text{cut-off}} B_{n}(\lambda)} & \text{in the non-group case},
\end{cases}
$$
stays \emph{bounded} for every $n$ and every $\lambda$; $D^{\lambda}$ being the dimension of the irreducible or spherical irreducible representation of label $\lambda$, and $-B_{n}(\lambda)$ the associated eigenvalue of the Laplace-Beltrami operator. Combining this argument with a simple analysis of the generating series 
$$\sum_{\lambda \text{ partition}} x^{|\lambda|}=\prod_{i \geq 1}\frac{1}{1-x^{i}},$$
this is sufficient to get a correct upper bound after cut-off time. \bigskip

Section \ref{lower} is then devoted to the proof of the lower bounds. We use in each case a ``minimal'' zonal spherical function (the trace of matrices in the case of groups; see \S\ref{zonal}), and we compute its expectation and variance under Haar measure and Brownian measures (\S\ref{bienayme}). A simple application of Bienaym\'e-Chebyshev's inequality will then show that the chosen zonal spherical function is indeed discriminating. An algebraic difficulty occurs in the case of symmetric spaces $G/K$ of type non-group, as one has to compute the expansion in zonal functions of the square of the discriminating zonal function, and this is far less obvious than in the case of irreducible characters. The problem is solved by writing the discriminating zonal function in terms of the coefficients of the matrices in the isometry group $G$, and by computing the joint moments of these coefficients under a Brownian measure. The combinations of negative exponentials appearing in these formulas are then in correspondence with the expansions of the squares of the discriminating zonal spherical functions. 
\medskip

\subsection*{Acknowledgements}
Many thanks are due to Yacine Barhoumi, Philippe Biane, Florent Benaych-Georges, Paul Bourgade, Reda Chhaibi, Djalil Chafa\"i, Kenneth Maples, Ashkan Nikeghbali and Simon P\'epin-Lehalleur for discussions around the cut-off phenomenon and the theory of Lie groups.
\bigskip
\bigskip

\section{Fourier expansion of the densities}\label{fourier}

In this section, we explain how to compute the density $p_{t}^{K}(k)$ or $p_{t}^{X}(x)$ of the marginal law $\mu_{t}$ of the Brownian motion traced on a classical compact symmetric space. This computation is done in an abstract setting for instance in \cite{Liao04paper} or \cite{Apple11}, and we shall give at the end of this section its concrete counterpart in each classical case, see Theorem \ref{explicitdensity}. The main ingredients of the computation are:\vspace{2mm}
\begin{enumerate}
\item Peter-Weyl's theorem and its refinement due to Cartan, that ensures that the matrix coefficients of the irreducible representations of $K$ (respectively, of the irreducible spherical representations of $G$) form an orthogonal basis of $\leb^{2}(K,\eta)$ (respectively, of $\leb^{2}(G/K,\eta)$); see \S\ref{peterweyl}.\vspace{2mm}
\item the classical highest weight theory, that describes the irreducible representations of a compact simple Lie group and give formulas for their dimensions and characters; see \S\ref{weyltheory}.\vspace{2mm}
\end{enumerate}
On these subjects, we refer to the two books by Helgason \cite{Hel78,Hel84}, and also to \cite{BD85,Var89,FH91,Far08,GW09} for the representation theory of compact Lie groups. We shall only recall what is needed in order to have a good understanding of the formulas of Theorem \ref{explicitdensity}. We shall also fix all the notations related to the harmonic analysis on (classical) compact symmetric spaces.\bigskip

\subsection{Peter-Weyl's theorem and Cartan's refinement}\label{peterweyl}
Let $K$ be a compact (Lie) group, and $\widehat{K}$ be the set of isomorphism classes of irreducible complex linear representations of $K$. Each class $\lambda \in \widehat{K}$ is finite-dimensional, and we shall denote $V^{\lambda}$ the corresponding complex vector space; $\rho^{\lambda} : K \to \unit(V^{\lambda})$ the representation morphism; $D^{\lambda}=\dim_{\C}V^{\lambda}$ the dimension of the representation; $\chi^{\lambda}(\cdot)=\tr\,\rho^{\lambda}(\cdot)$ the character; and $\hatchi^{\lambda}(\cdot)=\chi^{\lambda}(\cdot)/D^{\lambda}$ the normalized character. An Hermitian scalar product on $\hendo(V^{\lambda})$ is $\scal{M}{N}_{\hendo(V^{\lambda})}=D^{\lambda}\,\tr(M^{\dagger}N)$. For every class $\lambda$ and every function $f \in \leb^{2}(K)$, we set 
$$\widehat{f}(\lambda)=\int_{K}f(k)\,\rho^{\lambda}(k)\,dk;$$
this is an element of $\hendo(V^{\lambda})$. We refer to \cite{BD85,Far08} for a proof of the following results.
\begin{theorem}[Peter-Weyl]
The (non-commutative) Fourier transform $\mathcal{F} : f \mapsto \sum_{\lambda \in \widehat{K}}\widehat{f}(\lambda)$ realizes an isometry and an isomorphism of (non-unital) algebras between $\leb^{2}(K,\eta)$ and $\bigoplus_{\lambda \in \widehat{K}}\hendo(V^{\lambda})$. So, if $f \in \leb^{2}(K)$, then
\begin{align}
f(k)&=\sum_{\lambda \in \widehat{K}} D^{\lambda}\,\tr\left(\widetilde{f}(\lambda)\,\rho^{\lambda}(k)\right)= \sum_{\lambda \in \widehat{K}} D^{\lambda}\,\tr\left(\int_{K}f(h^{-1}k)\,\rho^{\lambda}(h)\,dh\right)\label{fourierexpansion}\\
 \|f\|^{2}_{\leb^{2}(K)}&=\sum_{\lambda\in \widehat{K}} \left\|\widehat{f}(\lambda)\right\|_{\hendo(V^\lambda)}^{2}=\sum_{\lambda\in \widehat{K}} D^{\lambda}\,\tr\left(\widehat{f}(\lambda)^{\dagger}\widehat{f}(\lambda)\right)\label{parseval}
\end{align}
where $\widetilde{f}(\lambda)=\widehat{f^{-}}(\lambda)=\int_{K}f(k^{-1})\,\rho^{\lambda}(k)\,dk$.
\end{theorem}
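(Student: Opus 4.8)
The plan is to follow the classical route: (i) establish Schur's orthogonality relations for matrix coefficients, which is a purely algebraic averaging argument; (ii) prove that matrix coefficients span a dense subspace of $\leb^{2}(K)$, which is the only genuinely analytic input and goes through compact self-adjoint operators; and (iii) assemble these two facts into the isometry, the inversion formula \eqref{fourierexpansion}, the Parseval identity \eqref{parseval} and the convolution (algebra) identity.

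\emph{Orthogonality relations.} Since $K$ is compact, averaging an arbitrary Hermitian inner product on $V^{\lambda}$ over $dk$ yields a $K$-invariant one, so each $\rho^{\lambda}$ may be taken unitary; fix such choices and orthonormal bases $(e^{\lambda}_{i})_{1\le i\le D^{\lambda}}$ of the $V^{\lambda}$. For $\lambda,\mu\in\widehat{K}$, Schur's lemma applied to the averaged intertwiner $M\mapsto\int_{K}\rho^{\mu}(k)^{-1}\,M\,\rho^{\lambda}(k)\,dk$ gives the relation
$$
\int_{K}\overline{\langle\rho^{\mu}(k)\,e^{\mu}_{k},e^{\mu}_{l}\rangle}\;\langle\rho^{\lambda}(k)\,e^{\lambda}_{i},e^{\lambda}_{j}\rangle\,dk=\frac{1}{D^{\lambda}}\,\delta_{\lambda\mu}\,\delta_{ik}\,\delta_{jl}.
$$
Hence the $(D^{\lambda})^{2}$ matrix coefficients $c^{\lambda}_{ij}:k\mapsto\langle\rho^{\lambda}(k)e^{\lambda}_{i},e^{\lambda}_{j}\rangle$ attached to distinct classes $\lambda$ are mutually orthogonal in $\leb^{2}(K)$, each of squared norm $1/D^{\lambda}$, and the normalization built into $\scal{M}{N}_{\hendo(V^{\lambda})}=D^{\lambda}\,\tr(M^{\dagger}N)$ is chosen exactly so that $f\mapsto\widehat{f}(\lambda)$ is norm-preserving on each block $\hendo(V^{\lambda})$. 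This embeds $\bigoplus_{\lambda\in\widehat{K}}\hendo(V^{\lambda})$, completed for its Hilbert-sum norm, isometrically onto a closed subspace of $\leb^{2}(K)$, and gives the Bessel inequality $\|f\|^{2}_{\leb^{2}(K)}\ge\sum_{\lambda}\|\widehat{f}(\lambda)\|^{2}_{\hendo(V^{\lambda})}$, i.e. the ``$\ge$'' half of \eqref{parseval}.

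\emph{Completeness.} The point to prove is that the closed span $\mathcal{M}$ of all the $c^{\lambda}_{ij}$ is the whole of $\leb^{2}(K)$; equivalently, that $\widehat{f}(\lambda)=0$ for all $\lambda$ forces $f=0$. Assume $f\perp\mathcal{M}$ with $f\neq0$ and put $g=f^{\ast}\ast f$, where $f^{\ast}(k)=\overline{f(k^{-1})}$. Then $g$ is continuous (a convolution of two $\leb^{2}$ functions on a compact group), satisfies $\overline{g(k^{-1})}=g(k)$ and $g(e)=\|f\|^{2}_{\leb^{2}(K)}>0$, and, because $\mathcal{M}$ is stable under convolution and under $\phi\mapsto\phi^{\ast}$, the relation $\langle g,m\rangle=\langle f,f\ast m\rangle$ shows that $g$ is again orthogonal to $\mathcal{M}$. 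The right-convolution operator $R_{g}:\phi\mapsto\phi\ast g$ on $\leb^{2}(K)$ has kernel $(k,h)\mapsto g(h^{-1}k)\in\leb^{2}(K\times K)$, so it is Hilbert--Schmidt, hence compact, and the symmetry of $g$ makes it self-adjoint. By the spectral theorem for compact self-adjoint operators, $\leb^{2}(K)=\ker R_{g}\oplus\bigoplus_{\nu\neq0}E_{\nu}$ with each eigenspace $E_{\nu}$ finite-dimensional; since left translations $L_{h}$ commute with $R_{g}$, each $E_{\nu}$ is a finite-dimensional representation of $K$, hence a sum of irreducibles, hence contained in $\mathcal{M}$. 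As $g\perp\mathcal{M}$, we get $g\in\ker R_{g}$, that is $g\ast g=0$; but $(g\ast g)(e)=\int_{K}g(h)\,g(h^{-1})\,dh=\int_{K}|g(h)|^{2}\,dh>0$, a contradiction. Hence $\mathcal{M}=\leb^{2}(K)$. (For a compact Lie group one could instead invoke Stone--Weierstrass, using that $K$ admits a faithful finite-dimensional representation, so that matrix coefficients form a point-separating, conjugation-stable unital subalgebra of $\mathscr{C}(K)$.) This completeness step is the main obstacle; everything else is bookkeeping with the constants $D^{\lambda}$.

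\emph{Assembly.} Density together with the orthogonality relations shows that the rescaled functions $\sqrt{D^{\lambda}}\,c^{\lambda}_{ij}$, indexed by the triples $(\lambda,i,j)$ with $\lambda\in\widehat{K}$ and $1\le i,j\le D^{\lambda}$, form a complete orthonormal system of $\leb^{2}(K)$ adapted to the decomposition $\bigoplus_{\lambda}\hendo(V^{\lambda})$; this upgrades the Bessel inequality to the Parseval identity \eqref{parseval} and makes $\mathcal{F}$ an isometric isomorphism of Hilbert spaces. Expanding $f\in\leb^{2}(K)$ in this basis and computing each coordinate by the orthogonality relation identifies it with a matrix entry of $\widetilde{f}(\lambda)=\int_{K}f(k^{-1})\rho^{\lambda}(k)\,dk$; resumming over $i,j$ turns the expansion into $f(k)=\sum_{\lambda}D^{\lambda}\,\tr(\widetilde{f}(\lambda)\,\rho^{\lambda}(k))$, which is \eqref{fourierexpansion}. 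Finally, a change of variables in $\int_{K}(f_{1}\ast f_{2})(k)\,\rho^{\lambda}(k)\,dk$ yields $\widehat{f_{1}\ast f_{2}}(\lambda)=\widehat{f_{1}}(\lambda)\,\widehat{f_{2}}(\lambda)$ in the appropriate order, so $\mathcal{F}$ carries the convolution product on $K$ to the blockwise composition product on $\bigoplus_{\lambda}\hendo(V^{\lambda})$; combined with the isometry, this makes $\mathcal{F}$ an isomorphism of (non-unital) algebras, which completes the proof.
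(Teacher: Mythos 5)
The paper does not actually prove this theorem: it is quoted as classical, with the proof delegated to the references \cite{BD85,Far08}, so there is no internal argument to compare yours with. What you wrote is precisely the standard proof found in those references, and it is correct in structure: Schur orthogonality via the averaged intertwiner, completeness via the Hilbert--Schmidt self-adjoint right-convolution operator $R_{g}$ with $g=f^{\ast}\ast f$ and the spectral theorem, then Parseval, inversion and multiplicativity of $\mathcal{F}$ by direct computation on matrix coefficients. Two places deserve tightening. First, the step ``each $E_{\nu}$ is a finite-dimensional representation of $K$, hence a sum of irreducibles, hence contained in $\mathcal{M}$'' uses a fact you do not state: for $\nu\neq 0$ every $\phi\in E_{\nu}$ equals $\nu^{-1}\phi\ast g$ and is therefore continuous, and a finite-dimensional translation-invariant space of \emph{continuous} functions consists of matrix coefficients of the translation representation (write $L_{h}\phi_{i}=\sum_{j}u_{ji}(h)\,\phi_{j}$ for an orthonormal basis and evaluate at $h^{-1}$ applied to the identity, or equivalently evaluate at $e$); without continuity the pointwise evaluation argument is not available. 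Second, the claim that the normalization makes $f\mapsto\widehat{f}(\lambda)$ ``norm-preserving on each block'' is slightly misstated: for $f=\rho^{\lambda}_{ij}$ one finds $\widehat{f}(\mu)=0$ unless $\mu$ is the contragredient class $\overline{\lambda}$, where $\widehat{f}(\overline{\lambda})=\frac{1}{D^{\lambda}}E_{ji}$ up to conventions, so the isometry matches the isotypic component $\mathscr{R}^{\lambda}(K)\subset\leb^{2}(K)$ with the block $\hendo(V^{\overline{\lambda}})$; since $\widehat{K}$ is stable under $\lambda\mapsto\overline{\lambda}$ this permutation of blocks affects neither \eqref{parseval} nor the isomorphism statement, but it should be said. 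Finally, your Stone--Weierstrass aside presupposes that $K$ admits a faithful finite-dimensional representation --- immediate for the classical matrix groups of this paper, but in general a fact usually deduced from Peter--Weyl itself, so it cannot replace the completeness argument in full generality.
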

\bigskip

Assume now that $f$ is in $\leb^{2}(K,\eta)^{K}$, the subalgebra of conjugacy-invariant functions. The Fourier expansion \eqref{fourierexpansion} and the Parseval identity \eqref{parseval} become then
$$f(k)=\sum_{\lambda \in \widehat{K}}(D^{\lambda})^{2}\,\hatchi^{\lambda}(f^{-})\,\hatchi^{\lambda}(k)\qquad;\qquad \|f\|_{\leb^{2}(K)}^{2}=\sum_{\lambda \in \widehat{K}} |\chi^{\lambda}(f)|^{2},$$
and in particular, the irreducible characters $\chi^{\lambda}$ form an orthonormal basis of $\leb^{2}(K)^{K}$. Cartan gave a statement generalizing Theorem \ref{peterweyl} for $\leb^{2}(G/K,\eta)$, where $X=G/K$ is a simply connected irreducible compact symmetric space. Call \emph{spherical} an irreducible representation $(V^{\lambda},\rho^{\lambda})$ of $G$ such that $(V^{\lambda})^{K}$, the space of vectors invariant by $\rho^{\lambda}(K)$, is non-zero. Then, it is in fact one-dimensional, so one can find a vector $e^{\lambda}$ of norm $\|e^{\lambda}\|^{2}=1$, unique up to multiplication by $z \in \Tor$, such that $(V^{\lambda})^{K}=\C e^{\lambda}$. Denote then $\mathscr{C}^{\lambda}(G/K)$ the set of functions from $G$ to $\C$ that can be written as
\begin{equation}
f(g)=f_{v}(g)=\scal{v}{\rho^{\lambda}(g)(e^{\lambda})}\quad\text{with }v\in V^{\lambda}.\label{generalizedmatrixcoeff}
\end{equation}
Such a function is right-$K$-invariant, so it can be considered as a function from $G/K$ to $\C$. 
\begin{theorem}[Cartan]\label{helgason}
Let $\widehat{G}^{K}$ be the set of spherical irreducible representations of $G$. The Hilbert space $\leb^{2}(G/K,\eta)$ is isomorphic to the orthogonal sum $\bigoplus_{\lambda \in \widehat{G}^{K}}\mathscr{C}^{\lambda}(G/K)$. This decomposition corresponds to the Fourier expansion 
\begin{equation}
f(gK)=\sum_{\lambda \in \widehat{G}^{K}}D^{\lambda}\,\tr\left(\int_{G}f(h^{-1}gK)\,\rho^{\lambda}(h)\,dh\right)\label{superfourier}
\end{equation}
for $f \in \leb^{2}(G/K)$.
\end{theorem}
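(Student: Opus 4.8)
The plan is to establish Cartan's theorem as a refinement of Peter-Weyl applied to the group $G$, combined with the standard Frobenius-reciprocity-type analysis of $K$-fixed vectors. First I would recall that $\leb^{2}(G/K,\eta)$ embeds isometrically into $\leb^{2}(G,\eta_{G})$ as the subspace of right-$K$-invariant functions: since $\eta_{X}$ is the pushforward of $\eta_{G}$ under $\pi$, the map $f \mapsto f\circ\pi$ is an isometry onto $\leb^{2}(G)^{K}$ (invariance under right translation by $K$). So it suffices to decompose this subspace. By Peter-Weyl applied to $G$, we have $\leb^{2}(G,\eta_{G}) \simeq \bigoplus_{\lambda \in \widehat{G}} \hendo(V^{\lambda})$, where the summand $\hendo(V^{\lambda})$ is realized concretely inside $\leb^{2}(G)$ as the span of the matrix coefficients $g \mapsto \scal{v}{\rho^{\lambda}(g)w}$ for $v,w \in V^{\lambda}$. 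Right translation by $k \in K$ sends the coefficient $g \mapsto \scal{v}{\rho^{\lambda}(g)w}$ to $g \mapsto \scal{v}{\rho^{\lambda}(g)\rho^{\lambda}(k)w}$, i.e. it acts only on the ``$w$'' (right) variable through $\rho^{\lambda}|_{K}$. Hence the right-$K$-invariant part of the summand $\hendo(V^{\lambda}) \simeq V^{\lambda}\otimes (V^{\lambda})^{*}$ is exactly $V^{\lambda}\otimes ((V^{\lambda})^{*})^{K}$.

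Next I would invoke the key structural fact, due to Cartan, that for a simply connected irreducible compact symmetric space $G/K$ the subspace $(V^{\lambda})^{K}$ of $K$-invariant vectors is at most one-dimensional for every irreducible $\rho^{\lambda}$ (this is the statement that $(G,K)$ is a Gelfand pair, equivalently that $\leb^{2}(G/K)^{K\text{-bi-inv}}$ is commutative; it can be taken as known, being stated earlier in the excerpt as part of the setup). Here one should be slightly careful about $(V^{\lambda})^{K}$ versus $((V^{\lambda})^{*})^{K}$: since $\rho^{\lambda}$ is unitary, $(V^{\lambda})^{*} \simeq \overline{V^{\lambda}}$, and the complex conjugate of a spherical representation is spherical with the same multiplicity-one property, so this causes no trouble — the surviving summands are precisely those $\lambda \in \widehat{G}^{K}$, and each contributes a copy of $V^{\lambda}$. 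Fixing a unit vector $e^{\lambda}$ spanning $(V^{\lambda})^{K}$, the right-$K$-invariant matrix coefficients are exactly the functions $f_{v}(g)=\scal{v}{\rho^{\lambda}(g)e^{\lambda}}$, $v \in V^{\lambda}$, which is the space $\mathscr{C}^{\lambda}(G/K)$ from \eqref{generalizedmatrixcoeff}. This gives the orthogonal decomposition $\leb^{2}(G/K,\eta) \simeq \bigoplus_{\lambda\in\widehat{G}^{K}}\mathscr{C}^{\lambda}(G/K)$, with the orthogonality of distinct summands inherited from Peter-Weyl.

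Finally I would derive the explicit Fourier formula \eqref{superfourier}. Starting from the Peter-Weyl expansion \eqref{fourierexpansion} for $f\circ\pi \in \leb^{2}(G)$, namely $f(gK)=\sum_{\lambda\in\widehat{G}}D^{\lambda}\tr(\int_{G}f(h^{-1}gK)\rho^{\lambda}(h)\,dh)$, I would observe that for $\lambda \notin \widehat{G}^{K}$ the operator $\widehat{f^{-}}(\lambda)=\int_{G}f(h^{-1}gK)\rho^{\lambda}(h)\,dh$ — viewed via the right-$K$-invariance of $f$ and an averaging over $K$ — has image contained in $(V^{\lambda})^{K}=0$, so those terms vanish; this reduces the sum to $\lambda\in\widehat{G}^{K}$, which is \eqref{superfourier}. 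Concretely: replacing $f(h^{-1}gk)=f(h^{-1}g)$ and integrating over $k\in K$ shows $\rho^{\lambda}(h)$ may be replaced by $\int_{K}\rho^{\lambda}(hk)\,dk = \rho^{\lambda}(h)\,P_{K}$ where $P_{K}$ is the orthogonal projection onto $(V^{\lambda})^{K}$, which is zero unless $\lambda$ is spherical. I do not expect any serious obstacle here; the only genuine input is the Gelfand-pair/multiplicity-one property, which for the classical pairs $(G,K)$ under consideration is classical and may be cited, and the rest is a bookkeeping translation of Peter-Weyl through the isometry $\leb^{2}(G/K)\hookrightarrow\leb^{2}(G)^{K}$. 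The mild subtlety worth stating carefully is the dual-space issue ($(V^{\lambda})^{K}$ vs. $((V^{\lambda})^{*})^{K}$) and the normalization of $e^{\lambda}$ up to $\Tor$, both already flagged in the statement.
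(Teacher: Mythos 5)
The paper gives no proof of this theorem: it is quoted as classical (with the references \cite{BD85,Far08,Hel84}), and the multiplicity-one property $\dim (V^{\lambda})^{K}\leq 1$ is likewise asserted without proof in the paragraph preceding the statement. So the only question is whether your argument stands on its own, and in substance it does, by the standard route that the cited references follow: identify $\leb^{2}(G/K,\eta)$ isometrically with the right-$K$-invariant subspace of $\leb^{2}(G,\eta_{G})$, decompose by Peter--Weyl, note that right translation by $K$ acts on each block $\hendo(V^{\lambda})\simeq V^{\lambda}\otimes \overline{V^{\lambda}}$ through the second factor only, and use $\dim(V^{\lambda})^{K}\leq 1$ (taken as input, exactly as the paper does) to recognize the surviving block as $\mathscr{C}^{\lambda}(G/K)$ of \eqref{generalizedmatrixcoeff}; your handling of the $(V^{\lambda})^{K}$ versus $(\overline{V^{\lambda}})^{K}$ point is fine since the representations are unitary.

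One step of your derivation of \eqref{superfourier} is, however, written with the projection on the wrong side. Put $\widetilde{f}(\lambda)=\int_{G}f(u^{-1}K)\,\rho^{\lambda}(u)\,du$, so that the $\lambda$-term of \eqref{fourierexpansion} is $D^{\lambda}\,\tr\big(\widetilde{f}(\lambda)\,\rho^{\lambda}(g)\big)$ and $\int_{G}f(h^{-1}gK)\rho^{\lambda}(h)\,dh=\rho^{\lambda}(g)\,\widetilde{f}(\lambda)$. The substitution $u\mapsto ku$, combined with $(ku)^{-1}=u^{-1}k^{-1}$ and the right-$K$-invariance of $f$, gives $\widetilde{f}(\lambda)=\rho^{\lambda}(k)\,\widetilde{f}(\lambda)$ for all $k\in K$, hence $\widetilde{f}(\lambda)=P_{K}\,\widetilde{f}(\lambda)$ with $P_{K}=\int_{K}\rho^{\lambda}(k)\,dk$: the image of $\widetilde{f}(\lambda)$ lies in $(V^{\lambda})^{K}$, which kills every non-spherical term and, for $\lambda\in\widehat{G}^{K}$, writes the term as $D^{\lambda}\scal{v_{\lambda}}{\rho^{\lambda}(g)(e^{\lambda})}\in\mathscr{C}^{\lambda}(G/K)$, tying \eqref{superfourier} to the orthogonal decomposition. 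Your ``concretely'' sentence instead replaces $\rho^{\lambda}(h)$ by $\int_{K}\rho^{\lambda}(hk)\,dk=\rho^{\lambda}(h)P_{K}$, i.e.\ asserts $\widetilde{f}(\lambda)=\widetilde{f}(\lambda)P_{K}$; that identity would require \emph{left}-$K$-invariance of $f$ and does not follow from the invariance you have (your earlier phrase ``image contained in $(V^{\lambda})^{K}$'' is the correct formulation, applied to $\widetilde{f}(\lambda)$ rather than to $\rho^{\lambda}(g)\widetilde{f}(\lambda)$). Since the conclusion --- vanishing of the coefficient for $\lambda\notin\widehat{G}^{K}$ --- follows equally from the correct identity, this is a local slip to be rewritten, not a gap in the argument.
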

\noindent In each space $\mathscr{C}^{\lambda}(G/K)$, the space of left $K$-invariant functions is one-dimensional, and it is generated by the \emph{zonal spherical function} $ \phi^{\lambda}(gK)=\scal{e^{\lambda}}{\rho^{\lambda}(g)(e^{\lambda})}.$ These spherical functions form an orthogonal basis of $\leb^{2}(X)^{K}$ when $\lambda$ runs over spherical representations. So, a $K$-invariant function writes as
$$f(gK)=\sum_{\lambda \in \widehat{G}^{K}} D^{\lambda}\,\phi^{\lambda}(f^{-})\,\phi^{\lambda}(gK),$$
where $\phi^{\lambda}(f)=\int_{G/K}f(x)\,\phi^{\lambda}(x)\,dx=\scal{e^{\lambda}}{\int_{G} f(gK)\,\rho^{\lambda}(g)(e^{\lambda})\,dg}$. \bigskip

To conclude with, notice that the decomposition of Theorem \ref{helgason} is the decomposition of $\leb^{2}(G/K,\eta)$ in common eigenspaces of the elements of $\mathscr{D}(G/K)$, the commutative algebra of $G$-invariant differential operators on $X$. Thus, there are morphisms of algebras $c^{\lambda} : \mathscr{D}(G/K) \to \C$ such that
$$L(f^{\lambda})=c^{\lambda}(L)\,f^{\lambda}$$
for every $\lambda \in \widehat{G}^{K}$, every $L \in \mathscr{D}(G/K)$ and every $f^{\lambda} \in \mathscr{C}^{\lambda}(G/K)$.\bigskip

\subsection{Highest weight theorem and Weyl's character formula}\label{weyltheory} The theory of highest weights of representations enables us to identify $\widehat{K}$ or $\widehat{G}^{K}$, and to compute the coefficients $c^{\lambda}(\Delta)$ associated to the Laplace-Beltrami operator. If $G$ is a connected compact Lie group, its maximal tori are all conjugated, and every element of $K$ is contained in a maximal torus $T$. Denote $W=\mathrm{Norm}(T)/T$ the \emph{Weyl group} of $G$, and call \emph{weight} of a representation $V$ of $G$ an element of $\tlie^{*}$, or equivalently a group morphism $\omega : T \to \Tor$ such that $V^{\omega}=\{v \in V\,\,|\,\, \forall t \in T,\,\,\,t \cdot v = \omega(t)\cdot v\}\neq 0$. Every representation $V$ of $G$ is the direct sum of its weight subspaces $V^{\omega}$, and this decomposition is always $W$-invariant. Besides, the set of all weights of all representations of $G$ is a lattice $\Z\Omega$ whose rank is also the dimension of $T$. We take a $W$-invariant scalar product on the real vector space $\R\Omega=\Z\Omega \otimes_{\Z}\R$, \emph{e.g.}, the dual of the scalar product given by Equation \eqref{normalization}, where $\R\Omega$ is identified with $\tlie^{*}$ by mean of $\omega \mapsto d_{e}\omega$ for $\omega \in \Z\Omega$. We also fix a closed fundamental set $C$ for the action of the Weyl group on $\R\Omega$. We call \emph{dominant} a weight $\omega$ that falls in the Weyl chamber $C$. A \emph{root} of $G$ is a non-zero weight of the adjoint representation. The set of roots $\Phi$ is a root system, which means that certain combinatorial relations are satisfied between its elements. There is a unique way to split $\Phi$  in a set $\Phi_{+}$ of positive roots and a set $\Phi_{-}=-\Phi_{+}$ such that $$C=\{x \in \R\Omega\,\,|\,\,\forall \alpha \in \Phi_{+},\,\, \scal{x}{\alpha} \geq 0\}.$$  Call \emph{simple} a positive root $\alpha$ that cannot be written as the sum of two positive roots; and simple coroot an element 
$\check{\alpha}=\frac{2\alpha}{\scal{\alpha}{\alpha}}$ with $\alpha$ simple root. Then, a distinguished basis of the lattice $\Z\Omega$ is given by the \emph{fundamental weights} $\varpi_{1},\varpi_{2},\ldots,\varpi_{r}$, the dual basis of the basis of coroots. Hence, the sets of weights and of dominant weights have the following equivalent descriptions: 
\begin{align*}
\Z\Omega&=\bigoplus_{i=1}^{r}\Z\varpi_{i}=\left\{x \in \R\Omega\,\,\big|\,\,\forall \alpha \in \Phi,\,\,\,\frac{\scal{x}{\alpha}}{\scal{\alpha}{\alpha}} \in \Z \right\} ;\\ \mathrm{Dom}(\Z\Omega)&=\bigoplus_{i=1}^{r}\N\varpi_{i}=\left\{x \in \R\Omega\,\,\big|\,\,\forall \alpha \in \Phi,\,\,\,\frac{\scal{x}{\alpha}}{\scal{\alpha}{\alpha}} \in \N \right\}.
\end{align*}
\bigskip

Suppose now that $G$ is a semi-simple simply connected compact Lie group, and consider the partial order induced by the convex set $C$ on $\R\Omega$. Recall that the Weyl group $W$ is a Coxeter group generated by the symmetries along the simple roots $\alpha_{1},\alpha_{2},\ldots,\alpha_{r}$; so in particular, it admits a signature morphism $\eps : W \to \{\pm 1\}$. Weyl's theorem ensures that every irreducible representation $V$ of $G$ has a unique highest weight $\omega_{0}$ for this order, which is then of multiplicity one and determines the isomorphism class of $V$. Moreover, the restriction to $T$ of the irreducible character associated to a dominant weight $\lambda$ is given by
\begin{equation}
\chi^{\lambda}(t)=\frac{\sum_{\sigma \in W} \eps(\sigma)\,\sigma(\lambda+\rho)(t)}{\sum_{\sigma \in W}\eps(\sigma)\,\sigma(\rho)(t)},\label{weylcharacter}
\end{equation}
where $\rho$ is the half-sum of all positive roots, or equivalently the sum of the fundamental weights. This formula degenerates into the dimension formula
\begin{equation}
D^{\lambda}=\dim V^{\lambda}=\frac{\prod_{\alpha \in \Phi_{+}}\scal{\lambda+\rho}{\alpha}}{\prod_{\alpha \in \Phi_{+}}\scal{\rho}{\alpha}}.
\end{equation}
\noindent These results make Equation \eqref{fourierexpansion} essentially explicit in the case of a conjugacy invariant function on a (semi-)simple compact Lie group $K$; in particular, we shall see in a moment that the highest weights are labelled by partitions or similar combinatorial objects in all the classical cases. \bigskip

The case of a compact symmetric space $X=G/K$ of type non-group is a little more involved. Denote $\theta$ an involutive automorphism of a semi-simple simply connected compact Lie group $G$, with $K=G^{\theta}$. Set $P=\{g \in G\,\,|\,\, \theta(g)=g^{-1}\}$; one has then the Cartan decomposition $G=KP$. In addition to the previous assumptions, one assumes that the maximal torus $T \subset G$ is chosen so that $T^{\theta}=T$ and $P \cap T$ is a maximal torus in $P$ (one can always do so up to conjugation of the torus). Then, Cartan-Helgason theorem (\cite[Theorem 4.1]{Hel84}) says that the spherical representations in $\widehat{G}^{K}$ are precisely the irreducible representations in $\widehat{G}$ that are trivial on $K \cap T=T^{\theta}$. This subgroup $T^{\theta}$ of $T\simeq \Tor^{r}$ is always the product of a subtorus $\Tor^{s\leq r}$ with an elementary abelian $2$-group $(\Z/2\Z)^{t}$;  this will correspond to additional conditions on the size and the parity of the parts of the partitions labeling the highest weights in $\widehat{G}^{K}$ (in comparison to $\widehat{G}$), \emph{cf.} \S\ref{explicit}. The corresponding zonal spherical functions $\phi^{\lambda}$ do not have in general an expression as simple as \eqref{weylcharacter}; see however \cite[Part 1]{HS94}. For most of our computations, this will not be a problem, since we shall only use certain properties of the spherical functions --- \emph{e.g.}, their orthogonality and the formula for the dimension $D^{\lambda}$ --- and not their explicit form; see however \S\ref{zonal}. \bigskip

The last ingredient in the computation of the densities is the value of the coefficient $c^\lambda(\Delta)$ such that $$\frac{\Delta(f^{\lambda})}{2}=c^{\lambda}(\Delta)\,f^{\lambda}$$ for every function $f^{\lambda}$ either in $\mathscr{R}^{\lambda}(K)=\mathrm{Vect}(\{k \mapsto (\rho^{\lambda}(k))_{ij},\,\,\,1\leq i,j \leq D^{\lambda}\})$ in the group case, or in $\mathscr{C}^{\lambda}(G/K)$ in the case of a symmetric space. In the group case, by comparing the definition of the Casimir operator \eqref{casimir} with the definition of the Laplace-Beltrami operator \eqref{laplacebeltrami}, one sees that $2c^{\lambda}(\Delta)$ is also $\kappa_{\lambda}$, the constant by which the Casimir operator $C$ acts \emph{via} the infinitesimal representation $d\rho^{\lambda} : U(\klie) \to \hendo(V^{\lambda})$ --- \emph{cf.} the remark at the end of \S\ref{brown}. This constant is equal to
\begin{equation}
\kappa_{\lambda}=-\scal{\lambda+2\rho}{\lambda},\label{kappa}
\end{equation}
see \cite[Equation (3.4)]{Apple11} and the references therein, or \cite{Lev11} and \cite[Chapter12]{Far08} for a case-by-case computation. These later explicit computations follow from the following expressions of the Casimir operators (see \cite[Lemma 1.2]{Lev11}):
\begin{align*}
C_{\mathfrak{so}(n)}&=\sum_{1\leq i<j\leq n} \left(\frac{E_{ij}-E_{ji}}{\sqrt{n}}\right)^{\otimes 2}\\
C_{\mathfrak{su}(n)}&=\frac{1}{n}\sum_{i=1}^{n} \I E_{ii}\otimes \I E_{ii}-\frac{1}{n^{2}}\sum_{i,j=1}^{n} \I E_{ii}\otimes \I E_{jj}+\sum_{1\leq i<j\leq n} \left(\frac{E_{ij}-E_{ji}}{\sqrt{2n}}\right)^{\otimes 2} + \left(\frac{\I E_{ij}+\I E_{ji}}{\sqrt{2n}}\right)^{\otimes 2} 
\\
C_{\mathfrak{usp}(n)}&=\frac{1}{2n}\sum_{i=1}^{n} \I E_{ii}\otimes \I E_{ii} + \mathrm{j} E_{ii}\otimes \mathrm{j} E_{ii}+\mathrm{k} E_{ii}\otimes \mathrm{k} E_{ii}\\
&+\sum_{1\leq i<j\leq n} \left(\frac{E_{ij}-E_{ji}}{\sqrt{4n}}\right)^{\otimes 2}+\left(\frac{\I E_{ij}+\I E_{ji}}{\sqrt{4n}}\right)^{\otimes 2}+\left(\frac{\mathrm{j} E_{ij}+\mathrm{j} E_{ji}}{\sqrt{4n}}\right)^{\otimes 2}+\left(\frac{\mathrm{k} E_{ij}+\mathrm{k} E_{ji}}{\sqrt{4n}}\right)^{\otimes 2}
\end{align*}
where $E_{ij}$ are the elementary matrices in $\mathrm{M}(n,k)$ with $k=\R$, $\C$ or $\Hq$ --- beware that the tensor product are over $\R$, since we deal with real Lie algebras. \bigskip

In the case of a compact symmetric space, the same Formula \eqref{kappa} gives the action of $\Delta^{G/K}$ on $\mathscr{C}^{\lambda}(G/K)$. Indeed, remember that the Riemannian structures on $G$ and $G/K$ are chosen in such a way that for any $f \in \mathscr{C}^{\infty}(G)$ that is right $K$-invariant, $\Delta^{G/K}(f)(gK)=\Delta^{G}(f)(g).$ Consider then a function in $\mathscr{C}^{\lambda}(G/K)$, viewed as a function on $G$. In Definition \eqref{generalizedmatrixcoeff}, $f$ appears clearly as a linear combination of matrix coefficients of the spherical representation $\lambda$, so the previous discussion holds.
\bigskip

\subsection{Densities of a Brownian motion with values in a compact symmetric space}\label{explicit}
Let us now see how the previous results can be used to compute the density $p_{t}^{K}(k)$ or $p_{t}^{X}(x)$ of a Brownian motion on a compact Lie group or symmetric space. These densities are in both cases $K$-invariant, so they can be written as
$$p_{t}^{K}(k)=\sum_{\lambda \in \widehat{K}} a_{\lambda}(t)\,\hatchi^{\lambda}(k) \quad \text{or} \quad p_{t}^{X}(x)=\sum_{\lambda \in \widehat{G}^{K}} a_{\lambda}(t)\,\phi^{\lambda}(x)$$
by using either Peter-Weyl's theorem in the case of conjugacy-invariant functions on $K$, or Cartan's theorem in the case of left $K$-invariant functions on $G/K$. We then apply $\frac{\Delta}{2}=\left.\frac{dP_{t}}{dt}\right|_{t=0}$ to these formulas:
$$
\frac{\Delta p_{t}^{K}(k)}{2}  =\sum_{\lambda \in \widehat{K}} \frac{\kappa_{\lambda}}{2}\,a_{\lambda}(t)\,\hatchi^{\lambda}(k)= \frac{dp_{t}^{K}(t)}{dt} = \sum_{\lambda \in \widehat{K}}\frac{da_{\lambda}(t)}{dt}\,\hatchi^{\lambda}(k),
$$
and similarly in the case of a compact symmetric space. Thus, $\frac{da_{\lambda}(t)}{dt}=\frac{\kappa_{\lambda}}{2}a_{\lambda}(t)$ and $a_{\lambda}(t)=a_{\lambda}(0)\,\E^{\frac{\kappa_{\lambda}}{2}t}$ for every class $\lambda$. The coefficient $a_{\lambda}(0)$ is given in the group case by
$$a_{\lambda}(0)=(D^{\lambda})^{2}\int_{K} \hatchi^{\lambda}(k)\,\delta_{e_{K}}(dk)=(D^{\lambda})^{2}\,\hatchi^{\lambda}(e_{K})=(D^{\lambda})^{2}$$
and in the case of a compact symmetric space of type non-group by
$$a_{\lambda}(0)=D^{\lambda}\, \scal{e^{\lambda}}{\int_{G} \rho^{\lambda}(g)(e^{\lambda})\,\delta_{e_{G}}(dg)}=D^{\lambda}\,\phi^{\lambda}(e_{G})=D^{\lambda}.$$
\medskip
\begin{proposition}\label{abstractdensity}
The density of the law $\mu_{t}$ of the Brownian motion traced on a classical simple compact Lie group $K$ is
$$p_{t}^{K}(k)=\sum_{\lambda \in \widehat{K}} \E^{-\frac{t}{2}\scal{\lambda+2\rho}{\lambda}}\,\left(\frac{\prod_{\alpha \in \Phi_{+}} \scal{\lambda+\rho}{\alpha}  }{\prod_{\alpha \in \Phi_{+}} \scal{\rho}{\alpha}}  \right)^{\!2}\hatchi^{\lambda}(k),$$
and the density of the Brownian motion traced on a classical simple compact symmetric space $G/K$ is
$$p_{t}^{X}(x)=\sum_{\lambda \in \widehat{G}^{K}} \E^{-\frac{t}{2}\scal{\lambda+2\rho}{\lambda}}\,\left(\frac{\prod_{\alpha \in \Phi_{+}} \scal{\lambda+\rho}{\alpha}  }{\prod_{\alpha \in \Phi_{+}} \scal{\rho}{\alpha}}  \right)\phi^{\lambda}(x).$$
\end{proposition}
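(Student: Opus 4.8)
The plan is to assemble the three ingredients already developed in the excerpt: the Fourier expansion of $K$-invariant functions (Peter-Weyl for groups, Cartan for symmetric spaces), the eigenvalue formula \eqref{kappa} for the Casimir/Laplace-Beltrami operator on each isotypic component, and the heat equation $\frac{dP_t}{dt}=\frac{1}{2}\Delta$. Concretely, since $p_t^K$ is conjugacy-invariant and $p_t^X$ is left-$K$-invariant, Theorems \ref{peterweyl} and \ref{helgason} give expansions $p_t^K(k)=\sum_{\lambda\in\widehat K}a_\lambda(t)\,\hatchi^\lambda(k)$ and $p_t^X(x)=\sum_{\lambda\in\widehat G^K}a_\lambda(t)\,\phi^\lambda(x)$, convergent in $\leb^2$.

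First I would justify differentiating term by term in $t$: the density $p_t$ is smooth for $t>0$ (hypoellipticity of $\frac12\Delta$, as noted after Definition \ref{defbrown}), and lies in the domain of the generator, so applying $\frac{d}{dt}=\frac12\Delta$ to the expansion and using that each $\hatchi^\lambda$ (resp. $\phi^\lambda$) is an eigenfunction of $\Delta$ with eigenvalue $\kappa_\lambda=-\scal{\lambda+2\rho}{\lambda}$ yields the scalar ODE $a_\lambda'(t)=\frac{\kappa_\lambda}{2}a_\lambda(t)$, hence $a_\lambda(t)=a_\lambda(0)\,\E^{-\frac t2\scal{\lambda+2\rho}{\lambda}}$. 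The initial condition is read off from $\mu_0=\delta_{e}$ (resp. $\delta_{eK}$): in the group case $a_\lambda(0)=(D^\lambda)^2\hatchi^\lambda(e_K)=(D^\lambda)^2$ by the Fourier inversion coefficient $(D^\lambda)^2$ for conjugacy-invariant functions, and in the non-group case $a_\lambda(0)=D^\lambda\,\phi^\lambda(e_G)=D^\lambda$ since $\phi^\lambda(eK)=\scal{e^\lambda}{e^\lambda}=1$. Finally I substitute Weyl's dimension formula $D^\lambda=\prod_{\alpha\in\Phi_+}\scal{\lambda+\rho}{\alpha}\big/\prod_{\alpha\in\Phi_+}\scal{\rho}{\alpha}$ to obtain the stated closed forms.

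The main obstacle is the rigorous justification of the termwise differentiation and of the identification of the limit $\lim_{t\to0}\mu_t=\delta_e$ with the claimed $a_\lambda(0)$ — i.e. that the formally-derived series genuinely represents the heat kernel rather than some other solution. The clean way around this is to \emph{define} the right-hand side as a candidate kernel: for $t>0$ the coefficients $(D^\lambda)^2\E^{-\frac t2\scal{\lambda+2\rho}{\lambda}}$ (resp. $D^\lambda\E^{\cdots}$) decay faster than any polynomial in $D^\lambda$ because $\scal{\lambda+2\rho}{\lambda}$ grows at least linearly while the number of $\lambda$ of bounded norm is polynomial and $|\hatchi^\lambda|\le1$, $|\phi^\lambda|\le1$; so the series converges absolutely and uniformly, defines a smooth function, and one checks directly that it solves $\partial_t u=\frac12\Delta u$ with $u(t,\cdot)\to\delta_e$ as $t\to0^+$. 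Uniqueness of the heat semigroup on a compact manifold then forces $p_t=u$. This is essentially the computation carried out abstractly in \cite{Liao04paper,Apple11}, to which one may simply refer; the only genuinely new content here is the explicit substitution of the Weyl dimension and Casimir-eigenvalue formulas specialized to the classical families, which is routine given \S\ref{weyltheory}.
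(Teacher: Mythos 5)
Your proposal follows essentially the same route as the paper: expand $p_t$ in normalized characters (Peter--Weyl) or zonal spherical functions (Cartan), use the heat equation together with the Casimir eigenvalue $\kappa_\lambda=-\scal{\lambda+2\rho}{\lambda}$ to get the scalar ODE $a_\lambda'(t)=\frac{\kappa_\lambda}{2}a_\lambda(t)$, fix $a_\lambda(0)=(D^\lambda)^2$ resp. $D^\lambda$ from $\mu_0=\delta_e$, and substitute Weyl's dimension formula. The extra care you take about termwise differentiation, absolute convergence, and uniqueness of the heat semigroup is a welcome tightening of what the paper handles by appeal to the abstract references, but it is the same proof.
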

\bigskip

Let us now apply this in each classical case. We refer to \cite{BD85}, \cite[Chapter 24]{FH91} and \cite[Chapter 10]{Hel78} for most of the computations. Unfortunately, we have not found a reference which describes explicitly the spherical representations;  this explains the following long discussion.  For convenience,  we shall assume:\vspace{2mm}
\begin{itemize}
\item $n \geq 2$ when considering $\SU(n)$, $\SU(n)/\SO(n)$, $\SU(2n)/\unit\SP(n)$ or $\SU(n)/\mathrm{S}(\unit(n-q)\times \unit(q))$;  \vspace{2mm}
\item $n \geq 3$ when considering $\unit\SP(n)$, $\unit\SP(n)/\unit(n)$ or $\unit\SP(n)/(\unit\SP(n-q) \times \unit\SP(q))$;  \vspace{2mm}
\item $n \geq 10$ when considering $\SO(n)$, $\SO(2n)/\unit(n)$ or $\SO(n)/(\SO(n-q)\times \SO(q))$.\vspace{2mm}
\end{itemize}
For $\SU(2n)/\unit\SP(n)$ and $\SO(2n)/\unit(n)$, the restriction will hold on the ``$2n$'' parameter of the group of isometries.
These assumptions shall ensure that the root systems and the Schur functions of type $\mathrm{B}$, $\mathrm{C}$ and $\mathrm{D}$ are not degenerate, and later this will ease certain computations. For Grassmanian varieties, we shall also suppose by symmetry that $q \leq \lfloor \frac{n}{2} \rfloor$.\vspace{2mm}
\bigskip

\subsubsection{Special unitary groups and their quotients}

In $\SU(n,\C)$, a maximal torus is 
$$
T=\left\{\diag(z_{1},z_{2},\ldots,z_{n})\,\,\bigg|\,\,\forall i,\,\,z_{i} \in \Tor \text{ and } \prod_{i=1}^{n}z_{i}=1\right\}=\Tor^{n}/\Tor, 
$$ and the Weyl group is the symmetric group $\sym_{n}$. The simple roots and the fundamental weights, viewed as elements of $\tlie^{*}$, are $\alpha_{i}=e^{i}-e^{i+1}$ and 
$$\varpi_{i}=\frac{n-i}{n}(e^{1}+\cdots+e^{i})-\frac{i}{n}(e^{i+1}+\cdots+e^{n})$$
for $i \in \lle 1,n-1\rre$, where $e^{i}$ is the coordinate form on $\tlie=\I\R^{n}$ defined by $e^{i}(\diag(\I t_{1},\I t_{2},\ldots,\I t_{n}))=t_{i}$.
The dominant weights are then the 
$$(\lambda_{1}-\lambda_{2})\varpi_{1}+\cdots +\lambda_{n-1}\varpi_{n-1}=\lambda_{1}e^{1}+\cdots +\lambda_{n-1}e^{n-1}-|\lambda|\,\frac{\varpi_{n}}{n},$$
 where $\lambda = (\lambda_{1}\geq \lambda_{2}\geq \cdots \geq \lambda_{n-1})$ is any partition (non-increasing sequence of non-negative integers) of length $(n-1)$;  it is then convenient to set $\lambda_{n}=0$. The half-sum of positive roots is given by $2\rho=2(\varpi_{1}+\cdots + \varpi_{n-1})=\sum_{i=1}^{n}(n+1-2i)e^{i}$, and the scalar product on $\tlie^{*}$ is $\frac{1}{n}$ times the usual euclidian scalar product $\scal{e^{i}}{e^{j}}=\delta_{ij}$. So,
$$
D^{\lambda}=\prod_{1\leq i<j \leq n}\frac{\lambda_{i}-\lambda_{j}+j-i}{j-i}\qquad;\qquad \chi^{\lambda}(k)=s_{\lambda}(z_{1},\ldots,z_{n})=\frac{\det(z_{i}^{\lambda_{j}+n-j})_{1\leq i,j \leq n}}{\det(z_{i}^{n-j})_{1\leq i,j \leq n}},
$$
where $z_{1},\ldots,z_{n}$ are the eigenvalues of $k$; thus, characters are given by Schur functions. The Casimir coefficient is $$-\kappa_\lambda=-\frac{|\lambda|^{2}}{n^{2}}+\frac{1}{n}\sum_{i=1}^{n}\lambda_{i}^{2}+(n+1-2i)\lambda_{i},$$
where $|\lambda|=\sum_{i=1}^{n}\lambda_{i}$ denotes the size of the partition.
\bigskip

Though we have chosen to examine only the Brownian motions on simple Lie groups, the same work can be performed over the unitary groups $\unit(n,\C)$, which are reducible Lie groups. Irreducible representations of $\unit(n,\C)$ are labelled by sequences $\lambda=(\lambda_{1}\geq \cdots \geq \lambda_{n})$ in $\Z^{n}$, the action of the torus $\Tor^{n}$ on a corresponding highest weight vector being given by the morphism $\lambda(z_{1},\ldots,z_{n})=z_{1}^{\lambda_{1}}\cdots z_{n}^{\lambda_{n}}$. The dimensions and characters are the same as before, and the Casimir coefficient is  $\frac{1}{n}\sum_{i=1}^{n}\lambda_{i}^{2}+(n+1-2i)\lambda_{i}$.\bigskip

For the spaces of quaternionic structures $\SU(2n,\C)/\unit\SP(n,\Hq)$, the involutive automorphism defining the symmetric pair is  $\theta(g)=J_{2n}\,\overline{g}\,J_{2n}^{-1}$, where $J_{2n}$ is the skew symmetric matrix 
$$ J_{2n}=\begin{pmatrix} 0 & 1 &         &     & \\
				-1 & 0 &         &     & \\
				    &    &\ddots&    &  \\
				    &    &         & 0  & 1 \\
				    &    &         & -1 & 0
\end{pmatrix}$$
of size $2n$. The subgroup $T^{\theta}$ is the set of matrices $\diag(z_{1},z_{1}^{-1},\ldots,z_{n},z_{n}^{-1})$, with all the $z_{i}$'s in $\Tor$. The dominant weights $\lambda$ trivial on $T^{ \theta}$ correspond then to partitions will all parts doubled: $$\forall i \in \lle 1,n\rre,\,\,\,\lambda_{2i-1}=\lambda_{2i}.$$\vspace{1mm}

In the spaces of real structures $\SU(n,\C)/\SO(n,\R)$, $\theta(g)=\overline{g}$. The intersection of the torus with $\SO(n,\R)$ is isomorphic to $(\Z/2\Z)^{n}/(\Z/2\Z)$, and therefore, by Cartan-Helgason theorem, the spherical representations correspond to partitions with even parts: $$\forall i \in \lle 1,n\rre,\,\,\,\lambda_{i} \equiv 0 \bmod 2.$$\vspace{1mm}

Finally, for the complex Grassmannian varieties $\SU(n,\C)/\mathrm{S}(\unit(n-q,\C) \times \unit(q,\C))$, it is a little simpler to work with $\unit(n,\C)/(\unit(n-q,\C)\times \unit(q,\C))$, which is the same space. An involutive automorphism defining the symmetric pair is then $\theta(g)=K_{n,q}\,g \,K_{n,q}$, where 
$$K_{n,q}=\begin{pmatrix}
&&T_{q} \\
&I_{n-2q}&\\
T_{q} & &  
\end{pmatrix}$$ 
and $T_{q}$ is the $(q\times q)$-anti-diagonal matrix with entries $1$ on the anti-diagonal. The subgroup $T^{\theta}$ is then the set of diagonal matrices $\diag(z_{1},\ldots,z_{q},z_{q+1},\ldots,z_{n-q},z_{q},\ldots,z_{1})$ with the $z_{i}$'s in $\Tor$. The dominant weights $\lambda$ trivial on $T^{\theta}$ correspond then to partitions of length $q$, written as
$$\lambda=(\lambda_{1},\ldots,\lambda_{q},0,\ldots,0,-\lambda_{q},\ldots,-\lambda_{1}).$$
\vspace{-3mm}

\subsubsection{Compact symplectic groups and their quotients}
Considering $\unit\SP(n,\Hq)$ as a subgroup of $\SU(2n,\C)$, a maximal torus is
$$T=\left\{ \diag(z_{1},z_{1}^{-1},\ldots,z_{n},z_{n}^{-1})\,\,\big|\,\,\forall i, \,\,z_{i}\in \Tor \right\},$$
and the Weyl group is the hyperoctahedral group $\mathfrak{H}_{n}=(\Z/2\Z)\wr\sym_{n}$. The simple roots, viewed as elements of $\mathfrak{t}^{*}$, are $\alpha_{i}=e^{i}-e^{i+1}$ for $i \in \lle 1,n-1\rre$ and $\alpha_{n}=2e^{n}$; and the fundamental weights are $\varpi_{i}=e^{1}+\cdots+e^{i}$ for $i \in \lle 1,n\rre$. Here, $e^{i}(\diag(\I t_{1},-\I t_{1},\ldots,\I t_{n},-\I t_{n}))=t_{i}$. The dominant weights can therefore be written as $\lambda_{1}e^{1}+\cdots + \lambda_{n}e^{n},$ where $\lambda=(\lambda_{1}\geq \lambda_{2}\geq \cdots \geq \lambda_{n})$ is any partition of length $n$. This leads to
\begin{align*}
D^{\lambda}&=\prod_{1\leq i<j \leq n}\frac{\lambda_{i}-\lambda_{j}+j-i}{j-i} \prod_{1\leq i\leq j \leq n} \frac{\lambda_{i}+\lambda_{j}+2n+2-i-j}{2n+2-i-j};\\
\chi^{\lambda}(k)&=sc_{\lambda}(z_{1},z_{1}^{-1},\ldots,z_{n},z_{n}^{-1})=\frac{\det(z_{i}^{\lambda_{j}+n-j+1}-z_{i}^{-(\lambda_{j}+n-j+1)})_{1\leq i,j \leq n}}{\det(z_{i}^{n-j+1}-z_{i}^{-(n-j+1)})_{1\leq i,j \leq n}},
\end{align*}
where $z_{1}^{\pm1},\ldots,z_{n}^{\pm 1}$ are the eigenvalues of $k$ viewed as a matrix in $\SU(2n,\C)$. The Casimir coefficient is $-\kappa_{\lambda}=\frac{1}{2n}\sum_{i=1}^{n}\lambda_{i}^{2}+(2n+2-2i)\lambda_{i}$. 
\bigskip

In the spaces of complex structures $\unit\SP(n,\Hq)/\unit(n,\C)$, $\theta(g)=\overline{g}$ (inside $\SU(2n,\C)$). The subgroup $T^{\theta}$ is isomorphic to $(\Z/2\Z)^{n}$, so the spherical representations correspond here again to partitions with even parts. On the other hand, for quaternionic Grassmannian varieties $\unit\SP(n,\Hq)/(\unit\SP(n-q,\Hq)\times \unit\SP(q,\Hq))$, a choice for the involutive automorphism is $\theta(g)=L_{2n,q}\,g\,L_{2n,q}$, where
$$L_{2n,q}=\begin{pmatrix}
T_{4} & & & \\
 & \ddots & & \\
  & & T_{4} & \\
  & &           & I_{2n-4q}
\end{pmatrix},$$
$T_{4}$ appearing $q$ times (with all the computations made inside $\SU(2n,\C)$). Then, $T^{\theta}$ is the set of diagonal matrices $\diag(z_{1},z_{1}^{-1},z_{1}^{-1},z_{1},\ldots,z_{q},z_{q}^{-1},z_{q}^{-1},z_{q},z_{2q+1},z_{2q+1}^{-1},\ldots,z_{n},z_{n}^{-1})$ with the $z_{i}$'s in $\Tor$. The dominant weights $(\lambda_{1},\ldots,\lambda_{n})$ trivial on $T^{\theta}$ write therefore as partitions of length $q$ with all parts doubled:
$$\lambda=(\lambda_{1},\lambda_{1},\ldots,\lambda_{q},\lambda_{q},0,\ldots,0).$$
\vspace{1mm}

\subsubsection{Special orthogonal groups and their quotients}
Odd and even special orthogonal groups do not have the same kind of root system, and on the other hand, $\SO(n,\R)$ is not simply connected and has for fundamental group $\Z/2\Z$ for $n \geq 3$. So in theory, the arguments previously recalled apply only for the universal cover $\mathrm{Spin}(n)$. Nonetheless, most of the results will stay true, and in particular the labeling of the irreducible representations; see the end of \cite[Chapter 5]{BD85} for details on this question. In the odd case, a maximal torus in $\SO(2n+1,\R)$ is 
$$
T = \left\{\diag(R_{\theta_{1}},\ldots,R_{\theta_{n}},1) \,\,\bigg|\,\, \forall i,\,\,R_{\theta_{i}}=\left(\begin{smallmatrix} \cos \theta_{i} &-\sin \theta_{i}\\ \sin \theta_{i} & \cos \theta_{i} \end{smallmatrix} \right) \in \SO(2,\R)\right\},$$
and the Weyl group is again the hyperoctahedral group $\mathfrak{H}_{n}$. The simple roots are $\alpha_{i}=e^{i}-e^{i+1}$ for $i \in \lle 1,n-1\rre$, and $\alpha_{n}=e^{n}$; and the fundamental weights are $\varpi_{i}=e^{1}+\cdots+e^{i}$ for $i \in \lle 1,n-1\rre$, and $\varpi_{n}=\frac{1}{2}(e^{1}+\cdots+e^{n})$. Here, 
$$e^{i}\left(\diag\left(\left(\begin{smallmatrix} 0 &-a_{1} \\ a_{1} & 0\end{smallmatrix} \right),\ldots,\left(\begin{smallmatrix} 0 &-a_{n} \\ a_{n} & 0\end{smallmatrix} \right),0\right) \right)= a_{i}$$
and it corresponds to the morphism $\diag(R_{\theta_{1}},\ldots,R_{\theta_{n}},1) \mapsto \E^{\I\theta_{i}}$. The dominant weights are then the  $\lambda_{1}e^{1}+\cdots + \lambda_{n}e^{n}$, where $\lambda=(\lambda_{1}\geq \lambda_{2}\geq \cdots \geq \lambda_{n})$ is either a partition of length $n$, or an half-partition of length $n$, where by half-partition we mean a non-increasing sequence  of half-integers in $\N'=\N+1/2$. So, one obtains
\begin{align*}
D^{\lambda}&=\prod_{1\leq i<j \leq n}\frac{\lambda_{i}-\lambda_{j}+j-i}{j-i} \prod_{1\leq i\leq j \leq n} \frac{\lambda_{i}+\lambda_{j}+2n+1-i-j}{2n+1-i-j} ; \\
\chi^{\lambda}(k)&=sb_{\lambda}(z_{1},z_{1}^{-1},\ldots,z_{n},z_{n}^{-1},1)=\frac{\det(z_{i}^{\lambda_{j}+n-j+1/2}-z_{i}^{-(\lambda_{j}+n-j+1/2)})_{1\leq i,j \leq n}}{\det(z_{i}^{n-j+1/2}-z_{i}^{-(n-j+1/2)})_{1\leq i,j \leq n}},
\end{align*}
where $z_{1}^{\pm 1},\ldots,z_{n}^{\pm 1},1$ are the eigenvalues of $k$. The Casimir coefficient associated to the highest weight $\lambda$ is $-\kappa_{\lambda}=\frac{1}{2n+1}\sum_{i=1}^{n} \lambda_{i}^{2}+(2n+1-2i)\lambda_{i}$. 
\bigskip

In the even case, a maximal torus in $\SO(2n,\R)$ is
$$
T = \left\{\diag(R_{\theta_{1}},\ldots,R_{\theta_{n}}) \,\,\bigg|\,\, \forall i,\,\,R_{\theta_{i}}=\left(\begin{smallmatrix} \cos \theta_{i} &-\sin \theta_{i}\\ \sin \theta_{i} & \cos \theta_{i} \end{smallmatrix} \right) \in \SO(2,\R)\right\}$$
and the Weyl group is $\mathfrak{H}_{n}^{+}$, the subgroup of $\mathfrak{H}_{n}$ of index $2$ consisting in signed permutations with an even number of signs $-1$. The simple roots are $\alpha_{i}=e^{i}-e^{i+1}$ for $i \in \lle 1,n-1\rre$ and $\alpha_{n}=e^{n-1}+e^{n}$; and the fundamental weights are $\varpi_{i}=e^{1}+\cdots+e^{i}$ for $i \in \lle 1,n-2\rre$ and $\varpi_{n-1,n}=\frac{1}{2}(e^{1}+\cdots+e^{n-1}\pm e^{n})$. The dominant weights are then $\lambda_{1}e^{1}+\cdots+ \lambda_{n-1}e^{n-1}+\eps \lambda_{n}e^{n}$, where $\eps$ is a sign and $(\lambda_{1}\geq \cdots \geq \lambda_{n})$ is either a partition or an half-partition of length $n$. So,
\begin{align*}
D^{\lambda}&=\prod_{1\leq i<j \leq n}\frac{\lambda_{i}-\lambda_{j}+j-i}{j-i} \,\frac{\lambda_{i}+\lambda_{j}+2n-i-j}{2n-i-j}, \\
\chi^{\lambda}(k)&=sd_{\lambda}(z_{1},z_{1}^{-1},\ldots,z_{n},z_{n}^{-1})\\
&=\frac{\det(z_{i}^{\lambda_{j}+n-j}-z_{i}^{-(\lambda_{j}+n-j)})_{1\leq i,j \leq n}+\det(z_{i}^{\lambda_{j}+n-j}+z_{i}^{-(\lambda_{j}+n-j)})_{1\leq i,j \leq n}}{\det(z_{i}^{n-j}+z_{i}^{-(n-j)})_{1\leq i,j \leq n}},
\end{align*}
and $-\kappa_{\lambda}=\frac{1}{2n}\sum_{i=1}^{n} \lambda_{i}^{2}+(2n-2i)\lambda_{i}$. \bigskip

For real Grassmannian varieties $\SO(n,\R)/(\SO(n-q,\R)\times\SO(q,\R))$ and for  spaces of complex structures $\SO(2n,\R)/\unit(n,\C)$, one cannot directly apply the Cartan-Helgason theorem, since $\SO(n,\R)$ is not simply connected. A rigorous way to deal with this problem is to first look at quotients of the spin group $\mathrm{Spin}(n)$. For instance, consider the Grassmannian variety of \emph{non-oriented} vector spaces 
$\Gra^{\pm}(n,q,\R)\simeq \mathrm{Spin}(n)/(\mathrm{Spin}(n-q)\times \mathrm{Spin}(q));$ 
$\Gra(n,q,\R)$ is a $2$-fold covering of $\Gra^{\pm}(n,q,\R)$. The defining map of $\Gra^{\pm}(n,q,\R)$ corresponds to the involution of $\SO(n,\R)$ given by $\theta(g)=N_{n,q}\,g\,N_{n,q}$, where
$$N_{n,q}= \begin{pmatrix}
T_{2} & & & \\
& \ddots & &\\
& & T_{2} & \\
& & 		& I_{n-2q}
\end{pmatrix}$$
with $q$ blocks $T_{2}$. Then $T^{\theta}$ is $(\Z/2\Z)^{q} \times (\SO(2,\R))^{\lfloor \frac{n}{2} \rfloor-q}$, so the dominant weights trivial on $T^{\theta}$  write as $\lambda=(\lambda_{1},\ldots,\lambda_{q},0,\ldots,0)$, with $\lambda_{i} \equiv 0 \bmod 2$ for all $i \in \lle 1,q\rre$.
They are therefore given by an integer partition of length $q$, with all parts even. Now, for the simply connected Grassmannian variety $\Gra(n,q,\R) $, there are twice as many spherical representations, as $T^{\theta}$ is in this case isomorphic to $((\Z/2\Z)^{q}/(\Z/2\Z)) \times \Tor^{\lfloor \frac{n}{2} \rfloor-q}$, instead of $(\Z/2\Z)^{q}\times \Tor^{\lfloor \frac{n}{2} \rfloor-q}$. Therefore, the condition of parity is now
$$\forall i,j \in \lle 1,q \rre,\,\,\,\lambda_{i} \equiv \lambda_{j} \bmod 2.$$  
Similar considerations show that for the spaces $\SO(2n,\R)/\unit(n,\C)$, the dominant weights $\lambda$ trivial on the intersection $T^{\theta}$ are given by
$$\lambda=(\lambda_{1},\lambda_{1},\ldots,\lambda_{m},\lambda_{m}) \,\text{ or }\, \lambda=(\lambda_{1},\lambda_{1},\ldots,\lambda_{m},\lambda_{m},0)$$
that is to say a partition with all non-zero parts that are doubled.
\bigskip

\subsubsection{Summary} Let us summarize the previous results (this is redundant, but very useful in order to follow all the computations of Section \ref{upper}). We denote: $\ym_{n}$ the set of partitions of length $n$; $\mathfrak{Z}_{n}$ the set of non-decreasing sequences of (possibly negative) integers; $\frac{1}{2}\ym_{n}$ the set of partitions and half-partitions of length $n$; 
$2\ym_{n}$ the set of partitions of length $n$ with even parts; $2\ym_{n} \boxplus 1$ the set of partitions of length $n$ with odd parts; and $\ym\ym_{n}$ the set of partitions of length $n$ and with all non-zero parts doubled. It is understood that if $i$ is too big, then $\lambda_{i}=0$ for a partition or an half-partition $\lambda$ of prescribed length.
\begin{theorem}\label{explicitdensity}
The density of the law $\mu_{t}$ of the Brownian motion traced on a classical simple compact Lie group writes as:\begin{align*}
\sum_{\lambda \in \ym_{n-1}}\, &\E^{-\frac{t}{2n}\,\left(-\frac{|\lambda|^{2}}{n}+\sum_{i=1}^{n-1}\lambda_{i}^{2}+(n+1-2i)\lambda_{i}\right)}\left(\prod_{1\leq i<j \leq n}\frac{\lambda_{i}-\lambda_{j}+j-i}{j-i}\right) s_{\lambda}(k);\\
\sum_{\lambda \in \mathfrak{Z}_{n}}\,\,\,\, &\E^{-\frac{t}{2n}\,\sum_{i=1}^{n}\lambda_{i}^{2}+(n+1-2i)\lambda_{i}}\left(\prod_{1\leq i<j \leq n}\frac{\lambda_{i}-\lambda_{j}+j-i}{j-i}\right) s_{\lambda}(k);\\
\sum_{\lambda \in \ym_{n}}\,\,\,&\E^{-\frac{t}{4n}\,\sum_{i=1}^{n}\lambda_{i}^{2}+(2n+2-2i)\lambda_{i}}\left(\prod_{1\leq i<j \leq n}\frac{\lambda_{i}-\lambda_{j}+j-i}{j-i}\prod_{1\leq i \leq j \leq n}\frac{\lambda_{i}+\lambda_{j}+2n+2-i-j}{2n+2-i-j}\right)sc_{\lambda}(k);\\
\sum_{\lambda \in \frac{1}{2}\ym_{n}}\,&\E^{-\frac{t}{4n+2}\,\sum_{i=1}^{n}\lambda_{i}^{2}+(2n+1-2i)\lambda_{i}}\left(\prod_{1\leq i<j \leq n}\frac{\lambda_{i}-\lambda_{j}+j-i}{j-i}\prod_{1\leq i \leq j \leq n}\frac{\lambda_{i}+\lambda_{j}+2n+1-i-j}{2n+1-i-j}\right)sb_{\lambda}(k);\\
\sum_{\lambda\in \frac{1}{2}\ym_{n}}\,&\E^{-\frac{t}{4n}\,\sum_{i=1}^{n}\lambda_{i}^{2}+(2n-2i)\lambda_{i}}\left(\prod_{1\leq i<j \leq n}\frac{(\lambda_{i}-\lambda_{j}+j-i)(\lambda_{i}+\lambda_{j}+2n-i-j)}{(j-i)(2n-i-j)}\right)(sd_{\lambda}(k)+sd_{\eps\lambda}(k))
\end{align*}
respectively for special unitary groups $\SU(n,\C)$, unitary groups $\unit(n,\C)$, symplectic groups $\unit\SP(n,\Hq)$, odd special orthogonal groups $\SO(2n+1,\R)$, and even special orthogonal groups $\SO(2n,\R)$. In this last case, $\eps\lambda=(\lambda_{1},\ldots,\lambda_{n-1},-\lambda_{n})$, and it is agreed that $sd_{\lambda}+sd_{\eps\lambda}$ stands for $sd_{\lambda}$ if $\lambda_{n}=0$.\bigskip

We denote generically $\phi_{\lambda}(x)$ a zonal spherical function associated to a spherical representation (the function depends of course of the implicit type of the space considered). The density of the law $\mu_{t}$ of the Brownian motion traced on a classical simple compact symmetric space writes then as follows:
\begin{align*}
\sum_{\lambda \in 2\ym_{n-1}} &\E^{-\frac{t}{2n}\,\left(-\frac{|\lambda|^{2}}{n}+\sum_{i=1}^{n-1}\lambda_{i}^{2}+(n+1-2i)\lambda_{i}\right)}\left(\prod_{1\leq i<j \leq n}\frac{\lambda_{i}-\lambda_{j}+j-i}{j-i}\right) \phi_{\lambda}(x);\end{align*}
\begin{align*}
\sum_{\lambda \in \ym\ym_{2n-1}} \!\!&\E^{-\frac{t}{4n}\,\left(-\frac{|\lambda|^{2}}{2n}+\sum_{i=1}^{2n-2}\lambda_{i}^{2}+(2n+1-2i)\lambda_{i}\right)}\left(\prod_{1\leq i<j \leq 2n}\frac{\lambda_{i}-\lambda_{j}+j-i}{j-i}\right) \phi_{\lambda}(x);\\
\sum_{\lambda \in \ym_{q}} \,\,\,\,&\E^{-\frac{t}{n}\,\sum_{i=1}^{q} \lambda_{i}^{2}+(n+1-2i)\lambda_{i}}\left(\prod_{1\leq i<j \leq n}\frac{\lambda_{i}-\lambda_{j}+j-i}{j-i}\right) \phi_{\lambda}(x);\\
\sum_{\lambda \in 2\ym_{n}}  \,\,&\E^{-\frac{t}{4n}\,\sum_{i=1}^{n}\lambda_{i}^{2}+(2n+2-2i)\lambda_{i}}\left(\prod_{1\leq i<j \leq n}\frac{\lambda_{i}-\lambda_{j}+j-i}{j-i}\prod_{1\leq i \leq j \leq n}\frac{\lambda_{i}+\lambda_{j}+2n+2-i-j}{2n+2-i-j}\right)\phi_{\lambda}(x);\\
\sum_{\lambda \in \ym\ym_{2q}}  \,&\E^{-\frac{t}{4n}\,\sum_{i=1}^{2q}\lambda_{i}^{2}+(2n+2-2i)\lambda_{i}}\left(\prod_{1\leq i<j \leq n}\frac{\lambda_{i}-\lambda_{j}+j-i}{j-i}\prod_{1\leq i \leq j \leq n}\frac{\lambda_{i}+\lambda_{j}+2n+2-i-j}{2n+2-i-j}\right)\phi_{\lambda}(x);\\
\sum_{\lambda\in \ym\ym_{n} } \,&\E^{-\frac{t}{4n}\,\sum_{i=1}^{n}\lambda_{i}^{2}+(2n-2i)\lambda_{i}}\left(\prod_{1\leq i<j \leq n}\frac{(\lambda_{i}-\lambda_{j}+j-i)(\lambda_{i}+\lambda_{j}+2n-i-j)}{(j-i)(2n-i-j)}\right)\phi_{\lambda}(x);\\
\sum_{\lambda \in 2\ym_{q} \sqcup 2\ym_{q}\boxplus 1}\!\!\!\!&\E^{-\frac{t}{4n+2}\,\sum_{i=1}^{q}\lambda_{i}^{2}+(2n+1-2i)\lambda_{i}}\!\left(\prod_{1\leq i<j \leq n}\!\frac{\lambda_{i}-\lambda_{j}+j-i}{j-i}\prod_{1\leq i \leq j \leq n}\frac{\lambda_{i}+\lambda_{j}+2n+1-i-j}{2n+1-i-j}\right)\!\phi_{\lambda}(x);\\
\sum_{\lambda\in 2\ym_{q} \sqcup2\ym_{q}\boxplus 1} \!\!\!\!&\E^{-\frac{t}{4n}\,\sum_{i=1}^{q}\lambda_{i}^{2}+(2n-2i)\lambda_{i}}\!\left(\prod_{1\leq i<j \leq n}\frac{(\lambda_{i}-\lambda_{j}+j-i)(\lambda_{i}+\lambda_{j}+2n-i-j)}{(j-i)(2n-i-j)}\right)\!\phi_{\lambda}(x)
\end{align*}
for real structures $\SU(n,\C)/\SO(n,\R)$, quaternionic structures $\SU(2n,\C)/\unit\SP(n,\Hq)$, complex Grassmannian varieties $\Gra(n,q,\C)$, complex structures $\unit\SP(n,\Hq)/\unit(n,\C)$, quaternionic Grassmannian varieties $\Gra(n,q,\Hq)$, complex structures $\SO(2n,\R)/\unit(n,\C)$, odd real Grassmannian varieties $\Gra(2n+1,q,\R)$ and even real Grassmannian varieties $\Gra(2n,q,\R)$.
\end{theorem}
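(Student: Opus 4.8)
The plan is to read off the ten formulas directly from the abstract expansion of Proposition~\ref{abstractdensity} (and its non-group counterpart), by substituting, family by family, the explicit representation-theoretic data collected in the previous subsections. Concretely, one inserts: the parametrisation of $\widehat{K}$, respectively $\widehat{G}^{K}$, by partitions / half-partitions / even or doubled partitions; the Weyl dimension formula written out with the explicit positive roots of type $\mathrm{A}_{n-1}$, $\mathrm{B}_{n}$, $\mathrm{C}_{n}$, $\mathrm{D}_{n}$; the Casimir eigenvalue $-\kappa_{\lambda}=\scal{\lambda+2\rho}{\lambda}$ from~\eqref{kappa}, evaluated against the scalar product of~\eqref{normalization}, which on $\tlie^{*}$ is $\tfrac{2}{\beta n}$ times the standard euclidian one; and the character, expressed as the appropriate Schur-type determinant $s_{\lambda}$, $sc_{\lambda}$, $sb_{\lambda}$ or $sd_{\lambda}$ in the group case, or kept as a generic zonal spherical function $\phi_{\lambda}$ in the non-group case. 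Throughout one uses the identity $a_{\lambda}(t)\,\hatchi^{\lambda}=D^{\lambda}\,\E^{\kappa_{\lambda}t/2}\,\chi^{\lambda}$, respectively $a_{\lambda}(t)\,\phi^{\lambda}=D^{\lambda}\,\E^{\kappa_{\lambda}t/2}\,\phi^{\lambda}$, established just before Proposition~\ref{abstractdensity}.

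First I would handle the group case. For $\SU(n,\C)$ one substitutes $\widehat{K}=\ym_{n-1}$, $D^{\lambda}=\prod_{1\le i<j\le n}\tfrac{\lambda_{i}-\lambda_{j}+j-i}{j-i}$, $-\kappa_{\lambda}=\tfrac{1}{n}\big({-}\tfrac{|\lambda|^{2}}{n}+\sum_{i}\lambda_{i}^{2}+(n+1-2i)\lambda_{i}\big)$ and $\chi^{\lambda}=s_{\lambda}$, and reads the first line directly. The cases of $\unit(n,\C)$ (labels relaxed to the non-increasing integer sequences $\mathfrak{Z}_{n}$), of $\unit\SP(n,\Hq)$ (type $\mathrm{C}_{n}$, torus of dimension $n$, prefactor $\tfrac{1}{2n}$) and of $\SO(2n+1,\R)$ (type $\mathrm{B}_{n}$, prefactor $\tfrac{1}{2n+1}$) are the same substitution. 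The one point that is not purely mechanical is $\SO(2n,\R)$: in type $\mathrm{D}_{n}$ there are two half-spin fundamental weights, so the representations of highest weights $\lambda$ and $\eps\lambda=(\lambda_{1},\dots,\lambda_{n-1},-\lambda_{n})$ are inequivalent when $\lambda_{n}>0$; since $\rho$ has vanishing last coordinate in type $\mathrm{D}_{n}$, these two representations share the same dimension and the same $\scal{\lambda+2\rho}{\lambda}$, so their two contributions add up to $D^{\lambda}\,\E^{\kappa_{\lambda}t/2}(sd_{\lambda}+sd_{\eps\lambda})$, and summing over $\lambda\in\tfrac12\ym_{n}$ (with $sd_{\lambda}+sd_{\eps\lambda}$ read as $sd_{\lambda}$ when $\lambda_{n}=0$) produces the stated formula. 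For all three orthogonal groups one argues on the simply connected covers $\mathrm{Spin}(2n{+}1)$, $\mathrm{Spin}(2n)$ as in \S\ref{weyltheory}, the half-partition labels being the spin representations.

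Next I would treat the non-group case, invoking Cartan's Theorem~\ref{helgason} in place of Peter--Weyl: $p_{t}^{X}=\sum_{\lambda\in\widehat{G}^{K}}D^{\lambda}\,\E^{\kappa_{\lambda}t/2}\,\phi^{\lambda}$, where $a_{\lambda}(0)=D^{\lambda}$ and the eigenvalue of $\tfrac12\Delta^{G/K}$ on $\mathscr{C}^{\lambda}(G/K)$ is still $\tfrac{\kappa_{\lambda}}{2}$ with $-\kappa_{\lambda}=\scal{\lambda+2\rho}{\lambda}$, because $G/K$ carries the induced metric and each $f\in\mathscr{C}^{\lambda}(G/K)$ is a linear combination of matrix coefficients of the ambient $G$-representation $\lambda$ (end of \S\ref{weyltheory}). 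Thus $D^{\lambda}$ is again the Weyl dimension formula of $G$ and the exponent the same rescaled product; it remains only to insert the list of spherical labels furnished by Cartan--Helgason --- even partitions ($2\ym$) for the spaces of real structures, partitions with all parts doubled ($\ym\ym$) for quaternionic structures, partitions of length $q$ with the appropriate palindromic $\pm$ pattern for the complex Grassmannians, $2\ym_{n}$ for $\unit\SP(n)/\unit(n)$, $\ym\ym_{2q}$ for the quaternionic Grassmannians, $\ym\ym_{n}$ for $\SO(2n)/\unit(n)$, and $2\ym_{q}\sqcup(2\ym_{q}\boxplus1)$ for the real Grassmannians --- each of which was already derived from the computation of $T^{\theta}=T\cap K$. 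Once the (half-)length of $\lambda$ is fixed, the general expressions for $\scal{\lambda+2\rho}{\lambda}$ and $D^{\lambda}$ collapse verbatim onto the displayed ones.

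The hard part will not be conceptual but clerical, and concentrated in two places. First, one must track the normalisation constant $\beta n/2$ of~\eqref{normalization} as it flows through $\rho$, through the Weyl dimension formula (unaffected, being a ratio) and through $-\kappa_{\lambda}=\tfrac{2}{\beta n}\scal{\lambda+2\rho}{\lambda}_{\mathrm{std}}$, and check that the scalar prefactors so produced agree with the constants $\alpha_{\klie}$ listed in \S\ref{brown}. Second, the orthogonal family forces one to pass to the spin cover, to handle the two half-spin fundamental weights of type $\mathrm{D}_{n}$ by the combining argument above, and, for $\SO(n)/(\SO(n-q)\times\SO(q))$ and $\SO(2n)/\unit(n)$, to realise $G/K$ --- or a suitable double cover such as $\Gra^{\pm}$ --- as a quotient of $\mathrm{Spin}(n)$ so that Cartan--Helgason literally applies, with attention to whether the oriented or the non-oriented Grassmannian occurs, since that is exactly what doubles (or not) the set of spherical labels. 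Beyond these two checkpoints, every line of the theorem is obtained by direct substitution into Proposition~\ref{abstractdensity}.
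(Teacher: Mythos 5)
Your proposal is correct and follows essentially the same route as the paper: the paper's proof of Theorem \ref{explicitdensity} is precisely the case-by-case substitution into Proposition \ref{abstractdensity} carried out in \S\ref{explicit} (explicit tori, roots, Weyl dimension formulas, Casimir coefficients, Schur-type characters, spin covers for the orthogonal family, the $\lambda$/$\eps\lambda$ pairing in type $\mathrm{D}_{n}$, and Cartan--Helgason applied to $T^{\theta}$ to get the spherical labels $2\ym$, $\ym\ym$, etc.). One small caution: your blanket claim that the dual scalar product on $\tlie^{*}$ is $\tfrac{2}{\beta n}$ times the standard euclidian one fails by a factor $2$ in the orthogonal case (the $2\times 2$ rotation blocks double the trace form on the torus, giving $\tfrac{1}{2n+1}$ and $\tfrac{1}{2n}$ rather than $\tfrac{2}{2n+1}$ and $\tfrac{2}{2n}$), but the concrete prefactors you actually substitute are the correct ones, so this does not affect the argument.
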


\begin{remark} In the case of complex Grassmannian varieties, it is understood that $\lambda_{n+1-i}=-\lambda_{i}$ as explained before. We have not tried to reduce the expressions in the previous formulas, so some simplifications can be made by replacing the indexing sets of type $2\ym_{p}$ or $\ym\ym_{p}$ by $\ym_{p}$. On the other hand, it should be noticed that in each case, the ``degree of freedom'' in the choice of partitions labeling the irreducible or spherical representations is exactly the rank of the Riemannian variety under consideration, that is to say the maximal dimension of flat totally geodesic sub-manifolds.
\end{remark}
\bigskip

\begin{example}[Brownian motions on spheres and projective spaces]
Let us examine the case $q=1$ for Grassmannian varieties: it corresponds to real spheres $\mathbb{S}^{n}(\R)=\SO(n+1,\R)/\SO(n,\R)$, to complex projective spaces $\mathbb{P}^{n}(\C)=\SU(n+1,\C)/\mathrm{S}(\unit(n,\C)\times \unit(1,\C))$ and to quaternionic projective spaces $\mathbb{P}^{n}(\Hq)=\unit\SP(n+1,\Hq)/(\unit\SP(n,\Hq)\times \unit\SP(1,\Hq))$. In each case, spherical representations are labelled by a single integer $k \in \N$. So, the densities are:
\begin{align}
p_{t}^{\mathbb{S}^{n}(\R)}(x)&=\sum_{k=0}^{\infty} \E^{-\frac{k(k+n-1)\,t}{2n+2}}\,\frac{(n-2+k)!}{(n-1)!\,k!}\,(2k+n-1)\,\,\phi_{n,k}^{\R}(x);\label{realsphere}\\
p_{t}^{\mathbb{P}^{n}(\C)}(x)&=\sum_{k=0}^{\infty} \E^{-\frac{k(k+n)\,t}{n+1}}\,\frac{((n-1+k)!)^{2}}{(n-1)!\,n!\,(k!)^{2}}\,(2k+n)\,\,\phi_{n,k}^{\C}(x);\label{complexprojective}\\
p_{t}^{\mathbb{P}^{n}(\Hq)}(x)&=\sum_{k=0}^{\infty}\E^{-\frac{k(k+2n+1)\,t}{2(n+1)}}\,\frac{(2n+k)!\,(2n-1+k)!}{(2n+1)!\,(2n-1)!\,(k+1)!\,k!}\,(2k+2n+1)\,\,\phi_{n,k}^{\Hq}(x).\label{quaternionicprojective}
\end{align}
In particular, one recovers the well-known fact that, up to the aforementioned normalization factor $(n+1)$, the eigenvalues of the Laplacian on the $n$-sphere are the $k(k+n-1)$, each with multiplicity $\frac{(n-2+k)!}{(n-1)!\,k!}\,(2k+n-1)$; see \emph{e.g.} \cite[\S3.3]{SC94}.
\end{example}
\medskip

\begin{example}[Torus and Fourier analysis]
Take the circle $\Tor=\unit(1,\C)=\mathbb{S}^{1}(\R)$. The Brownian motion on $\Tor$ is the projection of the real Brownian motion of density $p_{t}^{\R}(\theta)=\frac{1}{\sqrt{2\pi t}}\,\E^{-\theta^{2}/2t}$ by the map $\theta \mapsto \E^{\I \theta}$. Thus,
$$
p_{t}^{\Tor}(\E^{\I \theta})=2\pi\,\sum_{m=-\infty}^{\infty} p_{t}^{\R}(\theta+2m\pi)= \sqrt{\frac{2\pi}{t}}\sum_{m=-\infty}^{\infty}\E^{-\frac{(\theta+2m\pi)^{2}}{2t}}= \sqrt{\frac{2\pi}{t}}\,S(\theta,t).
$$
The series $S(\theta,t)$ is smooth and $2\pi$-periodic, so it is equal to its Fourier series $\sum_{n=-\infty}^{\infty} c_{k}(S(t))\,\E^{k\I \theta}$, with
$$
c_{k}(S(t))=\sum_{m=-\infty}^{\infty}\int_{0}^{2\pi} \E^{-\frac{(\theta+2m\pi)^{2}}{2t}}\,\E^{-k\I \theta}\,\frac{d\theta}{2\pi}=\frac{1}{2\pi}\int_{\R}\E^{-\frac{y^{2}}{2t}-k\I y}\,dy=\sqrt{\frac{t}{2\pi}}\,\E^{-\frac{k^{2}t}{2}}.
$$
Thus, the density of the Brownian motion on the circle with respect to the Haar measure $\frac{d\theta}{2\pi}$ is
$$
p_{t}^{\Tor}(\E^{\I \theta})=\sum_{k=-\infty}^{\infty}\E^{-\frac{k^{2}t}{2}}\,\E^{k\I \theta}=1+2\sum_{k=1}^{\infty}\E^{-\frac{k^{2}t}{2}}\,\cos k\theta,
$$
Since $s_{(k)}(\E^{\I \theta})=\E^{k\I \theta}$, this is indeed a specialization of the second formula of Theorem \ref{explicitdensity}, for $\unit(1,\C)$.
\end{example}
\medskip

\begin{example}[Brownian motion on the $3$-dimensional sphere] Consider the Brownian motion on $\unit\SP(1,\Hq)$, which is also $\SU(2,\C)$ by one of the exceptional isomorphisms. The specialization of the first formula of Theorem \ref{explicitdensity} for $\SU(2,\C)$ gives
$$
p_{t}^{\SU(2,\C)}(g)=\sum_{k=0}^{\infty} \E^{-\frac{k(k+2)\,t}{8}}\,(k+1)\,\frac{\sin (k+1)\theta}{\sin \theta},
$$
if $\E^{\pm\I\theta}$ are the eigenvalues of $g \in \SU(2,\C)$. It agrees with the example of \cite[\S4]{Liao04paper}, and also with Formula \eqref{realsphere} when $n=3$, since the group of unit quaternions is topologically a $3$-sphere.
\end{example}
\medskip

\begin{remark}
The previous examples show that the restrictions $n \geq n_{0}$ are not entirely necessary for the formulas of Theorem \ref{explicitdensity} to hold. One should only beware that the root systems of type $\mathrm{B}_{1}$, $\mathrm{C}_{1}$, $\mathrm{D}_{1}$ and $\mathrm{D}_{2}$ are somewhat degenerated, and that the dominant weights do not have the same indexing set as for $\mathrm{B}_{n\geq 2}$ or $\mathrm{C}_{n \geq 2}$ or $\mathrm{D}_{n\geq 3}$. For instance, for the special orthogonal group $\SO(3,\R)$, the only positive root is $e^{1}$, and the only fundamental weight is also $e^{1}$. Consequently, irreducible representations have highest weights $k\,e^{1}$ with $k\in \N$; the dimension of the representation of label $k$ is $2k+1$, and the corresponding character is again $\frac{\sin (k+1)\theta}{\sin \theta}$ if $\E^{\I\theta}$ and $\E^{-\I \theta}$ are the non-trivial eigenvalues of the considered rotation. So
$$
p_{t}^{\SO(3,\R)}(g)=\sum_{k=0}^{\infty} \E^{-\frac{k(k+1)\,t}{3}}\,(2k+1) \,\frac{\sin (k+1)\theta}{\sin \theta}
$$
if $g$ is a rotation of angle $\theta$ around some axis. 
\end{remark}
\bigskip\bigskip

\section{Upper bounds after the cut-off time}\label{upper}
Let $\mu$ be a probability measure on a compact Lie group $K$ or compact symmetric space $G/K$, that is absolutely continuous with respect to the Haar measure $\eta$, and with density $p$. Cauchy-Schwarz inequality ensures that
$$4\,\dtv(\mu,\eta)^{2}=\left(\int_{X} |p(x)-1|\,dx\right)^{2} \leq \int_{X} |p(x)-1|^{2}\,dx=\|p-1\|_{\leb^{2}(X)}^{2}.$$
The discussion of Section \ref{fourier} allows now to relate the right-hand side of this inequality with the harmonic analysis on $X$. Let us first treat the case of a compact Lie group $K$. If one assumes that $p$ is invariant by conjugacy, then Parseval's identity \eqref{parseval} shows that the right-hand side is $\sum_{\lambda \in \widehat{K}} |\chi^{\lambda}(p-1)|^{2}$. However, by orthogonality of characters, for any non-trivial irreducible representation of $K$ --- \emph{i.e.}, not equal to $\mathbf{1}_{K} : k \in K \mapsto 1$ --- one has 
$$\chi^{\lambda}(1)=\int_{K} \chi^{\lambda}(k)\,dk=\int_{K} \chi^{\lambda}(k)\,\chi^{\mathbf{1}_{K}}(k^{-1})\,dk=0.$$
On the other hand, for any measure $\mu$ on the group, $\chi^{\mathbf{1}_{K}}(\mu)=\int_{K}\chi^{\mathbf{1}}(k)\,\mu(dk)=\int_{K} \mu(dk)=1$. Hence, the inequality now takes the form 
$$4\,\dtv(\mu,\eta_{K})^{2}\leq \sum_{\lambda \in \widehat{K}}^{\prime} |\chi^{\lambda}(p)|^{2} ,$$
where the $\prime$ indicates that we remove the trivial representation from the summation. Similarly, on a compact symmetric space $G/K$, supposing that $p$ is $K$-invariant, Parseval's identity reads $\|p-1\|_{\leb^{2}(G/K)}^{2}=\sum_{\lambda \in \hatG^{K}} D^{\lambda}\,|\phi^{\lambda}(p-1)|^{2}$. However, for any non-trivial representation $\lambda$, 
$$\phi^{\lambda}(1)=\scal{e^{\lambda}}{\int_{G}\rho^{\lambda}(g)(e^{\lambda})\,dg}=0.$$ 
Indeed, using only elementary properties of the Haar measure, one sees that $\widehat{1}(\lambda)=\int_{G}\rho^{\lambda}(g)\,dg=0$, because it is a projector and it has trace $\chi^{\lambda}(1)=0$. So again, the previous inequality can be simplified and it becomes
$$4\,\dtv(\mu,\eta_{G/K})^{2}\leq \sum_{\lambda \in \widehat{G}^{K}}^{\prime} D^{\lambda}\,|\phi^{\lambda}(p)|^{2}.$$
In the setting and with the notations of Proposition \ref{abstractdensity}, a bound at time $t$ on $4 \,\dtv(\mu_{t},\eta_{K})^{2}$ (respectively, on $4\,\dtv(\mu_{t},\eta_{G/K})^{2}$) is then
$$ \sum_{\lambda \in \widehat{K}}^{\prime} \E^{-t\scal{\lambda+2\rho}{\lambda}}\,(D^{\lambda})^{2}\quad;\quad\text{respectively},\quad \sum_{\lambda \in \hatG^{K}}^{\prime} \E^{-t\scal{\lambda+2\rho}{\lambda}}\,D^{\lambda}.$$
\begin{proposition}\label{scaryseries}
In every classical case, $4\,\dtv(\mu_{t},\mathrm{Haar})^{2}$ is bounded by $\sum^{\prime}_{\lambda \in W_{n}} A_{n}(\lambda)\,\E^{-t\, B_{n}(\lambda)}$, where the indexing sets $W_{n}$ and the constants $B_{n}(\lambda)$ are the same as in Theorem \ref{explicitdensity}, and $A_{n}(\lambda)=(D^{\lambda})^{2}$ for compact Lie groups and $D^{\lambda}$ for compact symmetric spaces.
\end{proposition}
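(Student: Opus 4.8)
The plan is to combine the Cauchy--Schwarz bound already derived in the text with the explicit Fourier expansion of Proposition \ref{abstractdensity}. Recall that for a $K$-invariant density $p$ on a compact Lie group $K$ one has, by Parseval \eqref{parseval} applied to conjugacy-invariant functions together with the vanishing $\chi^{\lambda}(1)=0$ for non-trivial $\lambda$, the inequality $4\,\dtv(\mu,\eta_{K})^{2}\leq \sum_{\lambda\in\widehat{K}}^{\prime}|\chi^{\lambda}(p)|^{2}$; and analogously on a symmetric space $G/K$ one gets $4\,\dtv(\mu,\eta_{G/K})^{2}\leq \sum_{\lambda\in\widehat{G}^{K}}^{\prime}D^{\lambda}\,|\phi^{\lambda}(p)|^{2}$, using that $\widehat{1}(\lambda)$ is a trace-zero projector hence zero. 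Both of these are stated in the excerpt just above the proposition, so the only thing left is to substitute $p=p_{t}$.

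First I would recall from \S\ref{explicit} that the Fourier coefficient of $p_{t}^{K}$ on the class $\lambda$ is $a_{\lambda}(t)=(D^{\lambda})^{2}\,\E^{\kappa_{\lambda}t/2}$, so that $\chi^{\lambda}(p_{t})=\int_{K}\chi^{\lambda}(k)\,p_{t}^{K}(k)\,dk$ equals $D^{\lambda}\,\E^{\kappa_{\lambda}t/2}$ by orthonormality of characters. Hence $|\chi^{\lambda}(p_{t})|^{2}=(D^{\lambda})^{2}\,\E^{\kappa_{\lambda}t}$. Plugging $\kappa_{\lambda}=-\scal{\lambda+2\rho}{\lambda}$ from \eqref{kappa} gives exactly $\sum^{\prime}_{\lambda}(D^{\lambda})^{2}\,\E^{-t\scal{\lambda+2\rho}{\lambda}}$, which by the case-by-case values of $-\kappa_{\lambda}$ tabulated in Theorem \ref{explicitdensity} is the claimed series $\sum^{\prime}_{\lambda\in W_{n}}A_{n}(\lambda)\,\E^{-t\,B_{n}(\lambda)}$ with $A_{n}(\lambda)=(D^{\lambda})^{2}$ and $B_{n}(\lambda)=\scal{\lambda+2\rho}{\lambda}$. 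For the non-group case the computation is identical: $\phi^{\lambda}(p_{t})=\scal{e^{\lambda}}{\int_{G}p_{t}^{X}(gK)\,\rho^{\lambda}(g)(e^{\lambda})\,dg}=\phi^{\lambda}(\phi^{\lambda})\cdot(\text{coeff})$, and using the orthogonality relation $\int_{G/K}\phi^{\lambda}\,\overline{\phi^{\mu}}=\delta_{\lambda\mu}/D^{\lambda}$ one finds $\phi^{\lambda}(p_{t})=\E^{\kappa_{\lambda}t/2}$, so $D^{\lambda}|\phi^{\lambda}(p_{t})|^{2}=D^{\lambda}\,\E^{-t\scal{\lambda+2\rho}{\lambda}}$, giving $A_{n}(\lambda)=D^{\lambda}$.

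There is essentially no obstacle here: the proposition is a bookkeeping statement that merely records the combination of Cauchy--Schwarz, Parseval, and the already-computed heat-kernel coefficients, re-expressed in the explicit coordinates of Theorem \ref{explicitdensity}. The one point demanding a little care is the identification, in each of the ten families, that the exponent $\scal{\lambda+2\rho}{\lambda}$ coincides (after inserting the normalization \eqref{normalization} of the scalar product, i.e.\ the factor $\beta n/2$) with the explicit quadratic-in-$\lambda$ expression $B_{n}(\lambda)$ appearing in Theorem \ref{explicitdensity}; but this is precisely the content of the Casimir computations already carried out in \S\ref{weyltheory} and summarized there, so it suffices to cite them. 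I would therefore write the proof as a two or three line deduction, remarking that the sums are over non-trivial $\lambda$ (the $\prime$) because the trivial representation contributes $1$ to $p_{t}$ and is cancelled against the $-1$, and that convergence of the series for $t>0$ is guaranteed by the general smoothness of the heat kernel.
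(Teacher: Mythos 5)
Your proposal is correct and follows exactly the route the paper itself takes: the Cauchy--Schwarz and Parseval reduction (with the vanishing of $\chi^{\lambda}(1)$, resp.\ $\widehat{1}(\lambda)$, to drop the trivial term) combined with the Fourier coefficients $D^{\lambda}\,\E^{\kappa_{\lambda}t/2}$ of $p_{t}$ from Proposition \ref{abstractdensity}, and the identification of $\scal{\lambda+2\rho}{\lambda}$ with the explicit $B_{n}(\lambda)$ of Theorem \ref{explicitdensity}. Nothing is missing; the proposition is indeed just this bookkeeping.
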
\bigskip

\comment{\begin{sidewaystable}\vspace{15cm}
$$
\begin{tabular}{|c|c|c|c|}
\hline \vspace{-2.5mm}&&& \\
$K\,\, \mathit{or}\,\, G/K$ & $W_{n}$ & $A_{n}(\lambda)$ & $B_{n}(\lambda)$\\
\vspace{-2.5mm}& & & \\
\hline\hline \vspace{-2.5mm}& & & \\
$\SO(2n+1,\R)$& $\frac{1}{2}\ym_{n}$ & $\left(\prod_{1\leq i<j \leq n}\frac{\lambda_{i}-\lambda_{j}+j-i}{j-i}\prod_{1\leq i \leq j \leq n}\frac{\lambda_{i}+\lambda_{j}+2n+1-i-j}{2n+1-i-j}\right)^{2}$& $\frac{1}{2n+1}\left(\sum_{i=1}^{n} \lambda_{i}^{2}+(2n+1-2i)\lambda_{i}\right)$\\
\vspace{-2.5mm}& & & \\
\hline\vspace{-2.5mm}& & & \\
$\SO(2n,\R)$& $\frac{1}{2}\ym_{n}$& $2\left(\prod_{1\leq i<j \leq n}\frac{(\lambda_{i}-\lambda_{j}+j-i)(\lambda_{i}+\lambda_{j}+2n-i-j)}{(j-i)(2n-i-j)}\right)^{2}$& $\frac{1}{2n}\left(\sum_{i=1}^{n} \lambda_{i}^{2}+(2n-2i)\lambda_{i}\right)$\\
\vspace{-2.5mm}& & & \\
\hline\vspace{-2.5mm}& & & \\
$\SU(n,\C)$& $\ym_{n-1}$&  $\left(\prod_{1\leq i<j \leq n}\frac{\lambda_{i}-\lambda_{j}+j-i}{j-i}\right)^{2}$ &$\frac{1}{n}\left(-\frac{|\lambda|^{2}}{n}+\sum_{i=1}^{n-1}\lambda_{i}^{2}+(n+1-2i)\lambda_{i}\right)$ \\
\vspace{-2.5mm}& & & \\
\hline\vspace{-2.5mm}& & & \\
$\unit\SP(n,\Hq)$& $\ym_{n}$ & $\left(\prod_{1\leq i<j \leq n}\frac{\lambda_{i}-\lambda_{j}+j-i}{j-i}\prod_{1\leq i \leq j \leq n}\frac{\lambda_{i}+\lambda_{j}+2n+2-i-j}{2n+2-i-j}\right)^{2}$ & $\frac{1}{2n}\left(\sum_{i=1}^{n} \lambda_{i}^{2}+(2n+2-2i)\lambda_{i}\right)$\\
\vspace{-2.5mm}& & & \\
\hline\hline\vspace{-2.5mm}& & & \\
 $\Gra(2n+1,q,\R)$& $2\ym_{q} \sqcup 2\ym_{q} \boxplus 1$ & $\prod_{1\leq i<j \leq n}\frac{\lambda_{i}-\lambda_{j}+j-i}{j-i}\prod_{1\leq i \leq j \leq n}\frac{\lambda_{i}+\lambda_{j}+2n+1-i-j}{2n+1-i-j}$ &$\frac{1}{2n+1}\left(\sum_{i=1}^{q} \lambda_{i}^{2}+(2n+1-2i)\lambda_{i}\right)$ \\
\vspace{-2.5mm}& & & \\
\hline\vspace{-2.5mm}& & & \\
 $\Gra(2n,q,\R)$& $2\ym_{q}\sqcup 2\ym_{q} \boxplus 1$& $\prod_{1\leq i<j \leq n}\frac{(\lambda_{i}-\lambda_{j}+j-i)(\lambda_{i}+\lambda_{j}+2n-i-j)}{(j-i)(2n-i-j)}$ & $\frac{1}{2n}\left(\sum_{i=1}^{q} \lambda_{i}^{2}+(2n-2i)\lambda_{i}\right)$ \\
\vspace{-2.5mm}& & & \\
\hline\vspace{-2.5mm}& & & \\ $\Gra(n,q,\C)$ & $\ym_{q}$ & $\prod_{1\leq i<j \leq n}\frac{\lambda_{i}-\lambda_{j}+j-i}{j-i}$ & $\frac{2}{n}\left(\sum_{i=1}^{q} \lambda_{i}^{2}+(n+1-2i)\lambda_{i}\right)$ \\
\vspace{-2.5mm}& & & \\
\hline\vspace{-2.5mm}& & & \\
 $\Gra(n,q,\Hq)$ & $\ym\ym_{2q}$& $\prod_{1\leq i<j \leq n}\frac{\lambda_{i}-\lambda_{j}+j-i}{j-i}\prod_{1\leq i \leq j \leq n}\frac{\lambda_{i}+\lambda_{j}+2n+2-i-j}{2n+2-i-j}$& $\frac{1}{2n}\left(\sum_{i=1}^{2q}\lambda_{i}^{2}+(2n+2-2i)\lambda_{i}\right)$\\
\vspace{-2.5mm}& & & \\
\hline\hline\vspace{-2.5mm}& & & \\
 $\SO(2n,\R)/\unit(n,\C)$& $\ym\ym_{n}$& $\prod_{1\leq i<j \leq n}\frac{(\lambda_{i}-\lambda_{j}+j-i)(\lambda_{i}+\lambda_{j}+2n-i-j)}{(j-i)(2n-i-j)}$ & $\frac{1}{2n}\left(\sum_{i=1}^{n}\lambda_{i}^{2}+(2n-2i)\lambda_{i}\right)$\\
\vspace{-2.5mm}& & & \\
\hline\vspace{-2.5mm}& & & \\
 $\SU(n,\C)/\SO(n,\R)$& $2\ym_{n-1}$& $\prod_{1\leq i<j \leq n}\frac{\lambda_{i}-\lambda_{j}+j-i}{j-i}$ & $\frac{1}{n}\left(-\frac{|\lambda|^{2}}{n}+\sum_{i=1}^{n-1}\lambda_{i}^{2}+(n+1-2i)\lambda_{i}\right)$\\
\vspace{-2.5mm}& & & \\
\hline\vspace{-2.5mm}& & & \\
 $\SU(2n,\C)/\unit\SP(n,\Hq)$& $\ym\ym_{2n-1}$ & $\prod_{1\leq i<j \leq 2n}\frac{\lambda_{i}-\lambda_{j}+j-i}{j-i}$ & $\frac{1}{2n}\left(-\frac{|\lambda|^{2}}{2n}+\sum_{i=1}^{2n-2}\lambda_{i}^{2}+(2n+1-2i)\lambda_{i}\right)$\\
\vspace{-2.5mm}& & & \\
\hline\vspace{-2.5mm}& & & \\
 $\unit\SP(n,\Hq)/\unit(n,\C)$& $2\ym_{n}$& $\prod_{1\leq i<j \leq n}\frac{\lambda_{i}-\lambda_{j}+j-i}{j-i}\prod_{1\leq i \leq j \leq n}\frac{\lambda_{i}+\lambda_{j}+2n+2-i-j}{2n+2-i-j}$ &$\frac{1}{2n}\left(\sum_{i=1}^{n} \lambda_{i}^{2}+(2n+2-2i)\lambda_{i}\right)$\\
 &&&\\
\hline
\end{tabular}\label{sidetable}
$$
\end{sidewaystable}}

\comment{
\begin{remark}
One can shorten all these formulas by introducing the modified weight coordinates \vspace{2mm}
\begin{itemize}
\item $\mu_{i}=\lambda_{i}+\frac{n+1-2i}{2}$ in type $\mathrm{A}_{n-1}$, \emph{i.e.}, for $\SU(n,\C)$;\vspace{2mm}
\item $\mu_{i}=\lambda_{i}+n+\frac{1}{2}-i$ in type $\mathrm{B}_{n}$, \emph{i.e.}, for $\SO(2n+1,\R)$;\vspace{2mm}
\item $\mu_{i}=\lambda_{i}+n+1-i$ in type $\mathrm{C}_{n}$, \emph{i.e.}, for $\unit\SP(n,\Hq)$;\vspace{2mm}
\item $\mu_{i}=\lambda_{i}+n-i$ in type $\mathrm{D}_{n}$, \emph{i.e.}, for $\SO(2n,\R)$.\vspace{2mm}
\end{itemize}  
Then, $D^{\lambda}$ is equal to
\begin{align*}
&\frac{\vand(\mu_{1},\mu_{2},\ldots,\mu_{n})}{\vand(1,2,\ldots,n)} \,\,\,\,\quad\qquad\qquad\qquad\qquad\text{ in type }\mathrm{A}_{n-1};\\
&\left(\frac{2^{n^{2}}\prod_{i=1}^{n} \mu_{i}}{(2n-1)!!}\right)\frac{\vand(\mu_{1}^{2},\ldots,\mu_{n}^{2})}{\vand((2n-1)^{2},\ldots,1^{2})} \,\,\qquad\text{ in type }\mathrm{B}_{n};\\
&\left( \frac{\prod_{i=1}^{n}\mu_{i}}{n!}\right)\frac{\vand(\mu_{1}^{2},\ldots,\mu_{n}^{2})}{\vand(n^{2},\ldots,1^{2})} \,\,\,\,\,\quad\quad\qquad\qquad\text{ in type }\mathrm{C}_{n};\\
&\frac{\vand(\mu_{1}^{2},\ldots,\mu_{n}^{2})}{\vand((n-1)^{2},\ldots,0^{2})}  \qquad\qquad\qquad\qquad\qquad\text{ in type }\mathrm{D}_{n};
\end{align*}
where $\vand(x_{1},\ldots,x_{n})$ denotes the Vandermonde determinant. Also, $B_{n}(\lambda)$ is always an affine function (depending on $n$) of $\sum_{i=1}^{n} \mu_{i}^{2}$ in type $\mathrm{B}$, $\mathrm{C}$ and $\mathrm{D}$; and of $\sum_{i=1}^{n} \mu_{i}^{2} - \frac{1}{n}\left(\sum_{i=1}^{n}\mu_{i}\right)^{2}$ in type $\mathrm{A}$. 
\end{remark}\bigskip}

This section is now organized as follows. In \S\ref{order}, we compute the weights that minimize $B_{n}(\lambda)$; they will give the correct order of decay of the whole series after cut-off time. In \S\ref{versus}, we then show case-by-case that all the other terms of the series $S_{n}(t)$ of Proposition \ref{scaryseries} can be controlled uniformly. Essentially, we adapt the arguments of \cite{Ros94,Por96a,Por96b}, though we also introduce new computational tricks. As explained in the introduction, the main reason why one has a good control over $S_{n}(t)$ after cut-off time is that each term $T_{n}(\lambda,t)=A_{n}(\lambda)\,\E^{-t\,B_{n}(\lambda)}$ of the series $S_{n}(t)$ stays bounded when $t=t_{\text{cut-off}}$, for every $n$, every class $\lambda$ and in every case. We have unfortunately not found a way to factorize all the computations needed to prove this, so each case will have to be treated separately. However, the scheme of the proof will always be the same, and the reader will find the main arguments in \S\ref{sympupper} (for symplectic groups and their quotients), so he can safely skip \S\ref{oddupper}-\ref{unitaryupper} if he does not want to see the minor modifications needed to treat the other cases.\bigskip

\comment{\begin{remark}
The bounds obtained on the series $S_{n}(t)$ in \S\ref{versus} are not at all optimal, and in particular one can conjecture that the exponent of $n$ in these bounds can at least be multiplied by a factor $2$. A possible way to improve our bounds would be to use an alternative approach to the control of the series $S_{n}(t)$ that is related to the problem of minimization of the \emph{logarithmic potential}
$$
\iint_{x \neq y } \log \frac{1}{|x-y|} \,\nu(dx)\nu(dy)+\int_{\R} P(x)\,\nu(dx)
$$
of a probability measure $\nu$ on the real line (\emph{cf.} \cite{ST97}). Set $t=(1+\eps)\,t_{\text{cut-off}}$, and consider only the case of compact groups. The crucial remark is that up to some explicit constant $f_{n}(t)$, the general term $T_{n}(\lambda,t)$ of the series $S_{n}(t)$ is equal to the exponential of a logarithmic potential of either the discrete measure $\sum_{i=1}^{n} \delta_{\mu_{i}}$ in the case of unitary groups, or of the discrete measure $\sum_{i=1}^{n} \delta_{(\mu_{i})^{2}}$ in the case of special orthogonal groups or compact symplectic groups (the $\mu_{i}'s$ are the modified weight coordinates):
\begin{align*} T_{n}(\lambda,t)&\propto
V(\mu_{1},\ldots,\mu_{n})^{2}\,\E^{-\frac{t}{n}\sum_{i=1}^{n}\mu_{i}^{2}}\quad\text{when }K=\unit(n);\\
T_{n}(\lambda,t)&\propto \left(\prod_{i=1}^{n}\mu_{i}\right)^{\!2\,}V(\mu_{1}^{2},\ldots,\mu_{n}^{2})^{2}\,\E^{-\frac{t}{2n+1}\sum_{i=1}^{n}\mu_{i}^{2}}\quad\text{when }K=\SO(2n+1);\\
T_{n}(\lambda,t)&\propto \left(\prod_{i=1}^{n}\mu_{i}\right)^{\!2\,}V(\mu_{1}^{2},\ldots,\mu_{n}^{2})^{2}\,\E^{-\frac{t}{2n}\sum_{i=1}^{n}\mu_{i}^{2}}\quad\text{when }K=\unit\SP(n);\\
T_{n}(\lambda,t)&\propto V(\mu_{1}^{2},\ldots,\mu_{n}^{2})^{2}\,\E^{-\frac{t}{2n}\sum_{i=1}^{n}\mu_{i}^{2}}\quad\text{when }K=\SO(2n).
\end{align*}
Assume for a moment that the $\mu_{i}$'s are arbitrary (ordered) \emph{real} numbers, instead of integers of half-integers. Then one can use the following well-known results on discrete minimizers of logarithmic potentials, see \cite[Chapters 5 and 6]{Szego} and \cite[Volume 2, Chapter 10]{MOT53}. Denote 
$$
H_{n}(x)=(-1)^{n}\, \E^{x^{2}}\frac{d^{n}}{dx^{n}}\left(\E^{-x^{2}}\right)
$$
the \emph{Hermite polynomials}, and 
$$
L_{n}^{(-1)}(x)=\frac{x\,\E^{x}}{n!}\,\frac{d^{n}}{dx^{n}} \left(\E^{-x}\,x^{n-1}\right)\qquad;\qquad L_{n}^{(0)}(x)=\frac{\E^{x}}{n!}\,\frac{d^{n}}{dx^{n}} \left(\E^{-x}\,x^{n}\right)
$$
the generalized \emph{Laguerre polynomials} of weight $-1$ and $0$. If $\frac{1}{n}\sum_{i=1}^{n} \mu_{i}^{2}\leq L$, then the maximum of $\Delta(\mu_{1},\ldots,\mu_{n})^{2}$ is attained if and only if the $\mu_{i}$'s are the $n$ zeroes of the polynomial $H_{n}(\sqrt{\frac{n-1}{2L}}\,x).$ Similarly, under the same assumption $\frac{1}{n}\sum_{i=1}^{n} \mu_{i}^{2}\leq L$, the maximum of $\left(\prod_{i=1}^{n} \mu_{i}\right)\Delta(\mu_{1}^{2},\ldots,\mu_{n}^{2})$ is attained if and only if the $\mu_{i}$'s are the $n$ zeroes of the polynomial  $L_{n}^{(0)}(nx/L)$, and the maximum of $\Delta(\mu_{1}^{2},\ldots,\mu_{n}^{2})$ is attained if and only if the $\mu_{i}$'s are the $n$ zeroes of the polynomial  $L_{n}^{(-1)}((n-1)x/L)$. In each case, these values can be calculated, see \cite[Theorem 6.71]{Szego}. This enables one to have an explicit bound on $T_{n}(\lambda,t)$ assuming that $\sum_{i=1}^{n} \mu_{i}^{2}$ is contained in some interval $[\alpha,\beta]$. Multiplying these bounds by $\frac{1}{n!}$ times the number of integer of half-integer points in the ``orange peel'' $$\left\{(x_{1},\ldots,x_{n})\in \R^{n}\,\,\bigg|\,\,\sum_{i=1}^{n} x_{i}^{2} \in [kn^{3},(k+1)n^{3}]\right\},$$ and then summing these bounds over $k$ provides an upper bound $n^{-2\eps}$ on the part of $S_{n}(t)$ corresponding to weights such that $\sum_{i=1}^{n} \mu_{i}^{2}$ is big enough, say bigger than $10\,n^{3}$ --- we skip the details of these computations, as they are quite similar to what will be done in \S\ref{versus}. Unfortunately, we were not able to use this method to bound the whole series $S_{n}(t)$: indeed, for ``small'' values of $\sum_{i=1}^{n} \mu_{i}^{2}$, the points of the lattice of weights cannot approximate sufficiently the roots of the aforementioned renormalized Hermite or Laguerre polynomials, and therefore the corresponding bound was too crude for most of the corresponding terms in $S_{n}(t)$. Nonetheless, this alternative method hints at possible better bounds on $S_{n}(t)$.
\end{remark}}

\subsection{Guessing the order of decay of the dominating series}\label{order}
Remember the restriction $n \geq 2$ (respectively, $n \geq 3$ and $n \geq 10$) when studying special unitary groups (resp., compact symplectic groups and special orthogonal groups) and their quotients. We use the superscript $\star$ to denote a set of partitions or half-partitions minus the trivial partition $(0,0,\ldots,0)$. The lemma hereafter allows to guess the correct order of decay of the series under study.
\begin{lemma}\label{decay}
Each weight $\lambda_{\min}$ indicated in the table hereafter corresponds to an irreducible representation in the case of compact groups, and to a spherical irreducible representation in the case of symmetric spaces of type non-group. The table also gives the corresponding values of $A_{n}$ and $B_{n}$. In the group case, $B_{n}(\lambda_{\min})$ is minimal among $\{B_{n}(\lambda),\,\,\lambda \in W_{n}^{\star}\}$.
\end{lemma}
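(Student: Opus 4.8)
The strategy is to reduce everything to an elementary optimization over the weight lattice, case by case, exploiting the explicit formulas of Theorem \ref{explicitdensity} (equivalently, of Proposition \ref{scaryseries}) for $A_{n}(\lambda)$ and $B_{n}(\lambda)$. First I would observe that in each classical family, $B_{n}(\lambda)$ is (up to the positive normalizing constant depending on $n$ and $\beta$) of the form $\sum_{i}\lambda_{i}^{2}+c_{i}\lambda_{i}$ with strictly positive coefficients $c_{i}=2n+O(1)-2i$ for the ``$\lambda_{i}$-linear'' part, and the quadratic part is a positive semi-definite form (in type $\mathrm{A}$ one has the extra $-|\lambda|^{2}/n$ term, but $\sum\lambda_{i}^{2}-\frac{1}{n}(\sum\lambda_{i})^{2}\geq 0$ by Cauchy--Schwarz, so $B_{n}$ is still nonnegative and vanishes only at $\lambda=0$). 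Since each $\lambda_{i}$ ranges over a set of nonnegative integers (or half-integers, or even integers, etc., according to the relevant indexing set $W_{n}$), on the punctured set $W_{n}^{\star}$ the function $\lambda\mapsto B_{n}(\lambda)$ is minimized at the ``smallest admissible'' weight: the one supported on the first coordinate only, with $\lambda_{1}$ equal to the smallest admissible nonzero value ($1$ in the partition cases, $\tfrac12$ in the half-partition cases, $2$ in the $2\ym$ cases, $(1,1,0,\dots)$ in the $\ym\ym$ cases, and the analogous ``first fundamental weight'' in the Grassmannian cases where the weight comes paired as $(\lambda_{1},0,\dots,0,-\lambda_{1})$). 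This is because increasing any $\lambda_{i}$ by the minimal admissible step strictly increases $B_{n}$ (the linear coefficients $c_{i}$ are positive and the quadratic contribution is nonnegative), and moving weight to a later coordinate only decreases $c_{i}$ but one checks directly that the first coordinate is still optimal given the ordering constraint $\lambda_{1}\geq\lambda_{2}\geq\cdots$; the monotonicity argument is cleanest done coordinate by coordinate.

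Concretely, for each of the eleven lines of the table I would: (i) identify $\lambda_{\min}$ as the candidate just described; (ii) verify it lies in $W_{n}^{\star}$ (i.e. satisfies the parity/doubling/length constraints) and corresponds to a genuine (spherical) irreducible representation -- this is immediate from the Cartan--Helgason description recalled in \S\ref{explicit}, since e.g. the first fundamental representation of $\SU(n)$, $\SO(n)$ or $\unit\SP(n)$ is the geometric representation, its symmetric or exterior square gives the $2\ym$ and $\ym\ym$ cases, etc.; (iii) plug $\lambda_{\min}$ into the dimension formula from Theorem \ref{explicitdensity} to get $A_{n}(\lambda_{\min})$ as an explicit rational function of $n$ (and $q$), and into the Casimir formula to get $B_{n}(\lambda_{\min})$; (iv) prove minimality of $B_{n}(\lambda_{\min})$ over $W_{n}^{\star}$ by the monotonicity argument above. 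The verifications in (iii) are routine substitutions into $\prod_{i<j}\frac{\lambda_{i}-\lambda_{j}+j-i}{j-i}$ and its $\mathrm{B}/\mathrm{C}/\mathrm{D}$ analogues, which telescope nicely because $\lambda_{\min}$ has only one (or two) nonzero part; for the spheres and projective spaces this recovers the classical eigenvalue multiplicities $\frac{(n-2+k)!}{(n-1)!\,k!}(2k+n-1)$ etc. already displayed in \eqref{realsphere}--\eqref{quaternionicprojective} specialized to $k=1$ (or $k=2$ in the even-parts cases).

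The only genuinely delicate point is part (iv) in type $\mathrm{A}$, because of the indefinite-looking $-|\lambda|^{2}/n$ correction: one must check that for $\SU(n)$, among all nonzero partitions of length $n-1$, the single-box partition $\lambda_{\min}=(1,0,\dots,0)$ really minimizes $-\frac{|\lambda|^{2}}{n}+\sum_{i}\lambda_{i}^{2}+(n+1-2i)\lambda_{i}$, and similarly for the quotients $\SU(n)/\SO(n)$ (minimum at $(2,0,\dots,0)$) and $\SU(2n)/\unit\SP(n)$ (minimum at $(1,1,0,\dots,0)$). This follows by writing $B_{n}(\lambda)=\frac1n\sum_{i<j}(\lambda_{i}-\lambda_{j})^{2}\cdot\frac1n \cdot(\text{something})+\frac1n\sum_{i}(n+1-2i)\lambda_{i}$ -- more precisely using $\sum_{i}\lambda_{i}^{2}-\frac1n(\sum\lambda_{i})^{2}=\frac1n\sum_{i<j}(\lambda_{i}-\lambda_{j})^{2}$ -- so that $B_{n}(\lambda)$ is manifestly a sum of a nonnegative term and $\frac1n\sum_{i}(n+1-2i)\lambda_{i}$, and the latter is minimized (over nonzero dominant $\lambda$) exactly when all the weight sits in coordinate $1$. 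I expect this rewriting of the type-$\mathrm{A}$ Casimir to be the main obstacle; once it is in hand, minimality is transparent, and the remaining eight (type $\mathrm{B}$, $\mathrm{C}$, $\mathrm{D}$) lines are easier since there the quadratic form $\sum_{i}\lambda_{i}^{2}$ is already positive-definite and the linear coefficients $2n\pm O(1)-2i$ are positive throughout the admissible range, so no rewriting is needed -- one simply notes that replacing $\lambda$ by any admissible weight with strictly smaller first part, or zero, strictly decreases $B_{n}$.
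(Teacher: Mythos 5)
Your plan follows the paper's route in spirit (explicit case-by-case minimization over the weight lattice, using the type-$\mathrm{A}$ identity $\sum_{i}\lambda_{i}^{2}-\frac{1}{n}(\sum_{i}\lambda_{i})^{2}=\frac{1}{n}\sum_{i<j}(\lambda_{i}-\lambda_{j})^{2}$), but the two steps you declare ``transparent'' are exactly where the work lies, and as written they fail. For $\SU(n)$, after the rewriting you minimize only the linear part $\frac{1}{n}\sum_{i}(n+1-2i)\lambda_{i}$ and drop the nonnegative quadratic term. Writing the linear part as $\sum_{l}l(n-l)(\lambda_{l}-\lambda_{l+1})$ shows it is $\geq n-1$ on nonzero partitions, so discarding the quadratic term only yields $B_{n}(\lambda)\geq\frac{n-1}{n}$, which is strictly below the claimed minimum $1-\frac{1}{n^{2}}$; hence this bound cannot identify $\lambda_{\min}$ or its value. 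One must in addition prove that the quadratic term is itself $\geq\frac{n-1}{n}$ on nonzero partitions (counting the $l(n-l)$ pairs $(i,j)$ straddling an index $l$ with $\lambda_{l}>\lambda_{l+1}$) and then observe that both lower bounds are attained simultaneously at $(1,0,\dots,0)_{n-1}$; this two-sided argument is what the paper supplies and what your sketch omits. Moreover, your claim that the linear form is minimized ``exactly when all the weight sits in coordinate $1$'' is false: $(1,\dots,1)_{n-1}$ attains the same value and is in fact a second minimizer of $B_{n}$, as the paper itself points out.

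For the special orthogonal groups, coordinatewise monotonicity does not close the argument, because $W_{n}=\frac{1}{2}\ym_{n}$ is the union of partitions and half-partitions, and a half-partition must have \emph{all} its parts in $\N+\frac{1}{2}$: your candidate with $\lambda_{1}=\frac{1}{2}$ supported on the first coordinate is not an admissible weight. The punctured set $W_{n}^{\star}$ therefore has two coordinatewise-minimal elements, $(1,0,\dots,0)_{n}$ and the spin weight $(\frac{1}{2},\dots,\frac{1}{2})_{n}$, and one must compare the two values of $B_{n}$ (for $\SO(2n)$, in the normalization $2n\,B_{n}$: $2n-1$ against $\frac{n(2n-1)}{4}$); the comparison goes the right way only because of the standing restriction on the rank ($n\geq5$), and for small rank the spin weight ties or even wins (they tie for $\SO(8)$, and the spin weight wins for $\SO(6)$). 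Your proposal never makes this comparison. Finally, your step (iv) proposes to prove minimality of $B_{n}(\lambda_{\min})$ also in the non-group cases, but the lemma deliberately does not claim this, and it is actually false for real Grassmannians: for $\Gra(n,q,\R)$ the admissible set contains all-odd partitions such as $(1,\dots,1)_{q}$, which are not coordinatewise above $(2,0,\dots,0)_{q}$ and have $B_{n}<2$ for small $q$ (the remark following the lemma records the case $q=1$), so the monotonicity argument would there be attempting to prove a false statement.
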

\begin{remark}
For symmetric spaces of type non-group, one can also check the minimality of $B_{n}(\lambda_{\min})$, except for certain real Grassmannian varieties $\Gra(n,q,\R)$. For instance, if $q=1$, then $(1)_{q}$ labels the geometric representation of $\SO(n,\R)$ on $\C^{n}$, which has indeed an invariant vector by $\SO(n-1,\R)\times\SO(1,\R)$; and the corresponding value of $B(\lambda)$ is $(n-1)/n < 2$. Fortunately, $\lambda_{\min}$, though not minimal, will still yield in this case the correct order of decay of the series $S(t)$.
\end{remark}

\begin{remark}
To each ``minimal''  weight $\lambda_{\min}$ corresponds a very natural representation. Namely, for a special orthogonal group $\SO(n,\R)$ (respectively, a compact symplectic group $\unit\SP(n,\Hq)$), the minimizer is the ``geometric'' representation over $\C^{n}$ (respectively $\C^{2n}$) corresponding to the embedding $\SO(n,\R) \hookrightarrow \SO(n,\C) \hookrightarrow \GL(n,\C)$ (respectively $\unit\SP(n,\Hq) \hookrightarrow \SU(2n,\C)\hookrightarrow \GL(2n,\C)$). For a special unitary group $\SU(n,\C)$, one has again the geometric representation over $\C^{n}$, and its compose with the involution $k \mapsto (k^{t})^{-1}$ corresponds to the label $(1,\ldots,1)_{n-1}$, which also minimizes $B_{n}(\lambda)$. The case of spherical minimizers is more involved but still workable: we shall detail it in Section \ref{lower}.
\end{remark}

\begin{proof} To avoid any ambiguity, we shall use indices to precise the length of a partition or half-partition. Let us first find the minimizers of $B_{n}(\lambda)$ in the group case:
\vspace{2mm}

$$
\label{specialpage}
\begin{tabular}{|c|c|c|c|}
\hline\vspace{-2.5mm}&&&\\
 $K$ or $G/K$&  $\lambda_{\min}$ & $B_{n}(\lambda_{\min})$ &$A_{n}(\lambda_{\min})$  \\
 \vspace{-2.5mm}&&&\\
\hline\hline \vspace{-2.5mm}&&&\\
$\SO(2n+1,\R)$ &$(1,0,\ldots,0)_{n}$  & $\frac{2n}{2n+1}$ & $(2n+1)^{2}$\\
\vspace{-2.5mm}&&& \\
\hline \vspace{-2.5mm}&&&\\ 
$\SO(2n,\R)$ &$(1,0,\ldots,0)_{n}$ & $\frac{2n-1}{2n}$ & $4n^{2}$\\
\vspace{-2.5mm}&&&\\
\hline \vspace{-2.5mm}&&&\\
$\SU(n,\C)$  & $(1,0,\ldots,0)_{n-1}$ & $1-\frac{1}{n^{2}}$ & $n^{2}$\\
\vspace{-2.5mm}&&&\\
\hline\vspace{-2.5mm} &&& \\
$\unit\SP(n,\Hq)$ & $(1,0,\ldots,0)_{n}$ &$\frac{2n+1}{2n}$& $4n^{2}$\\
\vspace{-2.5mm}&&&\\
\hline\hline\vspace{-2.5mm}&&&\\
 $\Gra(2n+1,q,\R)$ & $(2,0,\ldots,0)_{q}$ & $2$& $2n^{2}+3n$\\
 \vspace{-2.5mm}&&&\\
\hline\vspace{-2.5mm}&&&\\
 $\Gra(2n,q,\R)$ & $(2,0,\ldots,0)_{q}$ & $2$ &$2n^{2}+n-1 $\\
 \vspace{-2.5mm}&&&\\
 \hline\vspace{-2.5mm}&&&\\
 $\Gra(n,q,\C)$ & $(1,0,\ldots,0)_{q}$ & $2$ & $n^{2}-1$\\
 \vspace{-2.5mm}&&&\\
 \hline\vspace{-2.5mm}&&&\\
 $\Gra(n,q,\Hq)$ & $(1,1,0,\ldots,0)_{2q}$ & $2$ & $(n-1)(2n+1)$ \\
 \vspace{-2.5mm}&&&\\
\hline\hline\vspace{-2.5mm}&&&\\
 $\SO(2n,\R)/\unit(n,\C)$ & $(1,1,0,\ldots,0)_{n}$ & $\frac{2(n-1)}{n}$ & $n(2n-1)$\\
 \vspace{-2.5mm}&&&\\
\hline\vspace{-2.5mm}&&&\\
 $\SU(n,\C)/\SO(n,\R)$  & $(2,0,\ldots,0)_{n-1}$ & $\frac{2(n-1)(n+2)}{n^{2}}$ &$\frac{n(n+1)}{2}$\\
 \vspace{-2.5mm}&&&\\
\hline\vspace{-2.5mm}&&&\\
 $\SU(2n,\C)/\unit\SP(n,\Hq)$  & $(1,1,0,\ldots,0)_{2n-1}$ &$\frac{(n-1)(2n+1)}{n^{2}}$ & $n(2n-1)$\\
 \vspace{-2.5mm}&&&\\
\hline\vspace{-2.5mm}&&&\\
 $\unit\SP(n,\Hq)/\unit(n,\C)$  & $(2,0,\ldots,0)_{n}$ &$\frac{2(n+1)}{n}$ & $n(2n+1)$\\
&&&\\
\hline
\end{tabular}
 $$
\begin{itemize}\vspace{2mm}
\item $\SU(n)$: one has to minimize
$$-\frac{|\lambda|^{2}}{n}+\sum_{i=1}^{n-1} \lambda_{i}^{2}+(n+1-2i) \lambda_{i}=\frac{1}{n}\left(\sum_{1 \leq i<j\leq n}(\lambda_{i}-\lambda_{j})^{2}\right)+\left(\sum_{i=1}^{n-1} i(n-i)(\lambda_{i}-\lambda_{i+1})\right)=A+B
$$
over $\ym_{n-1}^{\star}$. In $B$, at least one term is non-zero, so $$B \geq \left(\min_{i \in \lle 1,n-1\rre}i(n-i)\right)=n-1,$$ with equality if and only if $\lambda=(1,0,\ldots,0)_{n-1}$ or $\lambda=(1,\ldots,1)_{n-1}$. In both cases, $A$ is then equal to $\frac{n-1}{n}$. However, $\frac{n-1}{n}$ is also the minimum value of $A$ over $\ym_{n-1}^{\star}$. Indeed, there is at least one index $l \in \lle 1,n-1\rre$ such that $\lambda_{l}>\lambda_{l+1}$. Then all the $(\lambda_{i}-\lambda_{j})^{2}$ with $i \leq l$ and $j \geq l+1$ give a contribution at least equal to $1$, and there are $l(n-l)$ such contributions. Thus $$A \geq \frac{l(n-l)}{n}\geq \frac{n-1}{n},$$ and one concludes that $\min B_{n}(\lambda)$ is obtained only for the two aforementioned partitions, and is equal to $\frac{1}{n}(A_{\min}+B_{\min})=1-\frac{1}{n^{2}}$.  \vspace{2mm}
\item $\SO(2n)$: the quantity to minimize over $\frac{1}{2}\ym_{n}^{\star}$ is
$$
\left(\sum_{i=1}^{n} \lambda_{i}^{2} \right)+ \left(\sum_{i=1}^{n-2} i(2n-1-i)(\lambda_{i}-\lambda_{i+1})\right)+n(n-1)\lambda_{n-1}=A+B+C,
$$
again with $A$, $B$ and $C$ non-negative in each case. Only $A$ involves $\lambda_{n}$, so a minimizer satisfies necessarily $\lambda_{n}=0$ (partitions) or $\lambda_{n}=\frac{1}{2}$ (half-partitions). In the case of partitions, a minimizer of $B+C$ is $(1,0,\ldots,0)_{n}$, which gives the value $\min_{i\in\lle 1,n-1\rre}i(2n-1-i)=2n-2$. The same sequence minimizes $A$ over $\ym_{n}^{\star}$, so the minimal value of $A+B+C$ over non-trivial partitions is $2n-1$ and it is obtained only for $(1,0,\ldots,0)_{n}$. On the other hand, over half-partitions, the minimizer is $\left(\frac{1}{2},\ldots,\frac{1}{2}\right)_{n}$, giving the value $$\frac{n}{4}+\frac{n(n-1)}{2}=\frac{n(2n-1)}{4}.$$ Since we assume $2n\geq 10$ and therefore $n \geq 5$, this value is strictly bigger than $2n-1$, so the only minimizer of $B_{n}(\lambda)$ in $\frac{1}{2}\ym_{n}^{\star}$ is $(1,0,\ldots,0)_{n}$.
\vspace{2mm}
\item $\SO(2n+1)$: exactly the same reasoning gives the unique minimizer $(1,0,\ldots,0)_{n}$, with corresponding value $2n$ for $A+B+C=(2n+1)\,B_{n}(\lambda)$. \vspace{2mm}
\item $\unit\SP(n)$: here one has only to look at partitions, and the same reasoning as for $\SO(2n)$ and $\SO(2n+1)$ yields the unique minimizer $(1,0,\ldots,0)_{n}$, corresponding to the value $2n+1$ for $2n\,B_{n}(\lambda)$.\vspace{2mm}
\end{itemize}
The spherical minimizers are obtained by the same techniques; however, some cases (with $n$ or $q$ too small) are exceptional, so we have only retained in the statement of our Lemma the ``generic'' minimizer. The corresponding values of $A_{n}(\lambda)$ and $B_{n}(\lambda)$ are easy calculations.
\end{proof}
\bigskip

Suppose for a moment that the series $S_{n}(t)$ of Proposition \ref{scaryseries} has the same behavior as its ``largest term'' $A_{n}(\lambda_{\min})\,\E^{-t\,B_{n}(\lambda_{\min})}$. We shall show in a moment that this is indeed true just after cut-off time (for $n$ big enough). Then, $S_{n}(t)$ is a $O(\cdot)$ of\vspace{1mm}
\begin{itemize}
\item $n^{2}\,\E^{-t}$ for classical simple compact Lie groups;\vspace{2mm}
\item $n^{2}\,\E^{-2t}$ for classical simple compact symmetric spaces of type non-group.\vspace{2mm}
\end{itemize}
Set then $t_{n,\eps}=\alpha\,(1+\eps)\,\log n$, with $\alpha=2$ in the case $n^{2}\,\E^{-t}$, and $\alpha=1$ in the case $n^{2}\,\E^{-2t}$. Under the assumption $S_{n}(t)\sim A_{n}(\lambda_{\min})\,\E^{-t\,B_{n}(\lambda_{\min})}$, one has $S_{n}(t_{n,\eps}) = O(n^{-2\eps}).$
Thus, the previous computations lead to the following guess: the cut-off time is\vspace{1mm}
\begin{itemize}
\item $2 \log n$ for classical simple compact Lie groups;\vspace{2mm}
\item $\log n$ for classical simple compact symmetric spaces of type non-group.\vspace{2mm}
\end{itemize}

\subsection{Growth of the dimensions versus decay of the Laplace-Beltrami eigenvalues}\label{versus}
The estimate $S_{n}(t_{n,\eps})\sim A_{n}(\lambda_{\min})\,\E^{-t_{n,\eps}\,B_{n}(\lambda)}=O(n^{-2\eps})$ might seem very optimistic; nonetheless, we are going to prove that the sum of all the other terms $A_{n}(\lambda)\,\E^{-t_{n,\eps}\,B_{n}(\lambda)}$ in $S_{n}(t)$ does not change too much this bound, and that one still has at least $ S(t_{n,\eps})=O\left(n^{-\frac{\eps}{2}}\right).$ We actually believe that at least in the group case, the exponent $2\eps$ is good, \emph{cf.} the remark before \S\ref{order} --- the previous discussion shows that it is then optimal.\bigskip

Suppose that one can bound $A_{n}(\lambda)\,\E^{-t_{n,\eps}\,B_{n}(\lambda)}$ by $c(n)^{|\lambda|}$, where $|\lambda|$ is the size of the partition and $c(n)$ is some function of $n$ that goes to $0$ as $n$ goes to infinity (say, $Cn^{-\delta \eps}$). We can then use:
\begin{lemma}\label{partition}
Assume $x \leq \frac{1}{2}$. Then, the sum over all partitions $\sum_{\lambda} x^{|\lambda|}$, which is convergent, is smaller than $1+5x$. Consequently,
$$\sum_{\substack{\lambda \in \ym_{n} \\
\lambda \neq (0,\ldots,0)}} x^{|\lambda|}\leq 5x.$$
\end{lemma}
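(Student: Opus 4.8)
The plan is to use Euler's product formula for the partition generating series together with convexity, all the rest being a single numerical estimate. Recall that for $0\le x<1$ the series $\sum_{\lambda}x^{|\lambda|}$ is convergent and equals
\[
f(x):=\prod_{i\ge 1}\frac{1}{1-x^{i}}=\sum_{m\ge 0}p(m)\,x^{m},
\]
the first sum running over all integer partitions (the empty one included, contributing $x^{0}=1$) and $p(m)$ being the number of partitions of $m$. Since all coefficients $p(m)$ are non-negative, term-by-term differentiation shows that $f$ is convex and non-decreasing on $[0,1)$, with $f(0)=1$. So the claimed bound $f(x)\le 1+5x$ for $0\le x\le\tfrac12$ will follow once we know $f(\tfrac12)\le\tfrac72$.

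The main point is therefore this numerical estimate. I would peel off the first four factors of the product exactly,
\[
f\bigl(\tfrac12\bigr)=2\cdot\frac43\cdot\frac87\cdot\frac{16}{15}\cdot\prod_{i\ge 5}\frac{1}{1-2^{-i}}=\frac{1024}{315}\,\prod_{i\ge 5}\frac{1}{1-2^{-i}},
\]
and control the tail by taking logarithms: from $-\log(1-u)\le\frac{u}{1-u}$ one gets $\log\prod_{i\ge 5}\frac{1}{1-2^{-i}}\le\sum_{i\ge 5}\frac{1}{2^{i}-1}$, and since $2^{i}-1\ge\frac{31}{32}\,2^{i}$ for $i\ge 5$ this sum is at most $\frac{32}{31}\sum_{i\ge 5}2^{-i}=\frac{2}{31}$. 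Using $\E^{t}\le(1-t)^{-1}$ for $0\le t<1$ this gives $f\bigl(\tfrac12\bigr)\le\frac{1024}{315}\cdot\frac{31}{29}=\frac{31744}{9135}<\frac72$.

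It remains to invoke convexity. For $x\in[0,\tfrac12]$, writing $x=(1-2x)\cdot 0+2x\cdot\tfrac12$ and using that $f$ lies below its chords, $f(x)\le(1-2x)f(0)+2x\,f\bigl(\tfrac12\bigr)\le 1+2x\bigl(\tfrac72-1\bigr)=1+5x$, which is the first assertion. For the consequence, every $\lambda\in\ym_{n}$ different from $(0,\dots,0)$ is a non-empty partition, so $\sum_{\lambda\in\ym_{n},\,\lambda\neq(0,\dots,0)}x^{|\lambda|}\le\sum_{\lambda\neq\emptyset}x^{|\lambda|}=f(x)-1\le 5x$, uniformly in $n$.

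I do not expect any genuine obstacle. The only mild subtlety is that $f(\tfrac12)\le\tfrac72$ is close to sharp — the exact value is about $3.463$ — so one must retain enough leading factors of the product before estimating the tail (peeling off only three of them and then using $\E^{t}\le(1-t)^{-1}$ narrowly fails). One should also keep in mind the convention that $\ym_{n}$, the set of partitions of length $n$, allows trailing zeros, which is exactly what makes the bound independent of $n$.
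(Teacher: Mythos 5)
Your proof is correct and follows essentially the same route as the paper: convexity of the partition generating series $\prod_{i\geq 1}(1-x^{i})^{-1}$ reduces the affine bound $1+5x$ to checking the two endpoints $x=0$ and $x=\tfrac{1}{2}$. The only difference is that you justify $P\left(\tfrac{1}{2}\right)\leq \tfrac{7}{2}$ by an explicit tail estimate of the Euler product, whereas the paper simply quotes the numerical value $P\left(\tfrac{1}{2}\right)\leq 3.463$; your estimate is valid and makes the lemma self-contained.
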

\begin{proof}
The power series $P(x)=\sum_{\lambda} x^{|\lambda|} = \prod_{i=1}^{\infty}\frac{1}{1-x^{i}}=1+x+2x^{2}+3x^{3}+5x^{4}+\cdots$ has radius of convergence $1$, and it is obviously convex on $\R_{+}$. Thus, it suffices to verify the bound at $x=0$ and $x=\frac{1}{2}$. However,
$$P(0)=1=1+(5\times 0)\quad;\quad P\left(\frac{1}{2}\right)\leq 3.463\leq 1+\left(5\times \frac{1}{2}\right).\vspace{-7mm}$$
\end{proof}
\vspace{2mm}
\noindent With this in mind, the idea is then to control the growth of the coefficients $A_{n}(\lambda)$, starting from the trivial partition $(0,\ldots,0)$. This is also what is done in \cite{Por96a,Por96b}, but the way we make our partitions grow is different. The simplest cases to treat in this perspective are the compact symplectic groups and their quotients.
\bigskip

\subsubsection{Symplectic groups and their quotients}\label{sympupper}
Set $t_{n,\eps}=2(1+\eps)\log n$; in particular, $t_{n,0}=2\log n$. We fix a partition $\lambda \in \ym_{n}$, and for $k \leq \lambda_{n}$, we denote $\rho_{k,n}$ the quotient of the dimensions $D^{\lambda}$ associated to the two rectangular partitions
\begin{equation}
(k,\ldots,k)_{n} \quad\text{and} \quad(k-1,\ldots,k-1)_{n}.\label{evolutionfirst}
\end{equation}
Using the formula given in \S\ref{explicit} in the case of compact symplectic groups, one obtains:
\begin{align*}
\rho_{k,n}=\prod_{1 \leq i\leq j \leq n} \frac{2k+2n+2-i-j}{2k+2n-i-j}=\prod_{1 \leq i\leq j \leq n} 1+\frac{2}{2k+2n-i-j}\leq \exp\left(\sum_{1\leq i\leq j\leq n} \frac{2}{2k+2n-i-j}\right).
\end{align*}
The double sum can be estimated by standard comparison techniques between sums and integrals. Namely, since $x,y \mapsto \frac{1}{2k+2n-x-y}$ is convex on $\{(x,y)\,\,|\,\, x \geq 0,\,\, y \geq 0,\,\,2k+2n \geq x+y\}$, one can bound each term by
$$\frac{2}{2k+2n-i-j} \leq \iint_{\left[i-\frac{1}{2},i+\frac{1}{2}\right]\times\left[j-\frac{1}{2},j+\frac{1}{2}\right] }\, \frac{2}{2k+2n-x-y}\,dx\,dy.$$
We use this bound for non-diagonal terms with indices $i<j$, and for diagonal terms with $i=j$, we use the simpler bound
$$\sum_{i=1}^{n}\frac{1}{k+n-i} = \sum_{u=0}^{n-1} \frac{1}{k+u}=H_{k+n-1}-H_{k-1}\leq \frac{1}{k}+\log(k+n-1)-\log k $$
where $H_{n}$ denotes the $n$-th harmonic sum. So,
\begin{align*}\log \rho_{k,n} &\leq \sum_{1 \leq i\leq j\leq n}\frac{2}{2k+2n-i-j} \leq H_{k+n-1}-H_{k-1}+\iint_{\left[\frac{1}{2},n+\frac{1}{2}\right]^{2}} \frac{1}{2k+2n-x-y}\,dx\,dy \\ 
&\leq \frac{1}{k}+\log(k+n-1)-\log k\\
&\quad+(2k+2n-1)\log(2k+2n-1)+(2k-1)\log(2k-1)-2(2k+n-1)\log(2k+n-1).\end{align*}
On the other hand, the same transformation on partitions makes $-t_{n,0}\,B_{n}(\lambda)$ evolve by $-(2k+n)\log n$. So, if $\eta_{k,n}^{2}$ is the quotient of the quantities $(D^{\lambda})^{2}\,\E^{-t_{n,0}\,B_{n}(\lambda)}$ with $\lambda$ as in Equation \eqref{evolutionfirst}, then 
\begin{align*}
\log \eta_{k,n} &\leq -\frac{2k+n}{2}\log n + \frac{1}{k}+\log(k+n-1)-\log k\\
&\quad+(2k+2n-1)\log(2k+2n-1)+(2k-1)\log(2k-1)-2(2k+n-1)\log(2k+n-1).
\end{align*}
Suppose $k \geq 2$. Then, one can fix $n\geq 3$ and study the previous expression as a function of $k$. Its derivative is then always negative, so $\log \eta_{k,n}\leq \log \eta_{2,n}$, which is also always negative. From this, one deduces that 
$$D^{\lambda}\,\E^{-\frac{t_{n,0}}{2}\,B_{n}(\lambda)} \leq \eta_{1,n}$$ for any rectangular partition $(\lambda_{n},\ldots,\lambda_{n})_{n}$; indeed, the left-hand side is the product of the contributions $\eta_{k,n}$ for $k$ in $\lle 1,\lambda_{n}\rre$. However, $\eta_{1,n}$ is also smaller than $1$: in this case, the dimension is given by the exact formula
$$D^{(1,\ldots,1)_{n}}=\mathrm{Cat}_{n+1}=\frac{1}{n+2}\binom{2n+2}{n+1},$$
so $\eta_{1,n}=\mathrm{Cat}_{n+1}\,\E^{-\frac{n+2}{2}\log n}$, which can be checked to be smaller than $1$ for every $n \geq 3$. So in fact,
$$D^{\lambda}\,\E^{-\frac{t_{n,0}}{2}\,B_{n}(\lambda)} \leq 1$$ for any rectangular partition $(\lambda_{n},\ldots,\lambda_{n})_{n}$. 
\bigskip

The previous discussion hints at the more general result:
\begin{proposition}\label{superboundsymplectic}
In the case of compact symplectic groups, at cut-off time, 
$$D^{\lambda}\,\E^{-\frac{t_{n,0}}{2}\,B_{n}(\lambda)} \leq \frac{14}{3}$$
for any integer partition $\lambda$ of length $n$ (not only the rectangular partitions).
\end{proposition}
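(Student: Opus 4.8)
The idea is to bound $D^{\lambda}\E^{-\frac{t_{n,0}}{2}B_n(\lambda)}$ for an arbitrary partition $\lambda$ of length $n$ by starting from the trivial partition and building up $\lambda$ one box at a time, controlling the multiplicative change at each step. The excerpt has already handled the rectangular partitions $(\lambda_n,\dots,\lambda_n)_n$, showing their contribution is $\le 1$; so the natural strategy is to first strip off the largest rectangle inscribed in $\lambda$, i.e. write $\lambda = (\lambda_n,\dots,\lambda_n)_n + \mu$ where $\mu = (\lambda_1-\lambda_n,\dots,\lambda_{n-1}-\lambda_n,0)$ has $\mu_n = 0$, and then grow $\mu$ column by column (or row by row) on top of that rectangle.

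First I would set up the column-addition step. Fix the rectangular base and consider adding a column of height $h \le n-1$ to an existing shape; I want the ratio of $D^{\lambda'}$ to $D^{\lambda}$ after this operation, times the corresponding factor $\E^{-\frac{t_{n,0}}{2}(B_n(\lambda')-B_n(\lambda))}$. Using the $\unit\SP(n,\Hq)$ dimension formula from \S\ref{explicit},
$$D^{\lambda}=\prod_{1\leq i<j \leq n}\frac{\lambda_{i}-\lambda_{j}+j-i}{j-i} \prod_{1\leq i\leq j \leq n} \frac{\lambda_{i}+\lambda_{j}+2n+2-i-j}{2n+2-i-j},$$
the change in $\log D^{\lambda}$ from $\lambda_i \mapsto \lambda_i+1$ for $i\le h$ telescopes into a sum of logs of the form $\log\frac{\text{something}+1}{\text{something}}$, each of which is $\le \frac{1}{\text{something}}$; these I would estimate by sum-to-integral comparison exactly as in the rectangular computation (convexity of $1/(c-x-y)$ and $1/(c-x)$). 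Meanwhile the eigenvalue increment $B_n(\lambda')-B_n(\lambda) = \frac{1}{2n}\sum_{i=1}^{h}(2\lambda_i+1+2n+2-2i)$ produces a factor $\E^{-\frac{t_{n,0}}{2}\cdot(\cdots)}$; with $t_{n,0}=2\log n$ this is $n^{-\frac{1}{2n}\sum_{i\le h}(2\lambda_i+2n+3-2i)}$, which is extremely small (at least like $n^{-h}$ since each summand is $\ge 2n+3-2h \ge n$ roughly). The point is that the polynomial-in-$n$ growth from the dimension is dwarfed by the power-of-$n$ decay from the eigenvalue, so each added box (beyond the base rectangle) contributes a factor $\le c(n)$ with $c(n) = Cn^{-\delta}$ for some $\delta>0$, uniformly. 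Then Lemma \ref{partition} packages the sum over all such $\mu$: $\sum_{\mu} c(n)^{|\mu|} \le 1 + 5c(n)$, and multiplying by the rectangular bound $\le 1$ and being slightly generous with constants for small $n \ge 3$ gives the clean constant $\frac{14}{3}$.

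The main obstacle I anticipate is getting the bookkeeping clean enough that the constants actually close at the worst case. The rectangular part is already shown to be $\le 1$, but I need the per-box factor $c(n)$ to be genuinely $<1$ (indeed small) for \emph{all} positions of the added box, including boxes added high up where $h$ is close to $n$ and $\lambda_i$ is large — there the dimension ratio can be as big as a constant power of $n$, and I must verify the eigenvalue decay $n^{-\frac{1}{2n}\sum(2\lambda_i+2n+3-2i)}$ still wins. A careful worst-case analysis (taking $\lambda_i$ minimal, i.e. just above the base, and $h$ worst) is needed; the convexity/monotonicity argument used for $\log\eta_{k,n}$ in the rectangular case is the template, but here it must be carried out in the two-parameter family indexed by the column being added, and one must check the derivative-in-the-relevant-parameter is negative uniformly for $n\ge 3$. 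I would also need to double-check the normalization $\frac{t_{n,0}}{2}=\log n$ matches the $B_n$ of Proposition \ref{scaryseries} (where the exponential carries $-t\,B_n(\lambda)$, hence for $(D^\lambda)^2$ one wants $\E^{-t B_n}$ and for $D^\lambda$ the square-root $\E^{-\frac{t}{2}B_n}$, consistent with the statement). Once the per-box bound is in hand the rest is the routine application of Lemma \ref{partition} plus a finite check for small $n$.
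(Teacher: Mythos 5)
Your template (grow the partition step by step and control the multiplicative increments of $D^{\lambda}\,\E^{-\frac{t_{n,0}}{2}B_{n}(\lambda)}$) is indeed the one used in the paper, but the quantitative heart of your argument fails. It is not true that every box added beyond the base rectangle multiplies the tracked quantity by a uniformly small factor $c(n)=Cn^{-\delta}$, and the dangerous steps are not the ones you flag (tall columns, large $\lambda_{i}$) but the very first, smallest ones, where $B_{n}(\lambda)$ is of order $1$ while $D^{\lambda}$ is already of order $n$ or $n^{2}$. Concretely, after the first box one has $\lambda=(1,0,\ldots,0)_{n}$ with $D^{\lambda}=2n$ and $B_{n}(\lambda)=\frac{2n+1}{2n}$, so $D^{\lambda}\,\E^{-\log n\,B_{n}(\lambda)}=2\,n^{-\frac{1}{2n}}$, which tends to $2$; hence along any box-by-box path some increment is close to $2$, not $O(n^{-\delta})$. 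Likewise the step $(1,0,\ldots,0)_{n}\to(1,1,0,\ldots,0)_{n}$ carries the factor $\frac{(2n+1)(n-1)}{2n}\,n^{-\frac{2n-1}{2n}}$, which also exceeds $1$ for all $n\geq 4$. This is precisely why the paper's proof isolates two exceptional increments, $\eta_{1,1}\leq 2$ and $\eta_{1,2}\leq\frac{7}{3}$, proves that all the other layer increments $\eta_{k,l}$ are $\leq 1$, and obtains $\frac{14}{3}=2\cdot\frac{7}{3}$ as the product of the exceptions; once your uniform per-box bound is gone, your scheme has no mechanism that could produce this (or any) finite constant.

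Your final aggregation step is also a category error: Proposition \ref{superboundsymplectic} is a pointwise bound, valid for each fixed $\lambda$, and Lemma \ref{partition}, which sums $x^{|\lambda|}$ over all partitions, plays no role in it — in the paper it is used afterwards, together with the proposition and the inequality $B_{n}(\lambda)\geq\frac{|\lambda|}{2}$, to bound the series $S_{n}(t_{n,\eps})$. Note that if your per-box claim were correct, the pointwise quantity would simply be bounded by $1$ (the rectangle bound times factors $<1$), so neither $\frac{14}{3}$ nor Lemma \ref{partition} would be needed; reaching for that lemma to produce the constant is a sign of conflating the two stages of the argument. Finally, ``column by column (or row by row)'' are not interchangeable here: the paper's increments add $1$ simultaneously to the first $l$ parts (a full column of height $l$ on top of the already fixed parts $\lambda_{l+1},\ldots,\lambda_{n}$), and the determinantal cancellations in the resulting ratio $\rho_{k,l}$ are what make the estimates close at the exponent $\frac{t_{n,0}}{2}$; a genuine single-box scheme would require its own two-parameter worst-case analysis (position of the box and size of the parts), which your proposal does not supply.
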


\figcap{\includegraphics{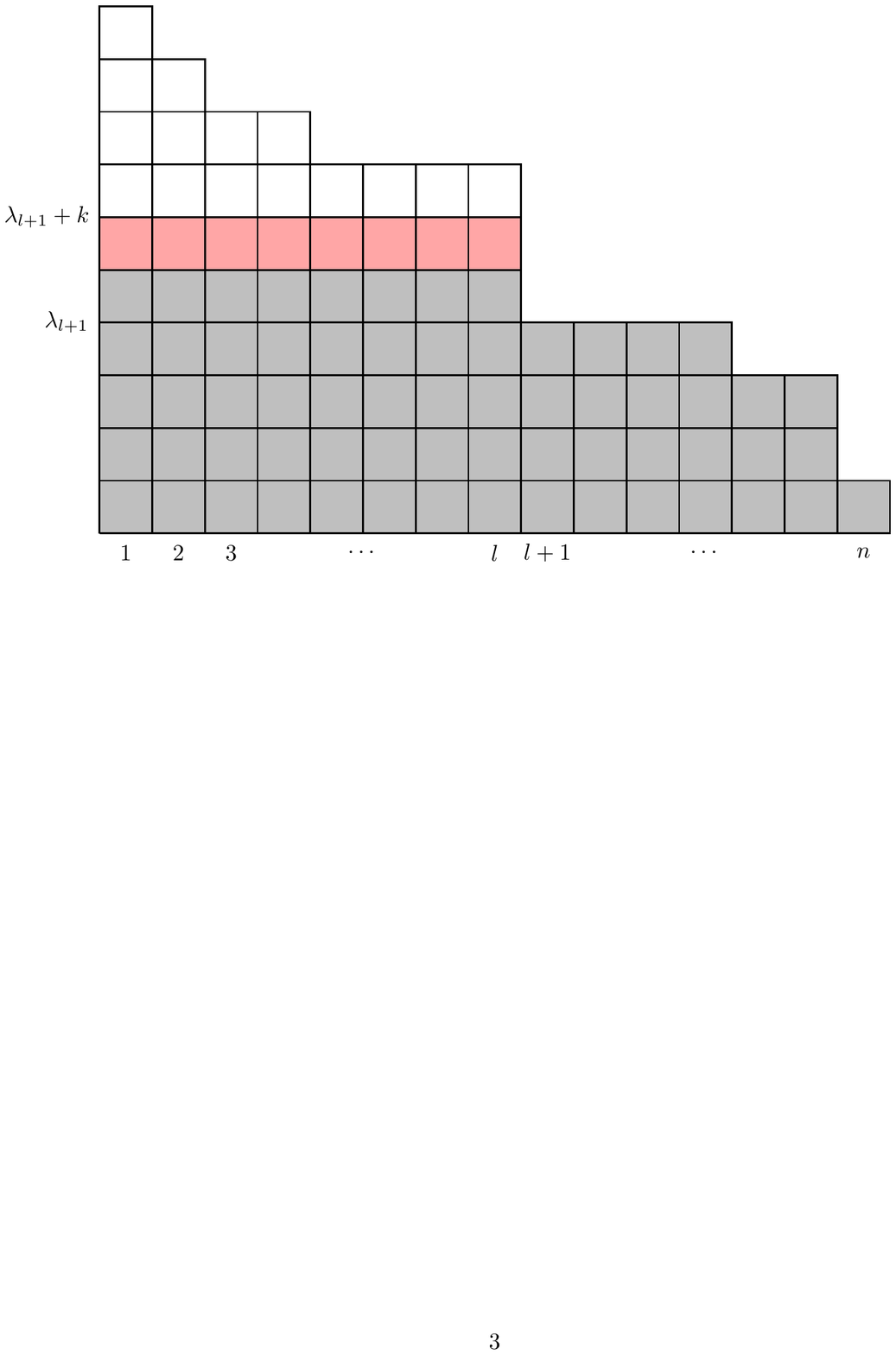}}{One makes the partitions grow layer by layer, starting from the bottom.\label{grow}}

\begin{proof}
We fix $l \in \lle 1,n-1\rre$, and the idea is again to study the quotient $\rho_{k,l}$ of the dimensions associated to the two partitions
\begin{equation}
(k+\lambda_{l+1},\ldots,k+\lambda_{l+1},\lambda_{l+1},\ldots,\lambda_{n})_{n} \quad\text{and} \quad(k-1+\lambda_{l+1},\ldots,k-1+\lambda_{l+1},\lambda_{l+1},\ldots,\lambda_{n})_{n},\label{evolution}
\end{equation}
where $k$ is some integer smaller than $\lambda_{l}-\lambda_{l+1}$ --- in other words, the $n-l$ last parts of our partition have already been constructed, and one adds $k$ to the $l$ first parts, until $k=\lambda_{l}-\lambda_{l+1}$; see Figure \ref{grow}.\bigskip

The transformation on partitions described by Equation \eqref{evolution} makes the quantity $-t_{n,0}\,B_{n}(\lambda)$ change by $-\frac{l(2k'+2n-l)}{n}\log n$. We shall prove that this variation plus $\log \rho_{k,l}$ is almost always negative. For convenience, we will treat separately the cases $l=1$ or $2$ and the case $l\geq 3$; hence, suppose first that $l \in \lle 3,n-1\rre$. The quotients of Vandermonde determinants can be simplified as follows:
$$\rho_{k,l}=\!\prod_{j=l+1}^{n} \frac{k+j-1+\lambda_{l+1}-\lambda_{j}}{k+j-l-1+\lambda_{l+1}-\lambda_{j}}\,\frac{k+\lambda_{l+1}+\lambda_{j}+2n+1-j}{k+\lambda_{l+1}+\lambda_{j}+2n+1-j-l} \prod_{1\leq i \leq j \leq l}\!\!\frac{2k+2\lambda_{l+1}+2n+2-i-j}{2k+2\lambda_{l+1}+2n-i-j}.$$
Notice that the second product $\rho_{k,l,(2)}$ in this formula is very similar to $\rho_{k,n}$; the main difference is that indices $i,j$ are now smaller than $l$ (instead of $n$). Hence, by adapting the arguments, one obtains
\begin{align*}
\log \rho_{k,l,(2)}&\leq \sum_{1\leq i\leq j \leq l} \frac{2}{2k'+2n-i-j} \leq H_{k'+n-1}-H_{k'+n-l-1} +\iint_{\left[\frac{1}{2},l+\frac{1}{2}\right]^{2}}\frac{1}{2k'+2n-x-y}\,dx\,dy\\
&\leq \frac{1}{k'+n-l}+\log(k'+n-1)-\log(k'+n-l)+(2k'+2n-1)\log(2k'+2n-1)\\
&\quad+(2k'+2n-2l-1)\log(2k'+2n-2l-1)-2(2k'+2n-l-1)\log(2k'+2n-l-1)
\end{align*}
where $k'$ stands for $k+\lambda_{l+1}$. So, if $(\eta_{k,l})^{2}$ is the quotient of the quantities $(D^{\lambda})^{2}\,\E^{-t_{n,0}\,B_{n}(\lambda)}$ with $\lambda$ as in Equation \eqref{evolution}, then $\log \eta_{k,l} \leq \log \widetilde{\eta}_{k,l}+\log \rho_{k,l,(1)}$, where $\log \widetilde{\eta}_{k,l}$ is given by
\begin{align*}
&-\frac{l(2k'+2n-l)}{2n}\log n+\frac{1}{k'+n-l}+\log(k'+n-1)-\log(k'+n-l)\\
&+(2k'+2n-1)\log(2k'+2n-1)+(2k'+2n-2l-1)\log(2k'+2n-2l-1)\\
&-2(2k'+2n-l-1)\log(2k'+2n-l-1),
\end{align*}
and $\rho_{k,l,(1)}$ is the first product in the expansion of $\rho_{k,l}$. Let us analyze these two quantities separately.\vspace{2mm}
\begin{itemize}
\item $\log \widetilde{\eta}_{k,l}$: here the technique is really the same as for $\log \eta_{k,n}$. Namely, with $n$ and $l$ fixed, $\log \widetilde{\eta}_{k,l}$ appears as a decreasing function of $x=k'$, because its derivative with respect to $x$ is
\begin{align*}&-\frac{l\,\log n}{n}-\frac{1}{(x+n-l)^{2}}+\frac{1}{x+n-1}-\frac{1}{x+n-l}\\
&+2\big(\log(2x+2n-1)+\log(2x+2n-2l-1)-2\log(2x+2n-l-1)\big).
\end{align*}
A upper bound on the first line is $-\frac{(l-1)\log n}{n}\leq 0$ (remember that $n \geq 3$ and therefore $\log n \geq 1$), and the second line is negative by concavity of the logarithm.  From this, one deduces that $\log\widetilde{\eta}_{k,l}\leq\log\widetilde{\eta}_{1,l}$, and we shall use this estimate in order to compensate the other part of $\log \eta_{k,l}$:
\begin{align*}
\log \widetilde{\eta}_{k,l}&\leq -\frac{l(2v+2+2n-l)}{2n}\log n+\frac{1}{v+n+1-l}+\log(v+n)-\log(v+n+1-l)\\
&\quad+(2v+2n+1)\log(2v+2n+1)+(2v+2n-2l+1)\log(2v+2n-2l+1)\\
&\quad-2(2v+2n-l+1)\log(2v+2n-l+1)
\end{align*}
where $v$ stands for $\lambda_{l+1}$.
\vspace{2mm}
\item $\log \rho_{k,l,(1)}$: in the product $\rho_{k,l,(1)}$, each term of index $j$ writes as
\begin{align*}
\frac{(k'+n)^{2}-(\lambda_{j}+n+1-j)^{2}}{(k'+n-l)^{2}-(\lambda_{j}+n+1-j)^{2}}&\leq \frac{(k'+n)^{2}-(\lambda_{l+1}+n+1-j)^{2}}{(k'+n-l)^{2}-(\lambda_{l+1}+n+1-j)^{2}} \\
&\leq \frac{k+j-1}{k+j-l-1}\,\frac{k''+2n+1-j}{k''+2n+1-j-l}
\end{align*}
with $k''=k+2\lambda_{l+1}=k+2v$; and multiplying all these bounds together, one gets
$$
\rho_{k,l,(1)}\leq \frac{(k+n-1)!}{(k+l-1)!}\,\frac{(k-1)!}{(k+n-l-1)!}\,\frac{(k''+2n-l)!}{(k''+n)!}\,\frac{(k''+n-l)!}{(k''+2n-2l)!}.
$$
Again, this is decreasing in $k$, so
$$
\rho_{k,l,(1)}\leq \frac{n!\,(2v+2n-l+1)!\,(2v+n-l+1)!}{l!\,(n-l)!\,(2v+n+1)!\,(2v+2n-2l+1)!}.
$$
Recall the classical Stirling estimates: for $m \geq 1$,
$$\log m! =m\log m +\frac{1}{2}\log m-m+\log \sqrt{2\pi}+\frac{1}{12m}-r_{m},\quad\text{with }\,0 \leq r_{m} \leq \frac{1}{360m^{3}}.$$
It enables us to bound $\log \rho_{k,l,(1)}$ by the sum of the following quantities:\vspace{2mm}
\begin{itemize}
\item[{$\star$}] $A=(2v+2n-l+1)\log(2v+2n-l+1)+(2v+n-l+1)\log(2v+n-l+1)$\\
\phantom{blop}~$-(2v+n+1)\log(2v+n+1)-(2v+2n-2l+1)\log(2v+2n-2l+1)$.\vspace{2mm}
\item[{$\star$}] $B=\frac{1}{2}(\log(2v+2n-l+1)+\log(2v+n-l+1)-\log(2v+n+1)-\log(2v+2n-2l+1))$,
which is non-positive by concavity of the logarithm.\vspace{2mm}
\item[{$\star$}] $C=n\log n - l \log l - (n-l) \log (n-l)$.\vspace{2mm}
\item[{$\star$}] $D=\frac{1}{2}(\log n - \log l - \log(n-l))$. This is non-positive unless $n=l+1$ --- recall that we assume for the moment $l \in \lle 3,n-1\rre$. In that case, it is smaller than $\frac{1}{2(n-1)}$.\vspace{2mm} 
\item[{$\star$}] $E=\frac{1}{12}\left(\frac{1}{n}-\frac{1}{l}-\frac{1}{n-l}+\frac{1}{2v+2n-l+1}+\frac{1}{2v+n-l+1}-\frac{1}{2v+n+1}-\frac{1}{2v+2n-2l+1}\right)$.  \vspace{2mm}
\item[{$\star$}] $F=\frac{1}{360}\left(\frac{1}{l^{3}}+\frac{1}{(n-l)^{3}}+\frac{1}{(2v+n+1)^{3}}+\frac{1}{(2v+2n-2l+1)^{3}}\right)$.\vspace{2mm}
\end{itemize}
The sum of the two last terms $E\!F=E+F$ happens to be negative. Indeed, $E$ and $F$ are decreasing in $v$ (we use the convexity of $x \mapsto \frac{1}{x^{2}}$ to show that $\frac{dE}{dv}\leq 0$), so it suffices to check the result when $v=0$. Then, with $l$ fixed, 
\begin{align*}E\!F(n,l)&=\frac{1}{12}\left(\frac{1}{n}-\frac{1}{l}-\frac{1}{n-l}+\frac{1}{2n-l+1}+\frac{1}{n-l+1}-\frac{1}{n+1}-\frac{1}{2n-2l+1}\right)\\
&\quad+\frac{1}{360}\left(\frac{1}{l^{3}}+\frac{1}{(n-l)^{3}}+\frac{1}{(n+1)^{3}}+\frac{1}{(2n-2l+1)^{3}}\right)
\end{align*}
is decreasing in $n$, hence smaller than its value when $n=l+1$. So, it suffices to look at $E\!F(l+1,l)$, which is now increasing in $l$, but still negative. Thus, in the following, we shall use the bound
$$\log \rho_{k,l,(1)}\leq A+C+D\leq A+C+\frac{1}{2n-2}.$$\vspace{2mm}
\end{itemize}
\noindent Adding together the bounds previously demonstrated, we get
\begin{align*}
\log \eta_{k,l}&\leq-\frac{l(2v+2+2n-l)}{2n}\log n+\frac{1}{2n-2}+\frac{1}{v+n+1-l}+\log(v+n)-\log(v+n+1-l)\\
&\quad+(2v+2n+1)\log(2v+2n+1)-(2v+2n-l+1)\log(2v+2n-l+1)\\
&\quad+(2v+n-l+1)\log(2v+n-l+1)-(2v+n+1)\log(2v+n+1)\\
&\quad+n \log n - l \log l -(n-l)\log(n-l).
\end{align*}
By concavity of $x \log x$, the sum of the second and third rows is non-positive. What remains is decreasing in $l$ and in $v$, and when $l=3$ and $v=0$, we get
$$\frac{3}{2n}\log n + \frac{1}{2n-2}+\frac{1}{n-2}+\log \left(\frac{n}{n-2}\right)+(n-3)\log \left(\frac{n}{n-3}\right)-3\log 3$$
which is maximal for $n=5$, and still (barely) negative at this value. Thus, we have shown so far that $\eta_{k,l} \leq 1$ for any $k$, any $l \in \lle 3,n-1\rre$, and any partition $\lambda$ that we fill as in Figure \ref{grow}.\bigskip

When $l=1$ or $l=2$, the approximations on $\log \eta_{k,l}$ that we were using before are not good enough, but we can treat these cases separately. When $l=1$,
\begin{align*}
\rho_{k,1}&=\frac{\lambda_{2}+k+n}{\lambda_{2}+k+n-1}\,\prod_{j=2}^{n}\frac{k+j-1+\lambda_{2}-\lambda_{j}}{k+j-2+\lambda_{2}-\lambda_{j}}\,\frac{k+\lambda_{2}+\lambda_{j}+2n+1-j}{k+\lambda_{2}+\lambda_{j}+2n-j}\\
&\leq \frac{k+n}{k+n-1}\,\prod_{j=2}^{n}\frac{k+j-1}{k+j-2}\,\frac{k+2n+1-j}{k+2n-j}=\frac{k+2n-1}{k};\\
\eta_{k,1}&\leq \frac{k+2n-1}{k}\,\E^{-\frac{2k+2n-1}{2n}\log n}.
\end{align*}
If $k=1$, which only happens once when one makes the partition grow, then the bound above is $2n\,\E^{-\frac{2n+1}{2n}\log n}\leq 2$. On the other hand, if $k \geq 2$, then the bound is decreasing in $k$ and therefore smaller than $\left(n+\frac{1}{2}\right)\E^{-\frac{2n+3}{2n}\log n}\leq1$. So, one also has $\eta_{k,1}\leq 1$ for any $k$ but $k=1$, where a correct bound is $2$. Similarly, when $l=2$,
\begin{align*}
\rho_{k,2}&=\frac{\lambda_{3}+k+n}{\lambda_{3}+k+n-2}\,\frac{2\lambda_{3}+2k+2n-1}{2\lambda_{3}+2k+2n-3}\,\prod_{j=3}^{n}\frac{k+j-1+\lambda_{3}-\lambda_{j}}{k+j-3+\lambda_{3}-\lambda_{j}}\,\frac{k+\lambda_{3}+\lambda_{j}+2n+1-j}{k+\lambda_{3}+\lambda_{j}+2n-1-j}\\
&\leq \frac{k+n}{k+n-2}\,\frac{2k+2n-1}{2k+2n-3}\,\prod_{j=3}^{n}\frac{k+j-1}{k+j-3}\,\frac{k+2n+1-j}{k+2n-1-j}= \frac{k+2n-2}{k}\,\frac{k+2n-1}{k+1}\,\frac{2k+2n-1}{2k+2n-3};\\
\eta_{k,2}&\leq \frac{k+2n-2}{k}\,\frac{k+2n-1}{k+1}\,\frac{2k+2n-1}{2k+2n-3}\,\E^{-\frac{2n+2k-2}{n}\log n}.
\end{align*}
Again, the last bound is decreasing in $k$, smaller than $2+\frac{1}{n}\leq \frac{7}{3}$ when $k=1$ and smaller than $1$ when $k=2$. Hence, $\eta_{k,2}\leq 1$ unless $k=1$, where a correct bound is $\frac{7}{3}$ (and again this situation occurs at most once whence making the partition grow).\bigskip

\noindent Conclusion: every quotient $\eta_{k,l}$ satisfies $\eta_{k,l}\leq 1$, but the two exceptions: $k=1$ and $l=1$ or $2$. The product of the bounds on these two exceptions is $2 \times \frac{7}{3}=\frac{14}{3}$, so for every partition $\lambda$, one has indeed
$$D^{\lambda}\,\E^{-\frac{t_{n,0}}{2}\,B_{n}(\lambda)} =\prod_{l=1}^{n}\prod_{k=1}^{\lambda_{l}-\lambda_{l+1}}\eta_{k,l} \leq \frac{14}{3}.\vspace{-7mm}$$
\end{proof}
\vspace{2mm}
\begin{remark}
A small refinement of the previous proof shows that the worst case is in fact the partition $(2,1,0,\ldots,0)_{n}$ --- by that we mean that any other partition has quotients $\rho_{k,l}$ that are smaller. Its dimension is provided by the exact formula 
$$D^{\lambda}=\frac{8n(n^{2}-1)}{3},$$
so one can replace the bound $\frac{14}{3}$ of Proposition \ref{superboundsymplectic} by $\frac{8}{3}$. 
\end{remark}
\comment{\begin{remark}
The main trick in the proof of Proposition \ref{superboundsymplectic} is the way we make our partitions grow. One could have tried something simpler, namely, make the first part $\lambda_{1}$ grow box by box, then the second part $\lambda_{2}$, \emph{etc.} However, with the same technique of estimation of quotients of dimensions, we were then not able to prove something better than 
$$D^{\lambda}\leq C\,\E^{-t_{n,0}\,B_{n}(\lambda)},$$
instead of $C\,\E^{-\frac{t_{n,0}}{2}\,B_{n}(\lambda)}$ --- this is in accordance with the bound $2\alpha\log n$ mentioned in Saloff-Coste's Theorem \ref{window}. By making the partitions grow layer by layer, we use in a much better way the fact that the parts $\lambda_{1},\lambda_{2},\ldots,\lambda_{n}$ are ordered, and the ``compensations'' in the growth of $D^{\lambda}$ given by the determinantal structure of its formula.
\end{remark}}
\bigskip

The upper bound \eqref{mainupper} is now an easy consequence of Lemma \ref{partition} and Proposition \ref{superboundsymplectic}. For any partition $\lambda$, notice that 
$$B_{n}(\lambda)\geq \frac{1}{2n}\sum_{i=1}^{n} (2n+2-2i)\lambda_{i}= \frac{1}{2n} \sum_{i=1}^{n} i(2n+1-i)(\lambda_{i}-\lambda_{i+1})\geq \frac{1}{2}\sum_{i=1}^{n}i(\lambda_{i}-\lambda_{i+1})=\frac{|\lambda|}{2}.$$
From this, one deduces that in the case of compact symplectic groups,
$$S_{n}(t_{n,\eps})=\sum_{\lambda \in \ym_{n}^{\star}} (D^{\lambda})^{2}\,\E^{-t_{n,\eps}\,B_{n}(\lambda)} \leq \frac{64}{9} \sum_{\lambda \in \ym_{n}^{\star}} \E^{-\eps |\lambda|\log n} \leq \frac{320}{9n^{\eps}}\leq \frac{36}{n^{\eps}}$$
if one assumes that $\frac{1}{n^{\eps}} \leq \frac{1}{2}$ (in order to apply Lemma \ref{partition}). By Proposition \ref{scaryseries}, one concludes that
$$\dtv^{\unit\SP(n,\Hq)}(\mu_{2(1+\eps)\log n},\mathrm{Haar}) \leq \frac{3}{n^{\frac{\eps}{2}}}.$$
Here one can remove the assumption $\frac{1}{n^{\eps}} \leq \frac{1}{2}$: otherwise, the right-hand side is bigger than $1$ and therefore the inequality is trivially satisfied. This ends the proof of the upper bound in the case of compact symplectic groups. For their quotients, one can still use Proposition \ref{superboundsymplectic}, as follows. For quaternionic Grassmannians, 
$$S_{n}\left(\frac{t_{n,\eps}}{2}\right)=\sum_{\lambda \in \ym\ym_{2q}^{\star}} D^{\lambda}\,\E^{-\frac{t_{n,\eps}}{2}\,B_{n}(\lambda)} \leq \frac{8}{3} \sum_{\lambda \in \ym_{n}^{\star}} \E^{-\frac{\eps}{2}|\lambda|\log n} \leq \frac{40}{3n^{\frac{\eps}{2}}}\leq \frac{16}{n^{\frac{\eps}{2}}}$$
assuming $\frac{1}{n^{\frac{\eps}{2}}}\leq \frac{1}{2}$. This implies that
$$\dtv^{\Gra(n,q,\Hq)}(\mu_{(1+\eps)\log n},\mathrm{Haar}) \leq \frac{2}{n^{\frac{\eps}{4}}}.$$
Again, the assumption on $n^{\frac{\eps}{2}}$ is superfluous, since otherwise the right-hand side is bigger than $1$. Exactly the same proof works for the spaces $\unit\SP(n)/\unit(n)$, with the same bound (it may be improved by using the fact that one looks only at even partitions).
\bigskip

\subsubsection{Odd special orthogonal groups and their quotients}\label{oddupper}
Though the same reasoning holds in every case, we unfortunately have to check case by case that everything works. For odd special orthogonal groups $\SO(2n+1,\R)$, set $t_{n,\eps}=2\,(1+\eps)\log (2n+1)$, with in particular $t_{n,0}=2\log (2n+1)$. The main difference between $\SO(2n+1)$ and $\unit\SP(n)$ is the appearance of half-partitions, which is solved by:
\begin{lemma}\label{halfpartition}
For any integer partition $\lambda$, denote $\lambda \boxplus \frac{1}{2}$ the half-partition $\lambda_{1}+\frac{1}{2},\lambda_{2}+\frac{1}{2},\ldots,\lambda_{n}+\frac{1}{2}$. 
$$\frac{D^{\lambda\boxplus \frac{1}{2}}}{D^{\lambda}}\,\E^{-\frac{t_{n,0}}{2}\left(B_{n}(\lambda\boxplus \frac{1}{2})-B_{n}(\lambda)\right)}\leq \E^{n\left(\log 2-\frac{\log(2n+1)}{4}\right)}\leq 2.$$
\end{lemma}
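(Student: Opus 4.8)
The plan is to establish this by a direct computation with the explicit dimension formula and Casimir eigenvalue for $\SO(2n+1,\R)$ recorded in \S\ref{explicit}; recall there that, with $t_{n,0}=2\log(2n+1)$ and a partition or half-partition $\lambda$ of length $n$,
$$D^{\lambda}=\prod_{1\le i<j\le n}\frac{\lambda_{i}-\lambda_{j}+j-i}{j-i}\prod_{1\le i\le j\le n}\frac{\lambda_{i}+\lambda_{j}+2n+1-i-j}{2n+1-i-j},\qquad B_{n}(\lambda)=\frac{1}{2n+1}\sum_{i=1}^{n}\lambda_{i}^{2}+(2n+1-2i)\lambda_{i}.$$
Writing $\mu=\lambda\boxplus\frac12$ (so $\mu_{i}=\lambda_{i}+\frac12$), the first, ``Vandermonde'' product in $D^{\lambda}$ depends only on the differences $\lambda_{i}-\lambda_{j}$, which are left unchanged by the shift $\lambda\mapsto\mu$. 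So the first step is simply to note that this factor cancels in $D^{\mu}/D^{\lambda}$, and then to control separately the ratio of the second products and the difference of the Casimir eigenvalues.

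For the dimensions, since $\mu_{i}+\mu_{j}=\lambda_{i}+\lambda_{j}+1$ I would write
$$\frac{D^{\mu}}{D^{\lambda}}=\prod_{1\le i\le j\le n}\frac{\lambda_{i}+\lambda_{j}+2n+2-i-j}{\lambda_{i}+\lambda_{j}+2n+1-i-j}=\prod_{1\le i\le j\le n}\left(1+\frac{1}{\lambda_{i}+\lambda_{j}+2n+1-i-j}\right).$$
Each denominator is $\ge 2n+1-i-j\ge 1>0$ while $\lambda_{i}+\lambda_{j}\ge 0$, so every factor is maximal at $\lambda=0$; hence $D^{\mu}/D^{\lambda}\le\prod_{1\le i\le j\le n}\frac{2n+2-i-j}{2n+1-i-j}=D^{(1/2,\dots,1/2)_{n}}=2^{n}$, which one recognises as the dimension of the spin representation of $\mathrm{Spin}(2n+1)$. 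For the eigenvalues, substituting $\mu_{i}=\lambda_{i}+\frac12$ and using $\sum_{i=1}^{n}(2n+1-2i)=n^{2}$ gives
$$(2n+1)\bigl(B_{n}(\mu)-B_{n}(\lambda)\bigr)=|\lambda|+\frac{n}{4}+\frac{n^{2}}{2}\ \ge\ \frac{n(2n+1)}{4},$$
so that $B_{n}(\mu)-B_{n}(\lambda)\ge\frac{n}{4}$ for every $\lambda$.

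Combining the two bounds and using $\tfrac{t_{n,0}}{2}=\log(2n+1)$ yields
$$\frac{D^{\mu}}{D^{\lambda}}\,\E^{-\frac{t_{n,0}}{2}(B_{n}(\mu)-B_{n}(\lambda))}\le 2^{n}\,\E^{-\frac{n}{4}\log(2n+1)}=\E^{n\left(\log 2-\frac{\log(2n+1)}{4}\right)},$$
which is the first inequality. For the second it suffices to check $n\bigl(\log 2-\tfrac14\log(2n+1)\bigr)\le\log 2$, i.e. $(n-1)\log 2\le\tfrac{n}{4}\log(2n+1)$: when $2n+1\ge 16$ the bracket is non-positive and there is nothing to prove, while for the finitely many remaining values permitted by the standing hypothesis on $n$ the inequality is checked by hand (it holds with room to spare, the tightest case being around $n=3$). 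I do not expect a genuine obstacle here; the only point requiring a little thought is the monotonicity observation that makes the dimension ratio extremal at the zero partition, and hence uniformly bounded by $2^{n}$ — everything else is routine bookkeeping with the formulas of \S\ref{explicit}.
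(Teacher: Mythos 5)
Your proof is correct and follows essentially the same route as the paper's: the Vandermonde factor cancels, the remaining product is maximized at $\lambda=0$ giving the bound $2^{n}$, the Casimir difference is bounded below by $\frac{n}{4}$ after dropping $|\lambda|\geq 0$, and the final inequality $\leq 2$ is a routine check in $n$ (which the paper dismisses as "an easy analysis of the variations"). The only extra touch in your write-up is the pleasant identification of $2^{n}$ as the spin-representation dimension, which the paper also uses shortly afterwards.
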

\begin{proof}
The quotient of dimensions is
$$\prod_{1 \leq i\leq j \leq n} \frac{\lambda_{i}+\lambda_{j}+2n+2-i-j}{\lambda_{i}+\lambda_{j}+2n+1-i-j}\leq\prod_{1 \leq i\leq j \leq n} \frac{2n+2-i-j}{2n+1-i-j}=2^{n},$$
and the difference $\frac{t_{n,0}}{2}\left(B_{n}(\lambda\boxplus \frac{1}{2})-B_{n}(\lambda)\right)$ is equal to
$$\frac{\log(2n+1)}{2n+1} \left(\sum_{i=1}^{n} \lambda_{i}+\frac{1}{4}+\frac{2n+1-2i}{2}\right) \geq \frac{\log(2n+1)}{2n+1}\left(\frac{n}{4}+\frac{n^{2}}{2}\right)=\frac{n \log (2n+1)}{4} .$$
This yields the first part of the inequality, and the second part is an easy analysis of the variations of the bound with respect to $n$.
\end{proof}
\bigskip

Then, for any integer partition $\lambda$, one can as before prove a uniform bound on $D^{\lambda}\,\E^{-\log(2n+1)\,B_{n}(\lambda)}$; the differences are tiny, \emph{e.g.}, in many formulas, $2n+2$ is replaced by $2n+1$, or $\frac{1}{2n}$ is replaced by $\frac{1}{2n+1}$. We refer to Appendix \ref{annexsoupper} for these computations. 
\begin{proposition}\label{tobeprovedinannexsoupper}
In the case of odd special orthogonal groups, at cut-off time,
$$D^{\lambda}\, \E^{-\frac{t_{n,0}}{2}\,B_{n}(\lambda)}\leq \frac{11}{10}$$
for any integer partition $\lambda$ of length $n$. For half-integer partitions, the bound is replaced by $\frac{11}{5}$.
\end{proposition}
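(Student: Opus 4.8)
The plan is to repeat, line for line, the argument proving Proposition~\ref{superboundsymplectic}, feeding in the type-$\mathrm{B}_n$ data instead of the type-$\mathrm{C}_n$ data: the dimension $D^\lambda=\prod_{1\le i<j\le n}\frac{\lambda_i-\lambda_j+j-i}{j-i}\prod_{1\le i\le j\le n}\frac{\lambda_i+\lambda_j+2n+1-i-j}{2n+1-i-j}$, the Laplace--Beltrami eigenvalue $B_n(\lambda)=\frac{1}{2n+1}\sum_{i=1}^n(\lambda_i^2+(2n+1-2i)\lambda_i)$, and the cut-off time $t_{n,0}=2\log(2n+1)$. The first step is to reduce to integer partitions: if $D^\lambda\,\E^{-\frac{t_{n,0}}{2}B_n(\lambda)}\le\frac{11}{10}$ is known for every integer partition $\lambda$ of length $n$, then for a half-integer partition $\mu=\lambda\boxplus\frac12$ Lemma~\ref{halfpartition} gives $D^\mu\,\E^{-\frac{t_{n,0}}{2}B_n(\mu)}\le 2\,D^\lambda\,\E^{-\frac{t_{n,0}}{2}B_n(\lambda)}\le\frac{11}{5}$, which is exactly the asserted half-integer bound. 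So everything reduces to the integer case.

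For an integer partition $\lambda$ of length $n$ I would build it up layer by layer as in Figure~\ref{grow}. Having already constructed the last $n-l$ parts, add $1$ to each of the first $l$ parts, i.e.\ pass from $(k-1+\lambda_{l+1},\ldots,k-1+\lambda_{l+1},\lambda_{l+1},\ldots,\lambda_n)$ to $(k+\lambda_{l+1},\ldots,k+\lambda_{l+1},\lambda_{l+1},\ldots,\lambda_n)$, for $1\le k\le\lambda_l-\lambda_{l+1}$. Writing $\rho_{k,l}$ for the ratio of the two dimensions and $\eta_{k,l}$ for the ratio of the two quantities $D^\lambda\,\E^{-\frac{t_{n,0}}{2}B_n(\lambda)}$ --- so that the $B_n$-part contributes the explicit factor $\E^{-\frac{l(2k'+2n-l-1)}{2n+1}\log(2n+1)}$ with $k'=k+\lambda_{l+1}$ --- the telescoping identity $D^\lambda\,\E^{-\frac{t_{n,0}}{2}B_n(\lambda)}=\prod_{l=1}^{n}\prod_{k=1}^{\lambda_l-\lambda_{l+1}}\eta_{k,l}$ reduces the proposition to showing that $\eta_{k,l}\le 1$ with only a bounded number of exceptional pairs $(k,l)$ whose $\eta$-values multiply to at most $\frac{11}{10}$.

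The case $l\ge 3$ is handled exactly as for $\unit\SP(n)$ and is the bulk of the (appendix) computation: factor $\rho_{k,l}=\rho_{k,l,(1)}\,\rho_{k,l,(2)}$, where $\rho_{k,l,(2)}$ is the subproduct over $1\le i\le j\le l$ --- the type-$\mathrm{B}$ analog of the $\rho_{k,n}$ used for rectangular partitions, whose logarithm is controlled by a comparison between sums and integrals together with a harmonic-sum estimate --- and $\rho_{k,l,(1)}$ collects the ``cross'' terms $j>l$, bounded by telescoping the factors and then applying Stirling's formula. Combining $\log\rho_{k,l,(2)}$ with the exponential decrease coming from $B_n$ produces a quantity $\widetilde\eta_{k,l}$ which is decreasing in $k'$, hence $\le\widetilde\eta_{1,l}$; one then checks that $\log\widetilde\eta_{1,l}+\log\rho_{k,l,(1)}$ is decreasing in both $l$ and $v=\lambda_{l+1}$, so the worst case is $l=3,\ v=0$, where, using $n\ge n_0$, the expression remains (barely) negative. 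The cases $l=1$ and $l=2$ are treated by the same direct estimates on $\rho_{k,1}$ and $\rho_{k,2}$ as in the symplectic proof; there the only exceptional quotient is the one with $k=1$, which occurs at most once in the whole growth of $\lambda$, and the two resulting exceptional bounds (both tending to $1$ as $n$ grows) multiply to at most $\frac{11}{10}$.

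The only genuine difficulty is numerical: the structural scheme is identical to Proposition~\ref{superboundsymplectic}, but the target $\frac{11}{10}$ is far sharper than the $\frac{14}{3}$ obtained there, so one cannot afford to discard any of the lower-order corrections --- the Stirling remainders and the concavity terms (the quantities denoted $D$, $E$, $F$ in the symplectic proof) must all be retained --- and the monotonicity in $n$ of the final bounds must be verified down to the smallest admissible value $n_0$. That bookkeeping, rather than any new idea, is the main obstacle, which is why the detailed verifications are deferred to Appendix~\ref{annexsoupper}.
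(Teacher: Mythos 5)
Your proposal follows the paper's own proof essentially verbatim: reduce half-integer partitions to integer ones via Lemma \ref{halfpartition} (factor $2$, giving $\frac{11}{5}$ from $\frac{11}{10}$), grow the partition layer by layer as in Figure \ref{grow}, split $\rho_{k,l}$ into the subproducts $\rho_{k,l,(1)}$ and $\rho_{k,l,(2)}$, control $\widetilde\eta_{k,l}$ by monotonicity in $k'$, $v$ and $l$ with the worst case at $l=3$, $v=0$, and defer the numerical verifications exactly as the paper does in Appendix \ref{annexsoupper}. The only (harmless) discrepancy is your accounting of the exceptions: in the type-$\mathrm{B}$ computation one finds $\eta_{k,1}\leq 1$ for every $k$, so the single exceptional quotient is $\eta_{1,2}$ (and only for $n=5,6$), bounded by $1.09\leq\frac{11}{10}$, rather than two exceptional bounds tending to $1$ whose product is $\frac{11}{10}$.
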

\bigskip

There is one last computation that needs to be done, namely, the special case $\lambda=(\frac{1}{2},\ldots,\frac{1}{2})_{n}=(0,\ldots,0)_{n}\boxplus \frac{1}{2}$ --- it corresponds to the \emph{spin representation} of  $\SO(2n+1,\R)$. The value of $B_{n}(\lambda)$ is then $\frac{n}{4}$, and $D^{\lambda}=2^{n}$. Thus, in this special case,
$$(D^{\lambda})^{2} \,\E^{-t_{n,\eps}\,B_{n}(\lambda)} \leq \E^{n\log 4-\frac{n\log (2n+1)}{2}}\,\E^{-\frac{\eps n \log(2n+1)}{2}}\leq \frac{11}{4}\,\frac{1}{(2n+1)^{\eps}}$$
for every $n \geq 5$. On the other hand, 
$$B_{n}(\lambda)=\frac{1}{2n+1}\sum_{i=1}^{n}\lambda_{i}^{2}+i(2n-i)(\lambda_{i}-\lambda_{i+1}) \geq \frac{|\lambda|}{2n+1}+\frac{n}{2n+1}\sum_{i=1}^{n}i(\lambda_{i}-\lambda_{i+1})=\frac{(n+1)|\lambda|}{2n+1}\geq \frac{|\lambda|}{2},$$
so we can now write:
\begin{align*}
S_{n}(t_{n,\eps}) &\leq  \frac{11}{4}\,\frac{1}{(2n+1)^{\eps}}+ \sum_{\lambda \in \ym_{n}^{\star}} (D^{\lambda})^{2}\,\E^{-t_{n,\eps}\,B_{n}(\lambda)}+ (D^{\lambda \boxplus \frac{1}{2}})^{2}\,\E^{-t_{n,\eps}\,B_{n}(\lambda\boxplus \frac{1}{2})}\\
&\leq\frac{11}{4}\,\frac{1}{(2n+1)^{\eps}}+ \sum_{\lambda \in \ym_{n}^{\star}} \left((D^{\lambda})^{2}\,\E^{-t_{n,0}\,B_{n}(\lambda)}+ (D^{\lambda \boxplus \frac{1}{2}})^{2}\, \E^{-t_{n,0}\,B_{n}(\lambda\boxplus \frac{1}{2})}\right)\E^{-2\eps\log(2n+1)\,B_{n}(\lambda)}\\
&\leq\frac{11}{4}\,\frac{1}{(2n+1)^{\eps}}+ \sum_{\lambda \in \ym_{n}^{\star}} \left(\frac{121}{100}+\frac{121}{25}\right)\E^{-\eps|\lambda|\log(2n+1)}\\
&\leq\frac{11}{4}\,\frac{1}{(2n+1)^{\eps}}+ \frac{121}{20}\sum_{\lambda \in \ym_{n}^{\star}} \frac{1}{(2n+1)^{\eps|\lambda|}} \leq \frac{33}{(2n+1)^{\eps}}\leq \frac{144}{(2n+1)^{\eps}}
\end{align*}
if one assumes $\frac{1}{(2n+1)^{\eps}}\leq \frac{1}{2}$. Thus, by Proposition \ref{scaryseries},
$$\dtv^{\SO(2n+1,\R)}(\mu_{2\,(1+\eps)\log (2n+1)},\mathrm{Haar})\leq \frac{6}{(2n+1)^{\frac{\eps}{2}}}.$$
and again we can now remove the assumption $\frac{1}{(2n+1)^{\eps}}\leq \frac{1}{2}$. The same technique applies to odd real Grassmannians, with
$$ S_{n}\left(\frac{t_{n,\eps}}{2}\right)= \sum_{\lambda \in (2\ym_{q}\sqcup 2\ym_{q} \boxplus 1)^{\star}} D^{\lambda}\,\E^{-\frac{t_{n,\eps}}{2}\,B_{n}(\lambda)}\leq \frac{11}{10}\sum_{\lambda \in \ym_{n}^{\star}}\E^{-\frac{\eps}{2} |\lambda|\log(2n+1)} \leq \frac{55}{10(2n+1)^{\frac{\eps}{2}}}\leq \frac{16}{(2n+1)^{\frac{\eps}{2}}},$$
and therefore 
$$ \dtv^{\Gra(2n+1,q,\R)}(\mu_{(1+\eps)\log (2n+1)},\mathrm{Haar})\leq \frac{2}{(2n+1)^{\frac{\eps}{4}}}.$$
\bigskip

\subsubsection{Even special orthogonal groups and their quotients}
Though the computations have to be done once again, we shall prove exactly the same bounds as before for even special orthogonal groups and even real Grassmannians. Denote $t_{n,\eps}=2\,(1+\eps)\log(2n)$. The possibility of a sign $\pm$ for the last part $\lambda_{n}$ of the partitions leads to a coefficient $2$ in the series $S_{n}(t)$, and on the other hand, the case of half-partitions is reduced to the case of partitions by way of an analogue of Lemma \ref{halfpartition}. Indeed,
$$\frac{D^{\lambda\boxplus \frac{1}{2}}}{D^{\lambda}}\,\E^{-\frac{t_{n,0}}{2}\left(B_{n}(\lambda\boxplus \frac{1}{2})-B_{n}(\lambda)\right)}\leq \E^{n\log 2-\frac{(2n-1)\log(2n)}{8}}\leq \frac{12}{5}$$
for any $n \geq 5$ and any partition. Again, we put the proof of the following Proposition at the end of the paper, in Appendix \ref{annexsouppereven}.

\begin{proposition}\label{tobeprovedinannexsouppereven}
In the case of even special orthogonal groups, at cut-off time,
$$D^{\lambda}\, \E^{-\frac{t_{n,0}}{2}\,B_{n}(\lambda)}\leq \frac{4}{3} \quad\big(\text{respectively},\,\,\,\frac{48}{15}\big)$$
for any integer partition (resp. any half-partition) $\lambda$ of length $n$.
\end{proposition}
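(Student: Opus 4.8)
The plan is to mirror exactly the layer-by-layer growth argument of Proposition \ref{superboundsymplectic}, adapted to the $\mathrm{D}_n$ root system. For $\SO(2n,\R)$ we have, from Theorem \ref{explicitdensity},
$$B_n(\lambda)=\frac{1}{2n}\sum_{i=1}^n\bigl(\lambda_i^2+(2n-2i)\lambda_i\bigr),\qquad D^\lambda=\prod_{1\le i<j\le n}\frac{(\lambda_i-\lambda_j+j-i)(\lambda_i+\lambda_j+2n-i-j)}{(j-i)(2n-i-j)},$$
and $t_{n,0}=2\log(2n)$. First I would reduce to integer partitions: the half-partition case follows from the $\mathrm{D}_n$-analogue of Lemma \ref{halfpartition} already displayed just before the Proposition (the bound $\E^{n\log 2-(2n-1)\log(2n)/8}\le \tfrac{12}{5}$), so it suffices to establish $D^\lambda\,\E^{-\frac{t_{n,0}}{2}B_n(\lambda)}\le\frac43$ for $\lambda\in\ym_n$ and then multiply by $\tfrac{12}{5}$ to get $\tfrac{48}{15}$ in the half-partition case (note $\tfrac43\cdot\tfrac{12}{5}=\tfrac{48}{15}$, which matches the statement).

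Next, fix $l\in\lle 1,n-1\rre$ and study the quotient $\eta_{k,l}$ of the quantities $(D^\lambda)^2\,\E^{-t_{n,0}B_n(\lambda)}$ when one passes from the partition whose first $l$ parts equal $k-1+\lambda_{l+1}$ (the rest being $\lambda_{l+1},\dots,\lambda_n$) to the one with first $l$ parts equal to $k+\lambda_{l+1}$, exactly as in \eqref{evolution}. Writing $k'=k+\lambda_{l+1}$, the transformation changes $-t_{n,0}B_n(\lambda)$ by $-\frac{l(2k'+2n-1-l)}{n}\log(2n)$, and $\log\rho_{k,l}$ splits, just as in the symplectic case, into a piece $\rho_{k,l,(1)}$ involving indices $j>l$ and a piece $\rho_{k,l,(2)}=\prod_{1\le i<j\le l}\frac{k'+\lambda_{l+1}+2n-i-j}{k'+\lambda_{l+1}+2n-2-i-j}\cdot(\text{the }\lambda_i-\lambda_j\text{ factors})$ involving indices $i<j\le l$. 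The $\mathrm{D}_n$ Weyl denominator has only the strictly-upper-triangular product (no $i=j$ terms, unlike $\mathrm{B}_n$ and $\mathrm{C}_n$), so the relevant sum to estimate is $\sum_{1\le i<j\le l}\frac{2}{2k'+2n-i-j}$; I would bound it by the same convexity/integral comparison used in \S\ref{sympupper}, obtaining a closed form in terms of $k'+n$, $l$ and $v:=\lambda_{l+1}$. For $\rho_{k,l,(1)}$ one bounds each factor $\frac{(k'+n)^2-(\lambda_j+n-j)^2}{(k'+n-l)^2-(\lambda_j+n-j)^2}$ by replacing $\lambda_j$ with $\lambda_{l+1}$ (monotonicity), telescopes the resulting ratios of factorials, uses monotonicity in $k$ to reduce to $k=1$, and then Stirling as in the symplectic proof to get $\log\rho_{k,l,(1)}\le A+C+D$ with the same-shaped error terms $E$, $F$ shown to be negative. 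Adding everything, $\log\eta_{k,l}$ becomes a concrete function of $n$, $l$, $v$ which by concavity of $x\log x$ and monotonicity in $l$ and $v$ is maximised at small $l$ and $v=0$; one checks numerically that $\eta_{k,l}\le 1$ except for the boundary exceptions $k=1$ with $l=1$ or $l=2$, where one uses the exact products $\rho_{k,1}$ and $\rho_{k,2}$ (as in the symplectic case, but with $2n$ in place of $2n+2$ and with the $\lambda_i+\lambda_j$ factors replaced by the $\mathrm{D}_n$ ones) to get small explicit constants whose product is $\tfrac43$. Telescoping $D^\lambda\,\E^{-\frac{t_{n,0}}{2}B_n(\lambda)}=\prod_{l=1}^n\prod_{k=1}^{\lambda_l-\lambda_{l+1}}\eta_{k,l}$ then gives the claimed $\tfrac43$.

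The main obstacle I expect is not conceptual but the bookkeeping: the $\mathrm{D}_n$ dimension formula has the factor $\prod_{i<j}(\lambda_i+\lambda_j+2n-i-j)$ where the symplectic and odd-orthogonal cases had $\prod_{i\le j}$, so the exponents and the Stirling error terms are slightly different and each of the small-$l$ boundary constants must be recomputed from scratch; getting the constants sharp enough that the product over the (at most two) exceptional quotients lands at $\tfrac43$ rather than something larger requires the careful monotonicity-in-$k$ reductions and the numerical check at the worst value of $n$ (here $2n\ge 10$, i.e.\ $n\ge 5$). A convenient sanity check, parallel to the remark after Proposition \ref{superboundsymplectic}, is that the extremal partition should again be $(2,1,0,\dots,0)_n$, for which $D^\lambda$ has an explicit product formula; verifying the bound directly on this partition both confirms $\tfrac43$ and guides the choice of which inequalities must be kept tight. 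I would defer the full arithmetic to Appendix \ref{annexsouppereven} as the statement indicates, presenting here only the reduction to integer partitions and the structure of the $\eta_{k,l}$ estimate.
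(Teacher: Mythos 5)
Your plan follows essentially the same route as the paper's Appendix \ref{annexsouppereven}: reduce half-partitions via the displayed $\mathrm{D}_n$ analogue of Lemma \ref{halfpartition} (giving $\tfrac{4}{3}\cdot\tfrac{12}{5}=\tfrac{48}{15}$), then run the layer-by-layer growth of \S\ref{sympupper} with the split $\rho_{k,l}=\rho_{k,l,(1)}\rho_{k,l,(2)}$, the integral/Stirling comparisons (simplified by the absence of diagonal terms in type $\mathrm{D}$), and monotonicity in $k$, $l$, $v$. Two small points to fix when executing it: the paper finds that all quotients with $l\in\lle 2,n-1\rre$ are already $\leq 1$, the single exception being $(k,l)=(1,1)$ with $\eta_{1,1}\leq \E^{\log(2n)/(2n)}\leq\tfrac{4}{3}$ (so no $l=2$ exception, unlike the symplectic case), and the rectangular quotients $\eta_{k,n}$, which your telescoping product over $l=1,\dots,n$ uses, need their own (easy) check, namely $\eta_{1,n}=\binom{2n-1}{n}\E^{-n\log(2n)/2}\leq 1$ and monotonicity in $k$; also the increment of $-t_{n,0}B_n(\lambda)$ is $-\tfrac{l(2k'+2n-2-l)}{n}\log(2n)$, not $-\tfrac{l(2k'+2n-1-l)}{n}\log(2n)$.
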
\bigskip

Besides, the same proof as in the case of odd special orthogonal groups shows that $B_{n}(\lambda)\geq \frac{|\lambda|}{2}$ for any partition. For the special half-partition $\lambda = (0,\ldots,0)_{n}\boxplus \frac{1}{2}$ that cannot be treated by combining Lemmas \ref{partition} and \ref{halfpartition}, one has $D^{\lambda}=2^{n-1}$ and $B_{n}(\lambda)=\frac{n}{4}$, hence
$$(D^{\lambda})^{2} \,\E^{-t_{n,\eps}\,B_{n}(\lambda)} \leq \E^{(n-1)\log 4-\frac{n\log (2n)}{2}}\,\E^{-\frac{\eps n \log(2n)}{2}}\leq \frac{1}{(2n)^{\eps}}$$
for $n \geq 5$. We conclude that
\begin{align*}
\frac{1}{2} S_{n}(t_{n,\eps}) &\leq  \frac{1}{(2n)^{\eps}}+ \sum_{\lambda \in \ym_{n}^{\star}} (D^{\lambda})^{2}\,\E^{-t_{n,\eps}\,B_{n}(\lambda)}+ (D^{\lambda \boxplus \frac{1}{2}})^{2}\,\E^{-t_{n,\eps}\,B_{n}(\lambda\boxplus \frac{1}{2})}\\
&\leq\frac{1}{(2n)^{\eps}}+ \sum_{\lambda \in \ym_{n}^{\star}} \left(\frac{16}{9}+\frac{2304}{225}\right)\E^{-\eps|\lambda|\log(2n)}\leq\frac{2749}{45(2n)^{\eps}}\leq \frac{72}{(2n^{\eps})},
\end{align*}
and therefore, by Proposition \ref{scaryseries}, 
$$\dtv^{\SO(2n,\R)}(\mu_{2(1+\eps)\log (2n)},\mathrm{Haar})\leq \frac{6}{(2n)^{\frac{\eps}{2}}}.$$
For even real Grassmannian varieties, 
$$S_{n}\left(\frac{t_{n,\eps}}{2}\right)=\sum_{\lambda \in (2\ym_{q}\sqcup 2\ym_{q} \boxplus 1)^{\star}}D^{\lambda}\,\E^{-\frac{t_{n,\eps}}{2}\,B_{n}(\lambda)} \leq \frac{4}{3} \sum_{\lambda \in \ym_{n}^{\star}}\E^{-\frac{\eps}{2}|\lambda|\log(2n)} \leq \frac{20}{3(2n)^{\frac{\eps}{2}}}\leq \frac{16}{(2n)^{\frac{\eps}{2}}},$$
and again, the total variation distance is bounded by $2/(2n)^{\frac{\eps}{4}}$. So, the inequalities take the same form for even and odd special orthogonal groups or real Grassmannians, and the proof of the upper bound in this case is done. The same inequality holds also for the spaces of structures $\SO(2n)/\unit(n)$.
\bigskip

\subsubsection{Special unitary groups and their quotients}\label{unitaryupper}
 Set $t_{n,\eps}=2(1+\eps)\log n$. For special unitary groups, Weyl's dimension formula fortunately takes a much simpler form than before, but on the other hand, the computations on $B_{n}(\lambda)$ are this time a little more subtle. We shall still prove that almost every quotient $\eta_{k,l}$ of the quantities $D^{\lambda}\,\E^{-t_{n,0}\,B_{n}(\lambda)}$ with $\lambda$ going from
$$ (\lambda_{l+1}+k-1,\ldots,\lambda_{l+1}+k-1,\lambda_{l+1},\ldots,\lambda_{n-1})_{n-1}\quad \text{to}\quad  (\lambda_{l+1}+k,\ldots,\lambda_{l+1}+k,\lambda_{l+1},\ldots,\lambda_{n-1})_{n-1}$$
is smaller than $1$; but in practice, what will happen is that the negative exponentials may be much larger than before, whereas the quotients of dimensions $\rho_{k,l}$ will be much smaller. Consider for a start $\eta_{k,n-1}$. One has
$$\rho_{k,n-1}=\prod_{i=1}^{n-1}\frac{k+n-i}{k-1+n-i}=\frac{k+n-1}{k},$$
whereas $B_{n}(\lambda)$ is changed by $\frac{(n-1)(n+2k-1)}{n^{2}}$. So,
$$\eta_{k,n-1}=\frac{k+n-1}{k}\,\E^{-\frac{(n-1)(n+2k-1)}{n^{2}} \log n}\leq \begin{cases}
n\,\E^{-\frac{n^{2}-1}{n^{2}}\log n}=\E^{\frac{\log n}{n^{2}}} \leq 2^{\frac{1}{4}} &\text{if }k=1,\\
\frac{n+1}{2}\,\E^{-\frac{n^{2}+2n-3}{n^{2}}\log n} \leq \frac{n+1}{2n} \leq 1 &\text{if }k\geq 2,
\end{cases}$$
by using the decreasing behavior with respect to $k$. Notice that $\rho_{1,n-1}$ is indeed much smaller than before (linear in $n$ whereas before it grew exponentially in $n$), but $B_{n}(\lambda)$ for $k=1$ is almost constant instead of linear in $n$.\bigskip

In the general case,
$$\rho_{k,l}=\prod_{j=l+1}^{n}\frac{k'-\lambda_{j}+j-1}{k'-\lambda_{j}+j-l-1}\leq \prod_{j=l+1}^{n}\frac{k+j-1}{k+j-l-1}$$
with the usual notation $k'=k+\lambda_{l+1}$. On the other hand, the transformation on partitions makes $B_{n}(\lambda)$ change by
$$\frac{-l(n-l)(n+2k'-1)+2l |\lambda|_{l+1,n}}{n^{2}},$$
where $|\lambda|_{l+1,n}$ is the restricted size $\sum_{j=l+1}^{n}\lambda_{j}$. Notice now that 
$$-(n-l)k'+|\lambda|_{l+1,n}=\sum_{j=l+1}^{n}\lambda_{j}-\lambda_{l+1}-k \leq \sum_{j=l+1}^{n} -k = -(n-l)k.$$
So,
$$ \eta_{k,l} \leq \prod_{j=l+1}^{n}\frac{k+j-1}{k+j-l-1}\,\,\E^{-\frac{l(n-l)(n+2k-1)}{n^{2}}\log n} \leq \binom{n}{l}\,\E^{-\frac{l(n-l)(n+1)}{n^{2}}\log n}$$
which can as usual be estimated by Stirling (this is the same kind of computations as before). Hence, with $l \geq 3$, the last bound is always smaller than $1$, and also if $l=2$ unless $n=4$. If $n=4$ and $l=2$, then
$$\eta_{k,2}\leq \frac{(k+2)(k+3)}{k(k+1)}\,\E^{-\frac{3+2k}{2}\log2}\leq \begin{cases} \frac{3}{2^{3/2}} &\text{if }k=1,\\
1&\text{if }k\geq 2.
\end{cases}$$
Finally, when $l=1$, one has exactly the same bound as for $l=n-1$, so $2^{\frac{1}{4}}$ when $k=1$ and $1$ for $k=2$, Multiplying together all the bounds ($3/2^{\frac{3}{2}}$ and twice $2^{\frac{1}{4}}$), we obtain:
\begin{proposition}
In the case of special unitary groups, at cut-off time,
$$D^{\lambda}\, \E^{-\frac{t_{n,0}}{2}\,B_{n}(\lambda)}\leq \frac{3}{2}$$
for any integer partition $\lambda$ of length $n-1$.
\end{proposition}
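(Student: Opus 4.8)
The plan is to reuse the ``layer by layer'' scheme carried out for compact symplectic groups in \S\ref{sympupper}, exploiting the fact that in type $\mathrm{A}$ Weyl's formula collapses to a single Vandermonde quotient, $D^\lambda=\prod_{1\leq i<j\leq n}\frac{\lambda_i-\lambda_j+j-i}{j-i}$. Building a partition $\lambda\in\ym_{n-1}$ up one rectangular layer at a time as in Figure \ref{grow}, one expresses $D^\lambda\,\E^{-\frac{t_{n,0}}{2}B_n(\lambda)}$ as a finite product of quotients $\eta_{k,l}$, where $\eta_{k,l}$ records the effect of adding $1$ to the first $l$ parts after the last $n-1-l$ parts have been frozen. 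It then suffices to prove that each $\eta_{k,l}\leq 1$ apart from an explicit finite list of exceptions, each of which can occur at most once while filling a given $\lambda$, and whose combined contribution multiplies out to exactly $\tfrac32$.

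First I would record the dimension ratio
\[\rho_{k,l}=\prod_{j=l+1}^{n}\frac{k'-\lambda_j+j-1}{k'-\lambda_j+j-l-1}\leq\prod_{j=l+1}^{n}\frac{k+j-1}{k+j-l-1},\qquad k'=k+\lambda_{l+1},\]
which is decreasing in $k$ with $\rho_{1,l}=\binom{n}{l}$, together with the change $\frac{1}{n^2}\bigl(-l(n-l)(n+2k'-1)+2l\,|\lambda|_{l+1,n}\bigr)$ that the same move induces on $B_n(\lambda)$, where $|\lambda|_{l+1,n}=\sum_{j=l+1}^n\lambda_j$. The feature distinguishing type $\mathrm{A}$ from the symplectic case, and the step I expect to be the main obstacle, is the term $-|\lambda|^2/n$ hidden inside $B_n(\lambda)$, which produces the positive cross term $2l\,|\lambda|_{l+1,n}$ above; one must check that the decay of the negative exponential still dominates. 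This is settled by the elementary inequality
\[(n-l)k'-|\lambda|_{l+1,n}=(n-l)k+\sum_{j=l+2}^{n}(\lambda_{l+1}-\lambda_j)\geq(n-l)k,\]
valid since $\lambda_{l+1}\geq\lambda_j$ for $j>l+1$, which collapses everything to $\eta_{k,l}\leq\binom{n}{l}\,\E^{-\frac{l(n-l)(n+1)}{n^2}\log n}$ once one uses $t_{n,0}=2\log n$ and the monotonicity in $k$.

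The remainder is bookkeeping. Using the Stirling estimates recalled in \S\ref{sympupper}, I would verify that $\binom{n}{l}\,\E^{-\frac{l(n-l)(n+1)}{n^2}\log n}\leq 1$ for every $l\in\lle 3,n-1\rre$, and also for $l=2$ unless $n=4$. The genuinely exceptional layers are $l=1$, $l=n-1$ and the small case $(n,l)=(4,2)$: a direct computation shows $\eta_{k,1}\leq 2^{1/4}$ and $\eta_{k,n-1}\leq 2^{1/4}$ when $k=1$ and $\leq 1$ for $k\geq 2$, while $\eta_{1,2}\leq 3\cdot 2^{-3/2}$ when $n=4$. Since within a fixed $\lambda$ the value $k=1$ occurs at most once in each layer $l$ and the layers $l=1,\dots,n-1$ are distinct, the product of all the $\eta_{k,l}$ is bounded by the product of these worst cases; the extreme configuration is $n=4$, whose layers $l=1,2,3$ contribute $2^{1/4}$, $3\cdot 2^{-3/2}$, $2^{1/4}$, with product $2^{1/4}\cdot 3\cdot 2^{-3/2}\cdot 2^{1/4}=\tfrac32$, while for all other $n$ the resulting bound is at most $2^{1/2}$. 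This yields $D^\lambda\,\E^{-\frac{t_{n,0}}{2}B_n(\lambda)}\leq\tfrac32$ for every $\lambda\in\ym_{n-1}$, as claimed.
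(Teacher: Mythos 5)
Your proposal is correct and follows essentially the same route as the paper: the same layer-by-layer growth of the partition, the same reduction $\eta_{k,l}\leq\binom{n}{l}\,\E^{-\frac{l(n-l)(n+1)}{n^{2}}\log n}$ via the inequality $(n-l)k'-|\lambda|_{l+1,n}\geq(n-l)k$, and the same exceptional layers $l=1$, $l=n-1$ (bound $2^{1/4}$ at $k=1$) and $(n,l)=(4,2)$ (bound $3\cdot 2^{-3/2}$), whose product gives $\tfrac32$. Your remark that the worst case $\tfrac32$ is only attained at $n=4$ is a minor sharpening of the paper's direct multiplication of the three worst bounds, not a different argument.
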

\bigskip

Another big difference with the previous cases is that one cannot use Lemma \ref{partition} anymore. Indeed, for $\lambda=(k,\ldots,k)_{n-1}$, $B_{n}(\lambda)=\frac{k(n-1)}{n}=\frac{|\lambda|}{n}$, so there is no hope to have an inequality of the type $B_{n}(\lambda)\geq \alpha\,|\lambda|$ for any partition. That said, set $\delta_{i}=\lambda_{i}-\lambda_{i+1}$; then,
$$B_{n}(\lambda)=\frac{1}{n^{2}}\sum_{1 \leq i<j\leq n}(\lambda_{i}-\lambda_{j})^{2}+\frac{1}{n}\sum_{i=1}^{n-1}i(n-i)\,\delta_{i}\geq \sum_{i=1}^{n-1} \frac{i(n-i)}{n}\,\delta_{i}.$$ 
This leads us to study the series $$T_{n}(x)=\sum_{\delta_{1},\ldots,\delta_{n-1} \geq 0} x^{\sum_{i=1}^{n-1} \frac{i(n-i)}{n}\delta_{i}}=\prod_{i=1}^{n-1} \frac{1}{1-x^{\frac{i(n-i)}{n}}}.$$
Clearly, each $T_{n}(x)$ is convex on $\R_{+}$, so if we can show for example that $T_{n}\left(\frac{1}{8}\right)$ stays smaller than $1+\frac{K}{8}$ for every $n$, then we will also have the inequality $T_{n}(x)\leq 1+Kx$ for every $0\leq x \leq \frac{1}{8}$. Set $U_{n}(x)=\log(T_{n}(x))$; one has
$$U_{n}(x)=\sum_{i=1}^{n-1}-\log\left(1-x^{\frac{i(n-i)}{n}}\right) \leq \sum_{i=1}^{n-1}x^{\frac{i(n-i)}{n}}\leq 2 \sum_{i=1}^{\lfloor\frac{n}{2}\rfloor} x^{\frac{i}{2}} \leq \frac{2}{1-x^{\frac{1}{2}}}$$
for $0 \leq x \leq \frac{1}{8}$. It follows that $T_{n}(x)\leq 1+Kx$ with $K\leq 169$. Suppose $\frac{1}{n^{2\eps}}\leq \frac{1}{8}$. Then,
\begin{align*}
S_{n}(t_{n,\eps})&= \sum_{\lambda \in \ym_{n-1}^{\star}}(D^{\lambda})^{2}\,\E^{-t_{n,\eps}\,B_{n}(\lambda)}\leq \frac{9}{4}\sum_{\lambda \in \ym_{n-1}^{\star}} \left(\frac{1}{n^{2\eps}}\right)^{B_{n}(\lambda)} \leq \frac{9}{4} \left(T_{n}\!\left(\frac{1}{n^{2\eps}}\right)-1\right)\leq \frac{1521}{4n^{2\eps}}\leq \frac{400}{n^{2\eps}},
\end{align*}
which leads to
$$\dtv^{\SU(n,\C)}(\mu_{2(1+\eps)\log n},\mathrm{Haar})\leq \frac{10}{n^{\eps}}.$$
If $\frac{1}{n^{2\eps}}\geq \frac{1}{8}$, then this inequality is also trivially satisfied. Hence, the case of special unitary groups is done. For the quotients $\SU(n)/\SO(n)$, one obtains
$$S_{n}\left(\frac{t_{n,\eps}}{2}\right) \leq \frac{3}{2}\left( T_{n}\!\left(\frac{1}{n^{\eps}}\right)-1 \right)\leq \frac{507}{2n^{\eps}} \leq \frac{256}{n^{\eps}}$$
and therefore
$$\dtv^{\SU(n,\C)/\SO(n,\R)}(\mu_{(1+\eps)\log n},\mathrm{Haar})\leq \frac{8}{n^{\frac{\eps}{2}}}.$$
The proof is exactly the same for $\SU(2n)/\unit\SP(n)$ and gives the same inequality, however with $(2n)^{\frac{\eps}{2}}$ instead of $n^{\frac{\eps}{2}}$.\bigskip

For the complex Grassmannian varieties, we have seen that it was easier to see them as quotients of $\unit(n)$ (instead of $\SU(n)$), and this forces us to do some additional computations. Though the cut-off phenomenon also holds in the case of $\unit(n)$, the set of irreducible representations is then labelled by sequences of possibly negative integers, which makes our scheme of growth of partitions a little bit more cumbersome to apply. Fortunately, for Grassmannians, the spherical representations can be labelled by true partitions, but then the dimensions are given by a different formula and we have to do once again the estimates of quotients $\rho_{k,l}$ and $\eta_{k,l}$. We refer to Appendix \ref{annexsuupper} for a proof of the following: 
$$A_{n}(\lambda)\,\E^{-\log n\,B_{n}(\lambda)}\leq 1$$
for any partition. Then, one can compare directly $B_{n}(\lambda)$ to $|\lambda|$:
$$B_{n}(\lambda)=\frac{2}{n}\sum_{i=1}^{p}\lambda_{i}^{2}+(n+1-2i)\lambda_{i} \geq 2\sum_{i=1}^{p}\frac{i(n-i)}{n}(\lambda_{i}-\lambda_{i+1}) \geq \sum_{i=1}^{p}i(\lambda_{i}-\lambda_{i+1})=|\lambda|.$$
We conclude that
$$S_{n}\left(\frac{t_{n,\eps}}{2}\right)\leq \sum_{\lambda \in \ym_{q}^{*}} \E^{-\eps|\lambda|\log n} \leq \frac{5}{n^{\eps}}\leq \frac{16}{n^{\eps}}\quad;\quad\dtv^{\Gra(n,q,\C)} (\mu_{(1+\eps)\log n},\mathrm{Haar})\leq \frac{2}{n^{\frac{\eps}{2}}}$$
and this ends the proof of all the upper bounds of type \eqref{mainupper}.
\bigskip\bigskip

\section{Lower bounds before the cut-off time}\label{lower}
The proofs of the lower bounds before cut-off time rely on the following simple ideas. Denote $\lambda_{\min}$ the (spherical) irreducible representation ``of minimal eigenvalue'' identified in Section \ref{order}.  We then consider the random variable:
\begin{equation}
\Omega=\begin{cases}\chi^{\lambda_{\min}}(k) &\text{in the case of groups},\\
\sqrt{D^{\lambda_{\min}}}\,\phi^{\lambda_{\min}}(gK)&\text{in the case of symmetric spaces of type non-group}.
\end{cases}\label{discrimination}
\end{equation}
In this equation, $k$ or $gK$ will be taken at random either under the Haar measure of the space, or under a marginal law $\mu_{t}$ of the Brownian motion; we shall denote $\esper_{\infty}$ and $\esper_{t}$ the corresponding expectations. When $\Omega$ is real valued, we also denote $\Var_{\infty}$ and $\Var_{t}$ the corresponding variances:
$$\Var[\Omega]=\esper[\Omega^{2}]-\esper[\Omega]^{2}=\esper\!\left[(\Omega-\esper[\Omega])^{2}\right].$$
In the case of unitary groups and their quotients, $\Omega$ will be complex valued, and we shall use the notations $\Var_{\infty}$ and $\Var_{t}$ for the expectation of the square \emph{of the module} of $\Omega-\esper[\Omega]$:
$$\Var[\Omega]=\esper\!\left[|\Omega|^{2}\right]-\left|\esper[\Omega]\right|^{2}=\esper\!\left[|\Omega-\esper[\Omega]|^{2}\right].$$
The normalization of Equation \eqref{discrimination} is actually chosen so that $\Omega$ is in any case of mean $0$ and variance $1$ under the Haar measure. 
\begin{remark} In fact, much more is known about the asymptotic distribution of these functions under Haar measure, when $n$ goes to infinity; see \cite{DS94}. For instance, over the unitary groups, the moments of order smaller than $n_{0}$ of $\chi^{(1,0,\ldots,0)}(g)=\tr\, g$ agree with those of a standard complex gaussian variable as soon as $n$ is bigger than $n_{0}$. In particular, if $g$ is distributed according to the Haar measure of $\unit(n,\C)$, then $\tr\, g$ converges (without any normalization) towards a standard complex gaussian variable. One has similar results for orthogonal and symplectic groups, this time with standard real gaussian variables. As far as we know, the same problem with spherical functions on the classical symmetric spaces is still open, and certain computations performed in this section are related to this question. \end{remark}
\bigskip
One will also prove that under a marginal law $\mu_{t}$, the variance of $\Omega$ stays small for every value of $t$, whereas its mean before cut-off time is big (not at all near zero). Standard methods of moments allow then to prove that the probability of a event
$$E_{\alpha}=\{k\,\,|\,\,|\Omega(k)| \geq\alpha \}\quad\text{or}\quad\{gK\,\,|\,\,|\Omega(gK)| \geq\alpha \}$$
is before cut-off time near $1$ under $\mu_{t}$, and near $0$ under Haar measure (for an adequate choice of $\alpha$). This is sufficient to prove the lower bounds, see \S\ref{bienayme}; in other words, $\Omega$ is a discriminating random variable for the cut-off phenomenon. 
\bigskip

The method presented above reduces the problem mainly to the expansion in irreducible characters or in spherical zonal functions of $\Omega^{2}$ or of $|\Omega|^{2}$; \emph{cf.} \S\ref{zonal}. In the case of compact groups, this amounts simply to understand the tensor product of $V^{\lambda_{\min}}$ with itself, or with its conjugate when the character $\Omega$ is complex valued. However, for compact symmetric spaces of type non-group, this is far less obvious. Notice that a zonal spherical function $\phi^{\lambda}$ can be uniquely characterized by the following properties:\vspace{2mm}
\begin{itemize}
\item it is a linear combination of matrix coefficients of the representation $V^{\lambda}$: $$\phi^{\lambda}(gK)=\sum_{i=1}^{D^{\lambda}}\sum_{j=1}^{D^{\lambda}} c^{ij} \rho^{\lambda}_{ij}(gK).$$
\item it is in $\leb^{2}(G/K)^{K}$, \emph{i.e.}, it is $K$-bi-invariant; and it is normalized so that $\phi^{\lambda}(eK)=1$.\vspace{2mm}
\end{itemize} 
Consequently, if $(V^{\lambda_{\min}})^{\otimes 2}=V^{\nu_{1}}\oplus \cdots \oplus V^{\nu_{s}} \oplus V^{\epsilon_{1}}\oplus \cdots \oplus V^{\epsilon_{t}}$ with the $V^{\nu_{i}}$ spherical irreducible representations and the $V^{\epsilon_{j}}$ non-spherical irreducible representations, then there exists an expansion 
\begin{equation}
(\phi^{\lambda_{\min}})^{2}=c_{\nu_{1}}\phi^{\nu_{1}}+c_{\nu_{2}}\phi^{\nu_{2}}+\cdots+c_{\nu_{s}}\phi^{\nu_{s}}.
\label{squarezonal}
\end{equation}
Nonetheless, it seems difficult to guess at the same time the values of the coefficients $c_{\nu}$ in this expansion. The only ``easy'' computation is the coefficient of the constant function in $(\phi^{\lambda})^{2}$, or more generally in a product $\phi^{\lambda}\,\phi^{\rho}$:
 $$c_{\phi^{\mathbf{1}_{G}}}[\phi^{\lambda}\,\phi^{\rho}] = \int_{X} \phi^{\lambda}(x)\,\phi^{\rho}(x)\,dx=\begin{cases} 0 &\text{if }\phi^{\rho}\neq \overline{\phi^{\lambda}},\\
\frac{1}{D^{\lambda}}&\text{otherwise}.\end{cases}$$ 

\comment{\begin{example}
Let us examine in detail the case of the complex projective line $$\mathbb{P}^{1}(\C)=\SU(2,\C)/\mathrm{S}(\unit(1,\C)\times \unit(1,\C))=\unit(2,\C)/(\unit(1,\C)\times \unit(1,\C)).$$ As we shall see later, the label $\lambda_{\min}=(1,0,\ldots,0,-1)_{n}$ of the discriminating representation in the case of a complex Grassmannian varieties $\Gra(n,q,\C)=\unit(n,\C)/(\unit(p,\C)\times \unit(q,\C))$ corresponds to the adjoint representation of $\SU(n,\C)$ on $\mathfrak{sl}(n,\C)$. We use the traditional embedding of $\unit(p,\C)\times \unit(q,\C)$ into $\unit(n,\C)$ by block diagonal matrices of sizes $p=n-q$ and $q$. A scalar product on $\mathfrak{sl}(n,\C)$ for which $\SU(n,\C)$ acts by isometry is $\scal{M}{N}=\tr\, MN^{\dagger}$; and a spherical vector for the subgroup $\mathrm{S}(\unit(p,\C)\times \unit(q,\C))$ is
$$M_{p,q}=e^{(1,0,\ldots,0,-1)_{n}}=\frac{1}{\sqrt{npq}}\begin{pmatrix} -q\,I_{p} & 0 \\
0 & p\,I_{q}\end{pmatrix}.$$
As a consequence, if $(g_{ij})_{1 \leq i,j \leq n}$ are the coefficients of a matrix $g \in \SU(n,\C)$, then the spherical function $\phi^{(1,0,\ldots,0,-1)_{n}}$ writes as
\begin{align}
\phi^{(1,0,\ldots,0,-1)_{n}}(g)&=\tr(M_{p,q}(gM_{p,q}g^{-1})^{\dagger})=\tr(M_{p,q}gM_{p,q}g^{-1})\nonumber \\
&\!\!\!\!\!\!\!\!=\frac{q}{np}\left(\sum_{i=1}^{p}\sum_{j=1}^{p}|g_{ij}|^{2}\right)+\frac{p}{nq}\left(\sum_{i=p+1}^{n}\sum_{j=p+1}^{n}|g_{ij}|^{2}\right)-\frac{1}{n}\left(\sum_{i=1}^{p}\sum_{j=p+1}^{n}|g_{ij}|^{2}+\sum_{i=p+1}^{n}\sum_{j=1}^{p}|g_{ij}|^{2}\right)\nonumber \\
&\!\!\!\!\!\!\!\!=\frac{1}{p}\left(\sum_{i=1}^{p}\sum_{j=1}^{p}|g_{ij}|^{2}\right)+\frac{1}{q}\left(\sum_{i=p+1}^{n}\sum_{j=p+1}^{n}|g_{ij}|^{2}\right)-1\label{sphericalcomplexgrass}
\end{align}
by using on the third line the fact that rows and columns of a unitary matrix are of norm $1$. In particular, the random variable $\Omega$ is real-valued. Now, it can be shown (by a calculation with Schur functions) that
\begin{align*}(V^{(1,0,\ldots,0,-1)_{n}})^{\otimes 2}&=V^{(0,\ldots,0)_{n}}\oplus V^{(2,0,\ldots,0,-2)_{n}} \oplus \mathbf{1}_{n\geq 3} V^{(1,0,\ldots,0,-1)_{n}} \oplus \mathbf{1}_{n\geq 4} V^{(1,1,0,\ldots,0,-1,-1)_{n}}\\
& \quad \oplus V^{(1,0,\ldots,0,-1)_{n}} \oplus \mathbf{1}_{n\geq 3} V^{(2,0,\ldots,0,-1,-1)_{n}} \oplus \mathbf{1}_{n\geq 3} V^{(1,1,0,\ldots,0,-2)_{n}}. 
\end{align*}
Actually, the first line is the decomposition in irreducibles of the symmetric square $\mathcal{S}^{2}(\mathfrak{sl}(n,\C))$, whereas the second line is the decomposition of the skew-symmetric square $\mathcal{A}^{2}(\mathfrak{sl}(n,\C))$. The two last irreducible representations are not spherical, so there should be an expansion of the form
$$(\phi^{(1,0,\ldots,0,-1)_{n}})^{2}=\frac{1}{n^{2}-1}+a\,\phi^{(1,0,\ldots,0,-1)_{n}}+b\,\phi^{(2,0,\ldots,0,-2)_{n}}+c\,\phi^{(1,1,0,\ldots,0,-1,-1)_{n}}.$$
But since one does not know \emph{a priori} what are the spherical functions $\phi^{(2,0,\ldots,0,-2)_{n}}$ and $\phi^{(1,1,0,\ldots,0,-1,-1)_{n}}$, it seems really hard to find by this direct algebraic approach the coefficients $a,b,c$.\bigskip

In the case of $\mathbb{P}^{1}(\C)$, one can proceed as follows. The matrices of $\SU(2,\C)$ write uniquely as $\left(\begin{smallmatrix} w & -\overline{z} \\ z&\overline{w} \end{smallmatrix}\right)$ with $|w|^{2}+|z|^{2}=1$, and the zonal spherical function $\phi^{1,-1}$ is then $$\phi^{1,-1}(w,z)=2|w|^{2}-1.$$ Denote $H=E_{11}-E_{22}$, $X=E_{12}$ and $Y=E_{21}$ the generators of the complex Lie algebra $\mathfrak{sl}(2,\C)$; the decomposition $(\mathfrak{sl}(2,\C))^{\otimes 2}=V^{0,0}\oplus V^{2,-2}\oplus V^{1,-1}$ can be seen as a decomposition in irreducible $\mathfrak{sl}(2,\C)$-modules. The first space is generated by the Casimir element $$C=2\sum_{i,j=1}^{2}E_{ij}\otimes E_{ji} - \sum_{i,j=1}^{2}E_{ii}\otimes E_{jj}$$ and it corresponds to the constant function.
The second space is generated by the five symmetric tensors
\begin{align*}
S_{a}&=3\,\mathcal{S}(X, Y)-C\quad;\quad S_{b}=\mathcal{S}(H, X)\quad;\quad  S_{c}=\mathcal{S}(H, Y)\quad;\\
S_{d}&=\mathcal{S}(X, X) \quad;\quad S_{e}=\mathcal{S}(Y,Y).
\end{align*}
where $\mathcal{S}(V,W)=V\otimes W+W\otimes V$. The action of $H$ reads then as:
$$H \cdot S_{a}=0 \quad;\quad H\cdot S_{b}=2S_{b} \quad;\quad H\cdot S_{c}=-2S_{c} \quad;\quad H \cdot S_{d}=4 S_{d}\quad;\quad H \cdot S_{e}=-4 S_{e}.$$
As $H$ generates linearly the subalgebra $\mathfrak{s}(\mathfrak{gl}(1,\C)\times\mathfrak{gl}(1,\C))$, it follows that the spherical vector associated to $V^{2,-2}$ is up to a scalar constant 
$$
S_{a}=E_{11}\otimes E_{22}+E_{22}\otimes E_{11}+E_{12}\otimes E_{21}+E_{21}\otimes E_{12}-E_{11}\otimes E_{11}-E_{22}\otimes E_{22}=\mathcal{S}(X,Y)-\frac{\mathcal{S}(H,H)}{2}
$$
and therefore, $\phi^{2,-2}(w,z)=1-6|wz|^{2}=1-6|w|^{2}+6|w|^{4}$. So, one sees that
 $$(\phi^{1,-1})^{2}=\frac{1}{3}\,\phi^{0,0}+\frac{2}{3}\,\phi^{2,-2},$$
and in particular the spherical function $\phi^{1,-1}$ does not appear in the right-hand side. Now, it seems quite hard to generalize this argument to the general case of $\mathfrak{sl}(n,\C)$ acting on $(\mathfrak{sl}(n,\C))^{\otimes 2}$. Indeed, one would first need a description of each irreducible submodule\footnote{Even the identification of the irreducible component $V^{(1,0,\ldots,0,-1)_{n}}$ inside $\mathcal{S}^{2}(\mathfrak{sl}(n,\C))$ and $\mathcal{A}^{2}(\mathfrak{sl}(n,\C))$ is not easy. Namely, one can show that the first space is linearly generated by the symmetric tensors
\begin{align*}
\mathcal{S}H_{k} &= \sum_{i=1}^{n}\mathcal{S}(E_{ik},E_{ki})-\mathcal{S}(E_{i(k+1)},E_{(k+1)i})-\frac{2}{n}\sum_{i=1}^{n}\mathcal{S}(E_{kk}-E_{(k+1)(k+1)},E_{ii});\\
\mathcal{S}E_{kl}&=  \sum_{i = 1}^{n} \mathcal{S}(E_{il},E_{ki}) -\frac{2}{n}\sum_{i=1}^{n}\mathcal{S}(E_{kl},E_{ii}), 
\end{align*}
whereas the second space is generated by the skew-symmetric tensors
$$
\mathcal{A}H_{k} = \sum_{i=1}^{n}\mathcal{A}(E_{ik},E_{ki})-\mathcal{A}(E_{i(k+1)},E_{(k+1)i})\qquad;\qquad\mathcal{A}E_{kl} = \sum_{i=1}^{n}\mathcal{A}(E_{il},E_{ki}),
$$
with $\mathcal{A}(V,W)=V\otimes W-W\otimes V$. Thus, the spherical vectors of label $(2,0,\ldots,0,-2)_{n}$ and $(1,1,0,\ldots,0,-1,-1)_{n}$ lie in the orthogonal in $\mathcal{S}^{2}(\mathfrak{sl}(n,\C))$ of the subspace generated by the $\mathcal{S}H_{k}$'s, the $\mathcal{S}E_{kl}$'s, and the Casimir element; but this does not really help us to determine these vectors.} inside $(\mathfrak{sl}(n,\C))^{\otimes 2}$, and then to find adequate $\mathfrak{s}(\mathfrak{gl}(p,\C)\otimes \mathfrak{gl}(q,\C))$-spherical vectors in these spaces --- it seems to us the only direct algebraic way to determine the spherical functions $\phi^{(2,0,\ldots,0,-2)_{n}}$ and $\phi^{(1,1,0,\ldots,0,-1,-1)_{n}}$. 
\end{example}\bigskip}

\noindent As far as we know, for a general zonal spherical function, there is a definitive solution to Equation \eqref{squarezonal} only in the case of symmetric spaces of rank $1$, see \cite{Gas70}. For our problem, one can fortunately give in every case a geometric description of the discriminating spherical representation and of the corresponding spherical vector. This yields an expression of $\phi^{\lambda_{\min}}(gK)$ as a degree $2$ polynomial of the matrix coefficients of $g$. Now it turns out that the joint moments of these coefficients under $\mu_{t}$ and $\mu_{\infty}=\mathrm{Haar}$ can be calculated by mean of the stochastic differential equations defining the $G$-valued Brownian motion; see Lemma \ref{expectationcoefficients}, which we reproduce from \cite[Proposition 1.4]{Lev11}. As $(\phi^{\lambda_{\min}}(gK))^{2}$ or $|\phi^{\lambda_{\min}}(gK)|^{2}$ is also a polynomial in the coefficients $g_{ij}$, one can therefore compute its expectation under $\mu_{t}$, and this actually gives back the coefficients in the expansion \eqref{squarezonal}. Thus, the algebraic difficulties raised in our proof of the lower bounds will be solved by arguments of stochastic analysis.
\bigskip

\subsection{Expansion of the square of the discriminating zonal spherical functions}\label{zonal}
The orthogonality of characters or of zonal spherical functions ensures that for every non-trivial (spherical) irreducible representation $\lambda$,
\begin{align*}
\esper_{\infty}[\chi^{\lambda}]&=\esper_{\infty}[\chi^{\lambda}(k)\,\chi^{\mathbf{1}_{K}}(k)]=\scal{\chi^{\lambda}}{\chi^{\mathbf{1}_{K}}}_{\leb^{2}(K)}=0;\\
\esper_{\infty}\!\left[\sqrt{D^{\lambda}}\,\phi^{\lambda}\right]&=\sqrt{D^{\lambda}}\,\esper_{\infty}[\phi^{\lambda}(gK)\,\phi^{\mathbf{1}_{G}}(gK)]=\sqrt{D^{\lambda}}\,\scal{\phi^{\lambda}}{\phi^{\mathbf{1}_{K}}}_{\leb^{2}(G/K)}=0.
\end{align*}
The function corresponding to the trivial representation, which is just the constant function equal to $1$, has of course mean $1$ under the Haar measure, and also under $\mu_{t}$. On the other hand, Theorem \ref{explicitdensity} allows one to compute the mean of a non-trivial irreducible character of zonal spherical function under $\mu_{t}$:
\begin{align*}
\esper_{t}[\chi^{\lambda}]&=\int_{K} p_{t}^{K}(k)\,\chi^{\lambda}(k)\,dk=[\chi^{\lambda}](p_{t}^{K})=D^{\lambda}\,\E^{-\frac{t}{2}\,B_{n}(\lambda)}=\left\{A_{n}(\lambda)\,\E^{-t\,B_{n}(\lambda)}\right\}^{\frac{1}{2}}\\
\esper_{t}\!\left[\sqrt{D^{\lambda}}\,\phi^{\lambda}\right]&=\sqrt{D^{\lambda}}\int_{X=G/K} p_{t}^{X}(x)\,\phi^{\lambda}(x)\,dx=\sqrt{D^{\lambda}}\,\frac{[\phi^{\lambda}](p_{t}^{X})}{D^{\lambda}}=\left\{A_{n}(\lambda)\,\E^{-t\,B_{n}(\lambda)}\right\}^{\frac{1}{2}}
\end{align*}
with the notations of Proposition \ref{scaryseries}, and where $[\chi^{\lambda}](f)$ or $[\phi^{\lambda}](f)$ denotes the coefficient of $\chi^{\lambda}$ or $\phi^{\lambda}$ in the expansion of $f$. So, with the help of the table of Lemma \ref{decay}, we can compute readily $\esper_{t}[\Omega]$ in each case, and also $\esper_{\infty}[\Omega]$.
\bigskip

In order to estimate $\Var_{t}[\Omega]$ and $\Var_{\infty}[\Omega]$, we now need to find a representation-theoretic interpretation of either $\Omega^{2}$ when $\Omega$ is real-valued, or of $|\Omega|^{2}$ when $\Omega$ is complex-valued. We begin with compact groups:
\begin{lemma}\label{squaregroup}
Suppose $G=\SO(2n,\R)$ or $\SO(2n+1,\R)$ or $\unit\SP(n,\Hq)$. Then $\Omega=\chi^{(1,0,\ldots,0)_{n}}$ is real-valued, and 
\begin{equation}
\Omega^{2}=(\chi^{(1,0,\ldots,0)_{n}})^{2}=\chi^{(2,0,\ldots,0)_{n}}+\chi^{(1,1,0,\ldots,0)_{n}}+\chi^{(0,0,\ldots,0)_{n}}.\label{squareso}
\end{equation}
On the other hand, when $G=\SU(n,\C)$, $\Omega$ is complex-valued, and 
\begin{equation}
|\Omega|^{2}=\chi^{(1,0,\ldots,0)_{n-1}}\,\chi^{(1,\ldots,1)_{n-1}}=\chi^{(2,1,\ldots,1)_{n-1}}+ \chi^{(0,0,\ldots,0)_{n-1}}.\label{squareunit}
\end{equation}
\end{lemma}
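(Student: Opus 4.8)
The statement is purely about the decomposition of tensor products of the ``geometric'' representations, so the plan is to reduce everything to the known Littlewood--Richardson-type rules for the characters $sc_\lambda$, $sb_\lambda$, $sd_\lambda$ and $s_\lambda$ recalled in \S\ref{explicit}. First I would observe that in each of the three cases $\SO(2n,\R)$, $\SO(2n+1,\R)$, $\unit\SP(n,\Hq)$ the highest weight $(1,0,\ldots,0)_{n}$ is self-dual (the Weyl group $\mathfrak{H}_{n}$ or $\mathfrak{H}_{n}^{+}$ contains $-\mathrm{id}$, or at least acts so that the representation is orthogonal/symplectic self-conjugate), hence $\overline{\chi^{(1,0,\ldots,0)_{n}}}=\chi^{(1,0,\ldots,0)_{n}}$ and $\Omega=\chi^{(1,0,\ldots,0)_{n}}$ is real-valued; this is also visible directly from the eigenvalue description $\chi^{(1,0,\ldots,0)_{n}}(k)=\sum_{i}(z_{i}+z_{i}^{-1})$ (plus a $1$ in the odd orthogonal case), which is manifestly real.

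For the tensor square, I would use the classical branching/multiplication rules for the orthogonal and symplectic groups (e.g. Littlewood's formulas, or the concrete combinatorics in \cite[Chapter 24]{FH91} or \cite{GW09}): for the defining representation $V=V^{(1,0,\ldots,0)_{n}}$ one has $V\otimes V=\mathcal{S}^{2}(V)\oplus \mathcal{A}^{2}(V)$, and in the orthogonal case $\mathcal{S}^{2}(V)=V^{(2,0,\ldots,0)_{n}}\oplus V^{(0,\ldots,0)_{n}}$ (the trivial summand coming from the invariant symmetric bilinear form) while $\mathcal{A}^{2}(V)=V^{(1,1,0,\ldots,0)_{n}}$; in the symplectic case the roles of symmetric and alternating are swapped, $\mathcal{A}^{2}(V)=V^{(1,1,0,\ldots,0)_{n}}\oplus V^{(0,\ldots,0)_{n}}$ and $\mathcal{S}^{2}(V)=V^{(2,0,\ldots,0)_{n}}$, but the unordered sum of the three labels is the same, which is all that \eqref{squareso} asserts. (One must check the small-$n$ non-degeneracy, but the hypotheses $n\geq 3$, $n\geq 10$ of \S\ref{explicit} take care of that; for $\SO(2n,\R)$ the label $(1,1,0,\ldots,0)_{n}$ is genuinely distinct from $(2,0,\ldots,0)_{n}$ once $n\geq 2$.) Taking characters gives \eqref{squareso}.

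For $\SU(n,\C)$ the representation $V^{(1,0,\ldots,0)_{n-1}}$ is the standard representation $\C^{n}$, whose dual is $\overline{V}=V^{(1,1,\ldots,1)_{n-1}}=\Lambda^{n-1}\C^{n}$; hence $|\Omega|^{2}=\Omega\,\overline{\Omega}=\chi^{(1,0,\ldots,0)_{n-1}}\,\chi^{(1,\ldots,1)_{n-1}}$, which in terms of $\mathrm{U}(n)$-labels is $s_{(1,0,\ldots,0)}\cdot s_{(1,1,\ldots,1,0)}$ after twisting by a power of the determinant, equivalently $\C^{n}\otimes \mathrm{Hom}(\C^{n},\C)=\mathrm{End}(\C^{n})=\mathfrak{sl}(n,\C)\oplus \C$. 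Translating $\mathfrak{sl}(n,\C)$ back to an $\SU(n)$-highest weight in the partition normalization of \S\ref{explicit} gives the label $(2,1,\ldots,1)_{n-1}$, and the trivial summand gives $\chi^{(0,\ldots,0)_{n-1}}$, which is \eqref{squareunit}; a one-line Pieri-rule computation $s_{(1)}s_{(1^{n-1})}=s_{(2,1^{n-2})}+s_{(1^{n})}$ and the identification $s_{(1^{n})}=1$ on $\SU(n)$ confirms it.

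\textbf{Main obstacle.} None of the steps is deep; the only genuine care needed is bookkeeping of normalizations --- matching the ``partition'' labels used in \S\ref{explicit} (where e.g. $\SU(n)$-representations are indexed by partitions of length $n-1$ and $\SO(2n)$ by $\tfrac12\ym_{n}$) against the intrinsic highest weights that appear in Littlewood's multiplication rules, and making sure the self-duality/reality claim is argued correctly in each family (in particular for $\SU(n)$ where $\Omega$ is \emph{not} self-dual). So the main obstacle is purely one of correctly transporting the standard plethysm identities for $\mathrm{O}$, $\mathrm{Sp}$ and $\mathrm{GL}$ into the notational conventions of this paper and checking the low-rank non-degeneracy, rather than any new representation-theoretic input.
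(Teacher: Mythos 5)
Your proposal is correct, but it takes a somewhat different route from the paper. The paper proves \eqref{squareso} and \eqref{squareunit} by brute force: it writes out $sb_{(2,0,\ldots,0)}$, $sb_{(1,1,0,\ldots,0)}$, $sc_{(2,0,\ldots,0)}$, $sd_{(1,1,0,\ldots,0)}$, etc.\ as explicit Laurent polynomials in the eigenvalues $z_{i}^{\pm 1}$ and checks directly that their sum equals $\left(\sum_{i} z_{i}+z_{i}^{-1}(+1)\right)^{2}$; for $\SU(n)$ it first establishes $\overline{\chi^{\lambda}}=\chi^{\lambda^{*}}$ via the relation $z_{1}\cdots z_{n}=1$ and then does the same kind of hands-on symmetric-function computation (essentially your Pieri identity). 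You instead invoke the classical structure theory: $V\otimes V=\mathcal{S}^{2}(V)\oplus\mathcal{A}^{2}(V)$ together with the standard identification of the symmetric and alternating squares of the defining representations of the orthogonal and symplectic groups (trivial summand coming from the invariant bilinear or symplectic form), and for $\SU(n)$ the duality $\overline{\C^{n}}\simeq\Lambda^{n-1}\C^{n}$ and $\hendo(\C^{n})=\mathfrak{sl}(n,\C)\oplus\C$. Both arguments are sound; your version is shorter and explains conceptually where the trivial constituent comes from, at the price of citing Littlewood-type decompositions from the literature and of the label bookkeeping you flag (which you handle correctly, including the $(1,0,\ldots,0,-1)_{n}\leftrightarrow(2,1,\ldots,1)_{n-1}$ translation). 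The paper's explicit computation has the side benefit that the displayed values of the Schur functions are reused verbatim later (in the identification of the discriminating spherical representations and in the expansion of squares of zonal functions), which is why the author chose the computational route; as a standalone proof of the lemma, your argument is complete, granted the standing low-rank assumptions of \S\ref{explicit} which ensure irreducibility of the constituents, as you note.
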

\begin{proof}
In each case, $\Omega(k)=\tr\,k$, up to the map \eqref{doublequaternion} in the symplectic case; this explains why $\Omega$ is real-valued in the orthogonal and symplectic case, and complex-valued in the unitary case. Then, the simplest way to prove the identities \eqref{squareso} and \eqref{squareunit} is by manipulating the Schur functions of type $\mathrm{A}$, $\mathrm{B}$, $\mathrm{C}$ and $\mathrm{D}$; indeed, these polynomials evaluated on the eigenvalues are known to be the irreducible characters of the corresponding groups, see \S\ref{explicit}. We start with the special orthogonal groups. In type $\mathrm{B}_{n}$, $(z_{1}+\cdots+z_{n}+z_{1}^{-1}+\cdots+z_{n}^{-1}+1)^{2}$ is indeed equal to the sum of the three terms
\begin{align*}
sb_{(2,0,\ldots,0)}(Z,Z^{-1},1)&=\left(\sum_{1\leq i\leq j\leq n}z_{i}z_{j}+z_{i}z_{j}^{-1}+z_{i}^{-1}z_{j}+z_{i}^{-1}z_{j}^{-1}\right)+\left(\sum_{i=1}^{n}z_{i}+z_{i}^{-1} \right)-n;\\
sb_{(1,1,0,\ldots,0)}(Z,Z^{-1},1)&=\left(\sum_{1\leq i< j\leq n}z_{i}z_{j}+z_{i}z_{j}^{-1}+z_{i}^{-1}z_{j}+z_{i}^{-1}z_{j}^{-1}\right)+\left(\sum_{i=1}^{n}z_{i}+z_{i}^{-1} \right)+n;\\
sb_{(0,\ldots,0)}(Z,Z^{-1},1)&=1;
\end{align*}
whereas in type $\mathrm{D}_{n}$, $(z_{1}+\cdots+z_{n}+z_{1}^{-1}+\cdots+z_{n}^{-1})^{2}$ is equal to the sum of the three terms
\begin{align*}
sd_{(2,0,\ldots,0)}(Z,Z^{-1})&=\left(\sum_{1\leq i\leq j\leq n}z_{i}z_{j}+z_{i}z_{j}^{-1}+z_{i}^{-1}z_{j}+z_{i}^{-1}z_{j}^{-1}\right)-n-1;\\
sd_{(1,1,0,\ldots,0)}(Z,Z^{-1})&=\left(\sum_{1\leq i< j\leq n}z_{i}z_{j}+z_{i}z_{j}^{-1}+z_{i}^{-1}z_{j}+z_{i}^{-1}z_{j}^{-1}\right)+n;\\
sd_{(0,\ldots,0)}(Z,Z^{-1})&=1.
\end{align*}
For compact symplectic groups, hence in type $\mathrm{C}_{n}$, $(z_{1}+\cdots+z_{n}+z_{1}^{-1}+\cdots+z_{n}^{-1})^{2}$ is indeed equal to the sum of the three terms
\begin{align*}
sc_{(2,0,\ldots,0)}(Z,Z^{-1})&=\left(\sum_{1\leq i\leq j\leq n}z_{i}z_{j}+z_{i}z_{j}^{-1}+z_{i}^{-1}z_{j}+z_{i}^{-1}z_{j}^{-1}\right)-n;\\
sc_{(1,1,0,\ldots,0)}(Z,Z^{-1})&=\left(\sum_{1\leq i< j\leq n}z_{i}z_{j}+z_{i}z_{j}^{-1}+z_{i}^{-1}z_{j}+z_{i}^{-1}z_{j}^{-1}\right)+n-1;\\
sc_{(0,\ldots,0)}(Z,Z^{-1})&=1;
\end{align*}
and this is also $(sc_{(1,0,\ldots,0)}(Z,Z^{-1}))^{2}=(\chi^{(1,0,\ldots,0)}(k))^{2}=\Omega(k)^{2}$. Thus, Formula \eqref{squareso} is proved. In type $\mathrm{A}_{n-1}$, notice that for every character $\chi^{\lambda}$, $\overline{\chi^{\lambda}(k)}=\chi^{\lambda}(k^{-1})=\chi^{\lambda^{*}}(k)$, where $\lambda^{*}$ is the sequence obtained from $\lambda$ by the simple transformation
\begin{equation}
(\lambda_{1}\geq \lambda_{2}\geq \cdots \geq \lambda_{n-1})_{n-1} \mapsto (\lambda_{1} \geq \lambda_{1}- \lambda_{n-1} \geq \cdots \geq \lambda_{1}- \lambda_{2})_{n-1}.\label{conjugaterepresentation}
\end{equation}
Indeed, if $z_{1},\ldots,z_{n}$ are the eigenvalues of $k$, then
\begin{align*}
\overline{\chi^{\lambda}(k)}&=s_{(\lambda_{1},\ldots,\lambda_{n-1})_{n-1}}(z_{1}^{-1},\ldots,z_{n}^{-1})=s_{(\lambda_{1},\ldots,\lambda_{n-1},0)_{n}}(z_{1}^{-1},\ldots,z_{n}^{-1})=s_{(0,-\lambda_{n-1},\ldots,-\lambda_{1})_{n}}(z_{n},\ldots,z_{1})\\
&=s_{(\lambda_{1},\lambda_{1}-\lambda_{n-1},\ldots,0)_{n}}(z_{1},\ldots,z_{n})=s_{(\lambda_{1},\lambda_{1}-\lambda_{n-1},\ldots,\lambda_{1}-\lambda_{2})_{n-1}}(z_{1},\ldots,z_{n})=\chi^{\lambda^{*}}(k)
\end{align*}
Here, one uses the relation $z_{1}z_{2}\cdots z_{n}=1$ for every element of the torus of $\SU(n,\C)$, which enables one to transform a $n$-vector of possibly negative integers into a $(n-1)$-vector of non-negative integers. In particular, 
$|\Omega(k)|^{2}=|\chi^{(1,0,\ldots,0)_{n-1}}(k)|^{2}=\chi^{(1,0,\ldots,0)_{n-1}}(k)\,\chi^{(1,1,\ldots,1)_{n-1}}(k).$
Then, a simple calculation with symmetric functions yields Formula \eqref{squareunit}:
\begin{align*}
\chi^{(1,0,\ldots,0)_{n-1}}(k)\,\chi^{(1,1,\ldots,1)_{n-1}}(k)&=(z_{1}+\cdots+z_{n})(z_{1}^{-1}+\cdots+z_{n}^{-1})\\
&=\left(n-1+\sum_{i<j}z_{i}z_{j}^{-1}+z_{i}^{-1}z_{j} \right)+1\\
&=s_{(1,0,\ldots,0,-1)_{n}}(Z)+s_{(0,\ldots,0)_{n}}(Z)=s_{(2,1,\ldots,1)_{n-1}}(Z)+s_{(0,\ldots,0)_{n-1}}(Z)\\
&=\chi^{(2,1,\ldots,1)_{n-1}}(k)+\chi^{(0,\ldots,0)_{n-1}}(k)
\end{align*}
where $Z=\{z_{1},\ldots,z_{n}\}$ is the alphabet of the eigenvalues of $k$.
\end{proof}
\bigskip

\subsubsection{Values of the zonal functions and abstract expansions of their squares} As explained in the introduction of this part, the case of compact symmetric spaces of type non-group is much more involved. We start by finding an expression of $\Omega(gK)$ in terms of the matrix coefficients $g_{ij}$ of the matrix $g$.
\begin{proposition}\label{coeffsphericalfunction}
In terms of the matrix coefficients of $g$, $\phi^{\lambda_{\min}}(gK)$ is given by:\vspace{2mm}
$$
\begin{tabular}{|c|c|c|c|}
\hline\vspace{-2.5mm}&&&\\
$G/K$&  $V^{\lambda_{\min}}$  & $\phi^{\lambda_{\min}}(gK)$ & $\Bbbk$ \\
 \vspace{-2.5mm}&&&\\
\hline\hline \vspace{-2.5mm}&&&\\
$\Gra(n,q,\R)$ &$ \mathfrak{so}^{\perp}(n,\C)$  & $\frac{1}{p}\sum_{i=1}^{p}\sum_{j=1}^{p}(g_{ij})^{2}+\frac{1}{q}\sum_{i=p+1}^{n}\sum_{j=p+1}^{n}(g_{ij})^{2}-1$ & $\R$\\
\vspace{-2.5mm}&& &\\
\hline \vspace{-2.5mm}&& &\\ 
$\Gra(n,q,\C)$ &$\mathfrak{sl}(n,\C)$  & $\frac{1}{p}\sum_{i=1}^{p}\sum_{j=1}^{p}|g_{ij}|^{2}+\frac{1}{q}\sum_{i=p+1}^{n}\sum_{j=p+1}^{n}|g_{ij}|^{2}-1$ & $\R$\\
\vspace{-2.5mm}&& &\\
\hline \vspace{-2.5mm}&& &\\ 
$\Gra(n,q,\Hq)$ &$ \!\mathfrak{sp}^{\perp}(2n,\C) \!$  & $\frac{1}{p}\sum_{i=1}^{p}\sum_{j=1}^{p}|g_{ij}|^{2}+\frac{1}{q}\sum_{i=p+1}^{n}\sum_{j=p+1}^{n}|g_{ij}|^{2}-1$ & $\R$\\
&& &\\
\hline
\end{tabular}$$
$$
\begin{tabular}{|c|c|c|c|}
\hline \vspace{-2.5mm}&&&\\
$\SO(2n,\R)/\unit(n,\C)$ &$\mathcal{A}^{2}(\C^{2n})$  & $\frac{1}{n}\sum_{i=1}^{n}\sum_{j=1}^{n}g_{(2i)(2j)}g_{(2i-1)(2j-1)}-g_{(2i)(2j-1)}g_{(2i-1)(2j)}  $ & $\R$\\
\vspace{-2.5mm}&& &\\
\hline \vspace{-2.5mm}&& &\\ 
$\SU(n,\C)/\SO(n,\R)$ &$\mathcal{S}^{2}(\C^{n})$  & $\frac{1}{n}\sum_{i=1}^{n}\sum_{j=1}^{n}(g_{ij})^{2}$ & $\C$\\
\vspace{-2.5mm}&& &\\
\hline \vspace{-2.5mm}&& &\\ 
$\SU(2n,\C)/\unit\SP(n,\Hq)$ &$\mathcal{A}^{2}(\C^{2n})$  & $\frac{1}{n}\sum_{i=1}^{n}\sum_{j=1}^{n}g_{(2i)(2j)}g_{(2i-1)(2j-1)}-g_{(2i)(2j-1)}g_{(2i-1)(2j)} $ & $\C$\\
\vspace{-2.5mm}&& &\\
\hline \vspace{-2.5mm}& &&\\ 
$\unit\SP(n,\Hq)/\unit(n,\C)$ &$\mathcal{S}^{2}(\C^{2n})$  & $\frac{1}{n}\sum_{i=1}^{n}\sum_{j=1}^{n} ([1](g_{ij}))^{2}+([\mathrm{j}](g_{ij}))^{2}-([\I](g_{ij}))^{2}-([\mathrm{k}](g_{ij}))^{2}$ & $\R$\\
&&&\\
\hline
\end{tabular}
$$\vspace{0mm}

\noindent For real Grassmannians, $\mathfrak{so}^{\perp}(n,\C)$ denotes the orthogonal complement of $\mathfrak{so}(n,\C)$ in $\mathfrak{sl}(n,\C)$; and for quaternionic Grassmannians, $\mathfrak{sp}^{\perp}(2n,\C)$ denotes the orthogonal complement of $\mathfrak{sp}(2n,\C)$ in $\mathfrak{sl}(2n,\C)$.
\end{proposition}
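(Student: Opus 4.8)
The plan is to prove the proposition one family at a time, following in every case the same three-step recipe. \emph{Step one:} realize the minimal spherical representation $V^{\lambda_{\min}}$ concretely — as a space of matrices, or of $2$-tensors, carrying the obvious $G$-action — and check that it is irreducible with highest weight the label $\lambda_{\min}$ tabulated in Lemma \ref{decay}, so that it is indeed the representation labelled $\lambda_{\min}$ (which is spherical by that lemma); Weyl's dimension formula, matched against the values $A_{n}(\lambda_{\min})$, furnishes a convenient cross-check. \emph{Step two:} exhibit the (necessarily one-dimensional) space $(V^{\lambda_{\min}})^{K}$ of $K$-invariant vectors and pick a unit-norm generator $e^{\lambda_{\min}}$, using the explicit form of the involution $\theta$ and of the embedding $K\hookrightarrow G$ recorded in \S\ref{explicit}. \emph{Step three:} expand $\phi^{\lambda_{\min}}(gK)=\scal{e^{\lambda_{\min}}}{\rho^{\lambda_{\min}}(g)(e^{\lambda_{\min}})}$ in the chosen realization, and simplify it using the unitarity relations $\sum_{j}|g_{ij}|^{2}=\sum_{i}|g_{ij}|^{2}=1$ (together with their bilinear analogues) until one reaches the closed forms in the table. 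The normalization $\|e^{\lambda_{\min}}\|^{2}=1$ is precisely what guarantees $\phi^{\lambda_{\min}}(eK)=1$.

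For the three families of Grassmannians, $V^{\lambda_{\min}}$ is of ``adjoint type'': for $\Gra(n,q,\C)=\unit(n)/(\unit(p)\times\unit(q))$ it is $\mathfrak{sl}(n,\C)$ with $G$ acting by conjugation; for $\Gra(n,q,\R)$ it is the orthogonal complement $\mathfrak{so}^{\perp}(n,\C)$ of $\mathfrak{so}(n,\C)$ in $\mathfrak{sl}(n,\C)$, i.e.\ the traceless symmetric matrices, with $\SO(n)$ acting by $M\mapsto gMg^{t}$; and for $\Gra(n,q,\Hq)$ it is the complement $\mathfrak{sp}^{\perp}(2n,\C)$ of $\mathfrak{sp}(2n,\C)$ in $\mathfrak{sl}(2n,\C)$, with $\unit\SP(n)\subset\SU(2n)$ acting by conjugation. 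The dimension counts $\tfrac{(m-1)(m+2)}{2}$ (for $\SO(m,\C)$), $n^{2}-1$, and $(n-1)(2n+1)$ agree with $A_{n}(\lambda_{\min})$, and in each case the $K$-fixed vector is the (suitably normalized, traceless) block-scalar matrix $\tfrac{1}{\sqrt{npq}}\,\diag(-q\,I_{p},\,p\,I_{q})$, the unique element of $V^{\lambda_{\min}}$ commuting with $\unit(p)\times\unit(q)$ (resp.\ $\SO(p)\times\SO(q)$, resp.\ $\unit\SP(p)\times\unit\SP(q)$). With the scalar product $\scal{M}{N}=\tr(M^{\dagger}N)$ one gets $\phi^{\lambda_{\min}}(gK)=\tr\!\big(e^{\lambda_{\min}}\,g\,e^{\lambda_{\min}}\,g^{-1}\big)$ (with $g^{t}$ in place of $g^{-1}$ in the real case); expanding the trace into the four block sums of $|g_{ij}|^{2}$ (or $(g_{ij})^{2}$) and using $\sum_{j}|g_{ij}|^{2}=1$ to turn the two mixed sums into a constant collapses the expression to $\tfrac{1}{p}\sum_{i,j\le p}+\tfrac{1}{q}\sum_{i,j>p}-1$, as claimed. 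For the four ``structure'' spaces, $V^{\lambda_{\min}}$ is instead $\mathcal{S}^{2}$ or $\mathcal{A}^{2}$ of the defining module $\C^{n}$ or $\C^{2n}$ (dimensions $\binom{n+1}{2}$, $\binom{2n}{2}$, $\binom{2n+1}{2}$ again matching $A_{n}(\lambda_{\min})$), the $K$-fixed vector is the $K$-invariant bilinear form — the identity $I_{n}$ for $\SU(n)/\SO(n)$, the symplectic form $J_{2n}$ for $\SU(2n)/\unit\SP(n)$ and for $\SO(2n)/\unit(n)$, and the complex structure attached to the quaternion units for $\unit\SP(n)/\unit(n)$ — and $\scal{e^{\lambda_{\min}}}{\rho^{\lambda_{\min}}(g)(e^{\lambda_{\min}})}$ becomes the quadratic polynomial in the $g_{ij}$ displayed in the last column (a sum of $2\times 2$ minors in the two $\mathcal{A}^{2}$ cases).

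Steps two and three are elementary once the realization of step one is fixed, so the real content — and the main obstacle — is step one. The delicate points are: (a) for the real and quaternionic Grassmannians, verifying that the complements $\mathfrak{so}^{\perp}(n,\C)$ and $\mathfrak{sp}^{\perp}(2n,\C)$ are irreducible $G$-modules with highest weights $(2,0,\dots,0)_{q}$ and $(1,1,0,\dots,0)_{2q}$ (computed from the weights of the conjugation action in the $e^{i}$-coordinates of \S\ref{explicit}) and that the block-scalar matrix above does lie in these complements and is fixed by the appropriate $K$; and (b) for $\unit\SP(n)/\unit(n)$, tracking the chained embedding $\unit(n)\hookrightarrow\unit\SP(n)\hookrightarrow\SU(2n)$ carefully enough to express the $\unit(n)$-fixed vector of $\mathcal{S}^{2}(\C^{2n})$ through the quaternionic components $[1],[\I],[\mathrm{j}],[\mathrm{k}]$ of the entries $g_{ij}$, with the correct signs. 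A persistent minor nuisance is keeping track of the scalar-product normalizations so that $\|e^{\lambda_{\min}}\|^{2}=1$ exactly; once that is settled, the displayed formulas — and the field $\Bbbk$ over which $\Omega$ is valued — follow by direct expansion.
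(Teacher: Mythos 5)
Your plan is essentially the proof of the paper: the same concrete realizations of $V^{\lambda_{\min}}$ (adjoint-type modules $\mathfrak{so}^{\perp}(n,\C)$, $\mathfrak{sl}(n,\C)$, $\mathfrak{sp}^{\perp}(2n,\C)$ for the Grassmannians, and $\mathcal{S}^{2}$ or $\mathcal{A}^{2}$ of the defining module for the structure spaces), the same block-scalar spherical vector $\frac{1}{\sqrt{npq}}\,\diag(-qI_{p},pI_{q})$, and the same final expansion of $\scal{e^{\lambda_{\min}}}{\rho^{\lambda_{\min}}(g)(e^{\lambda_{\min}})}$ simplified by the unitarity of the rows and columns. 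One remark on your ``step one'', which you single out as the main obstacle: the paper disposes of it more cheaply than your highest-weight route, by computing the trace of $g$ acting on each candidate module (e.g.\ $(\tr g)(\tr \overline{g})-1$ for the conjugation action on $\mathfrak{sl}(n,\C)$) and matching it with the Schur-function values of label $\lambda_{\min}$ already obtained in Lemma \ref{squaregroup}; equality of characters identifies the module at once, with no separate irreducibility or weight computation, and the dimension check against $A_{n}(\lambda_{\min})$ becomes superfluous.

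There is, however, one concrete slip in your description of the fixed vectors: for $\unit\SP(n,\Hq)/\unit(n,\C)$ the $K$-invariant vector in $\mathcal{S}^{2}(\C^{2n})$ is \emph{not} ``the complex structure attached to the quaternion units'' --- a complex structure is skew-symmetric and would live in $\mathcal{A}^{2}(\C^{2n})$, not in $\mathcal{S}^{2}(\C^{2n})$. The correct vector is the symmetric tensor $\frac{1}{\sqrt{2n}}\sum_{i=1}^{2n}e_{i}\otimes e_{i}$, i.e.\ the bilinear form defining $\SO(2n)$: here $K=G^{\theta}$ with $\theta(g)=\overline{g}$ computed inside $\SU(2n,\C)$, so under the doubling map \eqref{doublequaternion} the subgroup $K$ lands in $\SO(2n,\R)\cap\SP(2n,\C)\simeq\unit(n,\C)$ (Formula \eqref{magicintersection}), and real orthogonal matrices fix $\sum_{i}e_{i}\otimes e_{i}$. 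This choice is not cosmetic: it is exactly what produces the signs in the tabulated formula $\frac{1}{n}\sum_{i,j}([1](g_{ij}))^{2}+([\mathrm{j}](g_{ij}))^{2}-([\I](g_{ij}))^{2}-([\mathrm{k}](g_{ij}))^{2}$, via the identity $(\widetilde{g}_{(2i-1)(2j-1)})^{2}+(\widetilde{g}_{(2i-1)(2j)})^{2}+(\widetilde{g}_{(2i)(2j-1)})^{2}+(\widetilde{g}_{(2i)(2j)})^{2}=2\bigl(([1](g_{ij}))^{2}+([\mathrm{j}](g_{ij}))^{2}-([\I](g_{ij}))^{2}-([\mathrm{k}](g_{ij}))^{2}\bigr)$; with a skew ``complex structure'' vector you would obtain a different, wrong polynomial. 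With that correction, and with the character-comparison shortcut above, your three-step scheme fills in to the paper's argument.
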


\begin{proof}
Each space $V^{\lambda_{\min}}$ described in the statement of our proposition is endowed with a natural action of $G=\SO(n)$ or $\SU(n)$ or $\unit\SP(n)$, namely, the action by conjugation in the case of Grassmannian varieties, and the diagonal action on tensors in the case of spaces of structures. Then, to say that
\begin{align*}
&V^{(2,0,\ldots,0)_{\lfloor \frac{n}{2}\rfloor}}_{\SO(n,\R)}=\mathfrak{so}^{\perp}(n,\C)\qquad;\qquad V^{(1,0,\ldots,0,-1)_{n}}_{\unit(n,\C)}=\mathfrak{sl}(n,\C) \qquad;\qquad V^{(1,1,0,\ldots,0)_{n}}_{\unit\SP(n,\Hq)}=\mathfrak{sp}^{\perp}(2n,\C)\qquad;\\
&V^{(1,1,0,\ldots,0)_{n}}_{\SO(2n,\R)}=\mathcal{A}^{2}(\C^{n}) \qquad;\qquad V^{(2,0,\ldots,0)_{n-1}}_{\SU(n,\C)}=\mathcal{S}^{2}(\C^{n}) \qquad;\qquad V^{(1,1,0,\ldots,0)_{2n-1}}_{\SU(2n,\C)}=\mathcal{A}^{2}(\C^{2n}) \qquad;\\
&V^{(2,0,\ldots,0)_{n}}_{\unit\SP(n,\Hq)}=\mathcal{S}^{2}(\C^{2n})
\end{align*}
is equivalent to the following statements: the trace of $g \in \SO(n,\R)$ acting on $\mathfrak{so}^{\perp}(n,\C)$ is given by the Schur function of type $\mathrm{B}$ or $\mathrm{D}$ and label $(2,0,\ldots,0)_{\lfloor \frac{n}{2}\rfloor}$; the trace of $g \in \unit(n,\C)$ acting on $\mathfrak{sl}(n,\C)$ is given by the Schur function of type $\mathrm{A}$ and label $(1,0,\ldots,0,-1)_{n}$; \emph{etc.} Let us detail for instance this last case. We have seen in the previous Lemma that 
$$s_{(1,0,\ldots,0,-1)_{n}}(Z)=(z_{1}+\cdots+z_{n})(z_{1}^{-1}+\cdots+z_{n}^{-1})-1.$$
On the other hand, the module $\mathfrak{gl}(n,\C)$ on which $\SU(n,\C)$ acts by conjugation is the tensor product of modules $(\C^{n})\otimes (\C^{n})^{*}$. It follows that the trace of the action by conjugation of $g \in \SU(n,\C)$ on $\mathfrak{gl}(n,\C)$ is 
$$\chi(g)=(\tr g) \,(\tr (g^{-1})^{t})=(z_{1}+\cdots+z_{n})(z_{1}^{-1}+\cdots+z_{n}^{-1})$$
if $z_{1},\ldots,z_{n}$ are the eigenvalues of $g$. Subtracting $1$ amounts to look at the irreducible submodule $\mathfrak{sl}(n,\C)$ inside $\mathfrak{gl}(n,\C)$. The other cases are entirely similar, and the corresponding values of the Schur functions have all been computed in Lemma \ref{squaregroup}.\bigskip

Once the discriminating representations have been given a geometric interpretation, it is easy to find the corresponding $K$-invariant (spherical) vectors. We endow each space of matrices with the invariant scalar product $\scal{M}{N}=\tr\,MN^{\dagger}$, and each space of tensors with the scalar product $\scal{x_{1}\otimes x_{2}}{y_{1} \otimes y_{2}}=\scal{x_{1}}{y_{1}}\,\scal{x_{2}}{y_{2}}$, where $\scal{v}{w}$ is the usual Hermitian scalar product on $\C^{n}$ or $\C^{2n}$. We also denote $(e_{i})_{i}$ the canonical basis of $\C^{n}$ or $\C^{2n}$. Then, the $K$-spherical vectors write as:\vspace{2mm}
$$
\begin{tabular}{|c|c|c|}
\hline\vspace{-2.5mm}&&\\
$G$&  $K$  & $e^{\lambda_{\min}}$ \\
 \vspace{-2.5mm}&&\\
\hline\hline \vspace{-2.5mm}&&\\
$\SO(n)$ & $\SO(p)\times \SO(q)$  & $\frac{1}{\sqrt{npq}}\,\left(\begin{smallmatrix} -qI_{p} & 0 \\ 0 & pI_{q} \end{smallmatrix}\right)$\\
\vspace{-2.5mm}&& \\
\hline \vspace{-2.5mm}&& \\ 
$\SU(n)$ &$\mathrm{S}(U(p)\times U(q))$ & $\frac{1}{\sqrt{npq}}\,\left(\begin{smallmatrix} -qI_{p} & 0 \\ 0 & pI_{q} \end{smallmatrix}\right)$\\
\vspace{-2.5mm}&& \\
\hline \vspace{-2.5mm}&& \\ 
$\unit\SP(n)$ & $\unit\SP(p) \times \unit\SP(q)$ &  $\frac{1}{\sqrt{2npq}}\,\left(\begin{smallmatrix} -qI_{2p} & 0 \\ 0 & pI_{2q} \end{smallmatrix}\right)$\\
&& \\
\hline
\end{tabular}$$
$$
\begin{tabular}{|c|c|c|}
\hline \vspace{-2.5mm}&&\\
$\SO(2n)$ & $\unit(n) $ & $\frac{1}{\sqrt{2n}} \sum_{i=1}^{n} e_{2i}\otimes e_{2i-1}-e_{2i-1}\otimes e_{2i}$ \\
\vspace{-2.5mm}&& \\
\hline \vspace{-2.5mm}&& \\ 
$\SU(n)$ &$\SO(n)$   & $\frac{1}{\sqrt{n}}\sum_{i=1}^{n} e_{i} \otimes e_{i}$\\
\vspace{-2.5mm}&& \\
\hline \vspace{-2.5mm}&& \\ 
$\SU(2n)$ &$\unit\SP(n)$  & $\frac{1}{\sqrt{2n}} \sum_{i=1}^{n} e_{2i}\otimes e_{2i-1}-e_{2i-1}\otimes e_{2i}$ \\
\vspace{-2.5mm}&& \\
\hline \vspace{-2.5mm}& &\\ 
$\unit\SP(n)$ &$\unit(n)$  & $\frac{1}{\sqrt{2n}}\sum_{i=1}^{2n} e_{i} \otimes e_{i}$\\
&&\\
\hline
\end{tabular}
$$\vspace{2mm}

In each case, $e^{\lambda_{\min}}$ belongs trivially to $V^{\lambda_{\min}}$ and is of norm $1$, so the only thing to check then is the $K$-invariance. In the case of Grassmannian varieties, the matrix $e^{\lambda_{\min}}$ commutes indeed with $\mathrm{G}(p) \times \mathrm{G}(q)$, since it is also $(p,q)$-block-diagonal and with scalar multiples of the identity matrix in each diagonal block. The notation $\unit\SP(n,\Hq)$ used in this paper was meant to avoid any confusion between $\SP(2n,\C)$ and its compact form, the compact symplectic group. For $\unit(n)$ inside $\SO(2n)$, we use the well-known fact that inside $\SL(2n,\C)$,
\begin{equation} \SO(2n,\R) \cap \SP(2n,\C) \simeq \unit(n,\C),\label{magicintersection}
\end{equation}
the isomorphism being given by the map \eqref{doublecomplex}. This implies in particular that $\unit(n)$ leaves invariant the skew-symmetric tensor  $\sum_{i=1}^{n} e_{2i}\otimes e_{2i-1}-e_{2i-1}\otimes e_{2i}$ corresponding to the skew-symmetric form defining $\SP(2n,\C)$. The intersection formula \eqref{magicintersection} also proves that $\unit(n)$ leaves invariant the symmetric tensor $\sum_{i=1}^{2n} e_{i} \otimes e_{i}$, whence the value of the spherical vector for $\unit(n)$ inside $\unit\SP(n)$. Finally, for $\SO(n)$ inside $\SU(n)$ and $\unit\SP(n)$ inside $\SU(2n)$, we use again the defining symmetric bilinear form or skew-symmetric bilinear form associated to the group $K$ to construct a $K$-invariant vector. \bigskip

The value of $\phi^{\lambda_{\min}}$ is then given by the formula $\phi^{\lambda}(g)=\scal{e^{\lambda}}{\rho^{\lambda}(g)e^{\lambda}}$, that is to say
$$
\tr(M_{p,q}\,g\,M_{p,q}\,g^{t})\quad;\quad \tr(M_{p,q}\,g\,M_{p,q}\,g^{\dagger}) \quad;\quad \frac{1}{2}\,\tr(\widetilde{M}_{p,q}\,\widetilde{g}\,\widetilde{M}_{p,q}\,\widetilde{g}^{\dagger})
$$
for real, complex and quaternionic Grassmannians; 
$$\frac{1}{n}\sum_{i,j=1}^{n}(g_{ij})^{2} \quad;\quad \frac{1}{2n}\sum_{i,j=1}^{2n}(\widetilde{g}_{ij})^{2}$$
for $\SU(n)/\SO(n)$ and $\unit\SP(n)/\unit(n)$; and
$$\frac{1}{n}\sum_{i=1}^{n}\sum_{j=1}^{n}g_{(2i)(2j)}g_{(2i-1)(2j-1)}-g_{(2i)(2j-1)}g_{(2i-1)(2j)} $$
for $\SO(2n)/\unit(n)$ and $\SU(2n)/\unit\SP(n)$. Here by $\widetilde{g}$ we mean the complex matrix of size $2n\times 2n$ obtained from a quaternionic matrix of size $n \times n$ by the map \eqref{doublequaternion}. In this last case, the computations can in fact be done inside $\matr(n,\Hq)$: indeed,
\begin{align*}
&(\widetilde{g}_{(2i-1)(2j-1)})^{2}+(\widetilde{g}_{(2i-1)(2j)})^{2}+(\widetilde{g}_{(2i)(2j-1)})^{2}+(\widetilde{g}_{(2i)(2j)})^{2}\\
&=2\big(([1](g_{ij}))^{2}+([\mathrm{j}](g_{ij}))^{2}-([\I](g_{ij}))^{2}-([\mathrm{k}](g_{ij}))^{2}\big),
\end{align*}
whereas $\widetilde{M^{\star}}=(\widetilde{M})^{\dagger}$ and $\frac{1}{2}\,\tr \widetilde{M}=\Re(\tr\, M)$. Thus, the formulas for the discriminating spherical functions of the spaces of structures are entirely proved, and for Grassmannian varieties, it suffices to check that for any unitary quaternionic matrix $N$,
$$\Re(\tr\,M_{p,q}NM_{p,q}N^{\star})=\frac{1}{p}\sum_{i=1}^{p}\sum_{j=1}^{p}|g_{ij}|^{2}+\frac{1}{q}\sum_{i=p+1}^{n}\sum_{j=p+1}^{n}|g_{ij}|^{2}-1;$$
indeed the real and complex cases are specializations of this formula. This is easily done.
\end{proof}\bigskip

\begin{lemma}\label{abstractexpansion}
There exists coefficients $a,b,c,\ldots$ (different on each line, and depending on $n$ and $q$) such that the following expansions hold:
\begin{align*}
&\Gra(n,q,\R): \quad\left(\phi^{(2,0,\ldots,0)_{\lfloor \frac{n}{2} \rfloor}}\right)^{2}= \frac{2}{n^{2}+n-2}+a\, \phi^{(2,0,\ldots,0)_{\lfloor \frac{n}{2} \rfloor}}+b\,\phi^{(1,1,0\ldots,0)_{\lfloor \frac{n}{2} \rfloor}}\\
&\qquad\qquad\qquad\qquad\qquad\qquad\qquad +c \,\phi^{(2,2,0,\ldots,0)_{\lfloor \frac{n}{2} \rfloor}}+d\, \phi^{(3,1,0,\ldots,0)_{\lfloor \frac{n}{2} \rfloor}}+e \, \phi^{(4,0,\ldots,0)_{\lfloor \frac{n}{2} \rfloor}};\\
&\Gra(n,q,\C):\quad \left(\phi^{(2,1,\ldots,1)_{n-1}}\right)^{2}= \frac{1}{n^{2}-1}+a\,\phi^{(2,1,\ldots,1)_{n-1}}+b\,\phi^{(4,2,\ldots,2)_{n-1}}+c\,\phi^{(2,2,1,\ldots,1,0)_{n-1}};\\
&\Gra(n,q,\Hq): \quad\left(\phi^{(1,1,0,\ldots,0)_{n}}\right)^{2}= \frac{1}{2n^{2}-n-1}+a\,\phi^{(1^2,0,\ldots,0)_{n-1}}+b\,\phi^{(1^4,0,\ldots,0)_{n}}+c\,\phi^{(2,2,0,\ldots,0)_{n}};\\
&\SO(2n)/\unit(n): \quad\left(\phi^{(1,1,0,\ldots,0)_{n}}\right)^{2}= \frac{1}{2n^{2}-n}+a\,\phi^{(1^{2},0,\ldots,0)_{n}}+b\,\phi^{(1^{4},0,\ldots,0)_{n}}+c\,\phi^{(2,2,0,\ldots,0)_{n}};\\
&\SU(n)/\SO(n): \quad\left|\phi^{(2,0,\ldots,0)_{n-1}}\right|^{2}= \frac{2}{n^{2}+n}+a\,\phi^{(4,2,\ldots,2)_{n-1}};\\
&\SU(2n)/\unit\SP(n):\quad \left|\phi^{(1,1,0,\ldots,0)_{2n-1}}\right|^{2}= \frac{1}{2n^{2}-n}+ a \,\phi^{(2,2,1,\ldots,1,0)_{2n-1}};\\
&\unit\SP(n)/\unit(n): \quad\left(\phi^{(2,0,\ldots,0)_{n}}\right)^{2}= \frac{1}{2n^{2}+n}+a\,\phi^{(2,0,\ldots,0)_{n}}+b\,\phi^{(2,2,0,\ldots,0)_{n}}+c\,\phi^{(4,0,\ldots,0)_{n}}.
\end{align*}
In these formulas, it is understood that if the label $\lambda$ of the spherical function $\phi^{\lambda}$ does not make sense for a choice of $n$ and $q$, then this term does not appear in the expansion.
\end{lemma}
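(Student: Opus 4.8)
The plan is to prove Lemma~\ref{abstractexpansion} in two stages: first identify, for each symmetric space, which spherical irreducible representations $V^{\nu}$ can appear in the decomposition of $(V^{\lambda_{\min}})^{\otimes 2}$, and second pin down the constant term of each expansion. For the first stage, I would compute $(V^{\lambda_{\min}})^{\otimes 2}$ as a $G$-module using Schur-function arithmetic of the appropriate type ($\mathrm{A}$, $\mathrm{B}$, $\mathrm{C}$ or $\mathrm{D}$), exactly as in the proof of Lemma~\ref{squaregroup}: for example, in the complex Grassmannian case $\lambda_{\min}=(1,0,\dots,0,-1)_{n}$ labels $\mathfrak{sl}(n,\C)$, and $(\mathfrak{sl}(n,\C))^{\otimes 2}$ decomposes into irreducibles indexed by the labels $(0)$, $(1,0,\dots,0,-1)$ (twice), $(2,0,\dots,0,-2)$, $(1,1,0,\dots,0,-1,-1)$, $(2,0,\dots,0,-1,-1)$, $(1,1,0,\dots,0,-2)$. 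Then I would invoke Cartan--Helgason (as recalled in \S\ref{weyltheory}) to select those $\nu$ whose highest weight is trivial on $T^{\theta}$ --- equivalently, those satisfying the parity/doubling conditions identified in \S\ref{explicit}; translating the $\unit(n)$-labels of the summands into partition labels (via the transformation \eqref{conjugaterepresentation}) gives exactly the list $(2,1,\dots,1)_{n-1}$, $(4,2,\dots,2)_{n-1}$, $(2,2,1,\dots,1,0)_{n-1}$ appearing in the statement, the non-spherical summands dropping out since $\phi^{\nu}$ is defined only for spherical $\nu$. The same mechanism --- symmetric/skew-symmetric square of $\C^{n}$ or $\C^{2n}$ for spaces of structures, conjugation action for Grassmannians --- handles all seven families, and the convention "if the label does not make sense, the term is absent" absorbs the small-$n$ or small-$q$ degeneracies.

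For the second stage, the constant term $c_{\phi^{\mathbf 1_G}}\!\big[(\phi^{\lambda_{\min}})^{2}\big]$ is computed by the orthogonality formula already displayed just before this lemma, namely $\int_{X}\phi^{\lambda}(x)\,\phi^{\rho}(x)\,dx$ equals $0$ unless $\phi^{\rho}=\overline{\phi^{\lambda}}$, in which case it equals $1/D^{\lambda}$. Since $\phi^{\lambda_{\min}}$ is real-valued in five of the seven cases and its complex square $|\phi^{\lambda_{\min}}|^{2}=\phi^{\lambda_{\min}}\overline{\phi^{\lambda_{\min}}}$ in the remaining two, the constant term is precisely $1/D^{\lambda_{\min}}$, and $D^{\lambda_{\min}}$ is read off from the table in Lemma~\ref{decay}: e.g.\ $D^{(2,1,\dots,1)_{n-1}}=n^{2}-1$ for $\Gra(n,q,\C)$, $D^{(1,1,0,\dots,0)_{2n-1}}=n(2n-1)=2n^{2}-n$ for $\SU(2n)/\unit\SP(n)$, and $D^{(2,0,\dots,0)_{\lfloor n/2\rfloor}}=\tfrac12(n^{2}+n-2)$ for $\Gra(n,q,\R)$, giving the stated constants $\tfrac{1}{n^{2}-1}$, $\tfrac{1}{2n^{2}-n}$, $\tfrac{2}{n^{2}+n-2}$. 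I would present this computation once in general and then tabulate the seven resulting values, noting that the factor $2$ in some numerators comes from $D^{\lambda_{\min}}$ being a half-integer-looking expression written with denominator $2$. The remaining coefficients $a,b,c,\dots$ are deliberately left undetermined here, since the point of \S\ref{zonal} is that they will be recovered later via the stochastic-analysis computation of joint moments of matrix coefficients (Lemma~\ref{expectationcoefficients} and Proposition~\ref{coeffsphericalfunction}), not by representation theory.

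The main obstacle is the first stage: carrying out the tensor-square decomposition correctly in types $\mathrm{B}$, $\mathrm{C}$, $\mathrm{D}$, where the Littlewood--Richardson rule is deformed (the modification rules of King/Littlewood) and where spinor/half-partition complications could in principle intrude. For $\lambda_{\min}$ a very small partition like $(1,1,0,\dots,0)$ or $(2,0,\dots,0)$ this is manageable --- one only needs the decomposition of $\mathcal{S}^{2}$ and $\mathcal{A}^{2}$ of the small module, which is classical --- but one must be careful that the computation is valid in the stated ranges ($n\ge 2$, $n\ge 3$, or $n\ge 10$ depending on the type, as fixed at the start of \S\ref{explicit}) so that no extra coincidences among labels occur. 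A secondary, purely bookkeeping obstacle is matching the $\unit(n)$-module labels (sequences in $\Z^{n}$) against the partition labels used for the symmetric spaces, which requires consistent use of the identity $z_{1}\cdots z_{n}=1$ and the conjugation rule \eqref{conjugaterepresentation}; once that dictionary is fixed, the list of spherical summands is forced and the lemma follows.
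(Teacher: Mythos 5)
Your proposal is correct and follows essentially the same route as the paper: the paper likewise decomposes $V^{\lambda_{\min}}\otimes V^{\lambda_{\min}}$ (or $V^{\lambda_{\min}}\otimes V^{\lambda_{\min}^{*}}$ in the two complex-valued cases) by explicit Schur-function manipulations of the appropriate type, discards the non-spherical constituents using the Cartan--Helgason labelling of the spherical representations fixed in \S\ref{explicit}, and identifies the constant term as $1/D^{\lambda_{\min}}$ via the orthogonality relation displayed just before the lemma. The only point to make explicit is that in the cases $\SU(n)/\SO(n)$ and $\SU(2n)/\unit\SP(n)$ the first stage must be carried out for the conjugate tensor product $V^{\lambda_{\min}}\otimes V^{\lambda_{\min}^{*}}$, as you already do implicitly in your second stage.
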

\begin{proof}
Each time, one computes the expansion in irreducible representations of $V^{\lambda_{\min}}\otimes V^{\lambda_{\min}}$ in the case of real-valued spherical functions, and of $V^{\lambda_{\min}}\otimes V^{\lambda_{\min}^{*}}$ in the case of complex-valued spherical functions, where $\lambda \mapsto \lambda^{*}$ is the transformation of weights given by Equation \eqref{conjugaterepresentation}. This expansion can be found with Schur functions; let us detail for instance the case of complex Grassmannian varieties $\Gra(n,q,\C)$. With an alphabet of eigenvalues $Z=\{z_{1},\ldots,z_{n}\}$ such that $z_{1}z_{2}\cdots z_{n}=1$, one has
\begin{align*}
s_{(0,\ldots,0)_{n-1}}(Z)&=1\\
s_{(2,1,\ldots,1)_{n-1}}(Z)&=s_{(1,0,\ldots,0,-1)_{n}}(Z)=\left(\sum_{i,j=1}^{n} z_{i}z_{j}^{-1}\right)-1\\
s_{(4,2,\ldots,2)_{n-1}}(Z)&=s_{(2,0,\ldots,0,-2)_{n}}(Z)=\left(\sum_{1 \leq i \leq j \leq n}\sum_{1 \leq k \leq l \leq n}z_{i}z_{j}z_{k}^{-1}z_{l}^{-1}\right)-\left(\sum_{i,j=1}^{n}z_{i}z_{j}^{-1}\right)\\
s_{(2,2,1,\ldots,1,0)_{n-1}}(Z)&=s_{(1,1,0,\ldots,0,-1,-1)_{n}}(Z)=\left(\sum_{1 \leq i < j \leq n}\sum_{1 \leq k < l \leq n}z_{i}z_{j}z_{k}^{-1}z_{l}^{-1}\right)-\left(\sum_{i,j=1}^{n}z_{i}z_{j}^{-1}\right)\\
s_{(3,1,\ldots,1,0)_{n-1}}(Z)&=s_{(2,0,\ldots,0,-1,-1)_{n}}(Z)=\left(\sum_{1 \leq i \leq j \leq n}\sum_{1 \leq k < l \leq n}z_{i}z_{j}z_{k}^{-1}z_{l}^{-1}\right)-\left(\sum_{i,j=1}^{n}z_{i}z_{j}^{-1}\right) +1\\
s_{(3,3,2,\ldots,2)_{n-1}}(Z)&=s_{(1,1,0,\ldots,0,-2)_{n}}(Z)=\left(\sum_{1 \leq i < j \leq n}\sum_{1 \leq k \leq l \leq n}z_{i}z_{j}z_{k}^{-1}z_{l}^{-1}\right)-\left(\sum_{i,j=1}^{n}z_{i}z_{j}^{-1}\right) +1.
\end{align*}
Consequently,
\begin{align*}(V^{(2,1,\ldots,1)_{n-1}})^{\otimes 2}&=V^{(0,\ldots,0)_{n-1}} \oplus 2\,V^{(2,1,\ldots,1)_{n-1}} \oplus V^{(4,2,\ldots,2)_{n-1}} \oplus V^{(2,2,1,\ldots,1,0)_{n-1}}\\
&\quad \oplus V^{(3,3,2,\ldots,2)_{n-1}} \oplus V^{(3,1,\ldots,1,0)_{n-1}},
\end{align*}
because the same equality with Schur functions holds. The second line corresponds to non spherical representations, so only the terms of the first line can contribute to $(\phi^{(2,1,\ldots,1)_{n-1}})^{2}$. Entirely similar calculations yield:\vspace{2mm}
\begin{itemize}
\item $\Gra(n,q,\R)$:
\begin{align*}\left(V^{(2,0,\ldots,0)_{\lfloor \frac{n}{2} \rfloor}}\right)^{\otimes 2}&=V^{(0,\ldots,0)_{\lfloor \frac{n}{2} \rfloor}}\oplus V^{(2,0,\ldots,0)_{\lfloor \frac{n}{2} \rfloor}}\oplus V^{(1,1,0\ldots,0)_{\lfloor \frac{n}{2} \rfloor}}\\
&\quad \oplus V^{(2,2,0,\ldots,0)_{\lfloor \frac{n}{2} \rfloor}}\oplus V^{(3,1,0,\ldots,0)_{\lfloor \frac{n}{2} \rfloor}}\oplus V^{(4,0,\ldots,0)_{\lfloor \frac{n}{2} \rfloor}}
\end{align*}
\item $\Gra(n,q,\Hq)$:
\begin{align*}
\left(V^{(1,1,0,\ldots,0)_{n}}\right)^{\otimes 2}&=V^{(0,\ldots,0)_{n}}\oplus V^{(1,1,0,\ldots,0)_{n}} \oplus V^{(1,1,1,1,0,\ldots,0)_{n}}\oplus V^{(2,2,0,\ldots,0)_{n}}\\
&\quad \oplus V^{(2,1,1,0,\ldots,0)_{n}}.
\end{align*}
Only the terms on the first line are spherical.\vspace{2mm}
\item $\SO(2n,\R)/\unit(n,\C)$:
\begin{align*}
\left(V^{(1,1,0,\ldots,0)_{n}}\right)^{\otimes 2}&= V^{(0,\ldots,0)_{n}}\oplus V^{(1,1,0,\ldots,0)_{n}} \oplus V^{(1,1,1,1,0,\ldots,0)_{n}} \oplus V^{(2,2,0,\ldots,0)_{n}}\\
&\quad \oplus V^{(2,0,\ldots,0)_{n}}\oplus V^{(2,1,1,0,\ldots,0)_{n}},
\end{align*}
again with non-spherical representations gathered on the second line.\vspace{2mm}
\item $\SU(n,\C)/\SO(n,\R)$: 
$$V^{(2,0,\ldots,0)_{n-1}}\otimes V^{(2,2,\ldots,2)_{n-1}}=V^{(0,\ldots,0)_{n-1}}  \oplus V^{(4,2,\ldots,2)_{n-1}}\oplus V^{(2,1,\ldots,1)_{n-1}},$$
and the last term is not a spherical representation. \vspace{2mm}
\item $\SU(2n,\C)/\unit\SP(n,\Hq)$: 
$$V^{(1,1,0,\ldots,0)_{2n-1}}\otimes V^{(1,\ldots,1,0)_{2n-1}}=V^{(0,\ldots,0)_{2n-1}} \oplus V^{(2,2,1,\ldots,1,0)_{2n-1}}\oplus V^{(2,1,\ldots,1)_{2n-1}},$$
and again the last term is not spherical.\vspace{2mm} 
\item $\unit\SP(n,\Hq)/\unit(n,\C)$:
\begin{align*}
\left(V^{(2,0,\ldots,0)_{n}}\right)^{\otimes 2}&=V^{(0,\ldots,0)_{n}}\oplus V^{(2,0,\ldots,0)_{n}}  \oplus V^{(2,2,\ldots,0)_{n}} \oplus V^{(4,0,\ldots,0)_{n}}\\
&\quad\oplus V^{(1,1,0,\ldots,0)_{n}}\oplus V^{(3,1,0,\ldots,0)_{n}}.
\end{align*}
The terms on the second line corresponds to non-spherical representations.\vspace{2mm}
\end{itemize}
As mentioned before, the coefficient of the constant function in $|\phi^{\lambda_{\min}}|^{2}$ is then always equal to $\frac{1}{D^{\lambda_{\min}}}$.
\end{proof}
\bigskip

For the spaces $\SU(n)/\SO(n)$ and $\SU(2n)/\unit\SP(n)$, the remaining coefficient $a$ can be found by evaluating the spherical functions at $e_{G}$. Thus,
\begin{align*}
&\SU(n)/\SO(n): \quad\left|\phi^{(2,0,\ldots,0)_{n-1}}\right|^{2}= \frac{2}{n^{2}+n}+\frac{n^{2}+n-2}{n^{2}+n}\,\phi^{(4,2,\ldots,2)_{n-1}};\\
&\SU(2n)/\unit\SP(n):\quad \left|\phi^{(1,1,0,\ldots,0)_{2n-1}}\right|^{2}= \frac{1}{2n^{2}-n}+ \frac{2n^{2}-n-1}{2n^{2}-n} \,\phi^{(2,2,1,\ldots,1,0)_{2n-1}}.
\end{align*}
But in the other cases, the values of the spherical functions appearing in the right-hand side of the formulas of Lemma \ref{abstractexpansion} are unfortunately not known \emph{a priori}, which makes finding the coefficients $a,b,c,\ldots$ quite difficult. However, since one only needs to compute $\esper_{t}[(\phi^{\lambda_{\min}})^{2}]$, and since $\phi^{\lambda_{\min}}$ is explicit in terms of matrix coefficients, one can use the following Lemma (\emph{cf.} \cite[Proposition 1.4]{Lev11}).
\begin{lemma}\label{expectationcoefficients}
Let $k\geq 1$ be any integer, and $(g_{t})_{t \in \R_{+}}$ be the Brownian motion on $\SO(n)$ or $\SU(n)$. The joint moments of order $k$ of the matrix coefficients of $g_{t}$ are given by 
\begin{equation}\esper[g_{t}^{\otimes k}]=\exp\left(t\,\frac{k\, \alpha_{\glie}}{2}\,(I_{n})^{\otimes k}+t \sum_{1\leq i< j \leq k} \eta_{i,j}(C_{\glie})\right)\label{levy}\end{equation}
where $\alpha_{\glie}$ is the coefficient introduced on page \pageref{casimircoefficient}; $C_{\glie}$ is the Casimir operator; and $\eta_{i,j}$ is the linear map from $\matr(n,\Bbbk)^{\otimes 2}$ to $\matr(n,\Bbbk)^{\otimes k}$ defined on simple tensors $X \otimes Y$ by
$$X \otimes Y \mapsto (I_{n})^{\otimes (i-1)}\otimes X \otimes (I_{n})^{\otimes (j-i-1)} \otimes Y \otimes (I_{n})^{\otimes k-j}.$$
In the complex case, one has also:
$$\esper[(g_{t})^{\otimes k}\otimes (\overline{g_{t}})^{\otimes l}]=\exp\left(t\,\frac{(k+l)\, \alpha_{\glie}}{2}\,(I_{n})^{\otimes k+l}+t \sum_{1\leq i< j \leq k+l} \widetilde\eta_{i,j}(C_{\glie})\right)\label{levymore},$$
with 
$$
\widetilde\eta_{i,j}(X\otimes Y)=\begin{cases}
\eta_{i,j}(X\otimes Y) &\text{if }i,j \in \lle 1,k\rre;\\
-\eta_{i,j}(X \otimes Y^{t}) &\text{if }i \in \lle 1,k\rre \text{ and }j \in \lle k+1,k+l\rre;\\
\eta_{i,j}(X^{t} \otimes Y^{t}) &\text{if }i,j \in \lle k+1,k+l\rre.
\end{cases}
$$
\end{lemma}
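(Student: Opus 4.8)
The statement I must prove is Lemma \ref{expectationcoefficients}, the formula for the joint moments of matrix coefficients of a Brownian motion on $\SO(n)$ or $\SU(n)$. Here is my plan.

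\textbf{Setting up the SDE for the tensor power.} The first step is to recall from \S\ref{brown} that the Brownian motion $(g_t)_{t\in\R_+}$ on $G = \SO(n)$ or $\SU(n)$ satisfies the Stratonovich-type SDE $dg_t = g_t\cdot dB_t + \tfrac{\alpha_\glie}{2}\,g_t\,dt$ with $B_t = \sum_{i=1}^d W_t^i X_i$ a standard Brownian motion on $\glie$, where $(X_i)$ is an orthonormal basis of $\glie$. Equivalently, in It\^o form, $dg_t = g_t\,dB_t + \tfrac{1}{2}g_t\big(\sum_i X_i^2\big)dt = g_t\,dB_t + \tfrac{\alpha_\glie}{2}g_t\,dt$, using that $\sum_i X_i^2 = \alpha_\glie I_n$ on the geometric representation (the fact recalled on page \pageref{casimircoefficient}). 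I would then write down the SDE satisfied by the $k$-th tensor power $g_t^{\otimes k}$, acting on $(\C^n)^{\otimes k}$ or $(\R^n)^{\otimes k}$, by applying It\^o's formula to the multilinear map $A \mapsto A^{\otimes k}$. The first-order (martingale) terms involve $\sum_{i=1}^d \big(I_n^{\otimes(a-1)}\otimes X_i\otimes I_n^{\otimes(k-a)}\big)\,dW_t^i$ summed over $a\in\lle 1,k\rre$; the second-order (drift) terms come in two types: the "diagonal" terms $I_n^{\otimes(a-1)}\otimes X_i^2\otimes I_n^{\otimes(k-a)}$ contributing $\tfrac{k\alpha_\glie}{2}I_n^{\otimes k}$ in total, and the "cross" terms for $a<b$ contributing $\sum_i I_n^{\otimes(a-1)}\otimes X_i\otimes\cdots\otimes X_i\otimes\cdots = \eta_{a,b}(C_\glie)$ where $C_\glie = \sum_i X_i\otimes X_i$ is the Casimir.

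\textbf{Taking expectations.} Taking expectation kills the martingale part, so $M_t := \esper[g_t^{\otimes k}]$ satisfies the linear ODE $\tfrac{d}{dt}M_t = M_t \cdot \Lambda$ (or $\Lambda\cdot M_t$, depending on left/right placement) where $\Lambda = \tfrac{k\alpha_\glie}{2}I_n^{\otimes k} + \sum_{1\le a<b\le k}\eta_{a,b}(C_\glie)$, with $M_0 = I_n^{\otimes k}$. Since $\Lambda$ is a fixed endomorphism of the tensor space, the unique solution is $M_t = \exp(t\Lambda)$, which is exactly Formula \eqref{levy}. One must be a bit careful that the cross terms and the diagonal terms are correctly bookkept using the quadratic covariation $d\langle W^i, W^j\rangle_t = \delta_{ij}\,dt$ of the driving Brownian motions; the $\eta_{a,b}$ map is precisely designed to record "insert $X_i$ in slot $a$ and slot $b$, leave identities elsewhere, then sum over the orthonormal basis", which yields $\eta_{a,b}$ applied to $\sum_i X_i\otimes X_i = C_\glie$. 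For the complex case with both $g_t$ and $\overline{g_t}$, I would observe that $\overline{g_t}$ satisfies the conjugated SDE $d\overline{g_t} = \overline{g_t}\,d\overline{B_t} + \tfrac{\alpha_\glie}{2}\overline{g_t}\,dt$, and $\overline{X_i} = X_i^t$ for the (real) Lie algebra elements represented as matrices (since $X_i \in \sulie(n)$ satisfies $X_i^\dagger = -X_i$, hence $\overline{X_i} = -X_i^t$; I need to track this sign carefully, as it produces the minus sign in the $i\le k < j$ case of $\widetilde\eta_{i,j}$). Running the same It\^o computation on the mixed tensor $g_t^{\otimes k}\otimes\overline{g_t}^{\otimes l}$ and using that $W$ and $\overline W$ share the same increments gives the cross terms between a "holomorphic" slot and an "antiholomorphic" slot, producing $-\eta_{i,j}(X\otimes Y^t)$, while two antiholomorphic slots give $\eta_{i,j}(X^t\otimes Y^t)$.

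\textbf{Main obstacle.} The genuinely delicate point — and the one I would spend the most care on — is the precise sign and transpose bookkeeping in the complex mixed-moment formula \eqref{levymore}. Getting $\overline{g_t}$'s SDE right requires knowing how complex conjugation interacts with the representation of $\sulie(n)$ by skew-Hermitian matrices, and then correctly identifying which slot of $C_\glie = \sum_i X_i\otimes X_i$ gets transposed when it lands in an antiholomorphic slot. A clean way to handle this is to note that $\overline{g_t} = (g_t^\dagger)^t = (g_t^{-1})^t$ for unitary $g_t$, so the antiholomorphic slots are really slots of the contragredient representation, whose infinitesimal action is $X \mapsto -X^t$; feeding this through the cross-term computation mechanically produces the three cases of $\widetilde\eta_{i,j}$. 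Everything else is a routine application of It\^o calculus plus the uniqueness of solutions to a linear matrix ODE, and I would only sketch those computations, citing \cite[Proposition 1.4]{Lev11} for the full details since the statement reproduces that result verbatim.
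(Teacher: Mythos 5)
Your proposal is correct and follows essentially the same route as the paper: write the It\^o SDE for $g_t$ (and for $\overline{g_t}$, with the conjugated driving noise $-dB_t^t$ coming from $\overline{X}=-X^t$ for skew-Hermitian generators), apply It\^o's formula to the tensor power so that the cross-variations between slots produce $\widetilde\eta_{i,j}(dB_t\otimes dB_t)=\widetilde\eta_{i,j}(C_\glie)\,dt$, then take expectations and solve the resulting linear ODE by exponentiation. The only cosmetic difference is your bookkeeping of the "diagonal" drift terms, which the paper absorbs directly into the $\frac{\alpha_\glie}{2}g_t\,dt$ drift of each slot rather than as within-slot It\^o corrections; the outcome is identical.
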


\begin{proof}
In the complex case, recall the stochastic differential equation satisfied by $g_{t}$, and therefore by $\overline{g_{t}}$:
$$dg_{t}=g_{t}\,dB_{t}+\frac{\alpha_{\glie}}{2}\,g_{t}\,dt\qquad;\qquad d\overline{g_{t}}=-\overline{g_{t}}\,dB_{t}^{t}+\frac{\alpha_{\glie}}{2}\,\overline{g_{t}}\,dt.$$
It\^{o}'s formula yields then a stochastic differential equation for $(g_{t})^{\otimes k}\otimes (\overline{g_{t}})^{\otimes l}$:
\begin{align*}
d(g^{\otimes k} \otimes \overline{g}^{\otimes l})_{t}&=(g_{t})^{\otimes k}\otimes (\overline{g_{t}})^{\otimes l}\left(\frac{(k+l)\,\alpha_{\glie}}{2}\,dt+\sum_{1\leq i<j\leq k}\widetilde\eta_{i,j}(dB_{t}\otimes dB_{t}) \right)\\
&\!\!\!\!\!\!\!\!+(g_{t})^{\otimes k}\otimes (\overline{g_{t}})^{\otimes l}\left(\sum_{i=1}^{k} (I_{n})^{\otimes i-1}\otimes dB_{t} \otimes (I_{n})^{\otimes k+l-i}-\sum_{i=k+1}^{k+l} (I_{n})^{\otimes i-1}\otimes dB_{t}^{t} \otimes (I_{n})^{\otimes k+l-i}\right).
\end{align*}
The quadratic variation of $B_{t}$ is given by the Casimir operator: $dB_{t}\otimes dB_{t}=C_{\glie}\,dt$. Taking expectations in the formula above leads now to a differential equation for $\esper[(g_{t})^{\otimes k}\otimes (\overline{g_{t}})^{\otimes l}]$, whose solution is the exponential of matrices in the statement of this lemma. The real case is the specialization $l=0$ of the previous discussion, though with a different Casimir operator. In the quaternionic case, one has to be more careful. In particular, since the quaternionic conjugate of $pq$ is $q^{\star} p^{\star}$ instead of $p^{\star}q^{\star}$, in the previous argument the SDE for $g_{t}^{\star}$ does not take  the same form. A way to overcome this problem is to use the doubling map \eqref{doublequaternion}. Thus, we write an equation for $\widetilde{g}_{t}$ instead of $g_{t}$:
$$\esper\!\left[(\widetilde{g}_{t})^{\otimes k}\right]=\exp\left(t\,\frac{k\, \alpha_{\glie}}{2}\,(I_{2n})^{\otimes k}+t \sum_{1\leq i< j \leq k} \eta_{i,j}(C_{\glie})\right),$$
where the Casimir is now considered as an element of $(\hendo(\C^{2n}))^{\otimes 2}$. As we shall see later, joint moments of the entries of $g$ and $g^{\star}$ are combinations of the joint moments of the entries of $\widetilde{g}$, so the previous formula will prove sufficient to solve our problem in the quaternionic case. 
\end{proof}
\bigskip

It turns out that in each case important for our computations, the matrix $\sum_{1\leq i<j \leq k} \widetilde\eta_{i,j}(C_{\glie})$ can be explicitly diagonalized, with a basis of eigenvectors that is quite tractable (to be honest, with the help of a computer). In the following, we describe the eigenvalues and eigenvectors of these matrices, and leave the reader check that they are indeed eigenvalues and eigenvectors: this is each time an immediate computation with elementary matrices, though quite tedious if $k=4$ or $k+l=4$. For simplification, we write $e[i_{1},i_{2},\ldots,i_{r}]=e_{i_{1}}\otimes e_{i_{2}}\otimes \cdots \otimes e_{i_{r}}$.\bigskip

\subsubsection{Quotients of orthogonal groups} For special orthogonal groups, set $\frac{1}{n}M_{n,k}=\sum_{1\leq i< j \leq k} \eta_{i,j}(C_{\solie(n)})$, to be considered as an element of $\mathrm{End}((\R^{n})^{\otimes k})$. If $k=2$, then the eigenvalues and eigenvectors of $M_{n,2}=\sum_{1\leq i<j\leq n}(E_{ij}-E_{ji})^{\otimes 2}$ are:\vspace{1mm}
$$
\begin{tabular}{|c|c|c|}
\hline\vspace{-2.5mm}&&\\
eigenvalue & multiplicity & eigenvectors \\
\vspace{-2.5mm}&&\\
\hline \vspace{-2.5mm}&& \\ 
$n-1$ & $1$ &  $\sum_{i=1}^{n}e[i,i]$ \\
\vspace{-2.5mm}&&\\
\hline \vspace{-2.5mm}&& \\ 
$1$ & $\frac{n(n-1)}{2}$ & $e[i,j] - e[j,i],\,\, i<j $ \\
\vspace{-2.5mm}&&\\
\hline \vspace{-2.5mm}&& \\ 
$-1$ & $\frac{(n+2)(n-1)}{2}$ & $e[i,j] + e[j,i],\,\, i<j$ \\
\vspace{-2mm}&&\\
& & $e[i,i]-e[i+1,i+1],\,\, i \leq n-1$ \\
&&\\
\hline
\end{tabular}
$$\vspace{1mm}

\noindent This allows to compute $\exp(-\frac{(n-1)t}{n})\,\exp(\frac{t}{n}M_{n,2})$, which is the right-hand side of Formula \eqref{levy} in the case of $\SO(n,\R)$ and for $k=2$. One obtains:
\begin{align*}
\esper[(g_{ii})^{2}]&=\frac{1}{n}+\left(1-\frac{1}{n}\right)\E^{-t}\quad;\quad\esper[(g_{ij})^{2}]=\frac{1}{n}\left(1-\E^{-t}\right)\quad;\\
\esper[g_{ii}g_{jj}]&=\frac{1}{2}\left(\E^{-t}+\E^{-\frac{n-2}{n}t}\right)\quad;\quad \esper[g_{ij}g_{ji}]=\frac{1}{2}\left(\E^{-t}-\E^{-\frac{n-2}{n}t}\right)\quad;
\end{align*}
and all the other mixed moments vanish (\emph{e.g.}, $\esper[g_{ii}g_{ij}]$ or $\esper[g_{ij}g_{kl}]$). Now, if $k=4$, then the eigenvalues and eigenvectors of $M_{n,4}$ are:\vspace{1mm}
$$
\begin{tabular}{|c|c|c|}
\hline\vspace{-2.5mm}&&\\
eigenvalue & multiplicity & eigenvectors (not exhaustive, some repetitions)\\
\vspace{-2.5mm}&&\\
\hline \vspace{-2.5mm}&& \\ 
$2n-2$ & $3$ & $\sum_{k=1}^{n}\sum_{l=1}^{n}e[k,k,l,l], \,\,\star$ \\
\vspace{-2.5mm}&&\\
\hline \vspace{-2.5mm}&& \\ 
$n$ & $3n(n-1)$ & $\sum_{k=1}^{n}e[i,j,k,k]-e[j,i,k,k],\,\, i<j, \,\,\star $ \\
\vspace{-2.5mm}&&\\
\hline \vspace{-2.5mm}&& \\ 
$n-2$ & $3(n+2)(n-1)$ & $\sum_{k=1}^{n}e[i,j,k,k]+e[j,i,k,k],\,\, i<j, \,\,\star$ \\
\vspace{-2mm}&&\\
 &  & $\sum_{k=1}^{n}e[i,i,k,k]-e[i+1,i+1,k,k],\,\, i \leq n-1, \,\,\star$ \\
\vspace{-2.5mm}&&\\
\hline \vspace{-2.5mm}&& \\ 
$6$ & $\frac{n(n-1)(n-2)(n-3)}{24}$ & $\sum_{\sigma \in \sym_{4}} \eps(\sigma)\,e[i,j,k,l]^{\sigma},\,\,i<j<k<l$ \\
\vspace{-2.5mm}&&\\
\hline \vspace{-2.5mm}&& \\ 
$2$&$\frac{3n(n+2)(n-1)(n-3)}{8}$&$D_{1}^{\eta}(i,j,k,l),\,\,D_{2}^{\eta}(i,j,k,l),\,\,D_{3}^{\eta}(i,j,k,l),\,\,i \neq j \neq k \neq l$\\
\vspace{-2.5mm}&&\\
\hline \vspace{-2.5mm}&& \\ 
$0$&$\frac{n(n+1)(n+2)(n-3)}{6}$&$S_{1}(i,j,k,l),\,\,S_{2}(i,j,k,l),\,\,i\neq j \neq k \neq l$\\
\vspace{-2mm}&&\\
&&$K_{1}(i,j,k,l),\,\,K_{2}(i,j,k,l),\,\,i\neq j \neq k \neq l$\\
\vspace{-2.5mm}&&\\
\hline \vspace{-2.5mm}&& \\ 
$-2$&$\frac{3(n-1)(n-2)(n+1)(n+4)}{8}$&$\left(\substack{e[i,j]^{\otimes 2}+e[j,k]^{\otimes 2}+e[k,i]^{\otimes 2}\\-e[j,i]^{\otimes 2}-e[k,j]^{\otimes 2}-e[i,k]^{\otimes 2}}\right),\,\,i\neq j\neq k,\,\,\star$\\
\vspace{-2mm}&&\\
&&$D_{1}^{\theta}(i,j,k,l),\,\,D_{2}^{\theta}(i,j,k,l),\,\,D_{3}^{\theta}(i,j,k,l),\,\,i \neq j \neq k \neq l$\\
\vspace{-2.5mm}&&\\
\hline \vspace{-2.5mm}&& \\ 
$-6$ & $\frac{n(n-1)(n+1)(n+6)}{24}$ & $\sum_{\sigma \in \sym_{4}} e[i,j,k,l]^{\sigma},\,\,i <j<k<l$ \\
\vspace{-2mm}&&\\
& & $e[i,i,i,i]+e[j,j,j,j]-\sum_{\sigma \in \sym_{4}}' e[i,i,j,j]^{\sigma},\,\,i<j$ \\
&&\\
\hline
\end{tabular}
$$
\vspace{1mm}

The star $\star$ means that the eigenvectors of a basis are listed up to action of $\sym_{4}$; and the symbols $\sum_{\sigma\in \sym_{4}}'$ mean that we take the sum of all \emph{distinct} permutations of the tensors. For the eigenvectors associated to the value $2$, denote $\mathfrak{D}_{4,(1)}=\langle (1,3,2,4),(1,2)\rangle$, $\mathfrak{D}_{4,(2)}=\langle (1,2,3,4),(1,3)\rangle$ and $\mathfrak{D}_{4,(3)}=\langle(1,2,4,3), (1,4)\rangle$ the three dihedral groups of order $4$ (hence cardinality $8$) that can be found inside $\sym_{4}$. Each dihedral group of order $4$ has for presentation
$$\mathfrak{D}_{4}=\langle r,s \,\,| \,\, r^{4}=s^{2}=(rs)^{2}=1\rangle,$$
so the parity $\eta(\sigma)$ of the number of occurrences of $s$ in a reduced writing of $\sigma \in \mathfrak{D}_{4}$ is well-defined, and provides a morphism $\eta : \mathfrak{D}_{4,(v)} \to \{ \pm 1\}$ for $v=1,2,3$. Then, it can be checked that for every $i \neq j \neq k \neq l$ and any $v$, 
$$D_{v}^{\eta}(i,j,k,l)=\sum_{\sigma \in \mathfrak{D}_{4,(v)}} \eta(\sigma) \,e[i,j,k,l]^{\sigma}$$
is in $V_{2}$. The eigenvectors $D_{1}^{\theta}(i,j,k,l)$, $D_{2}^{\theta}(i,j,k,l)$ and $D_{3}^{\theta}(i,j,k,l)$ associated to the eigenvalue $-2$ are defined exactly the same way, but with the morphism $\theta : \mathfrak{D}_{4,(v)} \to\{ \pm 1\}$ associated to the parity of the number of occurrences of $r$ in a reduced decomposition of $\sigma \in \mathfrak{D}_{4}$ (again it is well defined):
$$D_{v}^{\theta}(i,j,k,l)=\sum_{\theta \in \mathfrak{D}_{4,(v)}} \theta(\sigma) \,e[i,j,k,l]^{\sigma}.$$
For the eigenvectors associated to the value $0$, $S_{1}(i,j,k,l)$ is defined by
\begin{align*}
&\,\,e[i,i,k,k]+e[k,k,i,i]+e[j,j,l,l]+e[l,l,j,j]-e[i,i,l,l]-e[l,l,i,i]-e[j,j,k,k]-e[k,k,j,j]\\
&-e[i,k,k,i]-e[k,i,i,k]-e[j,l,l,j]-e[l,j,j,l]+e[i,l,l,i]+e[l,i,i,l]+e[j,k,k,j]+e[k,j,j,k]
\end{align*}
and $S_{2}(i,j,k,l)$ is obtained by replacing each term $a \otimes b \otimes b \otimes a$ by $a\otimes b \otimes a \otimes b$ in the previous formula. On the other hand, if $\mathfrak{K}_{4}=\{\id,(1,2)(3,4),(1,3)(2,4),(1,4)(2,3)\}$ denotes the Klein group, then $K_{1}(i,j,k,l)$ and $K_{2}(i,j,k,l)$ are defined as
\begin{align*}K_{1}(i,j,k,l)&=\sum_{\sigma \in \mathfrak{K}_{4}} e[i,j,k,l]^{\sigma}\,\,-\!\!\sum_{\sigma \in (1,2,3)\mathfrak{K}_{4}}e[i,j,k,l]^{\sigma};\\
 K_{2}(i,j,k,l)&=\sum_{\sigma \in \mathfrak{K}_{4}} e[i,j,k,l]^{\sigma}\,\,-\!\!\sum_{\sigma \in (1,3,2)\mathfrak{K}_{4}}e[i,j,k,l]^{\sigma}.
 \end{align*}\vspace{3mm}

That said, the deduction of the mixed moments of order $4$ of the coefficients of $g$ goes as follows. One notices that 
\begin{align*}(n+2)\sum_{i=1}^{n}e_{i}^{\otimes 4}&=\left(\sum_{k,l=1}^{n} e[k,k,l,l]+e[k,l,k,l]+e[k,l,l,k]\right)+\sum_{i<j} \left(e_{i}^{\otimes 4}+e_{j}^{\otimes 4}-\sum_{\sigma \in \sym_{4}}' e[i,i,j,j]^{\sigma}\right)
\end{align*}
with the first sum in the eigenspace $V_{2n-2}$ and the second sum in the eigenspace $V_{-6}$. On the other hand, for any $i \neq j$, 
\begin{align*}
&(n+4)(e_{i}^{\otimes 4}-e_{j}^{\otimes 4})=6\,e[i,i,i,i] + \sum_{k \neq i,j} \sum_{\sigma \in \sym_{4}}'e[i,i,k,k]^{\sigma} - \sum_{k \neq i,j} \sum_{\sigma \in \sym_{4}}'e[j,j,k,k]^{\sigma} - 6\,e[j,j,j,j]  \\
&+\sum_{k \neq i,j}\left(e[i,i,i,i] + e[k,k,k,k] - \sum_{\sigma \in \sym_{4}}' e[i,i,k,k]^{\sigma}\right)-\left(e[j,j,j,j] + e[k,k,k,k] - \sum_{\sigma \in \sym_{4}}' e[j,j,k,k]^{\sigma}\right),
\end{align*}
with the first line in $V_{n-2}$ and the second in $V_{-6}$. Since 
$e_{i}^{\otimes 4}=\frac{1}{n}\sum_{j=1}^{n} e_{j}^{\otimes 4}+ \frac{1}{n}\sum_{j=1}^{n}(e_{i}^{\otimes 4}-e_{j}^{\otimes 4}),$
one concludes that
\begin{align*}
e_{i}^{\otimes 4}&=\frac{1}{n(n+2)}\sum_{k,l=1}^{n}e[k,k,l,l]+e[k,l,k,l]+e[k,l,l,k]\\
&\quad+\frac{1}{n(n+4)} \sum_{\sigma \in \sym_{4}}' \sum_{k,l=1}^{n}  (e[i,i,k,k]-e[l,l,k,k])^{\sigma}\end{align*}
\begin{align*}
&\quad+\frac{n+1}{(n+2)(n+4)}\sum_{k \neq i}\left(e[i,i,i,i]+e[k,k,k,k]-\sum_{\sigma \in \sym_{4}}'e[i,i,k,k]^{\sigma}\right)\\
&\quad-\frac{1}{(n+2)(n+4)}\sum_{(k<l) \neq i}\left(e[k,k,k,k]+e[l,l,l,l]-\sum_{\sigma \in \sym_{4}}'e[k,k,l,l]^{\sigma}\right),
\end{align*}
each line being in a different eigenspace: $V_{2n-2}$, $V_{n-2}$, $V_{-6}$ and $V_{-6}$. The technique is now the following: to compute $\esper[g_{ij_{1}}g_{ij_{2}}g_{ij_{3}}g_{ij_{4}}]$, one counts the number of occurrences of $e[j_{1},j_{2},j_{3},j_{4}]$ in each term of the previous expansion. This leads to:
\begin{align*}
\esper[(g_{ii})^{4}]&=\frac{3}{n(n+2)}+\frac{6(n-1)}{n(n+4)}\,\E^{-t}+\frac{(n+1)(n-1)}{(n+2)(n+4)}\,\E^{-\frac{2n+4}{n}t};\\
\esper[(g_{ij})^{4}]&=\frac{3}{n(n+2)}-\frac{6}{n(n+4)}\,\E^{-t}+\frac{3}{(n+2)(n+4)}\,\E^{-\frac{2n+4}{n}t};\\
\esper[(g_{ii})^{2}(g_{ij})^{2}]&=\frac{1}{n(n+2)}+\frac{(n-2)}{n(n+4)}\,\E^{-t}-\frac{(n+1)}{(n+2)(n+4)}\,\E^{-\frac{2n+4}{n}t};\\
\esper[(g_{ij})^{2}(g_{ik})^{2}]&=\frac{1}{n(n+2)}-\frac{2}{n(n+4)}\,\E^{-t}+\frac{1}{(n+2)(n+4)}\,\E^{-\frac{2n+4}{n}t};
\end{align*}
and one sees also that the other expectations $\esper[g_{ij}g_{ik}g_{il}g_{im}]$ vanish (\emph{e.g.}, $\esper[g_{ij}g_{ik}(g_{il})^{2}]$ with $i \neq j \neq k \neq l$). Similar manipulations yield the decomposition in eigenvectors of $e_{i}^{\otimes 2}\otimes e_{j}^{\otimes 2}$:
\begin{align*}
&\frac{n+1}{n(n-1)(n+2)}\sum_{k,l=1}^{n}e[k,k,l,l]-\frac{1}{(n-1)n(n+2)}\sum_{k,l=1}^{n}e[k,l,k,l]+e[k,l,l,k]\\
&\text{- - - - - - - - - - - - - - - - - - - - - - - - - - - - - - - - - - - - - - - - - - - - - - - - - - - - - - - - - - - - - - -}\\
&+\frac{1}{n(n-2)}\sum_{k \neq i,j}\left(\sum_{l=1}^{n}e[i,i,l,l]-e[k,k,l,l]+\sum_{l=1}^{n}e[l,l,j,j]-e[l,l,k,k]\right)\\
&-\frac{1}{n(n-2)(n+4)}\sum_{k \neq i,j}\sum_{\sigma \in \sym_{4}}' \left(\sum_{l=1}^{n}(e[i,i,l,l]-e[k,k,l,l])^{\sigma}+\sum_{l=1}^{n}(e[l,l,j,j]-e[l,l,k,k])^{\sigma} \right)\\
&\text{- - - - - - - - - - - - - - - - - - - - - - - - - - - - - - - - - - - - - - - - - - - - - - - - - - - - - - - - - - - - - - -}\\
&+\frac{1}{6(n-1)(n-2)}\sum_{(k<l)\neq i,j}S_{1}(i,k,j,l)+S_{1}(i,l,j,k)+S_{2}(i,k,j,l)+S_{2}(i,l,j,k)\\
&\text{- - - - - - - - - - - - - - - - - - - - - - - - - - - - - - - - - - - - - - - - - - - - - - - - - - - - - - - - - - - - - - -}\\
&+\frac{1}{2n}\sum_{k\neq i,j}e[i,i,j,j]+e[j,j,k,k]+e[k,k,i,i]-e[j,j,i,i]-e[k,k,j,j]-e[i,i,k,k]\\
&\text{- - - - - - - - - - - - - - - - - - - - - - - - - - - - - - - - - - - - - - - - - - - - - - - - - - - - - - - - - - - - - - -}\\
&+\frac{1}{6(n+4)}\left(\sum_{k \neq i}\left(e_{i}^{\otimes 4}+e_{k}^{\otimes 4}-\sum_{\sigma \in \sym_{4}}'e[i,i,k,k]^{\sigma}\right)+\sum_{k\neq j}\left(e_{j}^{\otimes 4}+e_{k}^{\otimes 4}-\sum_{\sigma \in \sym_{4}}'e[j,j,k,k]^{\sigma}\right)\right)\\
&-\frac{1}{6}\left(e_{i}^{\otimes 4}+e_{j}^{\otimes 4}-\sum_{\sigma \in \sym_{4}}'e[i,i,j,j]^{\sigma}\right)-\frac{1}{3(n+2)(n+4)}\sum_{(k<l)} \left(e_{k}^{\otimes 4}+e_{l}^{\otimes 4}-\sum_{\sigma \in \sym_{4}}'e[k,k,l,l]^{\sigma}\right),
\end{align*}
where the eigenspaces associated to each part are $V_{2n-2}$, $V_{n-2}$, $V_{0}$, $V_{-2}$ and $V_{-6}$. As a consequence, 
\begin{align*}
\esper[(g_{ii})^{2}(g_{jj})^{2}]&=\frac{n+1}{(n-1)n(n+2)}+\frac{2(n+3)}{n(n+4)}\,\E^{-t}+\frac{n-3}{3(n-1)}\,\E^{-\frac{2n-2}{n}t}+\frac{n-2}{2n}\,\E^{-2t}\\
&\quad+\frac{n^{2}+4n+6}{6(n+2)(n+4)}\,\E^{-\frac{2n+4}{n}t};\\
\esper[(g_{ij})^{2}(g_{ji})^{2}]&=\frac{n+1}{n(n-1)(n+2)}-\frac{2}{n(n+4)}\,\E^{-t}+\frac{n-3}{3(n-1)}\,\E^{-\frac{2n-2}{n}t}-\frac{n-2}{2n}\,\E^{-2t}\\
&\quad+\frac{n^{2}+4n+6}{6(n+2)(n+4)}\,\E^{-\frac{2n+4}{n}t};\\
\esper[(g_{ii})^{2}(g_{jk})^{2}]&=\frac{n+1}{n(n-1)(n+2)}+\frac{n^{2}-8}{n(n-2)(n+4)}\,\E^{-t}-\frac{n-3}{3(n-1)(n-2)}\,\E^{-\frac{2n-2}{n}t}\\
&\quad-\frac{1}{2n}\,\E^{-2t}-\frac{n}{6(n+2)(n+4)}\,\E^{-\frac{2n+4}{n}t};\\
\esper[(g_{ij})^{2}(g_{jk})^{2}]&=\frac{n+1}{n(n-1)(n+2)}-\frac{2}{(n-2)(n+4)}\,\E^{-t}-\frac{n-3}{3(n-1)(n-2)}\,\E^{-\frac{2n-2}{n}t}\\
&\quad+\frac{1}{2n}\,\E^{-2t}-\frac{n}{6(n+2)(n+4)}\,\E^{-\frac{2n+4}{n}t};\\
\esper[(g_{ij})^{2}(g_{kl})^{2}]&=\frac{n+1}{n(n-1)(n+2)}-\frac{2(n+2)}{n(n-2)(n+4)}\,\E^{-t}+\frac{2}{3(n-1)(n-2)}\,\E^{-\frac{2n-2}{n}t}\\
&\quad+\frac{1}{3(n+2)(n+4)}\,\E^{-\frac{2n+4}{n}t};\\
\esper[g_{ii}g_{ij}g_{jj}g_{ji}]&=-\frac{1}{n(n-1)(n+2)}-\frac{2}{n(n+4)}\,\E^{-t}-\frac{n-3}{6(n-1)}\,\E^{-\frac{2n-2}{n}t}\\
&\quad+\frac{n^{2}+4n+6}{6(n+2)(n+4)}\,\E^{-\frac{2n+4}{n}t};\\
\esper[g_{ik}g_{il}g_{jk}g_{jl}]&=-\frac{1}{n(n-1)(n+2)}+\frac{4}{n(n-2)(n+4)}\,\E^{-t}-\frac{1}{3(n-1)(n-2)}\,\E^{-\frac{2n-2}{n}}\\
&\quad+\frac{1}{3(n+2)(n+4)}\,\E^{-\frac{2n+4}{n}t}.
\end{align*}
\vspace{2mm}

Finally, the elementary tensor $e_{i} \otimes e_{j} \otimes e_{k} \otimes e_{l}$ with $i \neq j \neq k \neq l$ can be expanded as a combination of eigenvectors in $V_{6}$, $V_{2}$, $V_{0}$, $V_{-2}$ and $V_{-6}$. This expansion is related to a remarkable identity in the group algebra $\C\sym_{4}$, which can be considered as a relation of orthogonality of characters, but that only involves one-dimensional representations. Denote $$D_{1}^{\eta}=\sum_{\sigma \in \mathfrak{D}_{4,(1)}}\eta(\sigma)\,\sigma,$$ and similarly for $D_{2}^{\eta}$, $D_{3}^{\eta}$, $D_{1}^{\theta}$, $D_{2}^{\theta}$ and $D_{3}^{\theta}$. We also introduce $I = \sum_{\sigma \in \sym_{4}} \sigma$, $E = \sum_{\sigma \in \sym_{4}} \eps(\sigma)\,\sigma$, and
$$K_{1}=\sum_{\sigma \in \mathfrak{K}_{4}}\sigma\,\,\,\,-\!\!\!\!\sum_{\sigma \in (1,2,3)\mathfrak{K}_{4}}\!\!\!\!\sigma \qquad;\qquad K_{2}=\sum_{\sigma \in \mathfrak{K}_{4}}\sigma\,\,\,\,-\!\!\!\!\sum_{\sigma \in (1,3,2)\mathfrak{K}_{4}}\!\!\!\!\sigma.$$
As explained before, all these sums correspond to eigenvectors in $V_{6}$, $V_{2}$, $V_{0}$, $V_{-2}$ and $V_{-6}$. Then,
$$\mathrm{id}_{\lle 1,4\rre}=\frac{1}{24}\,I+\frac{1}{8}\,(D_{1}^{\eta}+D_{2}^{\eta}+D_{3}^{\eta})+\frac{1}{12}(K_{1}+K_{2})+\frac{1}{8}\,(D_{1}^{\theta}+D_{2}^{\theta}+D_{3}^{\theta})+\frac{1}{24}\,E.\label{crazys4}$$
\comment{Indeed, $\frac{1}{24}\,I+\frac{1}{24}\,E$ is one twelfth of the sum of all even permutations in the alternate group $\mathfrak{A}_{4}$. By adding $\frac{1}{12}(K_{1}+K_{2})$ to this quantity, one removes all the permutations that are in $\mathfrak{A}_{4}$ and not in $\mathfrak{K}_{4}$, so
$$\frac{1}{24}\,I+\frac{1}{12}(K_{1}+K_{2})+\frac{1}{24}\,E=\frac{1}{4}\sum_{\sigma \in \mathfrak{K}_{4}}\sigma.$$
On the other hand, 
$$\frac{1}{8}\left(D_{1}^{\eta}+D_{1}^{\theta}\right)=\frac{1}{4}\,\id_{\lle 1,4\rre}+\frac{1}{4}\,(1,2)(3,4)-\frac{1}{4}\,(1,3)(2,4)-\frac{1}{4}\,(1,4)(2,3)$$
and similarly for the two other dihedral groups. Thus, the whole sum on the right-hand side of \eqref{crazys4} is indeed $\id_{\lle 1,4\rre}$. }
As a consequence, $e[i,j,k,l]$ is equal to
\begin{align*}&\frac{1}{24}\sum_{\sigma \in \sym_{4}}\eps(\sigma)\, e[i,j,k,l]^{\sigma}+\frac{1}{8}(D_{1}^{\eta}(i,j,k,l)+D_{2}^{\eta}(i,j,k,l)+D_{3}^{\eta}(i,j,k,l))+\frac{1}{12}(K_{1}(i,j,k,l)+K_{2}(i,j,k,l))\\
&+\frac{1}{8}(D_{1}^{\theta}(i,j,k,l)+D_{2}^{\theta}(i,j,k,l)+D_{3}^{\theta}(i,j,k,l))+\frac{1}{24}\sum_{\sigma \in \sym_{4}} e[i,j,k,l]^{\sigma}
\end{align*}
with each term respectively in  $V_{6}$, $V_{2}$, $V_{0}$, $V_{-2}$ and $V_{-6}$. This leads to
\begin{align*}
\esper[g_{ii}g_{jj}g_{kk}g_{ll}]&=\frac{1}{24}\,\E^{-\frac{2n-8}{n}t}+\frac{3}{8}\,\E^{-\frac{2n-4}{n}t}+\frac{1}{6}\,\E^{-\frac{2n-2}{n}t}+\frac{3}{8}\,\E^{-2t}+\frac{1}{24}\,\E^{-\frac{2n+4}{n}t};\\
\esper[g_{ij}g_{jk}g_{kl}g_{li}]&=-\frac{1}{24}\,\E^{-\frac{2n-8}{n}t}+\frac{1}{8}\,\E^{-\frac{2n-4}{n}t}-\frac{1}{8}\,\E^{-2t}+\frac{1}{24}\,\E^{-\frac{2n+4}{n}t};\\
\esper[g_{ii}g_{jj}g_{kl}g_{lk}]&=-\frac{1}{24}\,\E^{-\frac{2n-8}{n}t}-\frac{1}{8}\,\E^{-\frac{2n-4}{n}t} +\frac{1}{8}\,\E^{-2t}+\frac{1}{24}\,\E^{-\frac{2n+4}{n}t};\\
\esper[g_{ij}g_{ji}g_{kl}g_{lk}]&=\frac{1}{24}\,\E^{-\frac{2n-8}{n}t} -\frac{1}{8}\,\E^{-\frac{2n-4}{n}t}+\frac{1}{6}\,\E^{-\frac{2n-2}{n}t}-\frac{1}{8}\,\E^{-2t}+\frac{1}{24}\,\E^{-\frac{2n+4}{n}t};
\end{align*}
and we are done with the computations in the case of special orthogonal groups.\bigskip

\comment{\begin{remark}
It would be very nice to find the analogue of Equation \eqref{crazys4} for symmetric groups of larger order, in connection with the diagonalization of $M_{n,2k}$. An interesting feature of this identity is that it involves only generating functions of one-dimensional characters of subgroups $H \subset \sym_{4}$:
$$\varSigma(H,\chi)=\sum_{\sigma \in H \subset \sym_{4}}\chi(\sigma)\,\sigma.$$
Indeed, this is obvious for most of the terms, and notice that $K_{1}+K_{2}=\varSigma(\mathfrak{A}_{4},\chi)+\varSigma(\mathfrak{A}_{4},\chi^{2})$, where $\chi$ is the quotient map $\mathfrak{A}_{4} \to \mathfrak{A}_{4} / \mathfrak{K}_{4} \simeq \Z/3\Z = \{1,\E^{2\I \pi/3},\E^{-2\I \pi/3}\}$. This combinatorial property of the reduction of $M_{n,2k}$ seems profound and quite mysterious for the moment.
\end{remark}}

\begin{proposition}
For the real Grassmannian varieties $\Gra(n,q,\R)$ and the spaces $\SO(2n)/\unit(n)$, the coefficients of Lemma \ref{abstractexpansion} are:
\begin{align*}
&\Gra(n,q,\R): \quad \frac{2}{n^{2}+n-2}+\frac{2n^{2}}{3}\left(\frac{1}{(n-1)(n-2)}-\frac{1}{pq(n-2)}\right)\phi^{(2,2,0,\ldots,0)_{\lfloor \frac{n}{2} \rfloor}}\\
&\qquad\qquad\qquad +\frac{\frac{4n^{2}}{pq}-16}{(n-2)(n+4)}\,\phi^{(2,0,\ldots,0)_{\lfloor \frac{n}{2} \rfloor}}+\frac{n^{2}}{3}\left(\frac{1}{(n + 2)(n + 4)}+\frac{2}{pq(n + 4)}\right) \phi^{(4,0,\ldots,0)_{\lfloor \frac{n}{2} \rfloor}};\\
&\SO(2n)/\unit(n):\quad \frac{1}{2n^{2}-n}+\frac{n-1}{3n}\,\phi^{(1^{4},0,\ldots,0)_{n}}+\frac{4(n^{2}-1)}{(3n)(2n-1)}\,\phi^{(2,2,0,\ldots,0)_{n}}.
\end{align*}
\end{proposition}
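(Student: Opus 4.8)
The plan is to compute the coefficients $a,b,c,\ldots$ in the two relevant lines of Lemma~\ref{abstractexpansion} by brute evaluation of $\esper_t[(\phi^{\lambda_{\min}})^2]$ using the explicit polynomial expression of $\phi^{\lambda_{\min}}$ from Proposition~\ref{coeffsphericalfunction}, together with the moment formulas for the entries of a Brownian $g \in \SO(m)$ assembled in the long eigenvalue/eigenvector computation just before the statement. Concretely, recall from Lemma~\ref{abstractexpansion} that each $(\phi^{\lambda_{\min}})^2$ is an (unknown) linear combination of a fixed short list of zonal functions $\phi^{\nu}$; since the $\phi^{\nu}$ are orthogonal and $\esper_t[\sqrt{D^{\nu}}\,\phi^{\nu}] = \{A_n(\nu)\,\E^{-t B_n(\nu)}\}^{1/2}$ (from \S\ref{zonal}), the function $t \mapsto \esper_t[(\phi^{\lambda_{\min}})^2]$ is a combination of negative exponentials $\E^{-tB_n(\nu)}$ whose exponents are exactly the Casimir eigenvalues attached to the $\nu$ on the list. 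So the strategy is: (i) write $(\phi^{\lambda_{\min}}(gK))^2$ as a polynomial of degree $4$ in the matrix coefficients $g_{ij}$; (ii) apply linearity of expectation and plug in the mixed fourth moments $\esper[(g_{ii})^4]$, $\esper[(g_{ij})^4]$, $\esper[(g_{ii})^2(g_{ij})^2]$, $\esper[g_{ii}g_{jj}g_{kk}g_{ll}]$, etc., all of which were just tabulated; (iii) collect the resulting finite sum of exponentials, match each exponent $B_n(\nu)$ against the eigenvalues on the list (using the table of Lemma~\ref{decay} and the $B_n$ formulas of Theorem~\ref{explicitdensity}), and read off each coefficient as (the value of the exponential's coefficient) times $\sqrt{D^{\nu}}$ divided appropriately; equivalently, since $\esper_t[c_\nu \phi^\nu] = c_\nu \,\E^{-tB_n(\nu)}$, the coefficient $c_\nu$ is literally the coefficient of $\E^{-tB_n(\nu)}$ in the expansion of $\esper_t[(\phi^{\lambda_{\min}})^2]$.

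For $\SO(2n,\R)/\unit(n,\C)$ the input is $\phi^{\lambda_{\min}}(gK) = \frac{1}{n}\sum_{i,j=1}^{n}\big(g_{(2i)(2j)}g_{(2i-1)(2j-1)} - g_{(2i)(2j-1)}g_{(2i-1)(2j)}\big)$, an $\SO(2n)$ matrix coefficient polynomial of degree $2$; squaring gives a degree-$4$ polynomial in the entries of $g \in \SO(2n)$, and one sorts the $\binom{n}{\bullet}$-many index patterns $(i,j,k,l)$ into the combinatorial types (coincidences among rows, among columns) for which the table supplies $\esper[\cdots]$. Summing, one obtains a combination of $1$, $\E^{-t}$, $\E^{-\frac{2n-2}{n}t}$, $\E^{-2t}$ and $\E^{-\frac{2n+4}{n}t}$ with explicit rational (in $n$) coefficients; the constant term is $\frac{1}{2n^2-n} = 1/D^{(1,1,0,\ldots,0)_n}$ as predicted, and the three remaining exponents must correspond to the three spherical labels $(1^2,0,\ldots,0)_n$, $(1^4,0,\ldots,0)_n$, $(2,2,0,\ldots,0)_n$ of $\SO(2n)/\unit(n)$. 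Checking the $B_n$-values against Theorem~\ref{explicitdensity} identifies which exponential goes with which $\phi^\nu$, and the asserted coefficients $\frac{n-1}{3n}$ and $\frac{4(n^2-1)}{3n(2n-1)}$ drop out; as a sanity check one can verify $1/(2n^2-n) + (n-1)/(3n) + 4(n^2-1)/(3n(2n-1)) = (\phi^{\lambda_{\min}}(eK))^2 = 1$.

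For $\Gra(n,q,\R)$ the input is $\phi^{\lambda_{\min}}(gK) = \frac1p\sum_{i,j\le p}(g_{ij})^2 + \frac1q\sum_{i,j>p}(g_{ij})^2 - 1$ with $p=n-q$, a polynomial of degree $2$ in the entries of $g \in \SO(n)$; squaring produces a degree-$4$ polynomial, and here the dependence on $p,q$ is genuine, so the bookkeeping of which index blocks $(i,j)$ lie in the $p\times p$ corner versus the $q\times q$ corner matters — this is where the $1/pq$ terms in the stated coefficients originate. One again sums the tabulated second and fourth moments (now only of the $(g_{ij})^2$ type and products thereof, plus the subtraction of $1$ which interacts with the $\esper[(g_{ij})^2]$ terms), collects the exponentials $1, \E^{-t}, \E^{-\frac{2n-2}{n}t}, \E^{-2t}, \E^{-\frac{2n+4}{n}t}$, discards the $\E^{-2t}$ and $\E^{-\frac{2n-2}{n}t}$-type pieces that reassemble (with the $\E^{-t}$ piece) into the non-spherical contributions invisible in the zonal expansion — actually more carefully, since $\esper_t$ only sees spherical $\phi^\nu$, every exponential that survives must match a $B_n(\nu)$ with $\nu$ spherical, so one simply reads off the coefficients of the exponents attached to $(2,0,\ldots,0)_{\lfloor n/2\rfloor}$, $(1,1,0,\ldots,0)_{\lfloor n/2\rfloor}$, $(2,2,0,\ldots,0)_{\lfloor n/2\rfloor}$, $(3,1,0,\ldots,0)_{\lfloor n/2\rfloor}$, $(4,0,\ldots,0)_{\lfloor n/2\rfloor}$, and compares with the claimed formulas (observing that $b$ for $\phi^{(1,1,\ldots)}$ and $d$ for $\phi^{(3,1,\ldots)}$ turn out to vanish, consistent with the statement listing only three nontrivial terms).

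The main obstacle is purely the combinatorial accounting in the $\Gra(n,q,\R)$ case: one must partition the quadruple-index sums into the correct coincidence patterns, weight each pattern by the number of index tuples of that type (which depends on $p$ and $q$ separately and on whether the free indices fall in the same block), and then match the five distinct exponents $\frac{2n-8}{n}, \frac{2n-4}{n}, \frac{2n-2}{n}, 2, \frac{2n+4}{n}$ appearing in the quadruple moments against the Casimir eigenvalues $B_n(\nu)$ of the five candidate spherical representations — being careful that some of those moment-exponents, notably $\E^{-2t}$ and $\E^{-\frac{2n-2}{n}t}$, appear with coefficients that, after summing over all index patterns, recombine so that the spurious (non-spherical) parts cancel and only the five stated coefficients remain. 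A convenient cross-check that the final answer is right is to specialize to $q=1$ (so $\Gra(n,1,\R) = \mathbb{S}^{n-1}(\R)$, a rank-one space) and compare with the rank-one formula of \cite{Gas70} and with Equation~\eqref{realsphere}, and likewise to verify the value at $g=e$ equals $1$. The $\SO(2n)/\unit(n)$ computation has no $p,q$ parameter and is correspondingly shorter; its only subtlety is tracking the minus sign in the Pfaffian-like expression when squaring, which mixes the $g_{(2i)(2j)}g_{(2i-1)(2j-1)}$ and $g_{(2i)(2j-1)}g_{(2i-1)(2j)}$ terms into the four-coefficient moments $\esper[g_{ab}g_{cd}g_{ef}g_{gh}]$ of the table.
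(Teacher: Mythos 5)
Your proposal follows essentially the same route as the paper: square the explicit expression of $\phi^{\lambda_{\min}}$ from Proposition \ref{coeffsphericalfunction}, compute $\esper_{t}$ term by term with the tabulated joint moments of the matrix entries (taken with parameter $2n$ for $\SO(2n)/\unit(n)$), and read off the coefficients of Lemma \ref{abstractexpansion} by matching the surviving exponentials with the Casimir exponents of the spherical labels, exactly as the paper does. Two cosmetic points only: under $\mu_{t}$ one has $\esper_{t}[\phi^{\nu}]=\E^{-\frac{t}{2}B_{n}(\nu)}$ in the paper's normalization (not $\E^{-tB_{n}(\nu)}$), and for $\SO(2n)/\unit(n)$ the exponentials that actually survive are $1$, $\E^{-\frac{2n-4}{n}t}$ and $\E^{-\frac{2n-1}{n}t}$ rather than the parameter-$n$ list you quoted; neither affects the validity of the method.
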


\begin{proof}
One expands the square of the sum given by Proposition \ref{coeffsphericalfunction}, and one gathers the joint moments of the coefficients according to the possible identities between the indices. For real Grassmannians, $\left(\phi^{(2,0,\ldots,0)_{\lfloor \frac{n}{2} \rfloor}}\right)^{2}$ has for expansion:\label{expansquaregrass}
\begin{align*}&\left(\frac{1}{p}\sum_{i,j\leq p} (g_{ij})^{2}+\frac{1}{q}\sum_{i,j>p} (g_{ij})^{2}-1\right)^{2}=\left(\frac{1}{p}\sum_{i,j\leq p} (g_{ij})^{2}+\frac{1}{q}\sum_{i,j>p} (g_{ij})^{2}\right)^{2}-2\,\phi^{(2,0,\ldots,0)_{\lfloor \frac{n}{2} \rfloor}}-1\\
&=\left(\frac{n}{pq}\right)T[(g_{11})^{4}]+\left(4-\frac{n}{pq}\right)T[(g_{11}g_{22})^{2}]+\left(2-\frac{n}{pq}\right)\left(4\,T[(g_{11}g_{12})^{2}]+T[(g_{12})^{4}]+T[(g_{12}g_{21})^{2}]\right)\\
&\,\,\,+\left(4n-16+\frac{4n}{pq}\right)T[(g_{11}g_{23})^{2}]+\left(2n-12+\frac{4n}{pq}\right)\left(T[(g_{12}g_{13})^{2}]+T[(g_{12}g_{23})^{2}]\right)\\
&\,\,\,+\left(n^{2}-8n+24-\frac{6n}{pq}\right)T[(g_{12}g_{34})^{2}]-2\,\phi^{(2,0,\ldots,0)_{\lfloor \frac{n}{2} \rfloor}}-1.
\end{align*}
where by $T[(g_{11})^{4}]$ we mean a linear combination of products $(g_{ii})^{4}$, whose expectation is therefore $\esper[(g_{11})^{4}]$; by $T[(g_{11}g_{22})^{2}]$ we mean a linear combination of products $(g_{ii}\,g_{jj})^{2}$ whose expectation will be $\esper[(g_{11}g_{22})^{2}]$, \emph{etc.} Thus, the expectation of $\left(\phi^{(2,0,\ldots,0)_{\lfloor \frac{n}{2} \rfloor}}\right)^{2}$ is
\begin{align*}
&\frac{2}{n^{2}+n-2}+\frac{\frac{4n^{2}}{pq}-16}{(n-2)(n+4)}\,\E^{-t}+\frac{2n^{2}}{3}\left(\frac{1}{(n-1)(n-2)}-\frac{1}{pq(n-2)}\right)\,\E^{-\frac{2n-2}{n}t}\\
&+\frac{n^{2}}{3}\left(\frac{1}{(n + 2)(n + 4)}+\frac{2}{pq(n + 4)}\right)\,\E^{-\frac{2n+4}{n}t},
\end{align*}
and by identifying the Casimir coefficients of the spherical functions, one deduces from this the expansion of the square of the discriminating zonal function in zonal functions.
\bigskip

For the spaces $\SO(2n)/\unit(n)$, one computes again the square of the homogeneous polynomial of degree $2$ given by Proposition \ref{coeffsphericalfunction}. Thus, $\frac{1}{n^{2}}(\sum_{i,j=1}^{n}g_{2i,2j}\,g_{2i-1,2j-1}-g_{2i,2j-1}\,g_{2i-1,2j})^{2}$ equals
\begin{align*}
&\!\!\!\frac{1}{n}\left(T[(g_{11}g_{22})^{2}]+T[(g_{12}g_{21})^{2}]-2\,T[g_{11}g_{12}g_{22}g_{21}]\right)\\   % i i i i
&+ \frac{n-1}{n} \left(2\,T[(g_{12}g_{34})^{2}]-2\,T[g_{13}g_{14}g_{23}g_{24}]\right)\\ % i j i j 
&+ \frac{n-1}{n} \left(2\,T[g_{12}g_{21}g_{34}g_{43}]-2\,T[g_{12}g_{23}g_{34}g_{41}]\right)\\ % i j j i
&+ \frac{n-1}{n} \left(T[g_{11}g_{22}g_{33}g_{44}]+T[g_{12}g_{21}g_{34}g_{43}]-2\,T[g_{11}g_{22}g_{34}g_{43}]\right)\\%i i j j
&+ \text{remainder},
\end{align*}
with the same notations as before, and where the remainder is a combination of products of coefficients whose expectation vanish under Brownian (and Haar) measures. More precisely, terms with a certain symmetry cancel with one another when taking the expectation: for instance,
\begin{equation} (g_{2i,2j}\,g_{2i-1,2j-1}-g_{2i,2j-1}\,g_{2i-1,2j})\times (g_{2k,2l}\,g_{2k-1,2l-1}-g_{2k,2l-1}\,g_{2k-1,2l}) \label{fourstrangers}\end{equation}
with $i \neq j \neq k \neq l$ is equal to $a + b - c - d$ , where $a,b,c,d$ are products of type $g_{ij}g_{kl}g_{mn}g_{op}$, and have therefore the same expectation. Consequently, every product of type \eqref{fourstrangers} will not contribute to the expectation of $(\phi^{(1,1,0,\ldots,0)_{\lfloor \frac{n}{2}\rfloor}})^{2}$. The following sets of indices have the same property of ``self-cancellation'':
\begin{align*}
&(i,i,i,j)\,\,;\,\,(i,i,j,i)\,\,;\,\,(i,j,i,i)\,\,;\,\,(j,i,i,i)\,\,;\,\,(i,i,j,k)\,\,;\,\,(j, k ,i ,i )\,\,;\\
&(i ,j, i, k) \,\,;\,\, (j ,i ,k ,i )\,\,;\,\, (i ,j ,k ,i) \,\,;\,\, (j ,i ,i ,k) \,\,;\,\, (i ,j, k, l)\,\,;\,\,(i,j,k,l)\,\,;\end{align*}
so it suffices to consider products with sets of indices $(i,i,i,i)$, $(i,j,i,j)$, $(i,j,j,i)$ or $(i,i,j,j)$ --- these are the four lines of the previous expansion.
Using the formulas given before for the joint moments of the entries (beware that one has to use them with the parameter $2n$), we obtain:
$$\esper[(\phi^{(1,1,0,\ldots,0)_{n}})^{2}]=\frac{1}{n(2n-1)}+\frac{n-1}{3n}\,\E^{-\frac{2n-4}{n}t}+\frac{4(n^{2}-1)}{(3n)(2n-1)}\,\E^{-\frac{2n-1}{n}t}$$
and it suffices then to identify the coefficients of the negative exponentials.
\end{proof}
\bigskip

\subsubsection{Quotients of unitary groups} For special unitary groups, set $\frac{1}{n^{2}}M_{n,k,l}=\sum_{1 \leq i<j \leq k+l} \widetilde\eta_{i,j}(C_{\mathfrak{su}(n)})$, to be considered as an element of $\mathrm{End}((\C^{n})^{\otimes k+l})$. If $k=l=1$, then 
$$\esper[|g_{ii}|^{2}]=\frac{1}{n}+\left(1-\frac{1}{n}\right)\E^{-t}\quad;\quad \esper[|g_{ij}|^{2}]=\frac{1}{n}\left(1-\E^{-t}\right)\quad;\quad\esper[g_{ii}\overline{g_{jj}}]=\E^{-t}$$\vspace{1mm}
since the eigenvalues and eigenvectors of $M_{n,1,1}=\I I_{n}\otimes \I I_{n}+n\,\sum_{i,j=1}^{n} E_{ij}\otimes E_{ij}$ are:\vspace{1mm}
$$
\begin{tabular}{|c|c|c|}
\hline\vspace{-2.5mm}&&\\
eigenvalue & multiplicity & eigenvectors \\
\vspace{-2.5mm}&&\\
\hline \vspace{-2.5mm}&& \\ 
$n^{2}-1$ & $1$ & $\sum_{i=1}^{n}e[i,i]$ \\
\vspace{-2.5mm}&&\\
\hline \vspace{-2.5mm}&& \\ 
$-1$ & $n^{2}-1$ &  $e[i,j],\,\,1\leq i\neq j\leq n$ \\
\vspace{-2mm}&&\\
& & $e[i,i]-e[i+1,i+1],\,\,i\leq n-1$ \\
&&\\
\hline
\end{tabular}
$$\vspace{1mm}

\noindent If $k=l=2$, then the eigenvalues and eigenvectors of $M_{n,2,2}$ are:\vspace{1mm}
$$
\begin{tabular}{|c|c|c|}
\hline\vspace{-2.5mm}&&\\
eigenvalue & multiplicity & eigenvectors (not exhaustive, some repetitions)\\
\vspace{-2.5mm}&&\\
\hline \vspace{-2.5mm}&& \\ 
$2n^{2}-2$ & $2$ & $\sum_{k=1}^{n}\sum_{l=1}^{n}e[k,l,k,l],\,\,\sum_{k=1}^{n}\sum_{l=1}^{n}e[k,l,l,k]$ \\
\vspace{-2.5mm}&& \\
\hline \vspace{-2.5mm}&& \\ 
$n^{2}-2$ & $4(n+1)(n-1)$ &$\sum_{k=1}^{n}e[i,k,i,k]-e[i+1,k,i+1,k], \,\, i \leq n-1$\\
\vspace{-2mm}&&\\
&& $\sum_{k=1}^{n}e[k,i,k,i]-e[k,i+1,k,i+1],\,\, i \leq n-1$ \\
\vspace{-2mm}&&\\
&&$\sum_{k=1}^{n}e[i,k,k,i]-e[i+1,k,k,i+1], \,\, i \leq n-1$\\
\vspace{-2mm}&&\\
&& $\sum_{k=1}^{n}e[k,i,i,k]-e[k,i+1,i+1,k],\,\, i \leq n-1$ \\
\vspace{-2.5mm}&&\\
\hline \vspace{-2.5mm}&& \\ 
$2n-2$ & $\frac{n^{2}(n+1)(n-3)}{4}$ & $\left(\substack{(e[i,j]-e[j,i])^{\otimes 2}-(e[j,k]-e[k,j])^{\otimes 2}\\+(e[k,l]-e[l,k])^{\otimes 2}-(e[l,i]-e[i,l])^{\otimes 2}}\right),\,\,i \neq j \neq k \neq l$\\
\vspace{-2.5mm}&&\\
\hline \vspace{-2.5mm}&& \\ 
$-2$ & $\frac{(n+2)(n+1)(n-1)(n-2)}{2}$ & $\left(\substack{e[i,j,i,j]+e[j,k,j,k]+e[k,i,k,i] \\ -e[j,i,j,i]-e[k,j,k,j]- e[i,k,i,k]}\right),\,\,i<j<k$\\
\vspace{-2mm}&&\\
&& $\left(\substack{e[i,j,j,i]+e[j,k,k,j]+e[k,i,i,k] \\ -e[j,i,i,j]-e[k,j,j,k]- e[i,k,k,i]}\right),\,\,i<j<k$\\
\vspace{-2.5mm}&&\\
\hline \vspace{-2.5mm}&& \\ 
$-2n-2$ & $\frac{n^{2}(n-1)(n+3)}{4}$ & $ e[i,i,i,i]+e[j,j,j,j]-(e[i,j]+e[j,i])^{\otimes 2},\,\,i<j$\\
&&\\
\hline
\end{tabular}
$$\vspace{1mm}

\noindent 
Again, we can use the previous table to decompose some elementary $4$-tensors in eigenvectors of $M_{n,2,2}$; we refer to Appendix \ref{expun}.  Thus,
\begin{align*}\esper[|g_{ii}|^{4}]&=\frac{2}{n(n+1)}+\frac{4(n-1)}{n(n+2)}\,\E^{-t}+\frac{n(n-1)}{(n+1)(n+2)}\,\E^{-\frac{2n+2}{n}t};\\
\esper[|g_{ij}|^{4}]&=\frac{2}{n(n+1)}-\frac{4}{n(n+2)}\,\E^{-t}+\frac{2}{(n+1)(n+2)}\,\E^{-\frac{2n+2}{n}t};\\
\esper[|g_{ii}|^{2}|g_{ij}|^{2}]&= \frac{1}{n(n+1)}+\frac{n-2}{n(n+2)}\,\E^{-t}-\frac{n}{(n+1)(n+2)}\,\E^{-\frac{2n+2}{n}t};\\
\esper[|g_{ij}|^{2}|g_{ik}|^{2}]&=\frac{1}{n(n+1)}-\frac{2}{n(n+2)}\,\E^{-t} +\frac{1}{(n+1)(n+2)}\,\E^{-\frac{2n+2}{n}t};\\
\esper[|g_{ii}|^{2}|g_{jj}|^{2}]&=\frac{1}{(n-1)(n+1)}+ \frac{2(n+1)}{n(n+2)}\,\E^{-t}+\frac{n-3}{4(n-1)}\,\E^{-\frac{2n-2}{n}t}\\
&\quad+\frac{n-2}{2n}\,\E^{-2t}+\frac{n^{2}+n+2}{4(n+1)(n+2)}\,\E^{-\frac{2n+2}{n}t};\\
\esper[|g_{ij}|^{2}|g_{ji}|^{2}]&=\frac{1}{(n-1)(n+1)}-\frac{2}{n(n+2)}\,\E^{-t}+\frac{n-3}{4(n-1)}\,\E^{-\frac{2n-2}{n}t}\\
&\quad-\frac{n-2}{2n}\,\E^{-2t}+\frac{n^{2}+n+2}{4(n+1)(n+2)}\,\E^{-\frac{2n+2}{n}t};\\
\esper[|g_{ii}|^{2}|g_{jk}|^{2}]&=\frac{1}{(n-1)(n+1)}+\frac{n^{2}-2n-2}{n(n-2)(n+2)}\,\E^{-t}-\frac{n-3}{4(n-1)(n-2)}\,\E^{-\frac{2n-2}{n}t}\\
&\quad-\frac{1}{2n}\,\E^{-2t}-\frac{n-1}{4(n+1)(n+2)}\,\E^{-\frac{2n+2}{n}t};\\
\esper[|g_{ij}|^{2}|g_{jk}|^{2}]&=\frac{1}{(n-1)(n+1)}-\frac{2(n-1)}{n(n-2)(n+2)}\,\E^{-t}-\frac{n-3}{4(n-1)(n-2)}\,\E^{-\frac{2n-2}{n}t}\\
&\quad+\frac{1}{2n}\,\E^{-2t}-\frac{n-1}{4(n+1)(n+2)}\,\E^{-\frac{2n+2}{n}t};\end{align*}
\begin{align*}
\esper[|g_{ij}|^{2}|g_{kl}|^{2}]&=\frac{1}{(n-1)(n+1)}-\frac{2}{(n-2)(n+2)}\,\E^{-t}+\frac{1}{2(n-1)(n-2)}\,\E^{-\frac{2n-2}{n}t}\\
&\quad+\frac{1}{2(n+1)(n+2)}\,\E^{-\frac{2n+2}{n}t}.
\end{align*}\medskip

\begin{proposition}
For the symmetric spaces with isometry group $\SU(n)$ or $\SU(2n)$, the coefficients of Lemma \ref{abstractexpansion} are:
\begin{align*}
&\Gra(n,q,\C): \quad \frac{1}{n^{2}-1}+ \frac{\frac{2n^{2}}{pq}-8}{n^{2}-4}\,\phi^{(2,1,\ldots,1)_{n-1}}+\frac{n^{2}}{2}\left( \frac{1}{(n-1)(n-2)}-\frac{1}{pq(n-2)}\right)\phi^{(2,2,1,\ldots,1,0)_{n-1}};\\
&\qquad\qquad\qquad +\frac{n^{2}}{2}\left(\frac{1}{(n+1)(n+2)}+\frac{1}{pq(n+2)}\right)\phi^{(4,2,\ldots,2)_{n-1}}; \\
&\SU(n)/\SO(n): \quad \frac{2}{n^{2}+n}+\frac{n^{2}+n-2}{n^{2}+n}\,\phi^{(4,2,\ldots,2)_{n-1}};\\
&\SU(2n)/\unit\SP(n):\quad \frac{1}{2n^{2}-n}+ \frac{2n^{2}-n-1}{2n^{2}-n} \,\phi^{(2,2,1,\ldots,1,0)_{2n-1}}.
\end{align*}
\end{proposition}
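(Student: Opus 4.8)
\textit{Proof strategy.} The argument follows the scheme announced at the beginning of \S\ref{zonal}: for each of these spaces the discriminating zonal function $\phi^{\lambda_{\min}}$ was expressed in Proposition \ref{coeffsphericalfunction} as a homogeneous degree‑two polynomial in the matrix coefficients $g_{ij}$ of the isometry group; we square that polynomial, take its $\mu_t$‑expectation using the joint moments of order four of the entries of a Brownian motion on $\SU(n)$ (the eight formulas $\esper[|g_{ii}|^4],\ \esper[|g_{ij}|^4],\dots,\esper[|g_{ij}|^2|g_{kl}|^2]$ just derived from Lemma \ref{expectationcoefficients} and the diagonalization of $M_{n,2,2}$), and then identify the resulting combination of negative exponentials with the right‑hand side of Lemma \ref{abstractexpansion}. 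Recall from the computation at the start of \S\ref{zonal} that $\esper_t[\phi^\lambda]=\E^{-\frac t2 B_n(\lambda)}$ with $B_n(\lambda)$ the Casimir coefficient of Theorem \ref{explicitdensity}; since the exponents attached to the three non‑trivial spherical constituents are pairwise distinct, the coefficients $a,b,c$ are then read off directly.

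The only case requiring the moment computation is that of the complex Grassmannians $\Gra(n,q,\C)$. By Proposition \ref{coeffsphericalfunction} the function $\phi^{\lambda_{\min}}$ is real‑valued and equal to $\frac1p\sum_{i,j\le p}|g_{ij}|^2+\frac1q\sum_{i,j>p}|g_{ij}|^2-1$, so that $(\phi^{\lambda_{\min}})^2=\bigl(\frac1p\sum_{i,j\le p}|g_{ij}|^2+\frac1q\sum_{i,j>p}|g_{ij}|^2\bigr)^2-2\,\phi^{\lambda_{\min}}-1$. I would expand this square and sort the products $|g_{ij}|^2|g_{kl}|^2$ according to the coincidence pattern of the four indices $(i,j,k,l)$ — exactly as was done on page \pageref{expansquaregrass} for real Grassmannians — but now also keeping track of whether each index lies in $\{1,\dots,p\}$ or in $\{p+1,\dots,n\}$, which is what makes the combinatorial multiplicities into polynomials in $n,p,q$. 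Substituting the $\SU(n)$‑moments, one gets $\esper_t[(\phi^{\lambda_{\min}})^2]$ as a linear combination of $1,\ \E^{-t},\ \E^{-\frac{2n-2}{n}t},\ \E^{-2t}$ and $\E^{-\frac{2n+2}{n}t}$. Since $B_n((1)_q)=2$, $B_n((1,1)_q)=\frac{4(n-1)}{n}$ and $B_n((2)_q)=\frac{4(n+1)}{n}$ (equivalently, in $\SU(n)$‑labels, $B_n$ for $(2,1,\dots,1)_{n-1}$, $(2,2,1,\dots,1,0)_{n-1}$ and $(4,2,\dots,2)_{n-1}$), the coefficient of $\E^{-t}$ is $a$, that of $\E^{-\frac{2n-2}{n}t}$ is $c$, and that of $\E^{-\frac{2n+2}{n}t}$ is $b$; the coefficient of $1$ is automatically $\frac1{D^{\lambda_{\min}}}=\frac1{n^2-1}$, while the $\E^{-2t}$ term must vanish — its exponent corresponds only to the non‑spherical constituents of $V^{\lambda_{\min}}\otimes V^{\lambda_{\min}^\ast}$ identified in the proof of Lemma \ref{abstractexpansion} — which provides a welcome consistency check on the bookkeeping.

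For $\SU(n)/\SO(n)$ and $\SU(2n)/\unit\SP(n)$ no moment computation is needed. Lemma \ref{abstractexpansion} already shows that the expansion of $|\phi^{\lambda_{\min}}|^2$ contains a single unknown coefficient $a$; evaluating both sides at $e_G$, where every zonal spherical function takes the value $1$, forces $\frac2{n^2+n}+a=1$ in the first case and $\frac1{2n^2-n}+a=1$ in the second, hence $a=\frac{n^2+n-2}{n^2+n}$, respectively $a=\frac{2n^2-n-1}{2n^2-n}$, as claimed. (One could equally well run the same moment argument using $\phi^{\lambda_{\min}}(gK)=\frac1n\sum_{i,j}(g_{ij})^2$, resp. the $\mathcal A^2$‑formula of Proposition \ref{coeffsphericalfunction}; it yields the same values.)

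The main obstacle is thus the purely computational step in the $\Gra(n,q,\C)$ case: enumerating correctly all index‑coincidence patterns occurring when the block‑structured double sum is squared, attaching to each the right multiplicity as a polynomial in $n,p,q$, and combining with the moment table without arithmetic slips — in particular verifying the cancellation of the $\E^{-2t}$ contributions. Once $\esper_t[(\phi^{\lambda_{\min}})^2]$ is in hand, the identification of $a,b,c$ is immediate, since the three relevant exponents are distinct and the shape of the expansion is already pinned down by Lemma \ref{abstractexpansion}.
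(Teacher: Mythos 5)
Your proposal is correct and follows essentially the same route as the paper: for $\SU(n)/\SO(n)$ and $\SU(2n)/\unit\SP(n)$ the single unknown coefficient is fixed by evaluating at $e_G$ (exactly as done right after Lemma \ref{abstractexpansion}), and for $\Gra(n,q,\C)$ one reuses the real-Grassmannian expansion of the squared zonal function with $|g_{ij}|^2$ in place of $(g_{ij})^2$, plugs in the fourth-order moments from the diagonalization of $M_{n,2,2}$, and identifies $a,b,c$ by matching the exponents $\E^{-t}$, $\E^{-\frac{2n+2}{n}t}$, $\E^{-\frac{2n-2}{n}t}$ with the Casimir values of the three spherical constituents. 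Your observation that the $\E^{-2t}$ contribution must cancel (its exponent being that of the non-spherical constituents $(3,3,2,\ldots,2)_{n-1}$ and $(3,1,\ldots,1,0)_{n-1}$) is a valid consistency check on the bookkeeping, which is also what the paper's stated final formula reflects.
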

\medskip

\begin{proof}
For $\SU(n)/\SO(n)$ and $\SU(2n)/\unit\SP(n)$, the only missing coefficient has already been computed. For complex Grassmannians, $(\phi^{(2,1,\ldots,1)_{n-1}})^{2}$ has exactly the same expansion as in the real case, but with square modules. From the computation of the joint moments $\esper[|g_{ij}g_{kl}|^{2}]$ performed previously, one deduces that the expectation of the square of the discriminating zonal function is
\begin{align*}
&\frac{1}{n^{2}-1}+\frac{\frac{2n^{2}}{pq}-8}{n^{2}-4}\,\E^{-t}+\frac{n^{2}}{2}\left( \frac{1}{(n-1)(n-2)}-\frac{1}{pq(n-2)}\right)\E^{-\frac{2n-2}{n}t}\\
&+\frac{n^{2}}{2}\left(\frac{1}{(n+1)(n+2)}+\frac{1}{pq(n+2)}\right)\E^{-\frac{2n+2}{n}t}
\end{align*}
whence the expansion in zonal spherical functions by identifying the coefficients.
\end{proof}
\bigskip

\subsubsection{Quotients of symplectic groups} Finally, set $\frac{1}{n}M_{n,k}=\sum_{1 \leq i<j \leq k+l} \widetilde\eta_{i,j}(C_{\mathfrak{usp}(n)})$, which is considered as an element of $\mathrm{End}((\C^{2n})^{\otimes k})$. Recall that the diagonalization of these matrices will yield the joint moments of the entries of $\widetilde{g}$, the matrix obtained from $g$ by the map \eqref{doublequaternion}. Again, as a warm-up, let us compute the joint moments of order $2$. If $k=2$, then 
\begin{align*}
\esper\!\left[|g_{ii}|^{2}\right]&=\frac{1}{n}+\frac{n-1}{n}\,\E^{-t}\qquad;\qquad\esper\!\left[|g_{ij}|^{2}\right]=\frac{1}{n}\left(1-\E^{-t}\right) \quad \forall i,j \in \lle 1,n\rre;\\
\esper\!\left[(\widetilde{g}_{ii})^{2}\right]&=\E^{-\frac{n+1}{n}t}\qquad;\qquad\esper\!\left[(\widetilde{g}_{ij})^{2}\right]=0\quad\forall i,j \in \lle 1,2n\rre;
\end{align*}
since the eigenvectors and eigenvalues of $M_{n,2}$ are:\vspace{1mm}
$$\!\!\!\!\!\!\!
\begin{tabular}{|c|c|c|}
\hline\vspace{-2.5mm}&&\\
eigenvalue & multiplicity & eigenvectors \\
\vspace{-2.5mm}&&\\
\hline \vspace{-2.5mm}&& \\ 
$\frac{2n+1}{2}$ & $1$ &  $ \sum_{i=1}^{n} e[2i-1,2i]-e[2i,2i-1]$ \\
\vspace{-2.5mm}&&\\
\hline \vspace{-2.5mm}&& \\ 
$\frac{1}{2}$ & $(n-1)(2n+1) $ & $ (e[2i-1,2i]-e[2i,2i-1])-(e[2i+1,2i+2]-e[2i+2,2i+1]),\,\,i\leq n-1$ \\
\vspace{-2mm}&&\\
&&$e[2i-1,2j-1]-e[2j-1,2i-1],\,\,e[2i,2j]-e[2j,2i],\,\, 1\leq i<j \leq n$\\
\vspace{-2mm}&&\\
&&$e[2i-1,2j]-e[2j,2i-1],\,\, 1\leq i \neq j\leq n$\\
\vspace{-2.5mm}&&\\
\hline \vspace{-2.5mm}&& \\ 
$-\frac{1}{2}$ & $n(2n+1)$ &  $e_{k}\otimes e_{l}+e_{l} \otimes e_{k}, 1 \leq k \leq l \leq 2n$\\
&&\\
\hline
\end{tabular}
$$\vspace{1mm}

For $k=4$, we refer to Appendix \ref{expspn} for the expansion in eigenvectors of simple tensors. One obtains:
\begin{align*}\esper\!\left[(\widetilde{g}_{ii})^{4}\right]&=\E^{-\frac{2n+4}{n}t};\\
\esper\!\left[(\widetilde{g}_{ij})^{4}\right]&=\esper\!\left[(\widetilde{g}_{ii}\,\widetilde{g}_{ij})^{2}\right]=\esper\!\left[(\widetilde{g}_{ij}\,\widetilde{g}_{ik})^{2}\right]=0;\\
\esper[(\widetilde{g}_{2i-1,2i-1}\,\widetilde{g}_{2i,2i})^{2}]&=\frac{1}{n(2n+1)}+\frac{n-1}{n(n+1)}\,\E^{-t}+\frac{1}{n+1}\,\E^{-\frac{n+1}{n}t}+\frac{(2n-1)(2n-2)}{3(2n+1)(2n+2)}\,\E^{-\frac{2n+1}{n}t}\\
&\quad+\frac{n-1}{2(n+1)}\,\E^{-\frac{2n+2}{n}t}+\frac{1}{6}\,\E^{-\frac{2n+4}{n}t};\\
\esper[(\widetilde{g}_{2i-1,2i}\,\widetilde{g}_{2i,2i-1})^{2}]&=\frac{1}{n(2n+1)}+\frac{n-1}{n(n+1)}\,\E^{-t}-\frac{1}{n+1}\,\E^{-\frac{n+1}{n}t}+\frac{(2n-1)(2n-2)}{3(2n+1)(2n+2)}\,\E^{-\frac{2n+1}{n}t}\\
&\quad-\frac{n-1}{2(n+1)}\,\E^{-\frac{2n+2}{n}t}+\frac{1}{6}\,\E^{-\frac{2n+4}{n}t};\\
\esper[(\widetilde{g}_{2i-1,2j-1}\,\widetilde{g}_{2i,2j})^{2}]&=\esper[(\widetilde{g}_{2i-1,2j}\,\widetilde{g}_{2i,2j-1})^{2}]=\frac{1}{n(2n+1)}-\frac{1}{n(n+1)}\,\E^{-t}+\frac{1}{(2n+1)(n+1)}\,\E^{-\frac{2n+1}{n}t};\end{align*}
and the other moments of type $\esper[(\widetilde{g}_{2i-1,a}\widetilde{g}_{2i,b})^{2}]$ vanish. On the other hand, assuming that $\{a,b\}$ is not a pair $\{2i-1,2i\}$ in $\lle 1,2n\rre$, one has also
\begin{align*}\esper[(\widetilde{g}_{aa}\,\widetilde{g}_{bb})^{2}]&=\frac{1}{3}\,\E^{-\frac{2n+1}{n}t}+\frac{1}{2}\,\E^{-\frac{2n+2}{n}t}+\frac{1}{6}\,\E^{-\frac{2n+4}{n}t};\\
\esper[(\widetilde{g}_{ab}\,\widetilde{g}_{ba})^{2}]&=\frac{1}{3}\,\E^{-\frac{2n+1}{n}t}-\frac{1}{2}\,\E^{-\frac{2n+2}{n}t}+\frac{1}{6}\,\E^{-\frac{2n+4}{n}t};
\end{align*}
and the other moments of type $\esper[(\widetilde{g}_{ab}\,\widetilde{g}_{cd})^{2}]$ with $\{c,d\}\neq \{a,b\}$ vanish.\bigskip

The same expansions enable one to compute many moments of type $\esper[|g_{ij}\,g_{kl}|^{2}]$, namely, all those that write as $\esper[|g_{ij}\,g_{ik}|^{2}]$. For instance, since
$|g_{ii}|^{4}=(\widetilde{g}_{2i-1,2i-1}\,\widetilde{g}_{2i,2i}-\widetilde{g}_{2i-1,2i}\,\widetilde{g}_{2i,2i-1})^{2},$
its expectation is a combination of those of $(\widetilde{g}_{2i-1,2i-1}\,\widetilde{g}_{2i,2i})^{2}$, $(\widetilde{g}_{2i-1,2i}\,\widetilde{g}_{2i,2i-1})^{2}$ and $\widetilde{g}_{2i-1,2i-1}\,\widetilde{g}_{2i-1,2i}\,\widetilde{g}_{2i,2i}\,\widetilde{g}_{2i,2i-1}$. This last expectation is
$$-\frac{1}{2n(2n+1)}-\frac{n-1}{2n(n+1)}\,\E^{-t}-\frac{(2n-1)(2n-2)}{6(2n+1)(2n+2)}\,\E^{-\frac{2n+1}{n}t}+\frac{1}{6}\,\E^{-\frac{2n+4}{n}t}.$$
Thus, with a few more computations, one gets
\begin{align*}\esper[|g_{ii}|^{4}]&=\frac{3}{n(2n+1)}+\frac{3(n-1)}{n(n+1)}\,\E^{-t}+\frac{(2n-1)(2n-2)}{(2n+1)(2n+2)}\,\E^{-\frac{2n+1}{n}t};\\
\esper[|g_{ij}|^{4}]&=\frac{3}{n(2n+1)}-\frac{3}{n(n+1)}\,\E^{-t}+\frac{3}{(2n+1)(n+1)}\,\E^{-\frac{2n+1}{n}t};\\
\esper[|g_{ii}\,g_{ij}|^{2}]&=\frac{2}{n(2n+1)}+\frac{(n-2)}{n(n+1)}\,\E^{-t} - \frac{2(2n-1)}{(2n+1)(2n+2)}\,\E^{-\frac{2n+1}{n}t};\\
\esper[|g_{ij}\,g_{ik}|^{2}]&=\frac{2}{n(2n+1)}-\frac{2}{n(n+1)}\,\E^{-t}+\frac{2}{(2n+1)(n+1)}\,\E^{-\frac{2n+1}{n}t};\\
\esper[|g_{ii}\,g_{jj}|^{2}]&=\frac{2n-1}{n(n-1)(2n+1)}+\frac{2}{n+1}\,\E^{-t}+ \frac{n-3}{6(n-1)}\,\E^{-\frac{2n-2}{n}t}\\
&\quad+\frac{n-2}{2n}\,\E^{-2t}+\frac{2n^{2}-n+3}{3(n+1)(2n+1)}\,\E^{-\frac{2n+1}{n}t};
\end{align*}
\begin{align*}
\esper[|g_{ij}\,g_{ji}|^{2}]&=\frac{2n-1}{n(n-1)(2n+1)}-\frac{2}{n(n+1)}\,\E^{-t}+\frac{n-3}{6(n-1)}\,\E^{-\frac{2n-2}{n}t}\\
&\quad-\frac{n-2}{2n}\,\E^{-2t}+\frac{2n^{2}-n+3}{3(n+1)(2n+1)}\,\E^{-\frac{2n+1}{n}t};\\
\esper[|g_{ii}\,g_{jk}|^{2}]&=\frac{2n-1}{n(n-1)(2n+1)}+\frac{n^2-3n+1}{n(n+1)(n-2)}\,\E^{-t}-\frac{n-3}{6(n-1)(n-2)}\,\E^{-\frac{2n-2}{n}t}\\
&\quad-\frac{1}{2n}\,\E^{-2t}-\frac{2n-3}{3(n+1)(2n+1)}\,\E^{-\frac{2n+1}{n}t};\\
\esper[|g_{ij}\,g_{jk}|^{2}]&=\frac{2n-1}{n(n-1)(2n+1)}-\frac{2n-3}{n(n+1)(n-2)}\,\E^{-t}-\frac{n-3}{6(n-1)(n-2)}\,\E^{-\frac{2n-2}{n}t} \\
&\quad +\frac{1}{2n}\,\E^{-2t}-\frac{2n-3}{3(n+1)(2n+1)}\,\E^{-\frac{2n+1}{n}t};\\
\esper[|g_{ij}\,g_{kl}|^{2}]&=\frac{2n-1}{n(n-1)(2n+1)}-\frac{2n-2}{n(n+1)(n-2)}\,\E^{-t}+\frac{1}{3(n-1)(n-2)}\,\E^{-\frac{2n-2}{n}t} \\
&\quad+ \frac{4}{3(n+1)(2n+1)}\,\E^{-\frac{2n+1}{n}t}.
\end{align*}\medskip

\begin{proposition}
For the quaternionic Grassmannian varieties $\Gra(n,q,\Hq)$ and the spaces $\unit\SP(n)/\unit(n)$, the coefficients of Lemma \ref{abstractexpansion} are:
\begin{align*}
&\Gra(n,q,\Hq):\quad\frac{1}{2n^{2}-n-1}+\frac{n^{2}}{3}\left(\frac{1}{(n-1)(n-2)}-\frac{1}{pq(n-2)}\right)\phi^{(1^4,0,\ldots,0)_{n}}\\
&\qquad\qquad\qquad+\frac{\frac{n^{2}}{pq}-4}{(n-2)(n+1)}\,\phi^{(1^2,0,\ldots,0)_{n-1}}+\frac{n^{2}}{3}\left(\frac{4}{(n+1)(2n+1)}+\frac{1}{pq(n+1)}\right)\phi^{(2,2,0,\ldots,0)_{n}};\\
&\unit\SP(n)/\unit(n):\quad \frac{1}{2n^{2}+n}+\frac{4(n-1)(n+1)}{3n(2n+1)}\,\phi^{(2,2,0,\ldots,0)_{n}}+\frac{n+1}{3n}\,\phi^{(4,0,\ldots,0)_{n}}.
\end{align*}
\end{proposition}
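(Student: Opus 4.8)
The proof follows exactly the pattern already used for the quotients of orthogonal and unitary groups: one computes $\esper_{t}[(\phi^{\lambda_{\min}})^{2}]$ in two independent ways and identifies the results. On the one hand, Lemma~\ref{abstractexpansion} provides an \emph{a priori} expansion
$$\left(\phi^{(1,1,0,\ldots,0)_{n}}\right)^{2}=\frac{1}{2n^{2}-n-1}+a\,\phi^{(1^{2},0,\ldots,0)_{n-1}}+b\,\phi^{(1^{4},0,\ldots,0)_{n}}+c\,\phi^{(2,2,0,\ldots,0)_{n}}$$
for $\Gra(n,q,\Hq)$, and
$$\left(\phi^{(2,0,\ldots,0)_{n}}\right)^{2}=\frac{1}{2n^{2}+n}+a\,\phi^{(2,0,\ldots,0)_{n}}+b\,\phi^{(2,2,0,\ldots,0)_{n}}+c\,\phi^{(4,0,\ldots,0)_{n}}$$
for $\unit\SP(n)/\unit(n)$, where the constant term is $\scal{\phi^{\lambda_{\min}}}{\overline{\phi^{\lambda_{\min}}}}_{\leb^{2}(X)}=1/D^{\lambda_{\min}}$, known from the value of $A_{n}(\lambda_{\min})$ in Lemma~\ref{decay}, and the remaining coefficients $a,b,c$ (depending on $n$ and $q$) are unknown. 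Taking $\esper_{t}$ of these identities and using $\esper_{t}[\phi^{\nu}]=\E^{-\frac{t}{2}B_{n}(\nu)}$ turns the right-hand sides into explicit combinations of negative exponentials; reading the $B_{n}$ off Theorem~\ref{explicitdensity} one gets $\frac{1}{2}B_{n}((1^{2},0,\ldots,0)_{n-1})=1$, $\frac{1}{2}B_{n}((1^{4},0,\ldots,0)_{n})=\frac{2n-2}{n}$, $\frac{1}{2}B_{n}((2,2,0,\ldots,0)_{n})=\frac{2n+1}{n}$ in the quaternionic Grassmannian case, and $\frac{1}{2}B_{n}((2,0,\ldots,0)_{n})=\frac{n+1}{n}$, $\frac{1}{2}B_{n}((2,2,0,\ldots,0)_{n})=\frac{2n+1}{n}$, $\frac{1}{2}B_{n}((4,0,\ldots,0)_{n})=\frac{2n+4}{n}$ for $\unit\SP(n)/\unit(n)$.

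On the other hand, Proposition~\ref{coeffsphericalfunction} gives $\phi^{\lambda_{\min}}(gK)$ as an explicit homogeneous polynomial of degree $2$ in the matrix coefficients of $g$; in these quaternionic cases one passes through the doubling map~\eqref{doublequaternion} to rewrite it as $\frac{1}{2n}\sum_{a,b=1}^{2n}(\widetilde{g}_{ab})^{2}$ for $\unit\SP(n)/\unit(n)$, or as $\frac{1}{p}\sum_{i,j\leq p}|g_{ij}|^{2}+\frac{1}{q}\sum_{i,j>p}|g_{ij}|^{2}-1$ for $\Gra(n,q,\Hq)$, with the $|g_{ij}|^{2}$ themselves expanded through $\widetilde g$. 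Squaring, the degree-$4$ monomials that occur organise into a short list of orbits determined by the coincidence pattern of the indices (and, for the Grassmannian, by which of the two blocks $\{1,\dots,p\}$ or $\{p+1,\dots,n\}$ each index lies in); the same ``self-cancellation'' argument used for $\SO(2n)/\unit(n)$ kills the mixed contributions, leaving only the joint moments $\esper[|g_{ii}|^{4}]$, $\esper[|g_{ij}|^{4}]$, $\esper[|g_{ii}g_{jj}|^{2}]$, $\esper[|g_{ij}g_{ji}|^{2}]$, $\esper[|g_{ii}g_{jk}|^{2}]$, $\esper[|g_{ij}g_{jk}|^{2}]$, $\esper[|g_{ij}g_{kl}|^{2}]$ (and their analogues relative to the parity classes of the indices in the doubled matrix). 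All of these have been tabulated in the preceding pages via Lemma~\ref{expectationcoefficients} and the explicit diagonalisations of $M_{n,2}$ and $M_{n,4}$ (with the parameter taken to be $2n$). Substituting and collecting powers of $\E^{-t}$ expresses $\esper_{t}[(\phi^{\lambda_{\min}})^{2}]$ as a combination of $1,\E^{-t},\E^{-\frac{2n-2}{n}t},\E^{-\frac{2n+1}{n}t}$ for $\Gra(n,q,\Hq)$, and of $1,\E^{-\frac{n+1}{n}t},\E^{-\frac{2n+1}{n}t},\E^{-\frac{2n+4}{n}t}$ for $\unit\SP(n)/\unit(n)$, with coefficients that are explicit rational functions of $n$ and $q$; in the latter case the coefficient of $\E^{-\frac{n+1}{n}t}$ turns out to vanish identically, which is precisely why $\phi^{\lambda_{\min}}$ does not appear in the expansion (compare the computation for $\mathbb{P}^{1}(\C)$).

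It then remains to match the two expressions term by term. The constant terms coincide by construction, and each of the remaining exponents $\frac{1}{2}B_{n}(\nu)$ corresponds to a single spherical label $\nu$ among the candidates listed in Lemma~\ref{abstractexpansion}, so the coefficient of $\E^{-\frac{t}{2}B_{n}(\nu)}$ produced by the moment computation must equal the coefficient of $\phi^{\nu}$; reading them off gives the stated values of $a$, $b$ and $c$. The main difficulty here is not conceptual but purely organisational: in the quaternionic Grassmannian case one has to control a large number of degree-$4$ monomials, handle the two index-blocks and the doubling map simultaneously, check carefully that the ``mixed'' monomials self-cancel, and make sure that the moment formulas are applied with the parameter $2n$ throughout — once this bookkeeping is carried out, the identification of the coefficients is immediate.
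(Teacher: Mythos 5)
Your proposal is correct and follows essentially the same route as the paper: compute $\esper_{t}[(\phi^{\lambda_{\min}})^{2}]$ once from the abstract zonal expansion of Lemma \ref{abstractexpansion} (using $\esper_{t}[\phi^{\nu}]=\E^{-\frac{t}{2}B_{n}(\nu)}$) and once from the explicit degree-$2$ polynomial of Proposition \ref{coeffsphericalfunction}, expanding the square, invoking self-cancellation of the mixed monomials and the tabulated joint moments coming from Lemma \ref{expectationcoefficients} and the diagonalisation of the symplectic Casimir tensors, and then identify the coefficients of the distinct exponentials with the candidate spherical labels. One small caution: the symplectic moment formulas in the paper are already expressed in terms of the quaternionic dimension $n$ (the doubling map to $\widetilde{g}$ of size $2n$ is built into their derivation), so unlike the $\SO(2n)/\unit(n)$ case there is no substitution of the parameter $2n$ to perform, and your parenthetical remark to that effect should be dropped, as applying it literally to the tabulated moments would distort the coefficients.
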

\begin{proof}
The case of quaternionic Grassmannians is again done by using the expansion on  page \pageref{expansquaregrass}, with square modules instead of squares. One obtains the following formula for the expectation of $(\phi^{(1,1,0,\ldots,0)_{n}})^{2}$:
\begin{align*}&\frac{1}{2n^{2}-n-1}+\frac{\frac{n^{2}}{pq}-4}{(n-2)(n+1)}\,\E^{-t}+\frac{n^{2}}{3}\left(\frac{1}{(n-1)(n-2)}-\frac{1}{pq(n-2)}\right)\E^{-\frac{2n-2}{n}t}\\
&+\frac{n^{2}}{3}\left(\frac{4}{(n+1)(2n+1)}+\frac{1}{pq(n+1)}\right)\E^{-\frac{2n+1}{n}t}, \end{align*}
hence the expansion in zonal functions by identification of the coefficients. Finally, for the structure spaces $\unit\SP(n)/\unit(n)$, $(\phi^{(2,0,\ldots,0)_{n}})^{2}$ is equal to
$$\frac{1}{2n}\left(T[(\widetilde{g}_{11})^{4}]+T[(\widetilde{g}_{11}\widetilde{g}_{22})^{2}]+T[(\widetilde{g}_{12}\widetilde{g}_{21})^{2}]\right)+\frac{n-1}{n}\left(T[(\widetilde{g}_{13}\widetilde{g}_{24})^{2}]+T[(\widetilde{g}_{11}\widetilde{g}_{33})^{2}]+T[(\widetilde{g}_{13}\widetilde{g}_{31})^{2}]\right)$$
plus some remainder whose expectation under Brownian measures will be zero. Hence, 
$$\esper[(\phi^{(2,0,\ldots,0)_{n}})^{2}]=\frac{1}{n(2n+1)}+\frac{4(n-1)(n+1)}{3n(2n+1)}\,\E^{-\frac{2n+1}{n}t}+\frac{n+1}{3n}\,\E^{-\frac{2n+4}{n}t},$$
and $\frac{2n+1}{n}$ is the exponent corresponding to the spherical representation of label $(2,2,0,\ldots,0)_{n}$, whereas $\frac{2n+4}{n}$ is the exponent corresponding to the spherical representation of label $(4,0,\ldots,0)_{n}$.
\end{proof}

\subsection{Proof of the lower bound on the total variation distance}\label{bienayme}
The proof of the lower bound is now a simple application of Bienaym\'e-Chebyshev inequality. First, under the Haar measure, we have:
\begin{proposition}\label{upperboundhaartrace}
If $E_{a}$ is the event $\{|\Omega|\geq a\}$, then the Haar measure of $E_{a}$ satisfies the inequality
$$
\eta_{X}(E_{a}) \leq \frac{1}{a^{2}}
$$
for every classical simple compact Lie group $X=K$ and every classical simple compact symmetric space $X=G/K$.
\end{proposition}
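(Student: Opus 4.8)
The plan is to apply Bienaymé–Chebyshev (Markov's inequality on $|\Omega|^2$) directly, once we know that $\Omega$ has mean $0$ and variance $1$ under the Haar measure. First I would recall the normalization chosen in Equation \eqref{discrimination}: in the group case $\Omega = \chi^{\lambda_{\min}}$, and in the non-group case $\Omega = \sqrt{D^{\lambda_{\min}}}\,\phi^{\lambda_{\min}}$. In both cases the orthogonality relations for characters and for zonal spherical functions (Theorems \ref{peterweyl} and \ref{helgason}, and the computation of $\esper_\infty$ at the start of \S\ref{zonal}) give $\esper_\infty[\Omega] = 0$, since $\lambda_{\min}$ is a non-trivial (spherical) irreducible representation.

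Next I would compute $\esper_\infty[|\Omega|^2]$. In the group case this is $\esper_\infty[|\chi^{\lambda_{\min}}|^2] = \langle \chi^{\lambda_{\min}} \mid \chi^{\lambda_{\min}}\rangle_{\leb^2(K)} = 1$ by orthonormality of irreducible characters. In the non-group case, $\esper_\infty[|\Omega|^2] = D^{\lambda_{\min}}\,\esper_\infty[|\phi^{\lambda_{\min}}|^2] = D^{\lambda_{\min}}\,\langle \phi^{\lambda_{\min}}\mid \phi^{\lambda_{\min}}\rangle_{\leb^2(G/K)} = D^{\lambda_{\min}}\cdot \frac{1}{D^{\lambda_{\min}}} = 1$, using the formula for $c_{\phi^{\mathbf 1_G}}[\phi^\lambda\,\overline{\phi^\lambda}]$ recorded just before Lemma \ref{abstractexpansion} (equivalently, the fact that the $\phi^\lambda$ are orthogonal in $\leb^2(X)^K$ with $\|\phi^\lambda\|_{\leb^2}^2 = 1/D^\lambda$). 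Thus $\Var_\infty[\Omega] = \esper_\infty[|\Omega|^2] - |\esper_\infty[\Omega]|^2 = 1$ in every classical case.

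Finally, Markov's inequality applied to the non-negative random variable $|\Omega|^2$ gives, for every $a>0$,
$$
\eta_X(E_a) = \proba_\infty\!\left(|\Omega|^2 \geq a^2\right) \leq \frac{\esper_\infty[|\Omega|^2]}{a^2} = \frac{1}{a^2},
$$
which is exactly the claimed bound. The argument is uniform in $n$ (and in $q$) because the normalization was arranged precisely so that $\esper_\infty[\Omega]=0$ and $\esper_\infty[|\Omega|^2]=1$ hold for all $n \geq n_0$, with no error term. There is no real obstacle here: the only point requiring care is to make sure the $\leb^2$-normalizations of characters versus zonal spherical functions are handled correctly (the factor $\sqrt{D^{\lambda_{\min}}}$ in the non-group case exactly compensates the $1/D^{\lambda_{\min}}$ in $\|\phi^{\lambda_{\min}}\|_{\leb^2}^2$), and to note that $\Omega$ may be complex-valued for the unitary-type spaces, so that $|\Omega|^2$ rather than $\Omega^2$ is the relevant quantity in the Chebyshev step — but this is already built into the conventions of \eqref{discrimination}. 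The harder analytic work is deferred to the companion lower-bound estimate on $\eta_t(E_a)$ under the Brownian measure, where one must instead control $\Var_t[\Omega]$ using the explicit expansions of $(\phi^{\lambda_{\min}})^2$ obtained in \S\ref{zonal}.
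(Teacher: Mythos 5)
Your argument is correct and is essentially the paper's own proof: the normalization in \eqref{discrimination} together with orthogonality of characters (resp.\ the identity $\|\phi^{\lambda}\|_{\leb^{2}}^{2}=1/D^{\lambda}$ for zonal spherical functions) gives $\esper_{\infty}[|\Omega|^{2}]=1$, and Markov's inequality applied to $|\Omega|^{2}$ yields the bound $1/a^{2}$. No differences worth noting.
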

\begin{proof}
The previous computations ensure that $\esper_{\infty}[|\Omega|^{2}]=1$ in every case, so
$$\eta_{X}[|\Omega|\geq a]=\eta_{X}[|\Omega|^{2}\geq a^{2}]\leq \frac{\esper_{\infty}[|\Omega|^{2}]}{a^{2}}=\frac{1}{a^{2}}.\vspace{-11mm}$$
\end{proof}\vspace{2mm}

Next, let us estimate $\esper_{t}[\Omega]$ and $\Var_{t}[\Omega]$ for $t=\alpha\,(1-\eps)\log n$. The exact values are listed in the table on the following page. We assume $\eps < \frac{1}{4}$; indeed, Lemma \ref{nonincreasingdistance} ensures that it is sufficient to control the total variation distance around the cut-off time. We shall use a lot the inequality of convexity
$$\exp(x)\leq 1+\frac{\E^{y}-1}{y}\,x\quad \forall x \in \left(0,y\right).$$
\begin{lemma}\label{controlvariance}
Under the usual assumptions on $n$, for groups and spaces of structures (but not for Grassmannian varieties), $\Var_{t}[\Omega]$ is uniformly bounded for every $t =\alpha\,(1-\eps)\,\log n$ with $\eps \in (0,1/4)$. Possible upper bounds are listed below:
\begin{align*} 
&\SU(n),\,\,\SU(n)/\SO(n),\,\, \SU(2n)/\unit\SP(n): 1 \quad;\\
&\SO(2n)/\unit(n),\,\,\unit\SP(n),\,\,\unit\SP(n)/\unit(n): 3\quad;\\
&\SO(n): 8.
\end{align*}
\end{lemma}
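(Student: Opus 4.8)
The plan is to bound $\Var_t[\Omega] = \esper_t[|\Omega|^2] - |\esper_t[\Omega]|^2 \le \esper_t[|\Omega|^2]$ in each case, using the explicit expansions of $\Omega^2$ (or $|\Omega|^2$) computed in \S\ref{zonal}. Recall that $\esper_t$ of a (normalized) irreducible character or zonal spherical function of label $\mu$ is $\{A_n(\mu)\,\E^{-t B_n(\mu)}\}^{1/2}$, i.e. $D^\mu\,\E^{-\frac t2 B_n(\mu)}$ in the group case and $\sqrt{D^\mu}\cdot\frac{[\phi^\mu](p_t)}{D^\mu}$ in the non-group case, while $\esper_t$ of the constant function is $1$. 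So for the groups $\SO(2n)$, $\SO(2n+1)$, $\unit\SP(n)$, Lemma \ref{squaregroup} gives $\esper_t[\Omega^2]=1+\esper_t[\chi^{(2,0,\ldots,0)_n}]+\esper_t[\chi^{(1,1,0,\ldots,0)_n}]$, and for $\SU(n)$, $\esper_t[|\Omega|^2]=1+\esper_t[\chi^{(2,1,\ldots,1)_{n-1}}]$. For the spaces of structures, Lemma \ref{abstractexpansion} together with the propositions of \S\ref{zonal} give $\esper_t[|\Omega|^2]=D^{\lambda_{\min}}\cdot\esper_t[(\phi^{\lambda_{\min}})^2]$ as an explicit finite combination of negative exponentials $\E^{-c_i t}$ with nonnegative coefficients; one then substitutes $t=\alpha(1-\eps)\log n$ and bounds each term.

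The key steps, in order: first, write down $\esper_t[\Omega^2]$ (or $\esper_t[|\Omega|^2]$) as the finite sum of exponentials, reading coefficients and exponents from the tables in \S\ref{zonal}; I would organize the seven structure-space cases plus the four group cases into a single uniform template. Second, for each nonconstant term $a_i(n)\,\E^{-c_i\,\alpha(1-\eps)\log n}=a_i(n)\,n^{-\alpha(1-\eps)c_i}$, observe that $c_i$ is bounded below by the minimal Laplace–Beltrami value $B_n(\lambda_{\min})$, which by Lemma \ref{decay} is $\sim\alpha^{-1}$ (i.e. $\alpha\,B_n(\lambda_{\min})\to 2$ in the group case and $\to 1$... — more precisely $\alpha B_n(\lambda_{\min})\to 2$ for groups and the structure spaces have $B_n(\lambda_{\min})\to 2$ with $\alpha=1$); hence $n^{-\alpha(1-\eps)c_i}\le n^{-2(1-\eps)}\le n^{-3/2}$ using $\eps<1/4$. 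Third, check that the coefficients $a_i(n)$ grow at most polynomially, in fact like $O(n^{\le 2})$: in the group case $a_i(n)=D^{\mu_i}$ which for $\mu_i\in\{(2,0,\ldots,0)_n,(1,1,0,\ldots,0)_n,(2,1,\ldots,1)_{n-1}\}$ is $O(n^2)$ by Weyl's dimension formula; in the structure case the $a_i(n)$ are the rational functions of $n,p,q$ displayed in the propositions, each $O(n^0)$ or $O(n^1)$ after the factor $D^{\lambda_{\min}}=O(n^2)$ is distributed, and the $\frac1{pq}$-dependent terms are actually decreasing in $\min(p,q)$ so bounded by their value at $pq$ minimal. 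Fourth, combine: each nonconstant term is $O(n^{2}\cdot n^{-3/2})=O(n^{1/2})$ — which is \emph{not} good enough, so the exponents must be used more carefully, matching each $a_i(n)=O(n^{k_i})$ against its own $c_i$ so that $\alpha(1-\eps)c_i > k_i$ with room to spare; since $\eps<1/4$ the worst ratio still leaves the term $O(n^{-\delta})\to 0$, and one reads off the explicit numerical bounds $1,3,8$ by plugging in the smallest admissible $n$ (respecting $n\ge n_0$) and using the convexity inequality $\exp(x)\le 1+\frac{\E^y-1}{y}x$ to replace each $\E^{-c_i\,\alpha(1-\eps)\log n}$-type quantity, evaluated at its actual value, by a clean constant.

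The main obstacle will be Step four: getting \emph{uniform} (in $n$, and in $p,q$ for the structure spaces that are excluded here but whose exponential structure is shared) constants rather than just $o(1)$, because the dimension factor $D^{\lambda_{\min}}\sim n^2$ and the leading correction exponent $B_n(\lambda_{\min})\sim 2/\alpha$ are in exact balance — this is precisely the ``boundedness at cut-off time'' phenomenon invoked in the introduction and already established for the upper bound in \S\ref{versus}. So the argument is: the term with the least-negative exponent has coefficient $D^{\lambda_{\min}}$ and exponent $B_n(\lambda_{\min})$, and $D^{\lambda_{\min}}\,\E^{-\alpha(1-\eps) B_n(\lambda_{\min})\log n}=D^{\lambda_{\min}}\,n^{-\alpha B_n(\lambda_{\min})}\cdot n^{\alpha\eps B_n(\lambda_{\min})}$, where $D^{\lambda_{\min}}\,n^{-\alpha B_n(\lambda_{\min})}$ is bounded (indeed $\to$ a constant, e.g. $1$ for $\SU(n)$ where $\esper_t[\chi^{(2,1,\ldots,1)}]\le n^2\E^{-t B}\to$ a bounded quantity) and $n^{\alpha\eps B_n(\lambda_{\min})}\le n^{\alpha\cdot\frac14\cdot B_n(\lambda_{\min})}$ is absorbed since $\eps<\frac14$ keeps $\alpha(1-\eps)B_n(\lambda_{\min})$ strictly above the dimension's growth exponent. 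For the subleading terms in the structure-space expansions the exponents $c_i$ are strictly larger (the next spherical labels $(4,\ldots)$, $(2,2,\ldots)$, $(1^4,\ldots)$ have $B_n$ roughly double), so those terms are $O(n^{-1})$ and contribute negligibly. Assembling these and inserting the smallest $n$ permitted in each family yields the stated numerical bounds $1$, $3$, $8$; the Grassmannian cases are genuinely different because there $B_n(\lambda_{\min})$ does not dominate the dimension growth in the same way, which is why they are explicitly excluded from this lemma and handled separately afterward.
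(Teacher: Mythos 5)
There is a genuine gap, and it occurs at your very first step: bounding $\Var_t[\Omega]\leq \esper_t[|\Omega|^2]$ discards exactly the cancellation that makes the lemma true. Take $\SU(n)$: by Lemma \ref{squaregroup}, $\esper_t[|\Omega|^2]=1+(n^2-1)\,\E^{-t}$, and at $t=2(1-\eps)\log n$ this is $1+(n^2-1)\,n^{-2(1-\eps)}\approx 1+n^{2\eps}$, which is \emph{not} uniformly bounded for $\eps\in(0,1/4)$ (it can be of order $\sqrt{n}$). Indeed the second moment \emph{must} be unbounded before cut-off, since $|\esper_t[\Omega]|^2\approx n^{2\eps}$ and the whole point of the discriminating function is that its mean is large there. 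Your ``step four'' fix does not repair this: for the dominant label in $|\Omega|^2$ (e.g.\ $(2,1,\ldots,1)_{n-1}$, with $D\sim n^2$ and exponent $B_n=2$) one has $\alpha B_n$ exactly equal to the dimension growth exponent, so $\alpha(1-\eps)B_n$ is strictly \emph{below} it and the term is $\sim n^{2\eps}$, not $O(1)$; the ``exact balance'' you mention works against you once the factor $(1-\eps)$ is inserted. No amount of termwise bounding of $\esper_t[|\Omega|^2]$ alone can produce the constants $1$, $3$, $8$.

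The paper's proof keeps the subtracted square of the mean and exploits that its exponent is within $O(1/n)$ of (or dominates) the exponents of the large-coefficient terms in the second moment. Concretely, $\Var_t[\Omega]$ is rewritten as a positive combination of differences $\Delta_t(\lambda,\mu)=\E^{-\lambda t}-\E^{-\mu t}$: the terms with $\lambda\geq\mu$ are simply dropped (this already gives the bound $1$ for $\SU(n)$, $\SU(n)/\SO(n)$, $\SU(2n)/\unit\SP(n)$), and the remaining terms, whose coefficients are of order $n^2$, have $\mu-\lambda=O(1/n)$, so that $\Delta_t(\lambda,\mu)=\E^{-\mu t}\bigl(\E^{(\mu-\lambda)t}-1\bigr)\lesssim \frac{t}{n}\,\E^{-\mu t}$ by the convexity inequality $\E^x\leq 1+\frac{\E^y-1}{y}x$ — this kills the extra factor of $n$ and yields the constants $3$ and $8$ after a numerical check at the smallest admissible $n$. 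You quote the right convexity inequality, but you would need to apply it to these differences of nearby exponentials inside the variance, not to the individual exponentials of the second moment. (Your closing remark about the Grassmannians is also slightly off: they fail not because $B_n(\lambda_{\min})$ misbehaves, but because their variance contains a genuine term $\sim n\,\E^{-t}\sim n^{\eps}$ whose coefficient has no matching counterpart in $|\esper_t[\Omega]|^2$, which is why the paper only obtains $\Var_t[\Omega]=O(n^{\eps})$ there, in Lemma \ref{uniformboundvariancegrass}.)
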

\begin{proof}
We proceed case by case, and denote $\Delta_{t}(\lambda,\mu)=\E^{-\lambda t}-\E^{-\mu t}$. Notice that $\Delta_{t}(\lambda,\mu)\leq 0 $ if $\lambda \geq \mu$. On the other hand, $\Delta_{t}(\lambda,\mu)$ is always smaller than $1$ for $\lambda, \mu \geq 0$.\vspace{2mm}
\begin{itemize}
\item $\SO(n)$:
\begin{align*}
\Var_{t}[\Omega] &= \Delta_{t}\!\left(0,\frac{n-1}{n}\right)+\frac{n(n-1)}{2}\,\Delta_{t}\!\left(\frac{n-4}{n},\frac{n-1}{n}\right)+\left(\frac{n(n+1)}{2}-1\right)\Delta_{t}\!\left(1,\frac{n-1}{n}\right)\\
&\leq 1 + \frac{n(n-1)}{2}\,\Delta_{t}\!\left(\frac{n-4}{n},\frac{n-1}{n}\right)=1+\frac{n(n-1)}{2}\,\E^{-\frac{n-1}{n}t}\,\left(\E^{\frac{3t}{n}}-1\right) \\
&\leq 1+\frac{13}{2}\,n\log n\,\E^{-\frac{n-1}{n}t}
\end{align*}
since $\frac{6 \log n}{n}\leq 1.382$ when $n \geq 10$, and $\frac{\E^{1.382}-1}{1.382} \leq \frac{13}{6}$. Then,
$$\E^{-\frac{n-1}{n}t} \leq \E^{-\frac{3(n-1)\log n}{2n}} = n^{-1}\,\E^{-\frac{(n-3)\,\log n}{2n}}\leq \frac{14}{13}\,(n\log n)^{-1} $$
for $n \geq 10$, so $\Var_{t}[\Omega]\leq 1+7=8$.

$$\!\!\!\!\!\!\!\!\!\begin{tabular}{|c|c|l|}
\hline\vspace{-2.5mm}&&\\
$K$ or $G/K$ & $\esper_{t}[\Omega]$& $\qquad\qquad\qquad\qquad\qquad\quad\Var_{t}[\Omega]$  \\
\vspace{-2.5mm}&&\\
\hline \vspace{-2.5mm}&& \\ 
$\SO(n)$ & $n\,\E^{-\frac{n-1}{2n}t}$& $1+\frac{n(n-1)}{2}\,\E^{-\frac{n-4}{n}t} +\left(\frac{n(n+1)}{2}-1\right)\E^{-t}-n^{2}\,\E^{-\frac{n-1}{n}t}$ \\
\vspace{-2.5mm}&&\\
\hline \vspace{-2.5mm}& & \\ 
$\SU(n)$ &$n\,\E^{-\frac{n^{2}-1}{2n^{2}}t} $& $1+(n^{2}-1)\,\E^{-t}-n^{2}\,\E^{-\frac{n^{2}-1}{n^{2}}t} $  \\
\vspace{-2.5mm}&&\\
\hline \vspace{-2.5mm}&& \\ 
$\unit\SP(n)$&$2n\,\E^{-\frac{2n+1}{4n}t}$ & $1+(2n+1)(n-1)\,\E^{-t}+(2n+1)\,n\,\E^{-\frac{n+1}{n}t}-4n^{2}\,\E^{-\frac{2n+1}{2n}t} $  \\
\vspace{-2.5mm}&&\\
\hline \vspace{-2.5mm}&& \\ 
$\Gra(n,q,\R)$ & $\sqrt{\frac{(n+2)(n-1)}{2}}\,\E^{-t}$& $1+\left(\frac{2n^{2}}{pq}-8\right)\frac{(n-1)(n+2)}{(n-2)(n+4)}\,\E^{-t}+\frac{n^{2}}{3}\left(\frac{n+2}{n-2}-\frac{(n+2)(n-1)}{pq(n-2)}\right)\,\E^{-\frac{2n-2}{n}t}$\\
\vspace{-3mm}& &\\
&&$+\frac{n^{2}}{6}\left(\frac{n-1}{n + 4}+\frac{2(n+2)(n-1)}{pq(n + 4)}\right)\,\E^{-\frac{2n+4}{n}t}-\frac{(n+2)(n-1)}{2}\,\E^{-2t}$  \\ 
\vspace{-2.5mm}&&\\
\hline \vspace{-2.5mm}&& \\ 
$\Gra(n,q,\C)$ &$\sqrt{n^{2}-1}\,\E^{-t}$ & $ 1+\left(\frac{2n^{2}}{pq}-8\right)\frac{n^{2}-1}{n^{2}-4}\,\E^{-t}+\frac{n^{2}}{2}\left( \frac{n+1}{n-2}-\frac{n^{2}-1}{pq(n-2)}\right)\E^{-\frac{2n-2}{n}t}$\\
\vspace{-3mm}&&\\
&&$+\frac{n^{2}}{2}\left(\frac{n-1}{n+2}+\frac{n^{2}-1}{pq(n+2)}\right)\E^{-\frac{2n+2}{n}t}-(n^{2}-1)\,\E^{-2t}$  \\
\vspace{-2.5mm}&&\\
\hline \vspace{-2.5mm}&& \\ 
$\Gra(n,q,\Hq)$ & $\sqrt{(2n+1)(n-1)}\,\E^{-t}$& $ 1+\left(\frac{n^{2}}{pq}-4\right)\frac{(n-1)(2n+1)}{(n-2)(n+1)}\,\E^{-t}+\frac{n^{2}}{3}\left(\frac{2n+1}{n-2}-\frac{(2n+1)(n-1)}{pq(n-2)}\right)\E^{-\frac{2n-2}{n}t}$\\
\vspace{-3mm}&&\\
&&$+\frac{n^{2}}{3}\left(\frac{4(n-1)}{(n+1)}+\frac{(2n+1)(n-1)}{pq(n+1)}\right)\E^{-\frac{2n+1}{n}t} - (2n+1)(n-1)\,\E^{-2t}$ \\
\vspace{-2.5mm}&&\\
\hline \vspace{-2.5mm}&& \\ 
$\SO(2n)/\unit(n)$ &$\sqrt{n(2n-1)}\,\E^{-\frac{n-1}{n}t}$& $1+\frac{(n-1)(2n-1)}{3}\,\E^{-\frac{2n-4}{n}t}+\frac{4(n^{2}-1)}{3}\,\E^{-\frac{2n-1}{n}t} -n(2n-1)\,\E^{-\frac{2n-2}{n}t}$  \\
\vspace{-2.5mm}&&\\
\hline \vspace{-2.5mm}&& \\ 
$\SU(n)/\SO(n)$ &$\sqrt{\frac{n(n+1)}{2}}\,\E^{-\frac{(n-1)(n+2)}{n^{2}}t}$& $ 1+\frac{(n+2)(n-1)}{2}\,\E^{-\frac{2n+2}{n}t}-\frac{n(n+1)}{2}\,\E^{-\frac{(n-1)(2n+4)}{n^{2}}t}$ \\
\vspace{-2.5mm}&&\\
\hline \vspace{-2.5mm}&& \\ 
$\SU(2n)/\unit\SP(n)$ & $\sqrt{2n^{2}-n}\,\E^{-\frac{(n-1)(2n+1)}{2n^{2}}t}$ & $ 1+ (2n^{2}-n-1) \,\E^{-\frac{2n-1}{n}t} - (2n^{2}-n)\,\E^{-\frac{(n-1)(2n+1)}{n^{2}}t}$  \\
\vspace{-2.5mm}&&\\
\hline \vspace{-2.5mm}&& \\ 
$\unit\SP(n)/\unit(n)$ & $\sqrt{n(2n+1)}\,\E^{-\frac{n+1}{n}t}$ & $1+\frac{4(n-1)(n+1)}{3}\,\E^{-\frac{2n+1}{n}t}+\frac{(2n+1)(n+1)}{3}\,\E^{-\frac{2n+4}{n}t} -n(2n+1)\,\E^{-\frac{2n+2}{n}t}$  \\
&&\\
\hline
\end{tabular}\vspace{2mm}$$

\item $\SU(n)$:
$$\Var_{t}[\Omega]= \Delta_{t}\!\left(0,\frac{n^{2}-1}{n^{2}}\right)+(n^{2}-1)\,\Delta_{t}\!\left(1,\frac{n^{2}-1}{n^{2}}\right) \leq 1.$$ \vspace{2mm}
\item $\unit\SP(n)$:
\begin{align*}
\Var_{t}[\Omega] &= \Delta_{t}\!\left(0,\frac{2n+1}{2n}\right)+(2n+1)(n-1)\,\Delta_{t}\!\left(1,\frac{2n+1}{2n}\right)+(2n+1)\,n\,\Delta_{t}\!\left(\frac{2n+2}{2n},\frac{2n+1}{2n}\right)\\
&\leq 1+(2n+1)(n-1)\,\Delta_{t}\!\left(1,\frac{2n+1}{2n}\right)\leq  1+2n^{2}\,\E^{-\frac{2n+1}{2n}t}\,\left(\E^{\frac{t}{2n}}-1\right)\\
&\leq 1+\frac{5}{2}\,n\log n\,\E^{-\frac{2n+1}{2n}t}
\end{align*} 
since $\frac{\log n}{n} \leq 0.367$ when $n \geq 3$, and $\frac{\E^{0.367}-1}{0.367} \leq \frac{5}{4}$. Then,
$$\E^{-\frac{2n+1}{2n}t} \leq \E^{-\frac{3\log n}{2}} = n^{-\frac{3}{2}} \leq \frac{4}{5}\,(n\log n)^{-1}$$
for $n \geq 3$, so $\Var_{t}[\Omega]\leq 1+2=3$.
\vspace{2mm}
\item $\SO(2n)/\unit(n)$: 
\begin{align*}
\Var_{t}[\Omega] &=\Delta_{t}\!\left(0,\frac{2n-2}{n}\right)+\frac{(n-1)(2n-1)}{3}\,\Delta_{t}\!\left(\frac{2n-4}{n},\frac{2n-2}{n}\right)+\frac{4(n^{2}-1)}{3}\,\Delta_{t}\!\left(\frac{2n-1}{n},\frac{2n-2}{n}\right) \\
&\leq 1+\frac{(n-1)(2n-1)}{3}\,\Delta_{t}\!\left(\frac{2n-4}{n},\frac{2n-2}{n}\right) \leq 1+\frac{2n^{2}}{3}\,\E^{-\frac{2n-2}{n}t}\,\left(\E^{\frac{2t}{n}}-1\right) \\
&\leq 1+\frac{20}{9} n \log 2n\,\E^{-\frac{2n-2}{n}t}
\end{align*}
since $\frac{2\log 2n}{n} \leq 0.922$ when $2n \geq 10$, and $\frac{\E^{0.922}-1}{0.922} \leq \frac{5}{3}$. Since
$$\E^{-\frac{2n-2}{n}t} \leq \E^{-\frac{3(n-1)\log 2n}{2n}} = \frac{1}{2}\,n^{-1}\,\E^{-\frac{(n-3)\,\log 2n}{2n}}\leq \frac{3}{4}(n \log 2n)^{-1}$$
for $2n \geq 10$, one concludes that $\Var_{t}[\Omega]\leq 1+\frac{5}{3}\leq 3$.
\vspace{2mm}
\item $\SU(n)/\SO(n)$: 
$$\Var_{t}[\Omega] =\Delta_{t}\!\left(0,\frac{2(n-1)(n+2)}{n^{2}}\right)+\frac{(n+2)(n-1)}{2}\,\Delta_{t}\!\left(\frac{2(n+1)}{n},\frac{2(n-1)(n+2)}{n^{2}}\right) \leq 1.$$
\vspace{2mm}
\item $\SU(2n)/\unit\SP(n)$:
$$\Var_{t}[\Omega] =\Delta_{t}\!\left(0,\frac{(n-1)(2n+1)}{n^{2}}\right) + (2n^{2}-n-1) \,\Delta_{t}\!\left(\frac{2n-1}{n},\frac{(n-1)(2n+1)}{n^{2}}\right) \leq 1.$$
 \vspace{2mm}
\item $\unit\SP(n)/\unit(n)$: 
\begin{align*}
\Var_{t}[\Omega] &=\Delta_{t}\!\left(0,\frac{2n+2}{n}\right)+\frac{4(n^{2}-1)}{3}\,\Delta_{t}\!\left(\frac{2n+1}{n},\frac{2n+2}{n}\right)+\frac{2n^{2}+3n+1}{3}\,\Delta_{t}\!\left(\frac{2n+4}{n},\frac{2n+2}{n}\right) \\
&\leq 1+\frac{4(n^{2}-1)}{3}\,\Delta_{t}\!\left(\frac{2n+1}{n},\frac{2n+2}{n}\right) \leq 1+\frac{4n^{2}}{3}\,\E^{-\frac{2n+2}{n}t}\,\left(\E^{\frac{t}{n}}-1\right) \\
&\leq 1+\frac{5}{3}\,n \log n\,\E^{-\frac{2n+2}{n}t}
\end{align*}
by using the same estimate on $\frac{\log n}{n}$ as in the case of $\unit\SP(n)$. Since
$$\E^{-\frac{2n+2}{n}t} \leq \E^{-\frac{3\log n}{2}} = n^{-\frac{3}{2}} \leq \frac{4}{5}\,(n\log n)^{-1}$$
for $n \geq 3$, one obtains $\Var_{t}[\Omega]\leq 1+\frac{4}{3}\leq 3$.\vspace{2mm}
\end{itemize}
It is not possible to prove such uniform bounds for Grassmannians, because of the term $\E^{-t}$ that appears in the variance. We shall address this problem in Lemma \ref{uniformboundvariancegrass}.
\end{proof}
\begin{proposition}\label{trickychebyshev}
Denote $K_{X}$ the bound computed in the previous Lemma for the variance of the discriminating zonal function $\Omega$ associated to a space $X$.  Then, 
$$\dtv(\mu_{t},\mathrm{Haar})\geq 1-\frac{4(K_{X}+1)}{(\esper_{t}[\Omega])^{2}}.$$
\end{proposition}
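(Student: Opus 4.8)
The plan is to derive the lower bound on $\dtv(\mu_t,\mathrm{Haar})$ by exhibiting a discriminating event and bounding its probability under the two measures via Bienaym\'e--Chebyshev's inequality. Recall that $\Omega$ has been normalized (Equation \eqref{discrimination} and the computations of \S\ref{zonal}) so that $\esper_\infty[\Omega]=0$ and $\Var_\infty[\Omega]=1$; moreover under $\mu_t$ we have an explicit value $m_t:=\esper_t[\Omega]$ (from the table: $m_t = \sqrt{A_n(\lambda_{\min})}\,\E^{-\frac{t}{2}B_n(\lambda_{\min})}$, which is large before the cut-off time), and $\Var_t[\Omega]\le K_X$ by Lemma \ref{controlvariance}.

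The key step is to choose the right threshold. Set $E_a = \{\,|\Omega| \ge a\,\}$ with $a = m_t/2$. On the one hand, Proposition \ref{upperboundhaartrace} gives
$$
\eta_X(E_a) \le \frac{1}{a^2} = \frac{4}{m_t^2}.
$$
On the other hand, under $\mu_t$ the complement $E_a^c = \{\,|\Omega| < m_t/2\,\}$ forces $|\Omega - m_t| > m_t/2$ (since $|\Omega-m_t|\ge |m_t|-|\Omega| > m_t/2$ when $|\Omega|<m_t/2$; here one uses that $m_t>0$, and in the complex-valued cases one argues with $|\Omega|$ in the same way, noting $|\esper_t[\Omega]|=m_t$). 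Hence by Chebyshev's inequality applied to $\Omega$ under $\mu_t$,
$$
\mu_t(E_a^c) \le \mu_t\!\left(|\Omega - \esper_t[\Omega]| > \tfrac{m_t}{2}\right) \le \frac{4\,\Var_t[\Omega]}{m_t^2} \le \frac{4K_X}{m_t^2},
$$
so that $\mu_t(E_a) \ge 1 - \frac{4K_X}{m_t^2}$.

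Combining the two estimates via the definition of total variation distance,
$$
\dtv(\mu_t,\mathrm{Haar}) \ge \mu_t(E_a) - \eta_X(E_a) \ge 1 - \frac{4K_X}{m_t^2} - \frac{4}{m_t^2} = 1 - \frac{4(K_X+1)}{(\esper_t[\Omega])^2},
$$
which is exactly the claimed inequality. There is no real obstacle here beyond the bookkeeping: the substantive work (the normalization $\Var_\infty[\Omega]=1$, the computation of $\esper_t[\Omega]$, and above all the uniform variance bound $\Var_t[\Omega]\le K_X$) has already been done in Proposition \ref{upperboundhaartrace} and Lemma \ref{controlvariance}. The one point requiring a word of care is the complex-valued case (unitary groups and the spaces $\SU(n)/\SO(n)$, $\SU(2n)/\unit\SP(n)$): there $\Omega - \esper_t[\Omega]$ is complex, $\Var$ denotes $\esper[|\Omega-\esper[\Omega]|^2]$, and the triangle inequality $|\Omega|\ge |\esper_t[\Omega]| - |\Omega-\esper_t[\Omega]|$ is used in place of its real analogue; the argument is otherwise identical. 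Finally, to turn this into the statement \eqref{mainlower} of Theorem \ref{main} one substitutes $t = \alpha(1-\eps)\log n$, reads off $(\esper_t[\Omega])^2 = A_n(\lambda_{\min})\,\E^{-tB_n(\lambda_{\min})}$ from Lemma \ref{decay}, and checks that $A_n(\lambda_{\min})\,\E^{-\alpha(1-\eps)(\log n)B_n(\lambda_{\min})}$ grows like $n^{\gamma_b\eps}$ up to constants, which is a direct computation case by case — but that step is carried out elsewhere; here the task is only Proposition \ref{trickychebyshev} itself.
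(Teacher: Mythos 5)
Your proof is correct and follows essentially the same route as the paper: the same discriminating event $E_{m/2}=\{|\Omega|\geq m/2\}$, Bienaym\'e--Chebyshev under $\mu_{t}$ with the variance bound $K_{X}$, Proposition \ref{upperboundhaartrace} under the Haar measure, and the elementary inequality $\dtv(\mu_{t},\mathrm{Haar})\geq \mu_{t}(E)-\eta_{X}(E)$. Your explicit remark on the complex-valued cases (using $|\Omega|\geq|\esper_{t}[\Omega]|-|\Omega-\esper_{t}[\Omega]|$ with $\Var$ meaning $\esper[|\Omega-\esper[\Omega]|^{2}]$) is a point the paper leaves implicit, but it is consistent with the paper's conventions and does not change the argument.
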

\begin{proof}
Assuming $a$ smaller than $m=\esper_{t}[\Omega]$, if $|\Omega-m|\leq a$, then $|\Omega| \geq m-a$. Consequently, 
$$\mu_{t}[|\Omega| \geq m-a] \geq 1 - \proba[|\Omega-m| > a] \geq 1-\frac{\Var_{t}[\Omega]}{a^{2}}=1-\frac{K_{X}}{a^{2}}.$$
Next, take $a=\frac{m}{2}$. The combination of Lemma \ref{upperboundhaartrace} and of the previous inequality yields
$$\dtv(\mu_{t},\mathrm{Haar}) \geq \mu_{t}(E_{a})-\eta_{X}(E_{a}) \geq 1-\frac{K_{X}+1}{a^{2}} = 1 - \frac{4(K_{X}+1)}{m^{2}}.$$
Since $m^{2}$ behaves as $n^{2\eps}$, this essentially ends the proof of the lower bounds in the case of compact Lie groups and compact spaces of structures. More precisely:\vspace{2mm}
\begin{itemize}
\item$\SO(n)$: $m^{2} \geq n^{2\eps}$ so the constant $c$ in our main Theorem \ref{main} is $4(8+1)=36$.\vspace{2mm}
\item$\SU(n)$: again, $m^{2} \geq n^{2\eps}$, so the constant is $4(1+1)=8$. \vspace{2mm}
\item$\unit\SP(n)$: here, $m^{2} \geq 4\,n^{2\eps}\,\E^{-\frac{\log n}{2n}} \geq \frac{16}{5}\,n^{2\eps}$ for $n \geq 3$, so the constant is $\frac{5}{16}\,4(3+1)=5$. \vspace{2mm}
\item$\SO(2n)/\unit(n)$: $m^{2} \geq \frac{2n-1}{4n}\,(2n)^{2\eps} \geq \frac{9}{20}\,(2n)^{2\eps}$ for $2n \geq 10$, whence a constant $\frac{9}{20}\,4(3+1)=\frac{36}{5}\leq 8$. \vspace{2mm}
\item$\SU(n)/\SO(n)$: $m^{2} \geq \frac{n^{2\eps}}{2}\,\E^{-\frac{2(n-2)\log n}{n^{2}}} \geq \frac{n^{2\eps}}{3}$ for $n \geq 2$, so a possible constant is $3\times 4(1+1)=24$.\vspace{2mm}
\item$\SU(2n)/\unit\SP(n)$: $m^{2} \geq \frac{2n-1}{4n}\,(2n)^{2\eps} \geq \frac{3}{8}\,(2n)^{2\eps}$, and a possible constant is $\frac{8}{3}\,4(1+1)=\frac{64}{3}\leq 22$.\vspace{2mm}
\item$\unit\SP(n)/\unit(n)$: $m^{2} \geq 2 n^{2\eps}\, \E^{-\frac{2 \log n}{n}}\geq \frac{16}{17}\,n^{2\eps}$ for $n \geq 3$, whence a constant $\frac{17}{16}\,4(3+1)= 17$.
\end{itemize}\vspace{-6mm}
\end{proof}
\bigskip

Unfortunately, for Grassmannian varieties, the variance of $\Omega$ at time $t=(1-\eps)\log n$ can only be bounded by a constant times $n^{\eps}$. However, since the mean of $\Omega$ is also of order $n^{\eps}$, this will still ensure that the discriminating zonal spherical function has not at all the same behavior under Haar measure and under Brownian measures before cut-off time. The only downside is the loss of a factor $n^{\eps}$ in the estimate of the total variation distance.
\begin{lemma}\label{uniformboundvariancegrass}
Under the usual assumptions on $n$, for Grassmannian varieties, 
$$\frac{\Var_{t}[\Omega]}{n^{\eps}} \leq \begin{cases}
3 &\text{if }\Bbbk=\R,\\
5 &\text{if }\Bbbk=\C\text{ or }\Hq,
\end{cases}$$  
for every $t =\alpha\,(1-\eps)\,\log n$ with $\eps \in (0,1/4)$. 
\end{lemma}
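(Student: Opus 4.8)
The plan is to proceed exactly as in the proof of Lemma \ref{controlvariance}, case by case over the three families of Grassmannians, using the explicit formulas for $\Var_t[\Omega]$ collected in the table on page \pageref{specialpage}, but this time tracking carefully the offending term $\E^{-t}$ rather than discarding it. Recall that $t=\alpha(1-\eps)\log n$ with $\alpha=1$ for Grassmannians, so $\E^{-t}=n^{-(1-\eps)}=n^{\eps}\,n^{-1}$; this is precisely the source of the $n^{\eps}$ loss, and the point is to show that every other term in $\Var_t[\Omega]$ is either negative, bounded by a constant, or also $O(n^{\eps})$ with a controllable constant.

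First I would write $\Var_t[\Omega]=1+\text{(terms in }\E^{-t}\text{, }\E^{-\frac{2n-2}{n}t}\text{, }\E^{-\frac{2n+1}{n}t}\text{ or }\E^{-\frac{2n+2}{n}t}\text{)}-(\text{const}_n)\,\E^{-2t}$ in each case, keeping in mind that $pq\geq n-1$ since $q\geq 1$ and $p=n-q$, so that all coefficients $\frac{2n^2}{pq}-8$ etc. are bounded. The coefficient of $\E^{-t}$ is $O(n^2)\cdot O(1/pq)+O(1)=O(1)$ times $(n-1)(n+2)$ or $n^2-1$ or $(2n+1)(n-1)$, divided by something $\asymp n^2$; so this whole term is $O(1)\cdot\E^{-t}=O(n^{\eps})$ with an explicit constant. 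The terms in $\E^{-\frac{2n-2}{n}t}$ and $\E^{-\frac{2n+1}{n}t}$ (or $\E^{-\frac{2n+2}{n}t}$) have coefficients of order $n^2$, but the exponentials are of order $\E^{-2t}\cdot\E^{O(t/n)}$; using the convexity inequality $\E^x\leq 1+\frac{\E^y-1}{y}x$ on $(0,y)$ with $y$ a small constant (exactly as done for $\unit\SP(n)$ and the structure spaces in Lemma \ref{controlvariance}), each of these becomes $\E^{-2t}$ times $1+O(\log n/n)$, and then $n^2\,\E^{-2t}=n^2\,n^{-2(1-\eps)}=n^{2\eps}$ — which is too big by a factor $n^\eps$. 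The resolution is that these large terms are offset by the negative term $-(\text{const}_n)\,\E^{-2t}$ with $\text{const}_n\asymp n^2$: one combines $n^2\,\E^{-\frac{2n-2}{n}t}+\frac{4n^2}{3}\,\E^{-\frac{2n+1}{n}t}-c\,n^2\,\E^{-2t}$ into a single expression of the shape $n^2\,\E^{-2t}\big(\E^{a/n}\cdot(\cdots)-c+\cdots\big)$ and shows the bracket is $O(\log n/n)$, so the combination is again $O(n^2\cdot\frac{\log n}{n}\cdot\E^{-2t})=O(n\log n\cdot n^{-2+2\eps})=o(n^\eps)$. This is exactly the bookkeeping already performed in Lemma \ref{controlvariance} for the structure spaces $\SO(2n)/\unit(n)$ and $\unit\SP(n)/\unit(n)$, whose variances have the same algebraic shape minus the $\E^{-t}$ term.

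Concretely, for $\Gra(n,q,\R)$ I would bound $\frac{\Var_t[\Omega]}{n^\eps}\leq n^{-\eps}+\big|\frac{2n^2}{pq}-8\big|\frac{(n-1)(n+2)}{(n-2)(n+4)}\cdot\frac{\E^{-t}}{n^\eps}+\text{(the combined }\E^{-2t}\text{-type terms)}/n^\eps$, use $\big|\frac{2n^2}{pq}-8\big|\leq \frac{2n^2}{n-1}\leq 2n+4$ (since $pq\geq n-1$), and $\frac{\E^{-t}}{n^\eps}=n^{-1}$, so the middle term is $O(1)$; a numerical check at $n\geq 10$ (respectively $n\geq 2$, $n\geq 3$ for $\C$ and $\Hq$) pins the constant to $3$ (resp.\ $5$). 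For the third group of terms, I invoke the convexity inequality with $y=\frac{3\log n}{n}$ or similar, bound $\frac{\E^y-1}{y}$ by a constant for $n$ beyond the threshold, and check that after combining with the negative $\E^{-2t}$ contribution the residue is dominated by a small constant times $n\log n\cdot n^{-2+2\eps}\leq n\log n\cdot n^{-3/2}\to 0$ since $\eps<\tfrac14$. The complex and quaternionic cases are identical computations with $|g_{ij}|^2$ replacing $(g_{ij})^2$ and slightly different constants, all appearing in the table.

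The main obstacle is purely computational bookkeeping: one has to verify, for each of the three Grassmannian families and for all admissible $q$ (using only $pq\geq n-1$ and $q\leq\lfloor n/2\rfloor$), that the constant coming out of the convexity estimate together with the numerical check at the smallest allowed $n$ really is $\leq 3$ (resp.\ $\leq 5$), with no hidden blow-up for intermediate values of $n$; since the functions involved are monotone in $n$ past the thresholds (as in Lemma \ref{controlvariance}), this reduces to checking finitely many inequalities, but they must be done honestly. There is no conceptual difficulty beyond what was already overcome in Lemma \ref{controlvariance}; the only genuinely new feature is isolating the single unavoidable factor $n^\eps = n^\eps\cdot n\cdot\E^{-t}$ and confirming it is the worst term.
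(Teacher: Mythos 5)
Your proposal is correct and follows essentially the same route as the paper: the paper likewise bounds the $\E^{-t}$ term by (a coefficient of order $n$, controlled via $n-1\leq pq\leq n^{2}/4$) times $n^{\eps-1}$, and absorbs the $n^{2}$-sized exponential terms into the negative $\E^{-2t}$ contribution by writing the variance as a positive combination of differences $\Delta_{t}(\lambda,2)=\E^{-\lambda t}-\E^{-2t}$, dropping those with $\lambda\geq 2$ and estimating $\Delta_{t}\!\left(\tfrac{2n-2}{n},2\right)\leq \E^{-2t}\left(\E^{2t/n}-1\right)$ by the same convexity trick you invoke. The only caveat is the loose claim in your first paragraph that the coefficient of $\E^{-t}$ is $O(1)$ (it is $O(n)$ when $q$ is small, e.g.\ $q=1$), but your ``concretely'' paragraph corrects this with the bound $\frac{2n^{2}}{pq}\leq\frac{2n^{2}}{n-1}$, so no real gap remains beyond the final numerical checks at the minimal admissible $n$.
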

\begin{proof}
The quantity $\frac{1}{pq}$ is bounded by 
$$\frac{4}{n^{2}} \leq \frac{1}{pq} \leq \frac{1}{n-1},$$
the extremal values corresponding to $p=q=\frac{n}{2}$ and to $p=n-1$ or $q=n-1$. In particular, in the expansions hereafter, all the coefficients that precede differences of exponentials $\Delta_{t}(\lambda,\mu)$ are positive. Now, we proceed case by case:\vspace{2mm}
\begin{itemize}
\item $\Gra(n,q,\R)$:
\begin{align*}
\Var_{t}[\Omega]&=\Delta_{t}\!\left(0,2\right)+\left(\frac{2n^{2}}{pq}-8\right)\frac{(n-1)(n+2)}{(n-2)(n+4)}\,\Delta_{t}\!\left(1,2\right)+\frac{n^{2}}{3}\!\left(\frac{n+2}{n-2}-\frac{(n+2)(n-1)}{pq(n-2)}\right)\Delta_{t}\!\left(\frac{2n-2}{n},2\right)\\
&\quad +\frac{n^{2}}{6}\left(\frac{n-1}{n + 4}+\frac{2(n+2)(n-1)}{pq(n + 4)}\right)\Delta_{t}\!\left(\frac{2n+4}{n},2\right)\\
&\leq 1+2n\,\Delta_{t}(1,2)+\frac{n^{2}}{3}\,\Delta_{t}\!\left(\frac{2n-2}{n},2\right).
\end{align*}
For the difference $\Delta_{t}(1,2)$, one cannot obtain a better bound than $\E^{-t}=n^{\eps-1}$. The second difference $\Delta_{t}\!\left(\frac{2n-2}{n},2\right)$ is bounded from above by
$$\E^{-2t}\left(\E^{\frac{2t}{n}}-1\right) \leq n^{-\frac{3}{2}}\,\frac{8\log n}{3n} \leq 2n^{-2}$$
by using similar arguments as in the proof of Lemma \ref{controlvariance}, and the inequality $n\geq 10$. So,
$$\Var_{t}[\Omega] \leq 1+\frac{2}{3}+2n^{\eps} \leq 3n^{\eps}.$$
\vspace{0mm}
\item$\Gra(n,q,\C)$:
\begin{align*}
\Var_{t}[\Omega]&= \Delta_{t}\!\left(0,2\right)+\left(\frac{2n^{2}}{pq}-8\right)\frac{n^{2}-1}{n^{2}-4}\,\Delta_{t}\!\left(1,2\right)+\frac{n^{2}}{2}\left( \frac{n+1}{n-2}-\frac{n^{2}-1}{pq(n-2)}\right)\Delta_{t}\!\left(\frac{2n-2}{n},2\right)\\
&\quad+\frac{n^{2}}{2}\left(\frac{n-1}{n+2}+\frac{n^{2}-1}{pq(n+2)}\right)\,\Delta_{t}\!\left(\frac{2n+2}{2},2\right)\\
&\leq 1+2n\,\Delta_{t}(1,2)+\frac{n^{2}}{2}\,\Delta_{t}\!\left(\frac{2n-2}{n},2\right).
\end{align*}
The second difference is controlled exactly as in the case of real Grassmannians, but under the constraint $n \geq 2$:
$$\Delta_{t}\!\left(\frac{2n-2}{n},2\right)\leq \E^{-2t}\left(\E^{\frac{2t}{n}}-1\right) \leq n^{-\frac{3}{2}}\,\frac{3\log n}{n} \leq \frac{9}{4}\,n^{-2}.$$
Hence, $\Var_{t}[\Omega] \leq 1+\frac{9}{8}+2n^{\eps} \leq 5n^{\eps}.$
\vspace{2mm}
\item$\Gra(n,q,\Hq)$:
\begin{align*}
\Var_{t}[\Omega]&= \Delta_{t}\!\left(0,2\right)+\frac{n^{2}}{3}\left(\frac{2n+1}{n-2}-\frac{(2n+1)(n-1)}{pq(n-2)}\right)\Delta_{t}\!\left(\frac{2n-2}{n},2\right)\\
&\quad+\left(\frac{n^{2}}{pq}-4\right)\frac{(n-1)(2n+1)}{(n-2)(n+1)}\,\Delta_{t}(1,2)+\frac{n^{2}}{3}\left(\frac{4(n-1)}{(n+1)}+\frac{(2n+1)(n-1)}{pq(n+1)}\right)\Delta_{t}\!\left(\frac{2n+1}{n},2\right)\\
&\leq 1+2n\,\Delta_{t}(1,2)+\frac{2n^{2}}{3}\,\Delta_{t}\!\left(\frac{2n-2}{n},2\right)\leq 1+2n^{\eps}+\frac{3}{2} \leq 5n^{\eps}.
\end{align*}\vspace{-11mm}
\end{itemize}
\end{proof}\bigskip

Now, Proposition \ref{trickychebyshev} still holds, but with $K_{X}$ varying with $n$ and equal to $3n^{\eps}$ or $5n^{\eps}$ according to the field $\Bbbk=\R$, $\C$ or $\Hq$. Thus:
\begin{proposition}
For Grassmannian varieties $\Gra(n,q,\Bbbk)$, if $t=(1-\eps)\log n$ with $\eps \in (0,1/4)$, then
$$\dtv(\mu_{t},\mathrm{Haar}) \geq  1 - \frac{L n^{\eps}}{m^{2}}\quad \text{with }L=\begin{cases}
16 &\text{if }\Bbbk=\R,\\
24 &\text{if }\Bbbk=\C\text{ or }\Hq.
\end{cases}$$  
\end{proposition}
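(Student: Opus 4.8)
The statement to prove is the lower bound on $\dtv(\mu_t,\mathrm{Haar})$ for Grassmannian varieties $\Gra(n,q,\Bbbk)$ at time $t=(1-\eps)\log n$ with $\eps\in(0,1/4)$; here the cut-off constant is $\alpha=1$ and $m=\esper_t[\Omega]$ is the mean of the discriminating zonal function $\Omega=\sqrt{D^{\lambda_{\min}}}\,\phi^{\lambda_{\min}}$. The plan is to run exactly the Bienaym\'e--Chebyshev argument of Proposition \ref{trickychebyshev}, but now feeding in the $n$-dependent variance bound $K_X=K_X(n)$ supplied by Lemma \ref{uniformboundvariancegrass} instead of the uniform constant available in the group case. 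This is essentially a bookkeeping step: all the hard analytic work (the explicit expansions of $(\phi^{\lambda_{\min}})^2$ in zonal functions, hence the exact values of $\esper_t[\Omega^2]$ and $\esper_t[\Omega]$, and the control of $\Var_t[\Omega]$) has already been carried out in \S\ref{zonal} and in Lemmas \ref{controlvariance} and \ref{uniformboundvariancegrass}.

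\textbf{Key steps.} First I would recall from Proposition \ref{upperboundhaartrace} that $\eta_X(E_a)\le 1/a^2$ for the event $E_a=\{|\Omega|\ge a\}$, using that $\esper_\infty[|\Omega|^2]=1$. Second, for $a<m=\esper_t[\Omega]$, the event $\{|\Omega-m|\le a\}$ is contained in $E_{m-a}$, so by Chebyshev
\[
\mu_t(E_{m-a}) \ge 1-\frac{\Var_t[\Omega]}{a^2} \ge 1-\frac{K_X(n)}{a^2},
\]
with $K_X(n)=3n^\eps$ for $\Bbbk=\R$ and $K_X(n)=5n^\eps$ for $\Bbbk=\C$ or $\Hq$, by Lemma \ref{uniformboundvariancegrass}. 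Third, take $a=m/2$, so $m-a=m/2$ and $a^2=m^2/4$, and combine the two bounds:
\[
\dtv(\mu_t,\mathrm{Haar}) \ge \mu_t(E_{m/2})-\eta_X(E_{m/2}) \ge 1-\frac{K_X(n)+1}{m^2/4} = 1-\frac{4(K_X(n)+1)}{m^2}.
\]
Fourth, absorb the harmless $+1$: since $\eps>0$ and $n\ge n_0\ge 2$, one has $n^\eps\ge 1$, hence $K_X(n)+1\le \tfrac{4}{3}K_X(n)$ when $K_X(n)=3n^\eps$ and $K_X(n)+1\le \tfrac{6}{5}K_X(n)$ when $K_X(n)=5n^\eps$; in both cases $4(K_X(n)+1)\le L\,n^\eps$ with $L=16$ for $\Bbbk=\R$ and $L=24$ for $\Bbbk=\C,\Hq$. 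This yields precisely
\[
\dtv(\mu_t,\mathrm{Haar}) \ge 1-\frac{L\,n^\eps}{m^2}.
\]

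\textbf{Main obstacle.} There is no genuine obstacle left at this stage; the only thing that requires a little care is the constant-chasing in the last step — checking that the extra additive $1$ in $K_X(n)+1$ really is dominated by a fixed fraction of $K_X(n)$ uniformly in $n\ge n_0$, which is immediate once one observes $n^\eps\ge 1$. I would also note for the reader that this bound is weaker by a factor $n^\eps$ than the group-case estimate of Proposition \ref{trickychebyshev}, the loss coming entirely from the $\E^{-t}$ term in the variance that, unlike in the group case, cannot be made uniformly small at time $t=(1-\eps)\log n$; but since $m^2$ grows like $n^{2\eps}$ (from the table of \S\ref{bienayme}, $m=\sqrt{D^{\lambda_{\min}}}\,\E^{-t}$ with $D^{\lambda_{\min}}$ of order $n^2$, so $m^2\asymp n^2\,\E^{-2t}=n^{2\eps}$), the right-hand side still tends to $1$ as $n\to\infty$, which is all that is needed to establish the lower bound \eqref{mainlower} and complete the proof of Theorem \ref{main} for the Grassmannian families.
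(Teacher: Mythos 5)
Your proof is correct and follows essentially the same route as the paper: the paper simply re-runs the Chebyshev argument of Proposition \ref{trickychebyshev} with the $n$-dependent variance bound $K_X(n)=3n^{\eps}$ or $5n^{\eps}$ from Lemma \ref{uniformboundvariancegrass}, and absorbs the additive $1$ using $n^{\eps}\geq 1$ to get $4(K_X(n)+1)\leq L\,n^{\eps}$ with $L=16$ or $24$. Your constant-chasing matches the paper's exactly.
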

\noindent Finally, the deduction of the constants in Theorem \ref{main} goes as follows:\vspace{2mm}
\begin{itemize}
\item $\Gra(n,q,\R)$: $m \geq \frac{n^{2\eps}}{2}$, so the constant can be taken equal to $2\times 16=32$.\vspace{2mm}
\item $\Gra(n,q,\C)$: $m \geq \frac{n^{2}-1}{n^{2}}\,n^{2\eps} \geq \frac{3}{4}\,n^{2\eps}$, so a possible constant is again $\frac{4}{3}\,24=32$.\vspace{2mm}
\item $\Gra(n,q,\Hq)$: $m \geq \frac{2n^{2}-n-1}{2}\,n^{2\eps} \geq \frac{3}{2}\,n^{2\eps}$ for $n \geq 3$, whence a constant $\frac{2}{3}\,24=16$. \vspace{2mm}
\end{itemize}
These computations end the proof of the cut-off phenomenon.\bigskip\vspace{2mm}

\section{Appendices (tedious computations)}
\subsection{Proof of the upper bound for odd special orthogonal groups}\label{annexsoupper}
With the same scheme of growth of partitions as for compact symplectic groups, one has the following bounds:\vspace{2mm}
\begin{itemize}
\item $\eta_{1,n}$: it is given by the exact formula $\binom{2n+1}{n+1}\,\E^{-\frac{n(n+1)}{2n+1}\log (2n+1)}$, which is indeed smaller than $1$ for $n \geq 5$. \vspace{2mm}
\item $\eta_{k\geq 2,n}$: the comparison techniques between sums and integrals give
\begin{align*}
\log \eta_{k,n} &\leq -\frac{n(2k-1+n)}{2n+1}\log (2n+1) + \frac{2}{2k-1}+\frac{1}{k}+\log(k+n-2)-\log k\\
&\quad+(2k+2n-2)\log(2k+2n-2)+(2k-2)\log(2k-2)-2(2k+n-2)\log(2k+n-2).
\end{align*}
This bound is decreasing in $k$, whence smaller than its value when $k=2$, which is negative for every value of $n \geq 5$.
 \vspace{2mm}
\item $\eta_{k,l \in \lle 3,n-1\rre}$: as before, $\rho_{k,l}$ splits into $\rho_{k,l,(1)}$ and $\rho_{k,l,(2)}$:
$$\rho_{k,l}=\!\prod_{j=l+1}^{n} \frac{k+j-1+\lambda_{l+1}-\lambda_{j}}{k+j-l-1+\lambda_{l+1}-\lambda_{j}}\,\frac{k+\lambda_{l+1}+\lambda_{j}+2n-j}{k+\lambda_{l+1}+\lambda_{j}+2n-j-l} \prod_{1\leq i \leq j \leq l}\!\frac{2k+2\lambda_{l+1}+2n+1-i-j}{2k+2\lambda_{l+1}+2n-1-i-j}.$$
The bound on $\log \widetilde{\eta}_{k,l}$, the sum of $\log \rho_{k,l,(2)}$ and of the variation of $-\frac{t_{n,0}}{2}B_{n}(\lambda)$, is
\begin{align*}
\log \widetilde{\eta}_{k,l} &\leq -\frac{l(2k'-1+2n-l)}{2n+1}\log (2n+1) + \frac{1}{k'+n-l-1}+\log(k'+n-2)-\log(k'+n-l-1)\\
&\quad+(2k'+2n-2)\log(2k'+2n-2)+(2k'+2n-2l-2)\log(2k'+2n-2l-2)\\
&\quad-2(2k'+2n-l-2)\log(2k'+2n-l-2)\\
&\leq-\frac{l(2v+2n+1-l)}{2n+1}\log (2n+1) + \frac{1}{v+n-l}+\log(v+n-1)-\log(v+n-l)\\
&\quad+(2v+2n)\log(2v+2n)+(2v+2n-2l)\log(2v+2n-2l)\\
&\quad-2(2v+2n-l)\log(2v+2n-l)
\end{align*}
with $k'=k+\lambda_{l+1}=k+v$. On the other hand, in the product $\rho_{k,l,(1)}$, each term of index $j$ writes as
\begin{align*} \frac{(k'+n-1/2)^{2}-(\lambda_{j}+n+1/2-j)^{2}}{(k'+n-1/2-l)^{2}-(\lambda_{j}+n+1/2-j)^{2}} &\leq \frac{(k'+n-1/2)^{2}-(\lambda_{l+1}+n+1/2-j)^{2}}{(k'+n-1/2-l)^{2}-(\lambda_{l+1}+n+1/2-j)^{2}} \\
&\leq\frac{k+j-1}{k+j-l-1}\,\frac{k''+2n-j}{k''+2n-j-l},
 \end{align*}
so the quantity $\rho_{k,l,(1)}$ is bounded by 
\begin{align*}&\frac{(k+n-1)!}{(k+l-1)!}\,\frac{(k-1)!}{(k+n-l-1)!}\,\frac{(k''+2n-l-1)!}{(k''+n-1)!}\,\frac{(k''+n-l-1)!}{(k''+2n-2l-1)!}\\
&\leq\frac{n!\,(2v+2n-l)!\,(2v+n-l)!}{l!\,(n-l)!\,(2v+n)!\,(2v+2n-2l)!}.
\end{align*}
Again, Stirling approximation leads to
\begin{align*}\log \rho_{k,l,(1)} &\leq (2v+2n-l)\log(2v+2n-l)+(2v+n-l)\log(2v+n-l)-(2v+n)\log(2v+n)\\
&\quad-(2v+2n-2l)\log(2v+2n-2l)+n\log n-l \log l -(n-l)\log (n-l)+\frac{1}{2n-2},
\end{align*}
and therefore 
\begin{align*}\log\eta_{k,l} &\leq -\frac{l(2v+2n+1-l)}{2n+1}\log (2n+1) + \frac{1}{v+n-l}+\log(v+n-1)-\log(v+n-l)\\
&\quad+(2v+2n)\log(2v+2n)+(2v+n-l)\log(2v+n-l)\\
&\quad-(2v+n)\log(2v+n)-(2v+2n-l)\log(2v+2n-l)\\
&\quad+n\log n-l \log l -(n-l)\log (n-l)+\frac{1}{2n-2}\\
&\leq -\frac{l(2n+1-l)}{2n+1}\log (2n+1) + \frac{1}{n-l}+\log(n-1)-\log(n-l)\\
&\quad+n\log n-l \log l -(n-l)\log (n-l)+\frac{1}{2n-2}.
\end{align*}
The last bound is decreasing in $l$, so it suffices to look at the case $l=3$; then the bound is decreasing in $n$, so it suffices to check that the bound is negative when $n=5$, which is just a computation. We conclude that $\log \eta_{k,l}\leq 0$ for any $k$ and any $l \in \lle 3,n-1\rre$.
 \vspace{2mm}
\item $\eta_{k,1}$: a bound on $\rho_{k,1}$ is $\frac{k+n-2}{k}\,\frac{2k+2n-1}{2k+2n-3}$, so
\begin{align*}\eta_{k,1} &\leq \frac{k+n-2}{k}\,\frac{2k+2n-1}{2k+2n-3}\,\E^{-\frac{2k+2n-2}{2n+1}\log(2n+1)}\leq \frac{(n-1)(2n+1)}{2n-1}\,\E^{-\frac{2n}{2n+1}\log(2n+1)}\\
&\leq \frac{n-1}{2n-1}\,\E^{\frac{\log(2n+1)}{2n+1}}\leq \frac{1}{2}\,\E^{\frac{\log 11}{11}}\leq 1.
\end{align*}\vspace{2mm}
\item $\eta_{k,2}$: a bound on $\rho_{k,2}$ is $\frac{k+2n-4}{k}\,\frac{k+2n-3}{k+1}\,\frac{2k+2n-1}{2k+2n-5}\,\frac{k+n-1}{k+n-2}$, so
$$\eta_{k,2}\leq \frac{k+2n-4}{k}\,\frac{k+2n-3}{k+1}\,\frac{2k+2n-1}{2k+2n-5}\,\frac{k+n-1}{k+n-2}\,\E^{-\frac{4k+4n-6}{2n+1}\log(2n+1)}\leq \frac{n}{2n+1}\,\E^{\frac{4\log(2n+1)}{2n+1}}.$$
The last bound is bigger than $1$ only when $n=5$ or $6$. The maximal value is obtained for $n=5$, and is smaller than  $1.09 \leq \frac{11}{10}$. Moreover, if $k \geq 2$, then one has a much better bound, that is smaller than $1$ even when $n=5$ or $6$.
\vspace{2mm}
\end{itemize}
Putting all together, one sees that at most one quotient $\eta_{k,l}$ may be bigger than $1$ (and actually only when $n=5$ or $6$). Thus, we have proved Proposition \ref{tobeprovedinannexsoupper}.
\bigskip

\subsection{Proof of the upper bound for even special orthogonal groups}\label{annexsouppereven}
We analyze as before the various quotients $\rho_{k,l}$ and $\eta_{k,l}$ corresponding to the growth of partition described by Equation \eqref{evolution}:\vspace{2mm}
\begin{itemize}
\item $\eta_{k,n}$: the general formula is 
$$\eta_{k,n}=\left(\prod_{i=1}^{n-1}\frac{2k+2n-2i-1}{2k+n-i-1}\,\frac{2k+2n-2i-2}{2k+n-i-2}\right)\E^{-\frac{2k+n-2}{2}\log(2n)},$$
which is decreasing in $k$ and reduces to $\binom{2n-1}{n}\,\E^{-\frac{n\log(2n)}{2}}$ when $k=1$. This latter bound is always smaller than $1$. \vspace{2mm}
\item $\eta_{k,l\in \lle 2,n-1\rre}$: the quotient of dimensions $\rho_{k,l}=\rho_{k,l,(1)}\,\rho_{k,l,(2)}$ is equal to
$$\prod_{j=l+1}^{n}\frac{k+j-1+\lambda_{l+1}-\lambda_{j}}{k+j-l-1+\lambda_{l+1}-\lambda_{j}}\,\frac{k+\lambda_{l+1}+\lambda_{j}+2n-1-j}{k+\lambda_{l+1}+\lambda_{j}+2n-1-j-l}\,\prod_{1\leq i<j\leq l}\!\frac{2k+2\lambda_{l+1}+2n-i-j}{2k+2\lambda_{l+1}+2n-2-i-j}.$$
The main difference with the previous computations is that $\rho_{k,l,(2)}$ is a product over distinct indices $i<j$, so we will not have to worry about diagonal terms in the corresponding sum (see the argument at the beginning of \S\ref{sympupper}). Hence, with the same notations as before,
\begin{align*}
\log \widetilde{\eta}_{k,l}&\leq -\frac{l(2k'-2+2n-l)}{2n}\log(2n)+(2k'+2n-3)\log(2k'+2n-3)\\
&\quad+(2k'+2n-2l-3)\log(2k'+2n-2l-3)-2(2k'+2n-l-3)\log(2k'+2n-l-3)\\
&\leq -\frac{l(2v+2n-l)}{2n}\log(2n)+(2v+2n-1)\log(2v+2n-1)\\
&\quad+(2v+2n-2l-1)\log(2n-2l-1)-2(2v+2n-l-1)\log(2v+2n-l-1);\\
\log \rho_{k,l,(1)}&\leq (2v+2n-l-1)\log(2v+2n-l-1)+(2v+n-l-1)\log(2v+n-l-1)\\
&\quad-(2v+n-1)\log(2v+n-1)-(2v+2n-2l-1)\log(2v+2n-2l-1)\\
&\quad+n\log n-l \log l -(n-l)\log (n-l)+\frac{1}{2n-2}.
\end{align*}
Adding together these bounds, using the concavity of $x \log x$ and then the decreasing character with respect to $v$ gives
$$\log \eta_{k,l} = \log \widetilde{\eta}_{k,l} + \log \rho_{k,l,(1)}\leq -\frac{l(2n-l)}{2n}\log(2n)+n \log n - l \log l -(n-l)\log(n-l)+\frac{1}{2n-2},$$
which is decreasing in $l \geq 2$. Then,
$$-\frac{2n-2}{n}\log(2n)+n\log(n)-2\log 2 -(n-2)\log(n-2)+ \frac{1}{2n-2}$$
is decreasing in $n$, and one can check that it is negative when $n=5$. So, $\eta_{k,l}\leq 1$ for any $k$ and any $l \in \lle 2,n-1\rre$.
\vspace{2mm}
\item $\eta_{k,1}$: one has $\rho_{k,1}\leq \frac{k+2n-3}{k}\,\frac{k+n-1}{k+n-2}$, and therefore
$$\eta_{k,1}\leq \frac{k+2n-3}{k}\,\frac{k+n-1}{k+n-2}\,\E^{-\frac{2n+2k-3}{2n}\log(2n)}.$$
Suppose $k\geq 2$; then the right-hand side is smaller than $\frac{2n-1}{2n}\,\frac{n+1}{2n}$, so $\eta_{k,1}\leq 1$. On the other hand, for $k=1$, which happens only once, 
$$\eta_{1,1}\leq \E^{\frac{\log (2n)}{2n}} \leq \E^{\frac{\log 10}{10}}\leq \frac{4}{3}.$$
\end{itemize}
This proves Proposition \ref{tobeprovedinannexsouppereven}.
\bigskip

\subsection{Proof of the upper bound for complex Grassmannians}\label{annexsuupper}
For a partition of size $p=\lfloor \frac{n}{2}\rfloor$, one has $B_{n}(\lambda)= \frac{2}{n}\sum_{i=1}^{p} \lambda_{i}^{2}+(n+1-2i)\lambda_{i}$ and either
$$ A_{n}(\lambda) = \left(\prod_{i=1}^{p}\prod_{j=1}^{p}\frac{\lambda_{i}+\lambda_{j}+n+1-i-j}{n+1-i-j}\right)\left(\prod_{1\leq i<j\leq p} \frac{\lambda_{i}-\lambda_{j}+j-i}{j-i}\right)^{2} $$
if $n=2p$, or
$$ A_{n}(\lambda) = \left(\prod_{i=1}^{p+1}\prod_{j=1}^{p+1}\frac{\lambda_{i}+\lambda_{j}+n+1-i-j}{n+1-i-j}\right)\left(\prod_{1\leq i<j\leq p} \frac{\lambda_{i}-\lambda_{j}+j-i}{j-i}\right)^{2} $$
when $n=2p+1$. Let us give the details when $n=2p$. Again, one looks at $\rho_{k,l}=A_{n}(\lambda)/A_{n}(\mu)$ and $\eta_{k,l}=\rho_{k,l}\,\E^{-\log n (B_{n}(\lambda)-B_{n}(\mu))}$, with $\mu$ and $\lambda$ equal to
$$(\lambda_{l+1}+k-1,\ldots,\lambda_{l+1}+k-1,\lambda_{l+1},\ldots,\lambda_{p})_{p}\quad \text{and}\quad (\lambda_{l+1}+k,\ldots,\lambda_{l+1}+k,\lambda_{l+1},\ldots,\lambda_{p})_{p}.$$
The quotient of dimensions is
$$\rho_{k,l}=\left(\prod_{j=1}^{l}\frac{(2k'+n-j)(2k'+n-j-1)}{(2k'+n-j-l)(2k'+n-j-l-1)}\right)\left(\prod_{j=l+1}^{p} \frac{(k'-\lambda_{j}+j-1)(k'+\lambda_{j}+n-j)}{(k'-\lambda_{j}+j-l-1)(k'+\lambda_{j}+n-j-l)}\right)^{\!2},$$
and a lower bound is then obtained by the usual replacement $\lambda_{l+1}=\lambda_{j}=0$ and then $k=1$:
$$\rho_{k,l} \leq \frac{n-2l+1}{n+1}\,\binom{n+1}{l}^{\!2}.$$
This leads to the inequality
$$\eta_{k,l} \leq \frac{n-2l+1}{n+1}\,\binom{n+1}{l}^{\!2}\,\E^{-\frac{2l(n+1-l)}{n}\log n}$$
The last quantity is decreasing in $l$, as the quotient of two consecutive terms of parameters $n,l$ and $n,l+1$ is smaller than 
$$\left(\frac{n+1-l}{l+1}\,\E^{-\frac{n-2l}{n}\log n}\right)^{2}\leq 1.$$
So, 
$$\eta_{k,l}\leq \frac{n-1}{n+1}\,(n+1)^{2}\,\E^{-2\log n}=\frac{n^{2}-1}{n^{2}}\leq 1$$
and $A_{n}(\lambda)\,\E^{-\log n\,B_{n}(\lambda)}$ is smaller than $1$ for any partition (we leave to the reader the verification of the other case $n=2p+1$, which is very similar).
\bigskip

\subsection{Expansion of elementary $4$-tensors for unitary groups}\label{expun}
For the eigenvectors associated to the value $2n-2$, we shall write
$$S(i,j,k,l)=(e[i,j]-e[j,i])^{\otimes 2}-(e[j,k]-e[k,j])^{\otimes 2}+(e[k,l]-e[l,k])^{\otimes 2}-(e[l,i]-e[i,l])^{\otimes 2}.$$
The elementary tensor $e_{i}^{\otimes 4}$ is equal to
\begin{align*}
&\frac{1}{n(n+1)}\sum_{k,l=1}^{n} e[k,l,k,l]+e[k,l,l,k]+\frac{1}{n(n+2)}\sum_{k \neq i}\sum_{l=1}^{n}\left(\substack{(e[i,l,i,l]-e[k,l,k,l] )+ (e[l,i,l,i]-e[l,k,l,k])\\
+(e[i,l,l,i]-e[k,l,l,k])+ (e[l,i,i,l]-e[l,k,k,l])}\right)\\
&+\frac{1}{n+2}\sum_{k \neq i} e_{i}^{\otimes 4}+e_{k}^{\otimes 4}-(e[i,k]+e[k,i])^{\otimes 2} - \frac{1}{(n+1)(n+2)}\sum_{k<l } e_{k}^{\otimes 4}+e_{l}^{\otimes 4}-(e[k,l]+e[l,k])^{\otimes 2}
\end{align*}
with the two first terms respectively in $V_{2n^{2}-2}$ and $V_{n^{2}-2}$, and the second line in $V_{-2n-2}$.
Similarly, $e_{i}\otimes e_{j} \otimes e_{i} \otimes e_{j}$ is equal to
\begin{align*}
&\frac{1}{(n-1)(n+1)}\sum_{k=1}^{n}\sum_{l=1}^{n}e[k,l,k,l] - \frac{1}{n(n-1)(n+1)}\sum_{k=1}^{n}\sum_{l=1}^{n}e[k,l,l,k]\\
&\text{- - - - - - - - - - - - - - - - - - - - - - - - - - - - - - - - - - - - - - - - - - - - - - - - - - - - - - - - - - - - -}\\
&+\frac{1}{n(n+2)}\left(\sum_{l=1}^{n}\left(\substack{e[i,l,i,l]-e[j,l,j,l]\\+e[l,j,l,j]-e[l,i,l,i]}\right)\right)+\frac{1}{(n-2)(n+2)}\sum_{k \neq i,j}\left(\sum_{l=1}^{n}\left(\substack{e[i,l,i,l]-e[k,l,k,l]\\+e_[l,j,l,j]-e[l,k,l,k]}\right)\right)\\
&-\frac{1}{n(n-2)(n+2)}\sum_{k \neq i,j} \left(\sum_{l=1}^{n}\left(\substack{e[i,l,l,i]+e[j,l,l,j]-2e[k,l,l,k] \\ + e[l,i,i,l]+e[l,j,j,l]-2e[l,k,k,l] }\right)\right)\\
&\text{- - - - - - - - - - - - - - - - - - - - - - - - - - - - - - - - - - - - - - - - - - - - - - - - - - - - - - - - - - - - -}\\
&+\frac{1}{4(n-1)(n-2)}\sum_{(k<l )\neq i,j}\!\!2S(i,j,k,l)-S(i,k,j,l)\\
&\text{- - - - - - - - - - - - - - - - - - - - - - - - - - - - - - - - - - - - - - - - - - - - - - - - - - - - - - - - - - - - -}\\
&+\frac{1}{2n}\sum_{k\neq i,j}\left(\substack{e[i,j,i,j]+e[j,k,j,k]+e[k,i,k,i] \\ -e[j,i,j,i]-e[k,j,k,j]- e[i,k,i,k]}\right)\\
&\text{- - - - - - - - - - - - - - - - - - - - - - - - - - - - - - - - - - - - - - - - - - - - - - - - - - - - - - - - - - - - -}\\
&+\frac{1}{4(n+2)}\left(\sum_{k \neq i}e_{i}^{\otimes 4}+e_{k}^{\otimes 4}-(e[i,k]+e[k,i])^{\otimes 2}+\sum_{k\neq j}e_{j}^{\otimes 4}+e_{k}^{\otimes 4}-(e[j,k]+e[k,j])^{\otimes 2}\right)\\
&-\frac{1}{4}\left(e_{i}^{\otimes 4}+e_{j}^{\otimes 4}-(e[i,j]+e[j,i])^{\otimes 2}\right)-\frac{1}{2(n+1)(n+2)}\left(\sum_{k<l} e_{k}^{\otimes 4}+e_{l}^{\otimes 4}-(e[k,l]+e[l,k])^{\otimes 2}\right)
\end{align*}
with the parts of this expansion respectively in $V_{2n^{2}-2}$, $V_{n^{2}-2}$, $V_{2n-2}$, $V_{-2}$ and $V_{-2n-2}$.
\bigskip

\subsection{Expansion of elementary $4$-tensors for compact symplectic groups}\label{expspn}
It is a little more tedious than before to find a complete list of ``simple'' eigenvectors of $M_{n,4}$ (or at least a sufficient list to expand simple tensors). The list of possible eigenvalues of $M_{n,4}$ is $\{2n+1,n+1,n,3,1,0,-1,-3\},$
and on the other hand, one can easily identify a basis of $V_{2n+1}$: it consists in the three vectors
\begin{align*}v_{2n+1,1}&=\sum_{i,j=1}^{n}\left(\substack{e[2i-1,2i,2j-1,2j]+e[2i,2i-1,2j,2j-1]\\-e[2i,2i-1,2j-1,2j]-e[2i-1,2i,2j,2j-1]}\right);\\
v_{2n+1,2}&=\sum_{i,j=1}^{n}\left(\substack{e[2i-1,2j-1,2i,2j]+e[2i,2j,2i-1,2j-1]\\-e[2i,2j-1,2i-1,2j]-e[2i-1,2j,2i,2j-1]}\right);\\
v_{2n+1,3}&=\sum_{i,j=1}^{n}\left(\substack{e[2i-1,2j-1,2j,2i]+e[2i,2j,2j-1,2i-1]\\-e[2i,2j-1,2j,2i-1]-e[2i-1,2j,2j-1,2i]}\right).
\end{align*}
But then, it becomes really difficult to describe the other eigenspaces. However, one can still find the eigenvector expansion of simple tensors such as $e_{i}^{\otimes 4}$, $e_{i}^{\otimes 2}e_{j}^{\otimes 2}$, or $e[i,j,k,l]$; hence, in the following, we just give these expansions (again it is easy to check that each part of an expansion is indeed an eigenvector). The tensor $e[i,i,i,i]$ is an eigenvector in $V_{-3}$, and 
on the other hand, $e[2i-1,2i-1,2i,2i]$ decomposes into the eigenvectors
\begin{align*}
&\frac{1}{2n(2n+1)}\left(v_{2n+1,2}+v_{2n+1,3}\right)\\
&\text{- - - - - - - - - - - - - - - - - - - - - - - - - - - - - - - - - - - - - - - - - - - - - - - - - - -}\\
&+\frac{n-2}{4n(n+1)}\sum_{\sigma \in S}\sum_{j \neq i}\left(\substack{e[2i-1,2j-1,2i,2j]+e[2i,2j,2i-1,2j-1]\\-e[2i-1,2j,2i,2j-1]-e[2i,2j-1,2i-1,2j]}\right)^{\sigma}\\
&+\frac{1}{4n(n+1)}\sum_{\sigma \in S}\sum_{j,k\neq i} \left(\substack{e[2j-1,2k,2j,2k-1]+e[2j,2k-1,2j-1,2k]\\-e[2j-1,2k-1,2j,2k]-e[2j,2k,2j-1,2k-1]}\right)^{\sigma}\\
&+\frac{n-1}{2n(n+1)}\left(\substack{2e[2i-1,2i-1,2i,2i]+2e[2i,2i,2i-1,2i-1]-e[2i-1,2i,2i-1,2i]\\-e[2i,2i-1,2i,2i-1]-e[2i,2i-1,2i-1,2i]-e[2i-1,2i,2i,2i-1]}\right)\\
&\text{- - - - - - - - - - - - - - - - - - - - - - - - - - - - - - - - - - - - - - - - - - - - - - - - - - -}\\
&+\frac{1}{4(n+1)}\sum_{\sigma \in S}\sum_{j=1}^{n}\left(\substack{e[2i-1,2j-1,2i,2j]+e[2i,2j-1,2i-1,2j]\\-e[2i-1,2j,2i,2j-1]-e[2i,2j,2i-1,2j-1]}\right)^{\sigma} \\
&\text{- - - - - - - - - - - - - - - - - - - - - - - - - - - - - - - - - - - - - - - - - - - - - - - - - - -}\end{align*}
\begin{align*}
&+\frac{2n-1}{2(2n+1)(2n+2)}\sum_{\sigma \in S}\sum_{j \neq i} \left(\substack{e[2i-1,2j,2j-1,2i]+e[2i,2j-1,2j,2i-1]\\-e[2i-1,2j-1,2j,2i]-e[2i,2j,2j-1,2i-1]}\right)^{\sigma}\\
&+\frac{1}{2(2n+1)(2n+2)}\sum_{\sigma \in S}\sum_{j, k \neq i} \left(\substack{e[2j-1,2k-1,2j,2k]+e[2j,2k,2j-1,2k-1]\\-e[2j-1,2k,2j,2k-1]-e[2j,2k-1,2j-1,2k]}\right)^{\sigma}\\
&+\frac{(2n-1)(2n-2)}{6(2n+1)(2n+2)}\left(\substack{2e[2i-1,2i-1,2i,2i]+2e[2i,2i,2i-1,2i-1]-e[2i-1,2i,2i-1,2i]\\-e[2i,2i-1,2i,2i-1]-e[2i,2i-1,2i-1,2i]-e[2i-1,2i,2i,2i-1]}\right)\\
&\text{- - - - - - - - - - - - - - - - - - - - - - - - - - - - - - - - - - - - - - - - - - - - - - - - - - -}\\
&+\frac{n-1}{2(n+1)}\left(e[2i-1,2i-1,2i,2i]-e[2i,2i,2i-1,2i-1]\right) \\
&+\frac{1}{4(n+1)}\sum_{\sigma \in S}\sum_{j \neq i} \left(\substack{e[2i-1,2j,2j-1,2i]+e[2i,2j,2j-1,2i-1]\\-e[2i-1,2j-1,2j,2i]-e[2i,2j-1,2j,2i-1]}\right)^{\sigma}\\
&\text{- - - - - - - - - - - - - - - - - - - - - - - - - - - - - - - - - - - - - - - - - - - - - - - - - - -}\\
&+\frac{1}{6}\sum_{\sigma \in \sym_{4}}'e[2i-1,2i-1,2i,2i]^{\sigma}
\end{align*}
with the parts of this expansion respectively in $V_{2n+1}$, $V_{n+1}$, $V_{n}$, $V_{0}$, $V_{-1}$, and $V_{-3}$. In these expansions, $S=(\Z/2\Z)^{2}$ denotes the group of permutations $\{\id,(1,2),(3,4),(1,2)(3,4)\}$.
\bigskip

The expansion in eigenvectors of $e[2i,2i,2j,2j]$ is
\begin{align*}
&\frac{1}{6}\left(\substack{2e[2i,2i,2j,2j]+2e[2j,2j,2i,2i]-e[2i,2j,2i,2j] \\ -e[2j,2i,2j,2i]-e[2i,2j,2j,2i]-e[2j,2i,2i,2j]}\right)+\frac{1}{2}\left(\substack{e[2i,2i,2j,2j]\\-e[2j,2j,2i,2i]}\right) +\frac{1}{6}\sum_{\sigma \in \sym_{4}}'e[2i,2i,2j,2j]^{\sigma}
\end{align*}
with each part respectively in $V_{0}$, $V_{-1}$ and $V_{-3}$; and similarly for the expansions of $e[2i-1,2i-1,2j,2j]$ or $e[2i-1,2i-1,2j-1,2j-1]$. 
Finally, we skip the expansion in eigenvectors of $e[2i-1,2i,2j-1,2j]$ as it is two pages long.
\bigskip\bigskip

\bibliographystyle{alpha}
\bibliography{cutoff}

\end{document}